\documentclass[11pt]{amsart}
\usepackage{times}
\usepackage[T1]{fontenc}
\usepackage{amssymb, amsthm, amsmath}
\usepackage{mathrsfs}

\usepackage[active]{srcltx}

\usepackage{todonotes}

\usepackage{setspace}
\usepackage{geometry}

\usepackage{hyperref}

\DeclareMathOperator{\acl}{acl}
\DeclareMathOperator{\dcl}{dcl} 
\DeclareMathOperator{\gl}{GL} 
\DeclareMathOperator{\aut}{Aut} \DeclareMathOperator{\id}{Id}
 \DeclareMathOperator{\fr}{Fr}

 \DeclareMathOperator{\dom}{dom}

\DeclareMathOperator{\eq}{eq}

\DeclareMathOperator{\cl}{cl}

\newtheorem{introtheorem}{Theorem}

\newtheorem{theorem}{Theorem}[section]

\newtheorem{claim}{Claim}[theorem]

\newtheorem{corollary}[theorem]{Corollary}

\newtheorem{fact}[theorem]{Fact}
\newtheorem{lemma}[theorem]{Lemma}

\newtheorem{proposition}[theorem]{Proposition}


\newtheorem*{gen-dif}{\fbox{{\large A}} \hypertarget{Agen-dif}{Gen-Dif}}





\newtheorem*{min-balln}{\fbox{{\large A}} \hypertarget{Amin-ball}{Cballs}}

\theoremstyle{definition}
\newtheorem{definition}[theorem]{Definition}
\newtheorem{example}[theorem]{Example}
\newtheorem{remark}[theorem]{Remark}

\newcommand{\CA}{\mathcal A}

\newcommand{\Nn}{{\mathbb{N}}}
\newcommand{\Qq}{{\mathbb{Q}}}

\newcommand{\Zz}{{\mathbb {Z}}}

\newcommand{\m}{\textbf{m}}
\newcommand{\bk}{\textbf{k}}


\newcommand{\CL}{{\mathcal L}}
\newcommand{\CK}{{\mathcal K}}
\newcommand{\CN}{{\mathcal N}}

\newcommand{\CR}{{\mathcal R}}
\newcommand{\CM}{{\mathcal M}}

\newcommand{\CC}{{\mathcal C}}

\newcommand{\CO}{{\mathcal O}}

\newcommand{\CF}{{\mathscr F}}
\newcommand{\CG}{{\mathcal G}}

\newcommand{\0}{\emptyset}

\newcommand{\rest}{\upharpoonright}

\renewcommand{\phi}{\varphi}

\newcommand{\ad}{\mathrm{Ad}}

\def\bm{\mathfrak m}

\def\qp{\mathbb Q_p}

\def\dpr{\mathrm{dp\text{-}rk}}
\def\sub{\subseteq}

\newenvironment{claimproof}[1][\proofname]
  {%
    \proof[#1]%
  }
  {%
    \endproof%
  }

\date{\today}
\oddsidemargin .8cm
\evensidemargin .8cm

\title{Semisimple groups  interpretable in various valued fields}
\author{Yatir Halevi}
\address{Faculty of Natural Sciences, Department of Mathematics\\ University of Haifa, Haifa, Israel}
\email{yatirh@gmail.com}

\author{Assaf Hasson}
\address{Department of Mathematics, Ben Gurion University of the Negev, Be'er-Sheva, Israel}
\email{hassonas@math.bgu.ac.il}

\author{Ya'acov Peterzil}
\address{Faculty of Natural Sciences, Department of Mathematics, University of Haifa, Haifa, Israel}
\email{kobi@math.haifa.ac.il}

\date{February 2025}

\oddsidemargin .8cm
\evensidemargin .8cm

\setcounter{tocdepth}{1}

\begin{document}

\thanks{The first author was partially supported by ISF grant No. 555/21 and 290/19. The second author was supported by ISF grant No. 555/21. The third author was supported by ISF grant No. 290/19.}

\begin{abstract}
     We study infinite groups interpretable in power bounded $T$-convex, $V$-minimal or $p$-adically closed fields. We show that if $G$ is an interpretable definably semisimple group (i.e.,  has no definable infinite normal abelian subgroups) then, up to a finite index subgroup,  it is definably isogenous 
to a group $G_1\times G_2$, where $G_1$ is a $K$-linear group and $G_2$ is a $\mathbf{k}$-linear group. The analysis is carried out by studying the interaction of $G$ with four distinguished sorts: the valued field $K$, the residue field $\mathbf{k}$, the value group $\Gamma$,  and the closed $0$-balls  $K/\mathcal{O}$.  
\end{abstract}

\maketitle

\tableofcontents

\section{Introduction}

We continue the study of groups interpretable in three classes of tame valued fields: $p$-adically closed fields (and their analytic expansions), power bounded $T$-convex expansions of o-minimal real closed fields, and $V$-minimal expansions of algebraically closed valued fields of equi-characteristic $0$.  

The tameness conditions in each of these classes have significant geometric implications on definable sets. For example, they imply a well behaved notion of dimension, generic differentiability of definable functions $f: K^n\to K$ with corresponding versions of Taylor's approximation theorem, and more (see, e.g., \cite{hensel-min}). For definable groups, expanding on Pillay's work in the o-minimal context \cite{Pi5} (and see also \cite{PilQp}), this gives rise to a rudimentary Lie theory (\cite{AcHa}). 

 \vspace{.2cm}

A group $G$ is  {\em interpretable} in a structure $\CK$ if its universe is the quotient of a definable set by a definable equivalence relation and multiplication is part of the induced structure. The powerful geometric tools described above are not directly available for the study of interpretable groups. 
Our general program aims, therefore, to exploit those tools (as well as tameness of the value group $\Gamma$,  and the residue field $\bk$) to give structure theorems for interpretable groups using groups that are better understood by virtue of being definable in a small collection of well studied sorts.    

In our previous works, \cite{HaHaPeGps} and \cite{HaHaPeVF}, we showed that any group $G$ interpretable in $\CK$ has "infinitesimal" type-definable subgroups definably isomorphic to groups that are (type)-definable in one of the four {\em distinguished} sorts: the valued field sort $K$, the value group, the residue field (when infinite) and the sort of closed $0$-balls $K/\CO$.  Our strategy here is to understand interpretable groups using these type-definable groups and their construction.

In \cite{HaHaPeVF} we used this analysis to describe all interpretable fields in those families of structures. 
Here we use it to study {\em definably semisimple groups}, namely groups which contain no infinite definable normal abelian subgroups. Our main theorem (Theorem \ref{T: main} below) is:

\begin{introtheorem}\label{T: intro}
    Let $\CK$ be either a power bounded $T$-convex field, a $V$-minimal field or a $p$-adically closed field. Let $G$ be an interpretable definably semisimple group in $\CK$. Then there exists a finite normal subgroup $N\trianglelefteq G$ and two normal subgroups $H_1,H_2\trianglelefteq G/N$, such that 
    \begin{enumerate}
        \item $H_1\cap H_2=\{e\}$, $H_1$ and $H_2$ centralize each other and $H_1\cdot H_2$ has finite index in $G/N$.
               \item $H_1$ is definably isomorphic to a subgroup of $\gl_n(\bk)$.
     \item $H_2$ is definably semisimple and definably isomorphic to a subgroup of $\gl_n(K)$.
    \end{enumerate}
\end{introtheorem}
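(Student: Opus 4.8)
The plan is to leverage the structural dichotomy from \cite{HaHaPeGps} and \cite{HaHaPeVF}: every interpretable group $G$ contains type-definable "infinitesimal" subgroups isomorphic to groups (type-)definable in the four distinguished sorts $K$, $\Gamma$, $\bk$, $K/\CO$. The first step is to rule out the "bad" sorts. Since $\Gamma$ is (roughly) o-minimal as an ordered abelian group, any infinitesimal subgroup definable there is abelian; similarly the $K/\CO$-sort produces abelian infinitesimals. I would argue that if $G$ is definably semisimple then the contributions from $\Gamma$ and $K/\CO$ must be absent (or become absorbed into a finite/abelian normal piece that we quotient out via $N$ and the passage to $G/N$), so that the infinitesimal subgroups all live in either the field sort $K$ or the residue field $\bk$. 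Concretely: one examines $\mr$-type minimal type-definable subgroups, shows each is abelian unless it comes from a semisimple linear group over $K$ or $\bk$, and uses definable semisimplicity to conclude there is no room for the $\Gamma$ or $K/\CO$ parts.

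**Building $H_1$ and $H_2$.**

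Next I would produce the linear representations. For the $\bk$-part: the residue field is pure (stably embedded, with the induced structure that of a pure field — algebraically closed, real closed, or pseudofinite, depending on the case), so a group definable over $\bk$ is a $\bk$-algebraic group, hence — being semisimple — linear, embedding into some $\gl_n(\bk)$. For the $K$-part: here one uses the generic differentiability / Lie-theoretic tools of \cite{AcHa} together with the field-interpretation results of \cite{HaHaPeVF}; a definably semisimple group with $K$-infinitesimals should be definably isogenous to the $K$-points of a linear algebraic group over $K$, giving the embedding into $\gl_n(K)$ and the definable semisimplicity of $H_2$. Let $H_1$ be generated by the $\bk$-infinitesimals (together with enough of $G/N$ to make it normal) and $H_2$ by the $K$-infinitesimals. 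One then checks $H_1 \cap H_2$ is finite — it would be a subgroup with infinitesimals in both sorts, impossible for the distinguished sorts — and after absorbing that intersection into $N$ we get $H_1 \cap H_2 = \{e\}$.

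**Commutation and finite index.**

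To see $H_1$ and $H_2$ centralize each other: the commutator subgroup $[H_1,H_2]$ would be a definable subgroup normalized by both, and one shows its infinitesimals would have to lie simultaneously in the $K$-sort and the $\bk$-sort (since it is contained in both $H_1$ and $H_2$ up to the relevant identifications), forcing it to be finite; being connected (or after a further finite quotient) it is trivial. For the finite index of $H_1 \cdot H_2$: the quotient $G/(N \cdot \widetilde{H_1 H_2})$ would be an interpretable definably semisimple group with no infinitesimal subgroups in any of the four distinguished sorts, which by the main structural theorem of \cite{HaHaPeGps} forces it to be finite. Assembling: take $N$ to absorb the relevant finite kernels, pass to $G/N$, and set $H_1, H_2$ as above.

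**Main obstacle.**

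The hard part will be the $K$-linear part $H_2$: upgrading "$G$ has $K$-infinitesimal subgroups" to "$G$ is (up to isogeny and finite index) the $K$-points of a $K$-linear algebraic group." In the o-minimal/definable setting this is Pillay's theorem plus the structure theory of semisimple groups, but in the interpretable setting one must transfer through the type-definable infinitesimals, control the Lie algebra via \cite{AcHa}, and patch a global linear representation out of local data — this is precisely where the valued-field-specific input (generic differentiability, Taylor approximation, the classification of interpretable fields from \cite{HaHaPeVF}) has to do real work, and where the three cases ($T$-convex, $V$-minimal, $p$-adic) may need slightly different arguments. A secondary subtlety is the bookkeeping of finite kernels: making a single finite \emph{normal} $N$ work for all the isogenies simultaneously, rather than a chain of finite-index/finite-kernel reductions.
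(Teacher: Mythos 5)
Your broad plan is the right one---reduce to the distinguished sorts, rule out $\Gamma$ and $K/\CO$, build linear representations over $\bk$ and $K$, and check the decomposition properties---and it matches the paper's skeleton. However, several of the steps as you describe them do not survive scrutiny, and one is outright wrong. First, your claim that ``a group definable over $\bk$ is a $\bk$-algebraic group, hence --- being semisimple --- linear'' is false in the $T$-convex case, where $\bk$ is an o-minimal expansion of a real closed field and carries strictly more structure than the pure field; definable groups there are not algebraic. The paper (Proposition \ref{P: k}) handles the $\bk$-part by two separate arguments: for $V$-minimal $\CK$ it uses algebraic group theory over the algebraically closed $\bk$ (Rosenlicht, Humphreys), while for power-bounded $T$-convex $\CK$ it uses the o-minimal adjoint representation $\ad_\bk:G\to\gl_n(\bk)$ and sets $H_1=\ker(\ad_\bk)$, $H_2=C_G(H_1)$; the centralization and finite-index claims then follow from a Lie-algebra argument over $\bk$, not from commutator considerations on infinitesimals.

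Second, the elimination of $\Gamma$ and $K/\CO$ is not a soft consequence of the infinitesimals being abelian: the content of Propositions \ref{P: Gamma} and \ref{P: K/O} is to upgrade the type-definable abelian infinitesimal subgroup to an actual \emph{definable} infinite normal abelian subgroup, which takes substantial work (in the $p$-adic case via definability of types over $\mathbb Z$ in Presburger, and in the $K/\CO$ case via the classification of definable subgroups of $(K/\CO)^n$), and you give no mechanism for that conversion. Third, for the $K$-part you overclaim: the paper does not realize $H_1$ as the $K$-points of a linear \emph{algebraic} group (and does not even claim it is definably semisimple---see Remark \ref{R: result is ss}); it simply restricts the adjoint map $\ad_K:G\to\gl_n(K)$ to the $K$-pure part, and the crux (Claim \ref{C: last claim}) is that $\ker(\ad_K)$ is finite. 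That step uses $K$-purity together with the dimension calculus inherited from the geometric sort $K$ (Corollary \ref{C:dugaldetal} plus Baldwin--Saxl) and definable semisimplicity; your proposal does not contain this argument. Finally, you do not address the elementarity issue: since $\CK^{\eq}$ need not eliminate $\exists^\infty$ (it fails in the $p$-adic case), definable semisimplicity cannot be assumed in a saturated elementary extension, which forces the paper's careful bookkeeping of parameters over the small model $\CK_0$ throughout.
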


It may be worth pointing out, with regard to the formulation of the above theorem, that in our setting,  definable semisimplicity is preserved under finite quotients (Corollary \ref{C: semisimple in quotients}). we make use of this several times in the proof of the theorem. 

We have been informed by J. Gismatullin,  I. Halupczok and D. Macpherson that in a recent unpublished work \cite{GisHalMac} they characterize simple groups \emph{definable}  in certain Henselian valued fields of characteristic $0$ (covering the classes of fields discussed in the present paper).  Their work seems to combine with the present one to characterize definably simple groups interpretable in our settings.

Our proof goes through a case by case reduction to one of the four distinguished sorts. This is based on \cite{HaHaPeGps}, where we showed that after modding out by a finite subgroup, $G$ is {\em locally strongly internal} to one of the distinguished sorts $D$, namely there exists 
 an infinite definable set $X\sub G$ and a definable injection $f:X\to D^k$, for some $k$.

The main obstacle is to eliminate the cases when  $D=\Gamma,K/\CO$. In Proposition \ref{P: Gamma} we show that if 
 $G$ is locally strongly internal to $\Gamma$ then it contains a definable normal finite index subgroup whose center is infinite, which prohibits $G$ from being definably semisimple. 
A more intricate result, Proposition \ref{P: K/O}, allows us to conclude that a definably semisimple group $G$ cannot be locally strongly internal to $K/\CO$. 

When $G$ is locally strongly internal to $K$ we use local differentiability of definable functions with respect to  $K$, and basic Lie theory over $K$, to associate to $G$ an adjoint representation over $K$. When $D=\bk$ we either use similar methods, in the $T$-convex case, or use the theory of groups of finite Morley rank, in the V-minimal case, to complete the proof.

Though the statement of Theorem \ref{T: intro} and some of the   auxiliary results  often hold in all settings regardless of whether $\CK$ is $p$-adically closed, power bounded $T$-convex or V-minimal, some of the proofs depend  on the specific context. E.g., o-minimality of the value group plays a crucial in our analysis of $\Gamma$-groups in the V-minimal and power bounded $T$-convex setting, and a rather different analysis --  albeit with a similar conclusion -- is needed for the $p$-adic case. 

\begin{remark}
We note that a-priori the notion of definable semisimplicity (more precisely,  the existence of an infinite definable normal abelian subgroup) need not be elementary.
Indeed, while the valued field sort in our settings is a geometric structure, so in particular has uniform finiteness (sometimes called ``elimination of $\exists^\infty$'') for definable families of subsets of $K^n$, 
the same  might not be true in $\CK^{eq}$.

Johnson, \cite{JohCminimalexist}, shows, in the V-minimal case, that $\CK^{eq}$ does eliminate $\exists^\infty$ and using his methods we show the same for power bounded $T$-convex structures (see Section \ref{P:exists-infty V-min or T-conv}). However, 
in the $p$-adically closed case this fails in $\CK^{eq}$, as neither  $\Gamma$  nor  $K/\CO$ have uniform finiteness. Nevertheless,  one of the consequences of the present work is that definable semisimplicity is indeed an elementary property in all cases.
\end{remark}

\begin{remark}
    In the power bounded $T$-convex  case, our work makes use of results from James Tyne's PhD thesis, \cite{tynephd}, which as far as we know, have not been published elsewhere. These results, together with the work of van den Dries, \cite{vdDries-Tconvex}, imply that every definable subset of $K$ is a boolean combination of balls and intervals (first proven by Holly, \cite{holly}, for real closed valued fields). In order to make the results available in print, we include in the appendix direct proofs.
\end{remark}

\vspace{.2cm}

\noindent{\bf Previous work} 
We note recent work on interpretable groups in $p$-adically closed fields, by Johnson, \cite{JohnTopQp}, also together with  Yao,  \cite{JohnYao}, \cite{JohnYaoAbelian},  and with Guerrero, \cite{JohnGue}.  Further work is needed in order to understand the relation between our methods and the model theoretic tools studied there,  such as definable compactness, finitely satisfiable generics (fsg), definable $f$-generics (dfg), etc.

\vspace{.2cm}

\noindent\emph{Acknowledgement} We would like to thank J. Gismatullin, I. Halupczok and D. Macpherson  for sharing with us their unpublished work on simple groups definable in certain henselian fields. We also thank D. Macpherson for several conversations and useful suggestions, and E. Sayag for directing us to some useful references.  Finally, we thank the referee for a careful reading of the paper and for noticing several errors which required fixing.

\section{Preliminaries and Notation}
We set up some notation and terminology, and review some of the basic facts concerning the main objects of interest in the present paper.  Throughout, structures are denoted by  calligraphic capital letters, $\CM$, $\CN$, $\CK$ etc., and their respective universes by the corresponding Latin letters, $M$, $N$ and $K$. 

Tuples from a structure $\CM$ are always assumed to be finite, and are denoted by small Roman characters $a,b,c,\dots$. We apply the standard model theoretic abuse of notation writing $a\in M$ for $a\in M^{|a|}$. Variables will be denoted $x,y,z,\dots$ with the same conventions as above. We do not distinguish notationally between tuples and variables belonging to different sort, unless some ambiguity can arise. Capital Roman letters $A,B,C,\ldots$ usually denote small subsets of parameters from $\CM$. As is standard in model theory, we write $Ab$ as a shorthand for $A\cup \{b\}$. In the context of definable groups we will, whenever confusion can arise, distinguish between, e.g., $Agh:=A\cup\{g,h\}$ and $A\,g\!\cdot \! h:=A\cup \{g\!\cdot\! h\}$. 

By a partial type we mean a consistent collection of formulas. Two partial types $\rho_1, \rho_2$ are equal, denoted $\rho_1=\rho_2$, if they are logically equivalent, i.e., if they have the same realizations in some sufficiently saturated elementary extension.

All the definable sets we shall consider here have finite dp-rank, whose properties  (such as sub-additivity, invariance under finite-to-finite correspondences, invariance under automorphisms etc.) we use freely. See the preliminaries sections of \cite{HaHaPeVF},\cite{HaHaPeGps} for a more detailed discussion.

\subsection{Valued fields}
Throughout $\CK$ denotes an expansion of a  valued  field of characteristic $0$ in a language $\CL$ expanding the language of valued rings. We assume $\CK$ to be  $(|\CL|+2^{\aleph_0})^+$-saturated.

Unless specifically written otherwise, we will always work in $\CK^{\eq}$.  
{\bf Henceforth, by ``definable'' we mean ``definable in $\CK^{\eq}$ using parameters'', unless specifically mentioned otherwise}. In particular, we shall not use ``interpretable'' anymore. A more detailed review of standard definitions and notation can be found in \cite[\S 2]{HaHaPeGps}.

For any valued field $(K,v)$, we let $\CO$ denote its valuation ring,  $\m$ its maximal ideal and $\bk:=\CO/\m$ the residue field. The value group is denoted $\Gamma$. In case of possible ambiguity, we may, for the sake of clarity, add a subscript (e.g., $\CO_K$) to the above notation.

A closed ball in $K$ is a set of the form $B_{\geq \gamma}(a):=\{x\in K: v(x-a)\geq \gamma\}$ and similarly $B_{>\gamma}(a)$ denotes the open ball of (valuative) radius $\gamma$ around $a$. We will use the fact that $v$ descends naturally to $K/\CO\setminus \{0\}$ (by $v(a+\CO):=v(a)$ for any $a\notin \CO$), and use the same notation $B_{>\gamma}(x)$ and $B_{\ge \gamma}(x)$ for $x\in K/\CO$ in the obvious way. We will, however, reserve \textbf{the term ``ball'' in $K/\CO$, when $\CK$ is $p$-adically closed, only to  such sets where $\gamma<\Zz$}.   For $a=(a_1,\dots,a_n)\in K$ (or in $(K/\CO)^n$) we set $v(a)=\min_i\{v(a_i)\}$. A ball in $K^n$ (or in $(K/\CO)^n$) is an $n$-fold  product of $K$-balls (or $(K/\CO)$-balls) of {\bf equal radii}. 

When $\CK$ is $p$-adically closed, it is elementarily equivalent to some finite  extension  $\mathbb{F}$ of $\mathbb{Q}_p$.  By saturation, we may assume that $(K,v)$ is an elementary extension of $(\mathbb{F},v)$.  Since its value group $\Gamma_{\mathbb{F}}$ is isomorphic to $\mathbb{Z}$, as ordered abelian groups,  we identify $\Gamma_{\mathbb{F}}$ with $\mathbb{Z}$ and view it as a prime (and minimal) model for $\Gamma$. We denote $\Zz_{Pres}$ the structure $(\Zz, +, <)$.

\subsection{The setting}
Unless otherwise stated, $\CK$ is a saturated expansion of a valued field of one of  three types
 (see \cite{HaHaPeGps} for definitions and more details):

\begin{itemize}
    \item A $V$-minimal expansion of an algebraically closed valued field of residue characteristic $0$. 
    \item A $T$-convex expansion of a real closed valued field, for an o-minimal power bounded theory $T$.
    \item A $p$-adically closed field.
\end{itemize}

\begin{remark}
    Our proof for the $p$-adically closed case works, as written, in the context of $P$-minimal 1-h-minimal fields with definable Skolem functions in the valued field sort. These include models of the theory of $\qp^{an}$, the expansion of $\qp$ (or a finite extension thereof) by all convergent power series $f: \CO^n\to \qp$ (any $n$). For the sake of clarity of exposition, we stick to the $p$-adically closed case. 
\end{remark}

 There are important similarities between the three settings. E.g.,  in all cases the structure $\CK$ is  dp-minimal, namely $\dpr(\CK)=1$, so definable sets in $\CK^{eq}$ have finite dp-rank. Also, in all cases the valued field sort is a geometric structure, carrying, moreover,  the structure of an SW-uniformity. The latter introduced (without the name) by Simon and Walsberg, \cite{SimWal}: 
 \begin{definition}
    A dp-minimal expansion of a topological group $G$ is {\em 
    an SW-uniformity} if it supports a definable  group topology, with no isolated points and such that every infinite definable subset has non-empty interior.    
 \end{definition}
 In \cite{SimWal} the underlying setting is that of a definable uniformity inducing the topology. The existence of such a uniformity is automatic in the context of topological groups with a definable basis for the topology.

There are, however,  also obvious differences between the three settings. For example, the residue field is stable in the $V$-minimal case, o-minimal in the $T$-convex case and finite in the $p$-adic case. 
Thus, while the main theorems can be stated uniformly in all settings, some of the  proofs will require us to specialize to the particular cases. 

\subsection{The distinguished sorts}
As in our previous work, the analysis of definable quotients is carried out via a reduction to four {\em distinguished sorts},  $K, \Gamma, \bk$ and $K/\CO$. They are all dp-minimal, except the finite $\bk$ in the $p$-adic case. Note that in all cases the sorts $K$, $\Gamma$ and $K/\CO$ are partially ordered and therefore unstable. However, the residue field sort is unstable only in the $T$-convex case (in the $V$-minimal case it is a pure algebraically closed field, and in the $p$-adic case it is finite). Thus, when proofs mention the  ``unstable sorts'' they refer to the distinguished sorts in all three cases except for $\bk$ in the $V$-minimal and $p$-adically closed settings.

As noted above, in all settings the sort $K$ is an SW-uniformity, as is $\Gamma$ in the $V$-minimal and $T$-convex cases (it is in fact an ordered vector spaces so o-minimal) and $K/\CO$ in the $T$-convex setting (it is weakly o-minimal). However, in all cases $K/\CO$ is neither  a geometric structure ($\acl(\cdot)$ in $K/\CO$ does not satisfy the Steinitz Exchange Principle) nor is it  stably embedded, leading to certain complications in some  proofs.

\begin{remark}\label{R: assuming named F}
In \cite[\S3]{HaHaPeGps} we study the structure of $K/\CO$ in $p$-adically closed fields. In this context, it was helpful to work in a saturated model, expanding the language by constants for all  elements of (a copy of) $\mathbb{F}$.

Although the saturation assumption on $\CK$ plays an important role in many of our proofs here, the main theorems  of the present paper do not assume saturation. Thus,  a copy of $\mathbb F$ cannot be expected to exist in all our models (let alone be named). Whenever needed, as part of the proof,  we bridge this gap in the assumptions.
\end{remark}

\subsection{Some specialized terminology} 
 We remind some terminology from \cite{HaHaPeGps} that is used throughout the paper: 

Assume that  $S$ is definable in $\CK$ and $D$ is one of the distinguished sorts. We say that $S$ is \emph{locally almost strongly internal to $D$} if {\bf in a sufficiently saturated elementary extension}
  there is a definable infinite set $X\sub S$ and a definable $m$-to-one map $f: X\to D^n$, for some $m,n\in \Nn$. The set $X$ is then called \emph{almost strongly internal to $D$}. If we can find a definable injection $f:X\to D^n$ then $S$ is \emph{locally strongly internal to $D$} and $X$ is \emph{strongly internal to $D$}.  We add ``over $A$'' to all the notions above if $S,X$ and the map $f$ are defined over a parameter set $A$.

  The starting point of our analysis is the following (\cite[Lemma 7.3, Lemma 7.6, Lemma 7.10]{HaHaPeGps}):
  \begin{fact}\label{F: Reduction to sorts}
   Every definable infinite set $S$ in $\CK$ is locally almost strongly internal to $K$, $\bk$, $\Gamma$ or $K/\CO$.    
  \end{fact}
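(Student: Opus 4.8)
The plan is to reduce, by elimination of imaginaries, to a question about a single ``generic coordinate'' of an element of $S$, and then to analyse each of the basic geometric sorts by peeling off first a $\Gamma$-layer and afterwards residue-field and $K/\CO$-layers.

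First, fix a finite set $A$ over which $S$ is definable and pass to a sufficiently saturated extension. As $S$ is infinite, choose $s\in S$ with $\dpr(s/A)\ge 1$. In each of our settings one has the standard elimination of imaginaries of $\CK^{\eq}$ down to the valued field sort together with the geometric sorts coding lattices and sub-balls, so $s$ is interdefinable over $A$ with a finite tuple $e=(e_1,\dots,e_r)$ each of whose entries lies in $K$ or in one of these geometric sorts. Taking $r$ minimal forces $e_i\notin\acl(Ae_{<i})$ for every $i$, in particular $e_r\notin\acl(Ae_{<r})$. Put $A'=Ae_{<r}$ and let $X=\{\,s'\in S:\ e(s')\ \text{is defined and}\ e_i(s')=e_i\ \text{for}\ i<r\,\}$, an $A'$-definable subset of $S$ containing $s$. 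Then $X$ is infinite since $\dpr(s/A')\ge\dpr(e_r/A')\ge 1$, and $s'\mapsto e_r(s')$ is injective on $X$ because $s'$ is recovered over $A'$ from $(e_{<r},e_r(s'))$. Hence it suffices to treat the case where $S$ embeds definably into a single geometric sort; and if that sort is $K$ we are done.

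Next, let $X$ be an infinite definable subset of a lattice or ball sort. Apply the ``radius'' map $\rho$ sending an element to the tuple of valuative radii of the balls involved (for a lattice: its invariant factors relative to $\CO^n$, a tuple in $\Gamma^n$). If $\rho$ is finite-to-one on some infinite definable $X'\subseteq X$, we are done with $D=\Gamma$. Otherwise $\rho$ has an infinite fibre $X_{\bar\gamma}$ over a tuple $\bar\gamma$, which we now name. The elements of $X_{\bar\gamma}$ have ``fixed radius'', so $X_{\bar\gamma}$ embeds definably into a definable abelian group of the form $V/W$ with $V\subseteq K^N$ and $W$ a definable subgroup commensurable with $\CO^N$. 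Such a quotient carries a finite definable filtration whose successive quotients are powers of $K/\CO$ and of $\bk$: in the $V$-minimal and $T$-convex cases this uses residue characteristic $0$ (each $\CO/B_{\geq\gamma}(0)$ being a finite iterated extension of copies of $\bk$), and in the $p$-adic case it uses the discreteness of $\Gamma\cong\Zz$ (respecting the convention that a ``ball in $K/\CO$'' there has radius below $\Zz$). Chasing $X_{\bar\gamma}$ through this finite tower, at some stage the projection onto a successive quotient is finite-to-one on an infinite definable subset of $X_{\bar\gamma}$, and that quotient is a power of $K/\CO$ or of $\bk$, giving the desired $D$.

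The step I expect to be the main obstacle is the last one. One has to verify that the lattice and ball sorts --- not merely $1$-dimensional balls --- really do decompose, once the $\Gamma$-part has been named, into finite definable towers assembled from $K/\CO$ and $\bk$; and, more delicately, that being ``assembled from a finite tower of the distinguished sorts'' upgrades to being ``locally almost strongly internal to one of them'', i.e.\ that somewhere along the tower a single projection is finite-to-one on an infinite definable set rather than merely that the whole object is built from the sorts. The $p$-adic case requires particular care, since there the reduction of a ``fixed-radius'' object to $K/\CO$-data is not uniform in the radius and must be carried out only after the radius has been fixed, and since one must stay within the paper's convention on balls in $K/\CO$.
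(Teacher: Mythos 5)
Your proposal rests on two things that are not available at the level of generality the paper needs, and you flag the second of them yourself.

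First, the opening reduction assumes that $\CK^{\eq}$ eliminates imaginaries down to the valued-field sort together with the lattice and sub-ball sorts, uniformly across all three settings. This is a theorem for pure ACVF (Haskell--Hrushovski--Macpherson) and for $p$-adically closed fields (Hrushovski--Martin--Rideau), but the paper works with \emph{expansions}: $V$-minimal expansions of ACVF and power-bounded $T$-convex expansions of o-minimal real closed fields. Elimination of imaginaries is not preserved under expansions, and to my knowledge it is not established in either of these classes; the paper nowhere claims or uses it. The fact you are asked to prove is cited from three separate lemmas of \cite{HaHaPeGps} --- one per setting --- which strongly suggests that the actual proofs are setting-specific fibration arguments over $K^n$-families of equivalence classes, rather than a uniform reduction via EI. So the starting point of your argument is a substantial unproven hypothesis.

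Second, even granting a reduction to an infinite definable subset of a single lattice or ball sort, the decisive step --- that the fibres of the radius/invariant-factor map sit inside abelian quotients $V/W$ admitting a finite definable filtration by powers of $K/\CO$ and $\bk$ --- is not established, and it is not obviously true. A fibre of the invariant-factor map on the lattice sort $\gl_n(K)/\gl_n(\CO)$ over a fixed tuple $\bar\gamma$ is a double coset space $\gl_n(\CO)\,g\,\gl_n(\CO)/\gl_n(\CO)$, which for $n\geq 2$ is a partial flag-variety-like object over a ring of the form $\CO/B_{\geq\gamma}(0)$ rather than an abelian group quotient; one would first have to stratify such objects further before anything like your $V/W$ description applies. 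Since you explicitly identify this step as ``the main obstacle'' and leave it unresolved, the proposal as written has a genuine gap precisely at the point where the work has to be done.
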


A {\em $D$-critical subset of $S$} is a definable $X\sub S$ of maximal dp-rank that is strongly internal to $D$.  The \emph{$D$-rank\footnote{In \cite{HaHaPeGps} this was called the $D$-critical rank of $S$.}} of $S$ is the dp-rank of any $D$-critical $X\sub S$. The \emph{almost $D$-rank} of $S$ is the maximal dp-rank of a definable set $X\sub S$ almost strongly internal to $D$. A set $X\sub S$ is \emph{almost $D$-critical} if $\dpr(X)$ is the almost $D$-rank of $S$, and the size of the fibers of some function witnessing almost strong internality of $X$ is minimal possible, among all sets of the same dp-rank.



The set  $S$ is \emph{$D$-pure}  if it is locally almost strongly internal to $D$ but not to any other distinguished sort.  

\begin{definition}
    Let $X$ be an $A$-definable set in $\CK$, $a\in X$ and $B\supseteq A$ a set of parameters.
    \begin{enumerate}
        \item The point $a$ is $B$-\emph{generic}  in $X$ (or, {\em generic in $X$ over $B$}) if $\dpr(a/B)=\dpr(X)$.
        \item For an  $A$-generic  $a\in X$, a set $U\sub X$ is \emph{a $B$-generic vicinity of $a$ in $X$} if $a\in U$, $U$ is $B$-definable, and $\dpr(a/B)=\dpr(X)$ (in particular, $\dpr(U)=\dpr(X)$).
    \end{enumerate}
\end{definition}

In order to overcome the failure of additivity of dp-rank, we introduced in \cite{HaHaPeGps} the notion of a $D$-group. In the present paper this notion can be used as a black box allowing us to seamlessly refer to results from \cite{HaHaPeGps}. However, for the sake of completeness, we give the definition:   For $D$ one of the unstable distinguished sorts, an $A$-definable group $G$ is a \emph{ $D$-group} if it is locally strongly internal to $D$ and for every $X_1, X_2\subseteq G$  strongly internal to $D$, with $X_2$ $D$-critical in $G$,  both defined  over some $B\supseteq A$, and for every $(g,h)$ $B$-generic in $X_1\times X_2$, we have \[\dpr(g/B,g\cdot h)=\dpr(g/B).\]

We stress that, by definition, the notion of a $D$-group refers only to unstable $D$, namely all infinite sorts in our setting except $\bk$ in the $V$-minimal case. 
 The following fact shows that a group $G$  almost strongly internal to an unstable sort $D$ is close to being a $D$-group.
 
 \begin{fact}\cite[Fact 4.25, Proposition 4.35]{HaHaPeGps}\label{F: existence of finite normla to get D-group}
     Let $G$ be an infinite $A$-definable group in $\CK$ locally almost strongly internal to an unstable distinguished sort $D$. Then there is an $A$-definable finite  normal abelian subgroup $H\trianglelefteq G$ such that $G/H$ is a $D$-group. Moreover, 
     \begin{enumerate}
         \item The almost $D$-rank and the $D$-rank of $G/H$ are equal (and equal to the almost $D$-rank of $G$).
         \item $H$ is invariant under any definable automorphism of $G$ and is contained in any definable finite index subgroup of $G$. 
     \end{enumerate}
 \end{fact}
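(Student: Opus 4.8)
The plan is to produce $H$ from a single extremal choice and to verify the required properties in two stages. Fix an $A$-definable $X\sub G$ that is \emph{almost $D$-critical}: its dp-rank $\gamma$ equals the almost $D$-rank of $G$, and it is witnessed by a definable $m$-to-one map $f\colon X\to D^{n}$ with $m$ minimal possible among all subsets of $G$ of dp-rank $\gamma$. Note that $\gamma$, $m$, and the notions of $D$-critical and strongly internal set depend on $G$ alone (they use $A$ only through which sets are available); this will be the source of the canonicity of $H$.

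\emph{Stage 1 (from almost strongly internal to genuinely strongly internal).} The $m$-to-one map $f$ induces on $X$ a finite equivalence relation $E_f$ (``same $f$-value'') with classes of size $m$. First I would show that, after quotienting by a finite normal $A$-definable subgroup $H_{1}$, the set $X$ can be taken genuinely strongly internal to $D$ with the same dp-rank $\gamma$, so that the $D$-rank of $G/H_{1}$ equals the almost $D$-rank of $G$. The mechanism is to translate $E_f$ inside $G$ and read off, from generic points, the family $\{x^{-1}x' : f(x)=f(x')\}$; the extremal choice of $(\gamma,m)$ should force this to generate a single finite normal subgroup $H_{1}$ such that, up to dp-rank $<\gamma$, $E_f$ is the restriction to $X$ of the coset relation of $H_{1}$. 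Indeed, if it were not a subgroup, or were infinite, or were not normal, one could either decrease $m$ or produce a definable set of dp-rank $>\gamma$ strongly internal to $D$ — in both cases a contradiction. Passing to $G/H_{1}$, the induced map on $X/H_{1}$ is injective on a set of full dp-rank $\gamma$.

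\emph{Stage 2 (achieving the $D$-group property).} Replacing $G$ by $G/H_{1}$, we may assume $G$ is locally strongly internal to $D$ of $D$-rank $\gamma$. The obstruction to being a $D$-group sits in the multiplication map $\mu\colon X_{1}\times X_{2}\to G$, $(g,h)\mapsto gh$, for $X_{1}$ strongly internal and $X_{2}$ $D$-critical: the defining inequality fails exactly when $\dpr(X_{1}\cap gX_{2}^{-1})$ is not full, i.e., when $\mu$ has a ``hidden fibre'' beyond what sub-additivity of dp-rank forces. Such a defect is governed by a definable subgroup $S\leq G$, essentially a translate of a generic fibre of $\mu$, whose cosets meet $X_{1}$ and $X_{2}$ in full dp-rank; hence $S$ is itself strongly internal to $D$, and then $D$-criticality of $X_{2}$ together with maximality of $\gamma$ force $S$ to be \emph{finite}. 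One replaces $S$ by an $A$-definable finite normal subgroup (intersecting the finitely many relevant conjugates), quotients again, and checks that a suitable ordinal measure built from dp-ranks and fibre sizes strictly decreases; the process terminates, producing a finite normal $A$-definable $H\trianglelefteq G$ with $G/H$ a $D$-group and with $D$-rank unchanged — which is exactly (1).

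\emph{Canonicity, abelianity, and the main obstacle.} For the ``moreover'', every datum used to pin down $H$ — the almost $D$-rank $\gamma$, minimality of fibre sizes, $D$-criticality — is preserved by any definable automorphism of $G$, so such an automorphism must carry $H$ to the subgroup built from the same data, namely to $H$ itself; similarly, any definable finite-index subgroup $G'\leq G$ has the same almost $D$-rank and the same critical sets up to finite index, so re-running the construction inside $G'$ returns the same $H$, whence $H\leq G'$. Abelianity then comes for free: $C_{G}(H)=\bigcap_{h\in H}C_{G}(h)$ is definable of finite index (as $\aut(H)$ is finite), so $H\leq C_{G}(H)$ by the containment just proved, i.e.\ $H$ is abelian. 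The hard part throughout is the \emph{finiteness and normality} of the subgroup extracted from the fibres (of $f$ in Stage 1, of $\mu$ in Stage 2): because dp-rank is merely sub-additive one cannot read off a ``fibre dimension'' directly, and one must exploit the extremal choice of $X$ to exclude a positive-dimensional or non-normal defect group — it is precisely to encapsulate this difficulty that the notion of a $D$-group was introduced.
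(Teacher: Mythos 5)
This Fact is imported from \cite{HaHaPeGps} and is not proved in the present paper, so there is no in-text proof to compare against; I can therefore only assess your sketch on its own merits. The overall shape you propose — extract a finite ``defect'' normal subgroup from an almost-$D$-critical set using an extremal choice of $(\gamma,m)$, quotient, then iterate — is the right instinct, and your closing observations are genuinely correct: the abelianity of $H$ does come for free from the ``contained in every definable finite-index subgroup'' clause via $C_G(H)=\bigcap_{h\in H}C_G(h)$, and the invariance under definable automorphisms is a reasonable consequence of building $H$ from data intrinsic to $G$.

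The problem is that at both of the load-bearing steps you assert the existence of a subgroup rather than produce one. In Stage 1 you claim that the sets $x^{-1}f^{-1}(f(x))$, for generic $x$, stabilize and ``generate a single finite normal subgroup $H_1$,'' and that otherwise one ``could decrease $m$ or produce a set of dp-rank $>\gamma$.'' Neither implication is argued: a priori the finite fibre pattern depends on $x$, there is no reason for it to be closed under the group operation, and ``generated'' subgroups of a group have no reason to stay finite. The actual argument in \cite{HaHaPeGps} (around Prop.~4.35 and its preparatory lemmas) has to work quite hard — using the vicinic axioms, the filter-base property of generic vicinities, and translates of the critical set — precisely to establish that the fibres cohere into cosets of a fixed finite normal subgroup; your one-sentence dichotomy does not substitute for that. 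In Stage 2 the gap is worse: you posit ``a definable subgroup $S\leq G$, essentially a translate of a generic fibre of $\mu$,'' as the obstruction to the $D$-group inequality, and then deduce that $S$ is strongly internal and finite. No construction of $S$ is given; the failure of $\dpr(g/B,g\cdot h)=\dpr(g/B)$ is a statement about dp-rank collapse in a fibre of the multiplication map, and extracting a \emph{group} from that collapse is exactly the kind of thing that requires a dedicated argument (and is the content of Fact 4.25 in \cite{HaHaPeGps}, not a byproduct of Stage 1). The proposed termination measure (``a suitable ordinal measure built from dp-ranks and fibre sizes'') is also left unspecified. In short, you have correctly identified \emph{where} the difficulty lies — the finiteness, normality and coherence of the extracted subgroup — but the sketch resolves none of it, and cannot be regarded as a proof.
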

 
Recall that every definable group in $\CK$ is almost locally strongly internal to one of  the distinguished sorts, hence the above fact applies whenever that sort is unstable.

 \subsection{Vicinities and  infinitesimal subgroups}\label{ss:infint and vicin}

 In this section we recall the notion of a vicinic set and that of an infinitesimal group from \cite{HaHaPeGps}. Before proceeding, we  clarify the relation between several  $\acl$-related notions of dimension.

 \begin{definition}
For $D$ a definable set, a parameter set $A$, and $a\in D^n$, denote:
\begin{enumerate}
    \item $\dim_{\acl}(a/A)$ the minimal length of a sub-tuple $a'\sub a$ such that $\acl(a'A)=\acl(aA)$ and 
    \item $\dim_{\mathrm{ind}}(a/A)$  the maximal size of a sub-tuple $a'\sub a$ which is $\acl$-independent over $A$  (namely, no $a_i\in a'$ is in $\acl(A\cup a'\setminus \{a_i\})$).
\end{enumerate}
 \end{definition}

If $\acl$ satisfies Exchange on $D$ it is well known and easy to see that $\dim_{\acl}=\dim_{
  \mathrm{ind}}$. In general, we only have  $\dim_{\mathrm{ind}}(a/A)\geq \dim_{\acl}(a/A)$. In our setting, however, more is true: 

\begin{lemma}
    For $D$ a dp-minimal definable set, the following are equivalent:
    \begin{enumerate}
        \item For every tuple $a\in D^n$ and set $A$, $\dim_{\acl}(a/A)=\dpr(a/A)$.
        \item For every tuple $a\in D^n$ and set $A$, $\dim_{\mathrm{ind}}(a/A)=\dpr(a/A)$.
    \end{enumerate}
\end{lemma}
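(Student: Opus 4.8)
The statement is an equivalence between two formulations of a ``dp-rank equals $\acl$-dimension'' principle for a dp-minimal definable set $D$. One direction is immediate from the trivial inequality $\dim_{\mathrm{ind}}(a/A)\geq\dim_{\acl}(a/A)$ together with the general submultiplicativity of dp-rank; the content is the other direction. The plan is to prove $(1)\Rightarrow(2)$ and $(2)\Rightarrow(1)$ separately, with the real work being a decomposition argument that lets one pass between a tuple and a maximal $\acl$-independent subtuple without changing dp-rank.

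For $(2)\Rightarrow(1)$: given $a\in D^n$ and $A$, pick a maximal $\acl$-independent subtuple $a'\subseteq a$, so $\dim_{\mathrm{ind}}(a/A)=|a'|=\dim_{\mathrm{ind}}(a'/A)$; by maximality every coordinate of $a$ lies in $\acl(Aa')$, hence $\acl(Aa')=\acl(Aa)$ and therefore $\dpr(a/A)=\dpr(a'/A)$ (dp-rank is invariant under interalgebraicity over the base). Also $a'$ being $\acl$-independent over $A$ with $|a'|$ coordinates each in $D$ gives, coordinate by coordinate and using subadditivity of dp-rank together with the fact that a single element of $D$ has dp-rank $1$ over any set over which it is not algebraic, that $\dpr(a'/A)\geq |a'|$; combined with $(2)$ this forces $\dpr(a'/A)=|a'|$, hence $\dim_{\acl}(a/A)\leq |a'|=\dim_{\mathrm{ind}}(a/A)=\dpr(a/A)$. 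Conversely $\dim_{\acl}(a/A)\geq\dpr(a/A)$ follows because if $a''\subseteq a$ is a minimal subtuple with $\acl(Aa'')=\acl(Aa)$ then $\dpr(a/A)=\dpr(a''/A)\leq |a''|=\dim_{\acl}(a/A)$. This gives $\dim_{\acl}(a/A)=\dpr(a/A)$, i.e. $(1)$.

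For $(1)\Rightarrow(2)$: here I would use $(1)$ to control dp-rank along an $\acl$-independent subtuple. Take a maximal $\acl$-independent $a'\subseteq a$ over $A$. As above $\acl(Aa')=\acl(Aa)$, so $\dpr(a/A)=\dpr(a'/A)=\dim_{\acl}(a'/A)$ by $(1)$; but $a'$ is $\acl$-independent over $A$, so no proper subtuple of $a'$ has the same algebraic closure, giving $\dim_{\acl}(a'/A)=|a'|=\dim_{\mathrm{ind}}(a/A)$. Hence $\dpr(a/A)=\dim_{\mathrm{ind}}(a/A)$, which is $(2)$.

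\textbf{Main obstacle.} The crux in both directions is the clean fact that $\dpr(a/A)$ depends only on $\acl(Aa)$ — this is what lets one replace $a$ by a maximal (or minimal) subtuple freely — together with the elementary but essential inequality $\dpr(a'/A)\geq |a'|$ for an $\acl$-independent tuple $a'$ from $D$, which uses that over a set over which an element of $D$ is not algebraic its dp-rank is exactly $1$ (dp-minimality of $D$) and subadditivity of dp-rank under concatenation. Assembling these correctly, and being careful that ``$\acl$-independent'' is used in the form ``no coordinate is in the algebraic closure of the base together with the others,'' is the only delicate point; there are no deep ingredients, so I expect this to be a short proof once the bookkeeping of subtuples is set up.
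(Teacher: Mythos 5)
Your $(1)\Rightarrow(2)$ argument is correct and essentially identical to the paper's: take a maximal $\acl$-independent subtuple $a'\subseteq a$, use $\acl(Aa')=\acl(Aa)$ to replace $a$ by $a'$, apply $(1)$ to $a'$ (whose $\dim_{\acl}$ is its length), and conclude. The only cosmetic difference is that you pass through interalgebraicity-invariance of dp-rank where the paper uses monotonicity $\dpr(a/A)\geq\dpr(a'/A)$ plus the standing chain $\dpr\leq\dim_{\acl}\leq\dim_{\mathrm{ind}}$; both are fine.

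Your $(2)\Rightarrow(1)$ reaches the right conclusion, but it contains a spurious step that is in fact false as stated. You claim that $\acl$-independence of $a'$ over $A$, ``coordinate by coordinate and using subadditivity of dp-rank together with the fact that a single element of $D$ has dp-rank $1$ over any set over which it is not algebraic,'' yields $\dpr(a'/A)\geq|a'|$. Subadditivity gives the \emph{opposite} bound $\dpr(a'/A)\leq|a'|$; there is no general superadditivity of dp-rank, and the inequality $\dpr(a'/A)\geq|a'|$ for an $\acl$-independent tuple is precisely what can fail when $\acl$ does not satisfy exchange (which is exactly the regime this lemma is designed to handle). It is the content of axiom (A1), not an unconditional fact. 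Fortunately the step is a red herring: you never need to establish $\dpr(a'/A)\geq|a'|$ separately, because once you have $\dim_{\acl}(a/A)\leq|a'|$ (from $a'$ witnessing the definition of $\dim_{\acl}$) and $|a'|=\dim_{\mathrm{ind}}(a/A)=\dpr(a/A)$ (the last equality is $(2)$), you already have $\dim_{\acl}(a/A)\leq\dpr(a/A)$, which together with the always-true reverse inequality gives $(1)$. Strike the subadditivity claim and your argument is sound. The paper's proof of this direction is even shorter: it simply observes that the chain $\dpr(a/A)\leq\dim_{\acl}(a/A)\leq\dim_{\mathrm{ind}}(a/A)$ holds unconditionally, and $(2)$ collapses it to equality; no decomposition into subtuples is needed, and it is precisely the detour through $a'$ that led you to reach for a nonexistent superadditivity.
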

\begin{proof}
    By dp-minimality and sub-additivity of dp-rank  $\dpr(a/A)\leq \dim_{\acl}(a/A)$, proving  $(2)\Rightarrow (1)$. For the other direction,  assume $(1)$.

Let $a'\sub a$ be $\acl$-independent over $A$ of maximal length $d$, namely, $d=\dim_{\mathrm{ind}}(a/A)$. Since $a'$ is $\acl$-independent over $A$, $\dim_{\acl}(a'/A)=d$, which by assumption equals $\dpr(a'/A)$. Thus, $\dpr(a/A)\geq \dpr(a'/A)=d=\dim_{\mathrm{ind}}(a/A)$, and equality of $\dpr$ and $\dim_{\mathrm{ind}}$ follows.
\end{proof}

\begin{remark}

In \cite{HaHaPeGps}, we used a slightly different definition of $\dim_{\acl}$, that we assumed throughout, to be equal to $\dpr$. It follows immediately from the lemma  that under this  assumption this notion of dimension is also equal to $\dim_{\acl}$ as defined here (and thus also to $\dim_{\mathrm{ind}}$).
\end{remark}





We recall the following from \cite{HaHaPeGps}:

\begin{definition}
    A dp-minimal set $D$ is \emph{vicinic} if it satisfies the following axioms: 
\begin{enumerate}
    \item[(A1)] $\dim_{\acl}=\dpr$; i.e. for any tuple $a\in D^n$ and set $A$, $\dim_{\acl}(a/A)=\dpr(a/A)$.
    \item[(A2)] For any sets of parameters  $A$ and $B$,  for every $A$-generic elements $b\in D^n$, $c\in D^m$ and  any $B$-generic vicinity $X$  of $b$ in $D^n$,  there exists $C\supseteq A$ and a $C$-generic vicinity of $b$ in $X$ such that $\dpr(b,c/A)=\dpr(b,c/C)$.
\end{enumerate}
\end{definition}

By \cite[Fact 4.7]{HaHaPeGps}, all the unstable distinguished sorts in our settings are vicinic.
Throughout this subsection, unless specifically stated otherwise, we let $D$ be one of them. Given a definable $D$-group $G$ in $\CK$ the main technical result of \cite{HaHaPeGps} is the construction of the infinitesimal type-definable subgroup $\nu_D$. To achieve this, we introduce the notion of $D$-sets (in $G$). For completeness, we remind the somewhat technical definition. Note, however, that we do not give the original definition, we switch the original formulation  of ``minimal fibers'' with an equivalent one, see \cite[Remark 4.12]{HaHaPeGps}.  The fine details of the definition are unimportant for us here: 

\begin{definition}\cite[Definition 4.16]{HaHaPeGps}
    A definable set $X\subseteq G$ is a \emph{$D$-set over $A$ in $G$} if it is $D$-critical in $G$, witnessed by some $A$-definable function $f:X\to D^m$ and there exists a coordinate projection $\pi:f(X)\to D^n$, with $n=\dpr(X)$, such that for every $B\supseteq A$ and $B$-generic $a\in f(X)$, all elements of $\pi^{-1}(\pi(f(a)))$ have the same type over $B\pi(f(a))$.
\end{definition}

\begin{remark}\label{R: D-sets}
\begin{enumerate}
    \item If $G$ is a definable group locally strongly internal to $D$ then it always contains a $D$-set. See \cite[Remark 4.18]{HaHaPeGps}.
    \item Note the following special case: if $X$ is $D$-critical, $f: X\to D^n$ a definable injection witnessing it, and \underline{$n=\dpr(X)$} then $X$ is a $D$-set.  As we shall see, such an $X$ can always be found when $G$ is locally strongly internal to $D$. If $D$ is an SW-uniformity this follows from \cite[Proposition 4.6]{SimWal} and in the $p$-adically closed case this follows from Proposition \ref{P:local homeo K/O} when $D=K/\CO$ and cell decomposition when $D=\Gamma$. See Lemma \ref{L: Dsets} for more information.    
\end{enumerate}

\end{remark}

\begin{definition} 
Let $G$ be a $D$-group,  $Z\sub G$ a $D$-set over $A$ and $d\in Z$ an $A$-generic point.
The {\em infinitesimal vicinity of $d$ in $Z$},  denoted $\nu_Z(d)$, is the partial type consisting of all $B$-generic vicinities of $d$ in $Z$, as $B$ varies over all small parameter subsets of $\CK$.
\end{definition}

By \cite[Lemma 4.20]{HaHaPeGps}, the type $\nu_Z(d)$ is a filter-base, namely the intersection of any two generic vicinities of $d$ contains another. It follows that $\dpr(\nu_Z(d))$ equals the $D$-rank of $G$.

We think of $\nu_Z(d)$ (and the type definable group $\nu_D$ defined below) both as a collection of formulas over $\CK$ and a set of realization of the partial type in some monster model extending $\CK$. We say that two such types are equal if they are logically equivalent. For a definable set $X$ we denote $\nu_Z(d)\vdash X$  if there is $Y\in \nu_Z(d)$ such that $Y\sub X$. By writing $\nu_Z(d)\vdash \nu_W(d')$ we mean that for all $X\in \nu_W(d')$ we have $\nu_Z(d)\vdash X$.

\begin{fact}\cite[Proposition 5.8]{HaHaPeGps}\label{F: properties of nu}
Let $D$ be an unstable distinguished sort and  let $G$ be a $D$-group.
\begin{enumerate}
\item Assume that $X\subseteq G$ is a $D$-set over $A$, then for every $A$-generic $a,b\in X$ the set $\nu_X(a)a^{-1}$ is a (type-definable) subgroup of $G$ and  
$\nu_X(a)a^{-1}=\nu_X(b)b^{-1}=a^{-1}\nu_X(a)$. We denote this group $\nu_X$.

\item If $X,Y\sub G$ are $D$-sets over $A$ then $\nu_X=\nu_Y$, and we can call it $\nu_D(G)$, \emph{the infinitesimal type-definable subgroup of $G$ with respect to $D$}.
\item For every $g\in G(\CK)$, we have $g\nu_D(G) g^{-1}=\nu_D(G)$. In fact, $\nu_D$ is invariant under any $\CM$-definable automorphism of $G$. 
\end{enumerate}
\end{fact}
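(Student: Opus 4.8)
The plan is to verify the three claims of Fact~\ref{F: properties of nu} by unwinding the construction of $\nu_Z(d)$ and exploiting that it is a filter-base. Fix a $D$-set $X\subseteq G$ over $A$ and let $a,b\in X$ be $A$-generic. For (1), I would first show $\nu_X(a)\,a^{-1}$ is closed under multiplication: given generic vicinities $U,V$ of $a$, one needs a generic vicinity $W$ of $a$ with $W\,a^{-1}\cdot W\,a^{-1}\subseteq U\,a^{-1}\cdot V\,a^{-1}$, equivalently $W\cdot a^{-1}\cdot W\subseteq U\cdot a^{-1}\cdot V$. The key input is vicinicity axiom (A2) together with the $D$-group hypothesis $\dpr(g/B,g\cdot h)=\dpr(g/B)$: picking $(g,h)$ generic in $X\times X$ over a suitable base $B\supseteq A$ chosen via (A2) so that genericity is preserved after passing to a smaller generic vicinity of $a$, the product $g\cdot a^{-1}\cdot h$ stays generic in a set of the right dp-rank, and one can then absorb the translate into a single generic vicinity. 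Closure under inverse follows by a symmetric argument (or from the filter-base property plus the fact that $x\mapsto x^{-1}$ is a definable bijection $X\to X^{-1}$ preserving dp-rank and genericity). The identities $\nu_X(a)\,a^{-1}=a^{-1}\nu_X(a)$ and $\nu_X(a)\,a^{-1}=\nu_X(b)\,b^{-1}$ come from the same machinery: both $\{xa^{-1}:x\in \nu_X(a)\}$ and $\{a^{-1}x:x\in\nu_X(a)\}$ are type-definable, and using (A2) one shows each entails the other; similarly, left-translating $X$ by $ba^{-1}$ is a definable bijection taking the $D$-set structure near $a$ to that near $b$ (it preserves $D$-criticality since it is a definable injection), so $\nu_X(a)a^{-1}=\nu_X(b)b^{-1}$.

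For (2), given two $D$-sets $X,Y$ over $A$ with generic points $a\in X$, $b\in Y$, the claim is $\nu_X=\nu_Y$ as subgroups of $G$. I would take $c$ generic in $X\times Y$-position, i.e. pick a generic $c$ realizing both, or more carefully pass to a common generic element and compare: by (1) applied inside $X$ and inside $Y$, both $\nu_X$ and $\nu_Y$ are the left-translate to the identity of the respective infinitesimal vicinities, and using (A2) one finds a single parameter set over which a generic vicinity in $X$ and a generic vicinity in $Y$ can be compared, yielding $\nu_X(a)a^{-1}\vdash \nu_Y(b)b^{-1}$ and conversely. This is essentially the independence-of-the-$D$-set statement, and it is what licenses the notation $\nu_D(G)$.

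For (3), invariance under conjugation: for $g\in G(\CK)$, the map $x\mapsto gxg^{-1}$ is a definable automorphism of $G$ defined over $Ag$, so it sends the $D$-set $X$ over $A$ to a $D$-set $gXg^{-1}$ over $Ag$, with $gag^{-1}$ generic in it over $Ag$ (definable bijections preserve dp-rank). Hence $g\,\nu_X\,g^{-1}=\nu_{gXg^{-1}}$, which by (2) equals $\nu_D(G)$. The same argument with an arbitrary $\CM$-definable automorphism $\sigma$ in place of conjugation gives the final sentence: $\sigma(X)$ is a $D$-set over $\sigma(A)$ and $\sigma$ preserves genericity, so $\sigma(\nu_D(G))=\nu_{\sigma(X)}=\nu_D(G)$ by part (2).

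The main obstacle I anticipate is part (1) — specifically the closure under multiplication — because that is exactly where the failure of additivity of dp-rank bites, and one must deploy axiom (A2) and the defining inequality of a $D$-group in precisely the right order: one has to choose the base $B$ (over which a generic vicinity is taken) \emph{after} fixing a generic pair $(g,h)$, using (A2) to ensure that shrinking to a $B$-generic vicinity does not drop the dp-rank of $(g,h)$ over the enlarged base, and only then can the $D$-group inequality be invoked to conclude that $g\cdot a^{-1}\cdot h$ lands generically inside the target product. Parts (2) and (3) are then comparatively formal consequences of (1) plus the observation that definable injections preserve all the relevant data ($D$-criticality, dp-rank, genericity).
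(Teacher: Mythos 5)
This is a cited black box: the paper imports Fact~\ref{F: properties of nu} verbatim from \cite[Proposition 5.8]{HaHaPeGps} and gives no proof of it, so there is no in-paper argument to compare your sketch against. What I can do is flag issues in the sketch itself.

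The structural problem is in your argument for $\nu_X(a)a^{-1}=\nu_X(b)b^{-1}$ in part (1). You left-translate $X$ by $ba^{-1}$ to carry $a$ to $b$, and correctly observe this takes $\nu_X(a)$ to $\nu_{ba^{-1}X}(b)$; but $ba^{-1}X$ is a \emph{different} $D$-set from $X$, so to conclude $\nu_{ba^{-1}X}(b)b^{-1}=\nu_X(b)b^{-1}$ you need exactly the independence-of-the-$D$-set statement that is part (2). Since you also derive (2) ``as a formal consequence of (1),'' this is circular as written. In fact the equality $\nu_X(a)a^{-1}=\nu_X(b)b^{-1}$ for two generics in the \emph{same} $D$-set is the genuinely hard content of (1), and it has to be proved directly --- the translation trick cannot short-circuit it. A secondary point: your reduction of closure under multiplication (``given $U,V$, find $W$ with $Wa^{-1}Wa^{-1}\subseteq Ua^{-1}Va^{-1}$'') has the quantifiers backwards. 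What one must show is: for every $U\in\nu_X(a)$ there exist $V_1,V_2\in\nu_X(a)$ with $(V_1a^{-1})(V_2a^{-1})\subseteq Ua^{-1}$, i.e.\ $V_1a^{-1}V_2\subseteq U$. Fixing two sets of the type and producing a third does not establish closure. Finally, the assertion that $\nu_X(a)a^{-1}=a^{-1}\nu_X(a)$ ``comes from the same machinery'' and that ``using (A2) one shows each entails the other'' is a placeholder, not an argument: this identity encodes that $a$ normalizes the group, and it needs its own appeal to the $D$-group inequality, not a symmetry remark. Your part (3) is fine modulo (2), and your diagnosis that the non-additivity of dp-rank is the real obstacle is accurate; but the sketch as given would not assemble into a proof without reordering so that the independence statements in (1) and (2) are established by direct generic-vicinity computations rather than by appeal to each other.
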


Whenever the group $G$ is understood from the context and there is no ambiguity, we denote $\nu_D(G)$ by $\nu_D$.

\begin{remark}\label{R: nu lives on any definable subgroup witnessing}
Note that if $X\subseteq G$ is a $D$-set which  happens to be a subgroup, then $\nu_D\vdash X$. 
\end{remark}

\begin{lemma}\label{L:nu of subgroup} 
Let $H\leq G$ be two definable $D$-groups, locally strongly internal to an unstable distinguished sort $D$. Then 

\begin{enumerate}
    \item $\nu_D(H)\vdash \nu_D(G)$.
    \item If $H$ and $G$ have the same $D$-rank then $\nu_D(H)=\nu_D(G)$. In particular, this holds if $H$ has finite index in $G$.
\end{enumerate}
\end{lemma}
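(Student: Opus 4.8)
\textbf{Proof plan for Lemma \ref{L:nu of subgroup}.}

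The plan is to reduce both claims to manipulations of $D$-sets and their infinitesimal vicinities, using Fact \ref{F: properties of nu} as the principal tool. For part (1), the key point is that a $D$-set for $H$ can be used to produce a $D$-set for $G$ of the same (or possibly larger) type but living inside $H$; more precisely, since $H$ is locally strongly internal to $D$ and is a $D$-group, it contains a $D$-set $X$ over some $A$ (Remark \ref{R: D-sets}(1)), and $X$, being a strongly internal subset of $G$ as well, is contained in some $D$-critical $X'\subseteq G$. Passing to an $A$-generic $d\in X$, I would compare $\nu_X(d)$, computed inside $H$, with $\nu_{X'}(d)$, computed inside $G$. Every $B$-generic vicinity of $d$ in $X'$ intersected with $X$ is a $B$-generic vicinity of $d$ in $X$ (because $\dpr(X)\le\dpr(X')$ forces $d$ to remain $B$-generic in $X$), so $\nu_X(d)\vdash \nu_{X'}(d)$; translating by $d^{-1}$ and invoking Fact \ref{F: properties of nu}(1),(2) gives $\nu_D(H)=\nu_X(d)d^{-1}\vdash \nu_{X'}(d)d^{-1}$. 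The remaining task is to identify $\nu_{X'}(d)d^{-1}$ with $\nu_D(G)$; this is exactly Fact \ref{F: properties of nu}(1),(2) provided $X'$ is genuinely a $D$-set in $G$, which one arranges by enlarging $X'$ to a $D$-critical set of full $D$-rank and appealing to Remark \ref{R: D-sets}(2) (or by taking $X'$ to be any $D$-set of $G$ and noting $X\cap g X'$ for a suitable translate has the right properties).

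For part (2), suppose $H$ and $G$ have the same $D$-rank $n$. Then the $D$-set $X\subseteq H$ above has $\dpr(X)=n=\dpr(G\text{-rank})$, so $X$ is already $D$-critical \emph{in} $G$, and by Remark \ref{R: D-sets}(2) it is a $D$-set in $G$ as well (after possibly composing with a coordinate projection onto $D^n$, which is available once the witnessing injection lands in $D^n$ with $n=\dpr(X)$, as noted there). Consequently $\nu_D(G)=\nu_X=\nu_D(H)$ by the "independence of the chosen $D$-set" clause, Fact \ref{F: properties of nu}(2). The final sentence — that finite index implies equal $D$-rank — is immediate: a $D$-critical $X\subseteq H$ is also strongly internal to $D$ as a subset of $G$, so the $D$-rank of $G$ is at least that of $H$; conversely a $D$-critical $Y\subseteq G$ meets some coset $gH$ in a set of the same dp-rank (finitely many cosets, sub-additivity), and $g^{-1}(Y\cap gH)\subseteq H$ is strongly internal to $D$ via the translated map, so the $D$-rank of $H$ is at least that of $G$.

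I expect the main obstacle to be the careful bookkeeping in part (1) around \emph{which} set is a $D$-set in $G$: a $D$-set in $H$ need not literally be a $D$-set in $G$ when the $D$-ranks differ, because the coordinate projection $\pi$ and the genericity-of-fibers condition in Definition \ref{R: D-sets}'s source (Definition of $D$-set) are tied to $\dpr(X)$, and $X$ might fail to be $D$-critical in $G$. The fix is to start from a $D$-set $Z$ of $G$ (which exists by Remark \ref{R: D-sets}(1) applied to $G$), and then show $\nu_D(H)\vdash\nu_Z(d)d^{-1}$ directly: take $d$ generic in $Z$ over a base containing the parameters of $H$ and of a $D$-set $X$ of $H$; a generic vicinity of the identity coming from $\nu_D(H)$, translated to $d$, is contained in $H$ and, by the filter-base property (\cite[Lemma 4.20]{HaHaPeGps}) together with genericity, refines every generic vicinity of $d$ in $Z$. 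One must check this refinement claim by a dp-rank computation showing that the relevant intersections remain of full $D$-rank; this is routine given sub-additivity and the vicinic axioms, but it is the one spot where the argument needs genuine care rather than citation.
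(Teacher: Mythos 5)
Your part (2) is essentially the paper's proof: a $D$-set of $H$ of the same $D$-rank is automatically a $D$-set of $G$, and finite index gives equal $D$-ranks by translating a $D$-critical subset of $G$ into a coset of $H$ and noting that dp-rank is preserved. That half is fine.

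Part (1) has a genuine gap, and you partly diagnose it yourself. Your first approach fails for exactly the reason you name at the end: if $X\subseteq H$ is a $D$-set of $H$ and $X'\supseteq X$ is $D$-critical in $G$ with $\dpr(X')>\dpr(X)$, then an $A$-generic $d\in X$ satisfies $\dpr(d/A)=\dpr(X)<\dpr(X')$, so $d$ is \emph{not} generic in $X'$, $\nu_{X'}(d)$ is not even defined, and the comparison between $\nu_X(d)$ and $\nu_{X'}(d)$ collapses. (The parenthetical ``$\dpr(X)\le\dpr(X')$ forces $d$ to remain $B$-generic in $X$'' has the logic reversed.) Your ``fix'' has the right shape --- the paper likewise works with one $D$-set $X_G$ of $G$ and one $D$-set $X_H$ of $H$, fixes a pair $(g,h)$ generic in $X_G\times X_H$ over the joint parameter set, and compares $h^{-1}\nu_{X_H}(h)$ with $g^{-1}\nu_{X_G}(g)$. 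But two things go wrong in your write-up of it. First, the intermediate assertion that the generic vicinity of the identity ``translated to $d$, is contained in $H$'' is false: translating $W\subseteq H$ by $d\in Z$ lands in the coset $dH$, which is not $H$ unless $d\in H$; the paper avoids this by never moving $\nu_D(H)$ off $H$ and instead pulling the $G$-vicinity $V$ into the picture via $hg^{-1}V$. Second, and more importantly, the refinement claim --- that the intersection $U\cap hg^{-1}V$ is still a generic vicinity of $h$ --- is precisely where the proof lives, and it is not a ``routine dp-rank computation with sub-additivity and the vicinic axioms.'' It is the content of \cite[Lemma 4.26]{HaHaPeGps}, which in turn rests on the defining $D$-group axiom $\dpr(g/B,\,g\cdot h)=\dpr(g/B)$; indeed that axiom exists precisely to make this kind of intersection retain full rank. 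Waving at it as routine, rather than invoking the $D$-group property (or the lemma that packages it), is exactly the hole that the lemma you cite is there to fill, so the proposal as written does not constitute a proof of (1).
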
 
\begin{proof} 
Let $H\leq G$ be any subgroup, as in the statement.

(1) Let $X_G\subseteq G$ be a $D$-set in $G$  and $X_H\subseteq H$ a $D$-set in $H$,  all definable over a parameter set $A$. Let $(g,h)\in X_G\times X_H$ be  generic over $A$, so $\nu_D(G)=g^{-1}\nu_{X_G}(g)$ and $\nu_D(H)=h^{-1}\nu_{X_H}(h)$.

Let $V$ be a generic vicinity of $g$ and $U$ a generic vicinity of $h$.  By \cite[Lemma 4.26]{HaHaPeGps}, $U\cap hg^{-1} V$ is a generic vicinity of $h$, hence
\[\nu_D(H)\vdash h^{-1}(U\cap hg^{-1}V)=h^{-1}U\cap g^{-1}V\sub g^{-1}V.\]

(2) Assume that $H$ and $G$ have the same $D$-rank, hence any $D$-set in $H$ is automatically a $D$-set in $G$. It now follows by definition that $\nu_D(H)=\nu_D(G)$.

If $H$ has finite index in $G$ then it is easy to see that they have the same $D$-rank. 
\end{proof} 

The next lemma supports the intuition that the type-definable coset $g\cdot \nu_D(G)$ is an infinitesimal neighborhood of $g$, for $g$ generic in a set locally strongly internal to $D$: 

\begin{lemma}\label{L:intersects largely} 
Let $G$ be a $D$-group,  $X\subseteq G$ an $A$-definable set strongly internal to $D$ over $A$, and  $g\in X$ generic over $A$.  Then $\dpr(X\cap g\cdot \nu_D )=\dpr(X)$. 
\end{lemma}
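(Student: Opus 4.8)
The statement asserts that if $G$ is a $D$-group, $X\subseteq G$ is $A$-definable and strongly internal to $D$ (over $A$), and $g\in X$ is $A$-generic, then the intersection $X\cap g\cdot\nu_D$ has full dp-rank, namely $\dpr(X\cap g\cdot\nu_D)=\dpr(X)$. The natural strategy is to reduce to the case of a $D$-set, where the infinitesimal group is built by definition, and then transfer the conclusion back to $X$ using a finite-to-finite correspondence argument together with the basic properties of $\nu_D$ recorded in Fact~\ref{F: properties of nu}.

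\textbf{Step 1: reduce to a $D$-critical set, then to a $D$-set.} First I would replace $X$ by a $D$-critical subset of $G$. Since $\nu_D\vdash g^{-1}V$ for generic vicinities $V$ and the $D$-rank of $G$ equals $\dpr$ of any $D$-critical set, it is enough to prove the claim when $\dpr(X)$ equals the $D$-rank of $G$; indeed if $X$ has smaller dp-rank, I can work inside a $D$-critical $X'\supseteq$ (a translate intersecting $X$ generically) — more carefully, one shows that a generic $g\in X$ can be taken generic in a $D$-critical set containing it up to translation, using subadditivity of dp-rank. Having arranged $X$ to be $D$-critical, by Remark~\ref{R: D-sets}(2) (via \cite[Proposition 4.6]{SimWal} in the SW-uniformity cases, and Proposition~\ref{P:local homeo K/O}/cell decomposition in the $p$-adic cases) I may pass to a $D$-set $Z$: there is a definable injection $f\colon X\to D^m$ and a coordinate projection onto $D^n$ with $n=\dpr(X)$ realizing the $D$-set condition. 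The point is that for a $D$-set $Z$ over $A$ and $A$-generic $d\in Z$, the type $\nu_Z(d)$ is a filter-base of generic vicinities with $\dpr(\nu_Z(d))$ equal to the $D$-rank of $G$, and $\nu_D=d^{-1}\nu_Z(d)$ by Fact~\ref{F: properties of nu}(1).

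\textbf{Step 2: compute the intersection for a $D$-set.} For a $D$-set $Z$ and $A$-generic $d\in Z$, I claim $\dpr(Z\cap d\cdot\nu_D)=\dpr(Z)$. By Fact~\ref{F: properties of nu}(1), $d\cdot\nu_D=\nu_Z(d)$, the partial type of all $B$-generic vicinities of $d$ in $Z$. So $Z\cap d\cdot\nu_D$ is realized (in the monster model) precisely by the points of $Z$ lying in every generic vicinity of $d$; in particular $d$ itself realizes it, so it is non-empty, and more importantly $\dpr(Z\cap d\cdot\nu_D)\geq\dpr(\nu_Z(d))=\dpr(Z)$ by the filter-base property and the computation of $\dpr(\nu_Z(d))$ quoted after the definition of $\nu_Z$. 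The reverse inequality is immediate since $Z\cap d\cdot\nu_D\subseteq Z$. This handles the $D$-set case.

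\textbf{Step 3: transfer back to $X$.} Now use that $X$ and a $D$-set $Z$ in $G$ are related by a finite-to-finite correspondence: fixing $A$-generic $g\in X$ and $A$-generic $d\in Z$, the element $h:=dg^{-1}$ is such that left translation by $h$ carries generic vicinities of $g$ to generic vicinities of $d$ (this is exactly the content of \cite[Lemma 4.26]{HaHaPeGps}, used also in the proof of Lemma~\ref{L:nu of subgroup}), and $\nu_D$ is a normal type-definable subgroup invariant under translation conjugation (Fact~\ref{F: properties of nu}(3)). Concretely, $g\cdot\nu_D=h^{-1}(d\cdot\nu_D)$, so $X\cap g\cdot\nu_D=h^{-1}\big((hX)\cap(d\cdot\nu_D)\big)$, and since $hX$ is a generic vicinity-scale translate of a neighborhood of $d$ while $Z$ is a $D$-set there, one intersects further with $Z$: $(hX)\cap Z$ is a generic vicinity of $d$ in $Z$ (again by \cite[Lemma 4.26]{HaHaPeGps}), hence has dp-rank $\dpr(Z)=\dpr(X)$, and by Step~2 its further intersection with $d\cdot\nu_D$ still has dp-rank $\dpr(X)$. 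Pulling back by the injective (hence dp-rank preserving) translation $h^{-1}$ gives $\dpr(X\cap g\cdot\nu_D)\geq\dpr(X)$, and equality follows.

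\textbf{Main obstacle.} The delicate point is Step~3 — making precise that the finite-to-finite (indeed here bijective, up to the already-established passage to $D$-sets) comparison between $X$ and a $D$-set $Z$ is compatible with the infinitesimal groups, i.e. that translating $X$ so as to match a generic point of $X$ with a generic point of $Z$ sends $\nu_X(g)$-scale vicinities to $\nu_Z(d)$-scale vicinities. All the needed ingredients are in \cite[Lemma 4.26]{HaHaPeGps} and Fact~\ref{F: properties of nu}, but one must be careful about parameters: the translation element $h$ must be chosen over a base $B\supseteq A$ over which both $g$ and $d$ remain generic in their respective sets, and one invokes axiom (A2) of vicinity (or the genericity transfer built into the $D$-group hypothesis) to ensure such a $B$ exists without dropping dp-rank. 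Modulo this bookkeeping, the argument is a routine assembly of the quoted facts.
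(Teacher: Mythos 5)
Your overall strategy — compare $X$ with a $D$-set $Z$ via a suitable group translation and then invoke \cite[Lemma 4.26]{HaHaPeGps} — is the same skeleton the paper uses. But Step 1 contains a genuine gap that your later steps inherit. You attempt to reduce to the case where $X$ is $D$-critical (i.e.\ $\dpr(X)$ equals the $D$-rank of $G$), asserting that if $X$ has smaller dp-rank you can "work inside" a $D$-critical $X'\supseteq X$. This reduction does not work: replacing $X$ by a larger $X'$ tells you $\dpr(X'\cap g\cdot\nu_D)=\dpr(X')$, which gives no lower bound on $\dpr(X\cap g\cdot\nu_D)$ when $X\subsetneq X'$. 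The lemma genuinely needs to be proved for $X$ of arbitrary dp-rank $\leq\dpr(\nu_D)$, and this is where the real content is.

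The failure propagates to Step 3. There you conclude that $(hX)\cap Z$ "is a generic vicinity of $d$ in $Z$, hence has dp-rank $\dpr(Z)=\dpr(X)$." But $\dpr(Z)$ is the $D$-rank of $G$ (since $Z$ is a $D$-set), and when $\dpr(X)<\dpr(Z)$ the set $(hX)\cap Z$ has dp-rank at most $\dpr(X)<\dpr(Z)$ and so cannot be a generic vicinity of $d$ in $Z$; the invocation of \cite[Lemma 4.26]{HaHaPeGps} in this form collapses. Your Step 2 is correct but trivial: for a $D$-set $Z$ one has $Z\cap d\cdot\nu_D=\nu_Z(d)$ by construction, which is precisely the case where the lemma says nothing new. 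The paper's proof does not make the $D$-critical reduction: it fixes a mutually generic pair $(g,h)\in X\times Z$ with $\dpr(g,h/A)=\dpr(X)+\dpr(Z)$, shrinks a given generic vicinity $Y\ni h$ to a $C$-generic vicinity $Y'$ while preserving $\dpr(g,h/C)$ via \cite[Lemma 4.13]{HaHaPeGps} (your (A2)-style bookkeeping point), and then appeals to \cite[Lemma 4.26]{HaHaPeGps} to conclude directly that $\dpr(X\cap gh^{-1}Y')=\dpr(X)$ — where the lemma's statement is precisely tuned so that the dp-rank produced is $\dpr(X)$, not $\dpr(Z)$. To repair your argument you would need to drop the $D$-critical reduction and cite the correct form of Lemma 4.26, which already carries the asymmetric conclusion you need.
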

\begin{proof}
 Let $Z'$ be any $D$-set, definable over some parameter set $B'$. Find an element $g'\equiv_A g$ such that $\dpr(g'/AB')=\dpr(g/A)$. Applying an automorphism over $A$ we can move $g'$ to $g$ and $B'$ to some $B$.  The image, $Z$, of $Z'$ under this automorphism, is  definable over $B$ and $\dpr(g/AB)=\dpr(g/A)$. Renaming, we assume from now on, that  $A=AB$.

Fix an $A$-generic $h\in Z$ with $\dpr(g,h/A)=\dpr(X)+\dpr(Z)$. Thus, as $\nu_D=h^{-1}\nu_Z(h)$, we have to show that $\dpr(X\cap gh^{-1}\nu_Z(h))=\dpr(X)$.

Let $Y\subseteq Z$ be some $B$-generic vicinity of $h$ (i.e. $Y\in \nu_Z(h)$), for some $B$; so it will suffice to prove that $\dpr(X\cap gh^{-1}Y)=\dpr(X)$.

By \cite[Lemma 4.13]{HaHaPeGps}, there exists $C\supseteq A$ and a $C$-generic vicinity $Y'\subseteq Y$ of $h$ such that $\dpr(g,h/A)=\dpr(g,h/C)$. So $(g,h)$ is $C$-generic in $X\times Y'$. It will be sufficient to prove that $\dpr(X\cap gh^{-1}Y')=\dpr(X)$; this is exactly \cite[Lemma 4.26]{HaHaPeGps}.
 \end{proof}

\begin{lemma}\label{L:passage of D-group under finite-to-one}
Let $G$ be a definable group in $\CK$, $H$ a finite normal subgroup and $f:G\to G/H$ the quotient map. Let $D$ be any of the distinguished sorts.

\begin{enumerate}
    \item The almost $D$-ranks of $G$ and $G/H$ are equal.

\end{enumerate}

 For the following assume that $D$ is not $K/\CO$ in the $p$-adically closed case.
\begin{enumerate}
    \item[(2)] The $D$-rank of $G$ is at most the $D$-rank of $G/H$. 
    
    \item[(3)] If, furthermore, $G$ is $D$-group (so $D$ is unstable) then so is $G/H$, and then $f(\nu_{D}(G))=\nu_{D}(G/H)$.
    \item[(4)] If the $D$-critical rank  and the almost $D$-critical ranks of $G$ coincide, then the same is true for $G/H$.
\end{enumerate}
\end{lemma}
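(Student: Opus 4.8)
\emph{Approach.} Write $e=|H|$, so $f$ is an $e$-to-one surjection and in particular a definable finite-to-finite correspondence; hence $\dpr(S)=\dpr(f(S))$ and $\dpr(\bar S)=\dpr(f^{-1}(\bar S))$ for all definable $S\subseteq G$ and $\bar S\subseteq G/H$, a fact I use throughout. The plan is to prove (1) and (4) by soft dp-rank bookkeeping, (2) by a coding argument, and (3) by a more careful transfer along $f$ of the $D$-group condition and of the infinitesimal subgroup. For (1): almost strong internality to $D$, together with the witnessing dp-rank, is invariant under definable finite-to-finite correspondences, and $f$ is such a correspondence — one inequality being witnessed by $\bar Y\mapsto f^{-1}(\bar Y)$ (precompose the witnessing map with $f$) and the other by $X\mapsto f(X)$ via the correspondence $\{(\bar y,d):\exists x\in X\,(f(x)=\bar y\wedge g(x)=d)\}$ between $f(X)$ and a subset of $D^n$. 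Granting (2) (proved next), part (4) is then immediate: if the $D$-rank and the almost $D$-rank of $G$ both equal $r$, then by (1) the almost $D$-rank of $G/H$ is $r$, by (2) its $D$-rank is $\ge r$, and the $D$-rank never exceeds the almost $D$-rank.

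\emph{Part (2).} Assume $D\ne K/\CO$ when $\CK$ is $p$-adically closed. The key point is that in each remaining case there is a definable injection coding the subsets of $D^n$ of size $\le e$ into some $D^N$: via elementary (multi)symmetric functions when $D$ is a field (the sort $K$ always, and $\bk$ in the $V$-minimal case); via the definable linear order when $D$ is (weakly) o-minimal (the sort $\Gamma$ always, and $\bk$, resp.\ $K/\CO$, in the $T$-convex case); trivially when $\bk$ is finite; and the remaining case, $K/\CO$ in the $V$-minimal setting, requires a separate argument using the structure of closed $0$-balls. Given a $D$-critical $X\subseteq G$ with injective $g\colon X\hookrightarrow D^n$, put $Y=f(X)$ and define $\iota\colon Y\to D^N$ by sending $\bar y$ to the code of the finite set $g\big(f^{-1}(\bar y)\cap X\big)$. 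Then $\iota$ is injective: equality of codes gives $g(f^{-1}(\bar y)\cap X)=g(f^{-1}(\bar y')\cap X)$, hence $f^{-1}(\bar y)\cap X=f^{-1}(\bar y')\cap X$ by injectivity of $g$, and any $x$ in this common nonempty set gives $\bar y=f(x)=\bar y'$. Since $\dpr(Y)=\dpr(X)$ equals the $D$-rank of $G$, the $D$-rank of $G/H$ is at least that of $G$.

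\emph{Part (3).} Now $D$ is unstable and $G$ is a $D$-group. Local strong internality of $G/H$ to $D$ follows from (2) and $D$-rank$(G)\ge 1$. The auxiliary fact I would establish first is that the $D$-ranks of $G$ and $G/H$ coincide; (2) gives one inequality, and for the other one cannot simply pull back, since $f^{-1}$ of a strongly internal subset of $G/H$ is only \emph{almost} strongly internal in $G$ (its fibres are full $H$-cosets and admit no definable transversal), so one combines (2) with Fact~\ref{F: existence of finite normla to get D-group}, applied to $G$ and to $G/H$, and, if needed, an induction on $|H|$, to control the gap between the $D$-rank and the almost $D$-rank. Granting this, the $D$-group condition for $G/H$ is checked by lifting an arbitrary generic configuration $(\bar g,\bar h)\in\bar X_1\times\bar X_2$ to $G$ — genericity and dp-ranks are preserved as fibres are finite — after replacing the preimages by full-dp-rank strongly internal subsets, and invoking the $D$-group condition of $G$. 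Finally, fix a $D$-set $Z\subseteq G$ with injective $g\colon Z\hookrightarrow D^{d}$, $d=\dpr(Z)$ equal to the $D$-rank of $G$, and $z_0\in Z$ generic; by (2) and the rank equality, $\bar Z:=f(Z)$ is a $D$-set in $G/H$ with $f(z_0)$ generic. Because $f$ has finite fibres it commutes with the filtered intersection defining $\nu_Z(z_0)$, so $f(\nu_Z(z_0))=\bigcap_U f(U)$ over generic vicinities $U$ of $z_0$ in $Z$; each $f(U)$ is a generic vicinity of $f(z_0)$ in $\bar Z$ (same parameters, same dp-rank), and conversely every generic vicinity of $f(z_0)$ in $\bar Z$ pulls back to one of $z_0$ in $Z$, so $f(\nu_Z(z_0))=\nu_{\bar Z}(f(z_0))$; left-multiplying by $f(z_0)^{-1}$ yields $f(\nu_D(G))=\nu_D(G/H)$.

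The step I expect to be the real obstacle is the rank equality $D$-rank$(G)=D$-rank$(G/H)$ in (3) — and the closely related need to straighten the merely-almost-strongly-internal lifts of downstairs sets into honestly strongly internal full-rank subsets before applying the $D$-group condition of $G$. This is precisely where the soft dp-rank arguments behind (1), (2) and (4) run out and one must use the finer structural results of \cite{HaHaPeGps}.
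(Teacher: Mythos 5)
Your overall strategy — transfer (almost) strong internality back and forth along the finite-to-one map $f$ using soft dp-rank arithmetic, with a special argument for (2) and an explicit push/pull of generic vicinities for (3) — is in the same spirit as the paper, but the execution differs and leaves several genuine gaps which you partly flag yourself.

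The paper's proof of (1) and (2) reduces entirely to a preliminary lemma imported from \cite{HaHaPeGps} (Lemmas 2.9, 4.3, 3.9 there), which says that for any (almost) $D$-critical $X\subseteq G$ there is an (almost) $D$-critical $Y\subseteq f(X)$ with $\dpr(Y)=\dpr(X)$; the proof of that lemma uses the topological structure of $D$ (SW-uniformity, or the residue field in the $V$-minimal case) to shrink $f(X)$ to a full-rank definable subset on which one can invert the finite fibration, rather than coding the fibres. Your substitute is a coding argument — encode the finite set $g(f^{-1}(\bar y)\cap X)\subseteq D^n$ as a tuple in $D^N$. Two problems. First, in part (1) you write that the finite-to-finite \emph{correspondence} between $f(X)$ and a subset of $D^n$ witnesses almost strong internality of $f(X)$; but the paper's definition of almost strong internality requires an $m$-to-one \emph{map} from a subset of $f(X)$ to $D^n$, and a correspondence is not such a map. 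Turning the correspondence into a map is precisely the coding problem of (2), so (1) inherits the same gap, and moreover (1) also needs $D=K/\CO$ in the $p$-adic case (which is only excluded from (2)–(4)), which your case analysis does not mention. Second, coding finite subsets of $D^n$ into $D^N$ is genuinely unavailable for $D=K/\CO$ in the $V$-minimal setting: $K/\CO$ there is a divisible abelian group carrying only a (partial, valuation-induced) order and no field structure, so neither symmetric functions nor lexicographic enumeration apply, and an unordered pair of elements with the same mutual distances is not coded by a tuple in $(K/\CO)^N$; you acknowledge this case "requires a separate argument" but do not give one.

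For (3), the paper picks a $D$-set $X_0\subseteq G$, applies the preliminary lemma to get a $D$-critical $Y_0\subseteq f(X_0)$ and then a $D$-set $Y\subseteq Y_0$, sets $X=f^{-1}(Y)\subseteq X_0$ (automatically a $D$-set, being a full-rank subset of the $D$-set $X_0$), and then pushes/pulls generic vicinities — with the equality of $D$-ranks of $G$ and $G/H$ falling out for free from $\dpr(X_0)\geq\dpr(X)=\dpr(Y)=\text{$D$-rank}(G/H)\geq\text{$D$-rank}(G)=\dpr(X_0)$. It obtains the $D$-group property of $G/H$ from (the proof of) Fact~\ref{F: existence of finite normla to get D-group}, not by hand. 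Your version instead asserts without argument that $\bar Z=f(Z)$ is a $D$-set (already strong internality of $f(Z)$ needs the coding, and being a $D$-set is a further condition on coordinate projections the paper is careful about via \cite[Remark 4.18]{HaHaPeGps}), and your proposed route to the rank equality — combining Fact~\ref{F: existence of finite normla to get D-group} with "induction on $|H|$" — is not a proof. The push/pull of generic vicinities at the end is essentially the paper's, but the justification should be $\dpr(f(z_0)/B)=\dpr(z_0/B)$ from $f(z_0)\in\dcl(Az_0)$ and $z_0\in\acl(Af(z_0))$, not "same dp-rank" of the sets $U$ and $f(U)$.

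So: (4), and the preimage direction of (1), are fine; the image direction of (1), the $K/\CO$ cases of (2), and the structural claims in (3) are real gaps — and, as you yourself anticipate, they are exactly the points where the paper leans on the cited lemmas rather than on soft dp-rank bookkeeping.
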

\begin{proof} 

For (1) and  (2) we first note that  for any (almost) $D$-critical set $X\subseteq G$, there exists an (almost) $D$-critical  $Y\subseteq f(X)$ (with respect to $G/H$), with $\dpr(Y)=\dpr(X)$. Indeed, if $D$ is an SW-uniformity then this is \cite[Lemma 2.9]{HaHaPeGps} and if $D=\bk$ in the $V$-minimal case then it is \cite[Lemma 4.3]{HaHaPeGps}.  This implies (1) and (2)  for $D$ other than  $K/\CO$ in the $p$-adically closed case.  For (1) in that latter case use \cite[Lemma 3.9]{HaHaPeGps}.

We now assume that $D$ is not $K/\CO$ in the $p$-adically closed case.

(3) If $G$ is a $D$-group then $G/H$ is also locally strongly internal to $D$ by (2). Combined with (the proof of) \cite[Fact 4.25]{HaHaPeGps}  it  follows that $G/H$ is also a $D$-group.

To show that $f(\nu_{D}(G))=\nu_{D}(G/H)$, let $X_0\subseteq G$ be a $D$-set. By the above, we may find a $D$-critical subset $Y_0\subseteq f(X_0)$. By \cite[Remark 4.18]{HaHaPeGps} there exists a $D$-set $Y\subseteq Y_0\subseteq G/H$. Setting $X=f^{-1}(Y)\subseteq X_0$,  and since $X_0$ is a $D$-set so is $X_0$. We are now in the situation where $X$ and $Y=f(X)$ are both $D$-sets, with respect to $G$ and $G/H$, respectively. Assume everything is defined over some parameters set $A$.

Let $a\in X$ be an $A$-generic in $X$, so $f(a)$ is an $A$-generic in $Y$. It suffices to prove that $f(\nu_X(a))=\nu_X(f(a))$.

For this first note that if $U\sub X$ is a $B$-generic vicinity of $a$, for some $B\supseteq A$, then $f(U)$ is a $B$-generic vicinity of $f(a)$ since $f(a)\in \dcl(Aa)$ and $\dpr(U)=\dpr(f(U))$ as $f$ is finite-to-one.

To show the other direction, let $V$ be a $B$-generic vicinity of $f(a)$ for some $B\supseteq A$, then $f^{-1}(V)$ is a $B$-generic vicinity of $a$ since $a\in\acl(Af(a))$ and $f(f^{-1}(V))=V$ because $f$ is surjective.

(4) Follows directly from (1) and (2),
%
\end{proof}

\subsection{Some basic group theoretic facts in our setting} 

Before the next corollary, we note the following application of Baldwin-Saxl (\cite[Lemma 1.3]{PoiGroups}).
\begin{fact}\label{F: Baldwin saxl}
    Let $G$ be a group definable in a sufficiently saturated NIP structure and $\{H_i:i\in T\}$ a definable family of finite index subgroups of $G$. Then $\bigcap_{i\in T} H_i$ is a definable subgroup of finite index.
\end{fact}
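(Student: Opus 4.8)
The plan is to combine the Baldwin--Saxl lemma for NIP groups with the saturation of the ambient structure, via the standard ``maximal finite subintersection'' argument. Write $H_i=\phi(G,i)$ for a formula $\phi(x,y)$, with $i$ ranging over the (definable) index set $T$. Baldwin--Saxl (\cite[Lemma 1.3]{PoiGroups}) provides an integer $N$, depending only on $\phi$, such that for every finite $J\subseteq T$ there is a $J_0\subseteq J$ with $|J_0|\le N$ and $\bigcap_{i\in J}H_i=\bigcap_{i\in J_0}H_i$.

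Next I would use saturation to turn the hypothesis ``each $H_i$ has finite index'' into a uniform bound: there is $M\in\Nn$ with $[G:H_i]\le M$ for all $i\in T$. Indeed, for each $n$ the property ``$H_y$ has index $>n$'' is expressed by a first-order formula in $y$ (the existence of $n+1$ elements of $G$ pairwise inequivalent modulo $H_y$); if no $i\in T$ satisfied it we would be done with $M=n$, so if the bound failed then the partial type asserting ``$y\in T$ and $H_y$ has index $>n$ for every $n$'' would be finitely satisfiable, hence realized by some $i^*\in T$ by saturation, contradicting that $H_{i^*}$ has finite index. Combining the two steps, every finite subintersection $\bigcap_{i\in J}H_i$ equals an intersection of at most $N$ of the $H_i$ and therefore has index at most $M^N$.

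Finally, choose a finite $J^*\subseteq T$ for which $[G:\bigcap_{i\in J^*}H_i]$ is maximal; such a $J^*$ exists since all these indices are bounded by $M^N$. For any $j\in T$ the subgroup $\bigcap_{i\in J^*\cup\{j\}}H_i$ is contained in $\bigcap_{i\in J^*}H_i$, so its index is at least $[G:\bigcap_{i\in J^*}H_i]$; by maximality it is also at most that value, so the two subgroups coincide and hence $\bigcap_{i\in J^*}H_i\subseteq H_j$. As $j$ was arbitrary, $\bigcap_{i\in J^*}H_i\subseteq\bigcap_{i\in T}H_i$, and the reverse inclusion is trivial, so $\bigcap_{i\in T}H_i=\bigcap_{i\in J^*}H_i$ is a finite intersection of definable subgroups of finite index, hence itself definable and of finite index ($\le M^N$). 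The only real input is Baldwin--Saxl together with this one use of saturation; the remaining bookkeeping is routine, and I do not anticipate a serious obstacle.
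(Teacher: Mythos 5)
Your proof is correct and takes essentially the same route as the paper: invoke Baldwin--Saxl for NIP to bound the number of conjuncts needed in finite subintersections, use saturation to get a uniform bound on $[G:H_i]$, conclude that finite subintersections have uniformly bounded index, and finish with the maximal-index subintersection argument. The paper compresses all of this into the single sentence ``By Baldwin--Saxl, there is a finite bound on the index of finite intersections of the $H_i$,'' leaving the saturation step and the final maximality argument implicit; you have simply filled in those details.
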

\begin{proof}
    By Baldwin-Saxl, there is a finite bound on the index of finite intersections of the $H_i$.
\end{proof}

\begin{corollary}\label{Baldwin Saxl} 
    Let $G$ be a definable group in a sufficiently saturated NIP structure,  $\{\lambda_t:t\in T\}$ a definable family of group automorphisms of $G$, and $X\sub G$, all definable over a parameter set $A$. Assume that  for every $a\in X$, $C_G(a)$ has finite index in $G$. Then there exists an $A$-definable subgroup $G_1\sub C_G(X)$ of finite index in $G$ that is invariant under $\lambda_t$, for all $t\in T$.
\end{corollary}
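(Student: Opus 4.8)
The plan is to build $G_1$ in two stages: first produce an $A$-definable finite-index subgroup contained in the centralizer $C_G(X)$, and then intersect its images under all the $\lambda_t$ to make it $\lambda_t$-invariant.

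First I would show that $C_G(X):=\bigcap_{a\in X}C_G(a)$ is $A$-definable and of finite index in $G$. For each $a\in X$ the subgroup $C_G(a)$ has finite index by hypothesis, and $\{C_G(a):a\in X\}$ is an $A$-definable family of finite-index subgroups; so Fact \ref{F: Baldwin saxl} (Baldwin--Saxl) applies and gives that $C_G(X)=\bigcap_{a\in X}C_G(a)$ is a definable finite-index subgroup. It is $A$-definable since $X$ and the group operation are. Call this group $G_0$.

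Next I would make it invariant under the automorphisms $\lambda_t$. Since each $\lambda_t$ is a group automorphism of $G$, the image $\lambda_t(G_0)$ is again a definable finite-index subgroup of $G$, and the family $\{\lambda_t(G_0):t\in T\}$ is $A$-definable (using that $\{\lambda_t:t\in T\}$ is $A$-definable). Applying Fact \ref{F: Baldwin saxl} once more, $G_1:=\bigcap_{t\in T}\lambda_t(G_0)$ is an $A$-definable finite-index subgroup of $G$. By construction $\lambda_s(G_1)=\bigcap_{t\in T}\lambda_s\lambda_t(G_0)$; here one must check that $\{\lambda_s\lambda_t:t\in T\}$ ranges over (at least) the same subgroups, which holds as soon as the family $\{\lambda_t\}$ is closed under composition, or — more robustly — one first replaces $\{\lambda_t\}$ by the subgroup of $\aut(G)$ it generates, which is still an $A$-definable family by saturation and NIP, and then the intersection is genuinely invariant: $\lambda_s(G_1)=G_1$ for all $s$. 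Finally $G_1\subseteq G_0=C_G(X)$, as required.

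The main obstacle, and the only point needing care, is the invariance bookkeeping in the second step: a single application of Baldwin--Saxl to $\{\lambda_t(G_0)\}$ does not by itself yield a $\lambda_t$-invariant subgroup unless the indexing family is closed under the group action. The clean fix is to pass to the (definable) subgroup of $\aut(G)$ generated by $\{\lambda_t:t\in T\}$ before intersecting, so that $\{\lambda_t(G_0):t\in T\}$ becomes closed under all the $\lambda_s$; then $G_1$ is automatically invariant. Everything else is a routine combination of the hypotheses with Fact \ref{F: Baldwin saxl}.
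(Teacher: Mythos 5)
Your two Baldwin--Saxl applications are exactly the paper's argument: first $\{C_G(a):a\in X\}$ gives $G_0=C_G(X)$ definable of finite index, then $\{\lambda_t(G_0):t\in T\}$ gives $G_1=\bigcap_t\lambda_t(G_0)$ definable of finite index. You have, in addition, correctly noticed that the paper's one-line proof glosses over the invariance step: $\lambda_s(G_1)=\bigcap_t\lambda_s\lambda_t(G_0)$ need not equal $G_1$ unless the index family is stable under left composition by each $\lambda_s$ (and one also needs $\mathrm{id}$ among the $\lambda_t$, or an extra intersection with $G_0$, to guarantee $G_1\subseteq C_G(X)$). That observation is genuine and worth making explicit.

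The problem is with your fix. You assert that one can replace $\{\lambda_t:t\in T\}$ by the subgroup of $\aut(G)$ it generates, and that this ``is still an $A$-definable family by saturation and NIP.'' That claim is not justified, and I do not believe it is true in general. The generated subgroup is naturally parameterized by words of unbounded length, i.e., by $\bigsqcup_k T^k$, which is only an $\bigvee$-definable set; NIP (unlike stability, where the subgroup generated by a definable set is definable after boundedly many steps) gives no uniform bound on word length, and saturation on its own does not convert a countable increasing union of definable families into a definable family. So as written, your step 4 has a gap. The honest repair is to observe that in every use of this corollary in the paper the family $\{\lambda_t\}$ either is, or can harmlessly be enlarged (definably, by adjoining the identity and inverses and closing under composition \emph{only when this remains definable}, e.g., because it is the group of inner automorphisms $\{\tau_g:g\in G\}$), to a definable subgroup of $\aut(G)$; once $\Lambda$ is a group, $\mu\Lambda=\Lambda$ for every $\mu\in\Lambda$, so $\mu(G_1)=\bigcap_{\lambda\in\Lambda}\mu\lambda(G_0)=G_1$ and the paper's two-line proof, as well as yours, closes with no further argument.
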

\begin{proof}
    By Fact \ref{F: Baldwin saxl},  $C_G(X)$ has finite index in $G$. Applying this fact again to the intersection of the family $\{\lambda_t(C_G(X)):t\in T\}$ gives the desired conclusion.
\end{proof}

We need a couple of group theoretic observations on definable groups in our setting. We note for future reference  that Lemma \ref{L:groups 1} and Corollary \ref{C: semisimple in quotients} below do not require saturation of $\CK$.  
\begin{lemma}\label{L:groups 1}
Let $N$ be a definable group in $\CK$ and $H\trianglelefteq N$ a definable normal subgroup, such that $N/H$ is abelian. 
For $k\in \mathbb N$, let $N^k=\{g^k:g\in N\}$. Then: 
\begin{enumerate}
    \item For every $k\in \mathbb N$, $N^k H$ is a normal subgroup of $N$ and $N/N^kH$ is finite.
    
    \item If $H$ is finite and central in $N$,  and $k=|H|$ then the set $N^k$ is contained in  $Z(N)$ and $Z(N)$ has finite index in $N$.
    
\end{enumerate}
\end{lemma}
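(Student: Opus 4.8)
The idea is to deduce everything from the abelian quotient $A:=N/H$ together with one input about $\CK$ — that it admits no infinite definable group of finite exponent — and, for part (2), an elementary commutator computation.

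\emph{Part (1).} Let $\pi\colon N\to A$ be the quotient map. Since $A$ is abelian, $x\mapsto x^k$ is an endomorphism of $A$, so its image $A^k=\{x^k:x\in A\}$ is a definable, characteristic (in particular normal) subgroup of $A$. One checks $\pi^{-1}(A^k)=N^kH$: the inclusion $\supseteq$ is immediate since $\pi(g^kh)=\pi(g)^k$, and conversely if $\pi(n)\in A^k$, say $\pi(n)=\pi(g)^k=\pi(g^k)$, then $ng^{-k}\in H$, so $n\in g^kH\subseteq N^kH$ (using $H\trianglelefteq N$). Hence $N^kH$ is a definable normal subgroup of $N$ with $N/N^kH\cong A/A^k$, and it remains to prove that $A/A^k$ is finite. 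Now $A/A^k$ is a definable abelian group of exponent dividing $k$, so it suffices to know that $\CK$ has no infinite definable group of finite exponent. For this one passes to a sufficiently saturated elementary extension (harmless, since finiteness of a single definable set is absolute, so no saturation hypothesis on $\CK$ is actually used), applies Fact~\ref{F: Reduction to sorts} to locate a distinguished sort $D$ to which the putative infinite group $B$ is locally almost strongly internal, and derives a contradiction: if $D$ is unstable, Fact~\ref{F: existence of finite normla to get D-group} produces a finite quotient of $B$ which is a $D$-group, hence carries an infinite type-definable infinitesimal subgroup $\nu_D$ (Fact~\ref{F: properties of nu}), which is torsion-free for $D=K,\Gamma,K/\CO$ in our characteristic-$0$ settings (for $\Gamma$ in the $p$-adic case one instead uses Proposition~\ref{P: Gamma}), contradicting finite exponent; if $D=\bk$ is stable, then $B$ is interpretable in an algebraically closed field of characteristic $0$ or in a finite field, neither of which interprets an infinite group of finite exponent. (Alternatively, the finiteness of $A/A^k$ for definable abelian $A$ can be quoted per-setting from the literature.)

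\emph{Part (2).} Here $H$ is finite, central, and $k=|H|$; the blanket hypothesis gives $[N,N]\subseteq H\subseteq Z(N)$, so the commutator map $(x,y)\mapsto[x,y]$ is a bihomomorphism $N\times N\to H$. Consequently $[g^k,h]=[g,h]^k$ for all $g,h\in N$, and since $[g,h]\in H$ and $|H|=k$ we get $[g,h]^k=e$; thus $g^k$ commutes with every $h\in N$, i.e. $N^k\subseteq Z(N)$. Since also $H\subseteq Z(N)$, we have $Z(N)\supseteq N^kH$, which has finite index in $N$ by part (1) applied with this particular $k$. Hence $Z(N)$ has finite index in $N$.

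\textbf{Main obstacle.} The only non-formal ingredient is the fact used in part (1), that $\CK$ has no infinite definable group of finite exponent (equivalently, that $A/A^k$ is finite for definable abelian $A$); everything else is bookkeeping with the quotient $N/H$ and the standard commutator identities for a group with central commutator subgroup. I expect the cleanest write-up to isolate that fact — either as a short lemma proved via the reduction-to-sorts machinery as sketched above, or by citation in each of the three settings — and then present parts (1) and (2) exactly as here.
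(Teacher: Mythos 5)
Your proofs of (1) and (2) are correct and close in spirit to the paper's, with one stylistic difference worth noting, and one genuine (but non-fatal) gap in the optional detour.

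In part (1) you pass through the abelian quotient $A=N/H$ and identify $N^kH=\pi^{-1}(A^k)$, from which normality and the exponent bound on $N/N^kH\cong A/A^k$ are immediate. The paper instead verifies directly that $N^kH$ is a subgroup via the identity $a^kb^k=(ab)^kh_0$ (proved by induction, using normality of $H$ and commutativity of $N/H$) and observes that $N/N^kH$ has exponent at most $k$. Both routes are correct; yours is a bit more conceptual, the paper's more hands-on — a matter of taste. Part (2) is the same argument in different clothing: your bihomomorphism identity $[g^k,x]=[g,x]^k$ is precisely the paper's computation $g^{-1}x^kg=(xh)^k=x^kh^k=x^k$.

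The gap is in your optional sketch of the auxiliary fact that $\CK$ interprets no infinite group of bounded exponent. You assert that $\nu_D$ is torsion-free for $D=K,\Gamma,K/\CO$ and single out $\Gamma$ in the $p$-adic case, but the problematic case you omit is $D=K/\CO$ when $\CK$ is $p$-adically closed: there $\nu_{K/\CO}$ is \emph{not} torsion-free. By Lemma \ref{F: nu in K/O} together with Fact \ref{F: torsion}(3), every ball in $(K/\CO)^n$ around $0$ — hence every set in the filter base defining $\nu_{K/\CO}$ — contains the full torsion subgroup $\mathrm{Tor}(K/\CO)^n$, which is an infinite $p$-group. So the torsion-freeness argument collapses in that subcase, and one would need a different reason (e.g., that $\mathrm{Tor}(K/\CO)^n$ itself has unbounded exponent). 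This does not break your proof of the lemma, since you explicitly offer the citation alternative — which is exactly what the paper does, invoking \cite[Theorems 7.4, 7.7, 7.11]{HaHaPeGps} as a black box. But if you keep the sketch, it needs the $p$-adic $K/\CO$ case repaired.
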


\begin{proof} (1) Since $N/H$ is abelian, for every $a,b\in N$, $ab=bah$ for some $h\in H$. Because $H$ is normal, for all $g\in G$ and $h\in H$ there is $h'\in H$ such that $hg=gh'$. It follows that $a^2b^2=(ab)^2h_1$, for $h_1\in H$, and by induction, $a^kb^k=(ab)^k h_0$, for some $h_0\in H$. Thus $N^kH$ is a subgroup, clearly normal in $N$.

The order of every  $g\in N/N^kH$ is at most $k$, thus $N/N^kH$ has bounded exponent.
The group $N/N^kH$ is clearly also definable in $\CK$, and by  \cite[Theorem 7.4, Theorem 7.7 and Theorem 7.11]{HaHaPeGps} a definable group of bounded exponent must be finite. 
Thus, $N/N^kH$ must be finite.

(2) Assume now that $k=|H|$ and $H$ is central. Since $G/H$ is abelian,  for every $g, x\in N$ we have $g^{-1}xg=x h$ for some $h\in H$, and hence, since $H$ is central,  $g^{-1}x^k g=(xh)^k=x^kh^k=x^k$. Thus $N^k\sub Z(N)$. It follows that $N^kH\sub Z(N)$, so by (1), $Z(N)$ has finite index in $N$.
\end{proof}

The proof of the next corollary is simpler when $H$ is central, but we need the more general statement: 
\begin{corollary}\label{C: semisimple in quotients}
 Let $G$ be a definable group in $\CK$ and $H$ a finite normal subgroup of $G$, both defined over a parameter set $A$. Let $\{\lambda_t: t\in T\}$ be a definable family of group automorphisms of $G$ fixing $H$ setwise.

 If for some $B\supseteq A$ the group $G/H$ contains a $B$-definable normal abelian subgroup of dp-rank $k$  invariant under all the $\lambda_t$ then so does  $G$. In particular, if $G$ is definably semisimple, then so is $G/H$.
\end{corollary}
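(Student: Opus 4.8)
The strategy is to lift the given normal abelian subgroup of $G/H$ back to $G$ and show that its preimage, suitably modified, contains an infinite $\lambda_t$-invariant normal abelian subgroup of $G$. Let $\pi:G\to G/H$ be the quotient map, let $\bar A\leq G/H$ be the given $B$-definable normal abelian subgroup of dp-rank $k$ invariant under all $\lambda_t$, and set $N=\pi^{-1}(\bar A)$. Then $N$ is a $B$-definable normal subgroup of $G$, invariant under all the $\lambda_t$ (since $\lambda_t$ fixes $H$ setwise and induces $\bar A\mapsto\bar A$ on the quotient), it contains the finite central-in-$N$ subgroup $H$ (note $H\trianglelefteq G$ finite forces $H$ to have finite index centralizer, and after passing to a $\lambda_t$-invariant finite-index subgroup of $G$ via Corollary \ref{Baldwin Saxl} we may assume $H\leq Z(N)$), and $N/H\cong\bar A$ is abelian with $\dpr(N)=\dpr(N/H)=k$ by Lemma \ref{L:passage of D-group under finite-to-one}(1) applied to $N$ (or directly, since $H$ is finite).

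First I would reduce to the case $H\leq Z(N)$: applying Corollary \ref{Baldwin Saxl} with $X=H$ and the family $\{\lambda_t\}$, we obtain an $A$-definable $\lambda_t$-invariant finite-index subgroup $G_1\leq G$ with $H\leq Z(G_1)$; replacing $G$ by $G_1$ (and $N$ by $N\cap G_1$, still $B$-definable, normal, $\lambda_t$-invariant, with $N\cap G_1/H$ abelian and of dp-rank $k$ since $N\cap G_1$ has finite index in $N$) we may assume $H$ is central in $N$. Now apply Lemma \ref{L:groups 1}(2) to $N$ with $k_0=|H|$: the set $N^{k_0}$ is contained in $Z(N)$ and $Z(N)$ has finite index in $N$. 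In particular $Z(N)$ is a definable abelian subgroup of $N$ of dp-rank $k$ (finite index in $N$), and it is \emph{characteristic} in $N$, hence invariant under every automorphism of $G$ preserving $N$ — in particular under all the $\lambda_t$ — and normal in $G$. This $Z(N)$ is the required subgroup: it is normal in $G$, $\lambda_t$-invariant for all $t$, abelian, and of dp-rank $k\geq 1$, so infinite.

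For the ``in particular'' clause: if $G/H$ is not definably semisimple, it contains an infinite definable normal abelian subgroup; taking $T$ to be a single point and $\lambda$ the identity (and absorbing the defining parameter of that subgroup into $B$) the first part produces an infinite definable normal abelian subgroup of $G$, so $G$ is not definably semisimple either. Contrapositively, if $G$ is definably semisimple then so is $G/H$.

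The main obstacle is the bookkeeping around making $H$ central and tracking the dp-rank through the various finite-index passages: one must check that replacing $G$ by a $\lambda_t$-invariant finite-index subgroup and $N$ by its intersection with that subgroup preserves all of normality, $\lambda_t$-invariance, abelianness of the quotient by $H$, and the dp-rank $k$ — each of which is routine but needs Fact \ref{F: Baldwin saxl}/Corollary \ref{Baldwin Saxl} and sub-additivity/finite-index invariance of dp-rank. The genuinely load-bearing inputs are Lemma \ref{L:groups 1}(2) (which gives $Z(N)$ finite index, using that definable groups of bounded exponent in $\CK$ are finite) and the observation that $Z(N)$ is characteristic, which is what upgrades ``$\lambda_t$ preserves $N$'' to ``$\lambda_t$ preserves the witnessing abelian subgroup''.
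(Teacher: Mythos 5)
Your approach is the same as the paper's: use Baldwin--Saxl (Corollary \ref{Baldwin Saxl}) to pass to a $\Lambda$-invariant finite-index $G_1\subseteq C_G(H)$, then apply Lemma \ref{L:groups 1}(2) to the (suitably intersected) preimage $N$ to find that its centre has finite index, and finally use that the centre is characteristic to get normality in $G$ and $\Lambda$-invariance. That is exactly the argument in the paper.

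However, there is a bookkeeping slip that should be fixed. From $G_1\subseteq C_G(H)$ you conclude ``$H\leq Z(G_1)$,'' but this is not correct unless $H\subseteq G_1$, which need not hold (Corollary \ref{Baldwin Saxl} gives you a finite-index subgroup \emph{of the centralizer}, not one containing $H$). Consequently, when you pass to $N_1:=N\cap G_1$, the expression ``$N_1/H$'' is not well-defined either, since $H$ need not sit inside $N_1$. The correct move is the paper's: set $H_1:=H\cap N_1$, note that $H_1$ \emph{is} central in $N_1$ because $N_1\subseteq G_1\subseteq C_G(H)$ and $H_1\subseteq N_1$, verify that $N_1/H_1\cong N_1 H/H\subseteq N/H$ is abelian, and then apply Lemma \ref{L:groups 1}(2) to the pair $(N_1,H_1)$ (not $(N,H)$). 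With that repair, your $Z(N_1)$ argument goes through: $N_1$ is normal in $G$ (intersection of two normal subgroups, taking $\{\lambda_t\}$ to include inner automorphisms), so the characteristic subgroup $Z(N_1)$ is normal in $G$, $\Lambda$-invariant, abelian, and of dp-rank $k$.
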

\begin{proof} 
 For simplicity, let us call a set invariant under all the $\lambda_t$ $\Lambda$-invariant.  By Lemma \ref{Baldwin Saxl}, there exists a definable $\Lambda$-invariant $G_1\trianglelefteq G$ of finite index  such that $G_1\sub C_G(H)$. In particular, $G_1\cap H$ is central in $G_1$. We fix such $G_1$.

 Assume that $G/H$ has an infinite $\Lambda$-invariant definable abelian normal subgroup of the form $N/H$ for $N\trianglelefteq G$.  It follows that $N$ is $\Lambda$-invariant. Let  $N_1:=N\cap G_1$, an infinite normal subgroup of $G$ of finite index in $N$ and $H_1:=H\cap N_1$, a central subgroup of $N_1$. The quotient $N_1/H_1$ is isomorphic to $N_1H/H\sub N/H$ so is  abelian. Note that $N_1$ is also $\Lambda$-invariant. 

By Lemma \ref{L:groups 1} (2), $Z(N_1)$ has finite index in $N_1$ and therefore $\dpr(Z(N_1))=\dpr(N_1)=\dpr(N)=\dpr(N/H)$. Because $N_1$ is $\Lambda$-invariant and normal in $G$ so is $Z(N_1)$. Hence, $Z(N_1)$ is a $\Lambda$-invariant definable  normal abelian subgroup of $G$ of the same rank as $N_1/H$. Clearly, if $N/H$ is $B$-definable for some $B\supseteq A$ then so are $N_1$ and $Z(N_1)$. 
\end{proof}

\section{Definable subgroups of $((K/\CO)^n,+)$}
Let $\CK$ be one of our valued fields. The purpose of this section is to describe the definable subgroups of $(K/\CO)^n$. When $\CK$ is either power bounded $T$-convex or $V$-minimal those turn out to be definably isomorphic to a product of balls in $K/\CO$. In this case we can also describe all their definable endomorphisms. When $\CK$ is $p$-adically closed, the existence  of finite subgroups creates obstructions (see Example \ref{E:counter to K/O p-adic}), nonetheless we will show that definable subgroups project injectively onto subgroups of full dp-rank.

\subsection{$\CK$ power-bounded $T$-convex or $V$-minimal}\label{ss:groups in K/O}
We assume that  $\CK$ is either power bounded $T$-convex or $V$-minimal. Recall that for $a\in K\setminus \CO$, $v(a+\CO)$ is well-defined, allowing us to refer to definable balls in $K/\CO$. Below, we use the term {\em trivial ball} to refer to either $K$ (or $K/\CO$) or $\{0\}$.

We start with the following basic observation.

\begin{lemma}\label{L:definable subgroup of K or K/O is a ball}
Every  definable subgroup $G$ of $(K,+)$ is a ball, possibly trivial.  As a result, every definable subgroup of $K/\CO$ is a (possibly trivial) ball.
\end{lemma}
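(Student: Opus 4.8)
The plan is to first prove the statement for subgroups of $(K,+)$ and then read off the case of $K/\CO$ by passing to preimages. So let $G\leq (K,+)$ be definable. If $G$ is finite it is trivial, since $(K,+)$ is torsion free; hence assume $G$ is infinite. As an infinite definable subset of $K$, and since $K$ is an SW-uniformity, $G$ has nonempty interior, so it contains an open ball $B_{>\gamma}(a)$ with $a\in G$; subtracting $a\in G$ gives $B_{>\gamma}(0)\subseteq G$. As $B_{>\gamma}(0)$ is itself an additive subgroup, $G$ is a union of cosets of it: $x+B_{>\gamma}(0)\subseteq G$ for every $x\in G$. In particular $G$ is open.

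The heart of the proof is the claim that $G$ is \emph{valuatively closed around $0$}, i.e. $G=\{x\in K : v(x)\in S\}\cup\{0\}$ where $S:=v(G\setminus\{0\})\subseteq\Gamma$. Fix $\alpha\in S$ and $x\in G$ with $v(x)=\alpha$. Applying the quotient map $B_{\geq\alpha}(0)\twoheadrightarrow B_{\geq\alpha}(0)/B_{>\alpha}(0)\cong(\bk,+)$ to the subgroup $G\cap B_{\geq\alpha}(0)$ produces a definable subgroup of $(\bk,+)$ which is nonzero (it contains the image of $x$); since $\bk$ is either a pure real closed field (in the $T$-convex case) or a pure algebraically closed field of characteristic $0$ (in the $V$-minimal case), it has no proper nontrivial definable additive subgroup, so $G\cap B_{\geq\alpha}(0)$ surjects onto $\bk$. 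Combined with the coset structure of the previous paragraph, this already gives $\{y:v(y)=\alpha\}\subseteq G$ for every $\alpha\in S$ with $B_{>\alpha}(0)\subseteq G$ — in particular for all $\alpha\geq\gamma$ and, more generally, for $\alpha$ above $\gamma_0:=\inf\{\gamma'\in\Gamma: B_{>\gamma'}(0)\subseteq G\}$. What remains is to exclude a ``thin tail'', i.e. to show $G$ cannot meet the sphere $\{v=\alpha\}$ for $\alpha<\gamma_0$. This is where I invoke the structure theorem for definable subsets of $K$ — every such set is a Boolean combination of balls, together with intervals in the $T$-convex case (Holly \cite{holly} for the $V$-minimal case; van den Dries \cite{vdDries-Tconvex} and Tyne \cite{tynephd}, with a direct proof in the appendix, for the power bounded $T$-convex case). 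Writing $G$ as such a finite combination and using that $G$ is open and invariant under translation by its own elements, a standard bookkeeping argument on the ``outermost'' balls forces them to be cosets of a single ball centered at $0$, with no removed sub-balls and (in the $T$-convex case) no genuine interval components, since the latter are incompatible with $G$ being a union of $\gamma$-cosets; this yields the valuative closedness claim.

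Granting the claim, $S$ is a definable subset of the o-minimal value group $\Gamma$, and it is upward closed: if $\alpha\in S$ and $\delta>\alpha$, pick $s$ with $v(s)=\delta$ and $x\in G$ with $v(x)=\alpha$; then $v(x+s)=\alpha$, so $x+s\in G$ by valuative closedness, hence $s=(x+s)-x\in G$ and $\delta\in S$. An upward closed definable subset of $\Gamma$ is of the form $(\gamma_0,\infty)$, $[\gamma_0,\infty)$ for some $\gamma_0\in\Gamma$, or all of $\Gamma$; by the claim $G$ then equals $B_{>\gamma_0}(0)$, $B_{\geq\gamma_0}(0)$, or $K$ respectively — in every case a (possibly trivial) ball. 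Finally, given a definable subgroup $G'\leq K/\CO$, its preimage $\widehat G$ under $K\to K/\CO$ is a definable additive subgroup of $K$ containing $\CO=B_{\geq 0}(0)$; by the first part $\widehat G$ is a ball containing $\CO$, i.e. $B_{\geq\gamma_0}(0)$ or $B_{>\gamma_0}(0)$ with $\gamma_0\leq 0$, or $K$. Quotienting by $\CO$ shows $G'=\widehat G/\CO$ is a (possibly trivial) ball in $K/\CO$.

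The main obstacle is exactly the valuative closedness claim, and within it the exclusion of subgroups with a thin low-valuation tail (and, in the $T$-convex case, of genuine interval components); this is precisely the point where the structure theorem for definable subsets of $K$ is needed. The residue-field computation and the o-minimality of $\Gamma$ then do the rest routinely.
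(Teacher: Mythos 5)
Your high-level strategy --- contain a ball around $0$, aim for ``valuative closedness'' via the residue-field step, finish by o-minimality of $\Gamma$ --- is genuinely different from the paper's, and the step you yourself flag as the main obstacle, the ``standard bookkeeping argument on the outermost balls,'' is indeed a gap. It is not standard, and the assertion it is meant to deliver does not follow from the decomposition alone: a single convex piece (resp.\ swiss cheese) of $G$ can contain infinitely many cosets of $B$, so ``finitely many pieces'' does not yield ``finitely many $B$-cosets,'' and there is no reason the pieces should be single cosets of one ball centered at $0$. In the $T$-convex case the gap can be closed along your lines: for $x\in G\setminus B$, the multiples $nx$ ($n\geq 1$) are pairwise distinct (characteristic $0$) and all in $G$, so by pigeonhole two of them lie in the same convex piece; convexity and $G=-G$ then put $[-(n_2-n_1)x,\,(n_2-n_1)x]$ inside $G$, which contains the open ball $B_{>v(x)}(0)$, forcing $v(x)\geq\gamma_0$; and if $v(x)=\gamma_0$ your residue-field step gives $B_{\geq\gamma_0}(0)\subseteq G$, contradicting maximality of $B$. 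But this convexity argument does not transfer to the $V$-minimal case, where the decomposition is into swiss cheeses rather than order-convex pieces, and your sketch gives no handle on the holes. (Two incidental slips: Holly's theorem in \cite{holly} is the RCVF statement, not the $V$-minimal one --- there the relevant input is the swiss-cheese decomposition --- and in the $T$-convex case $\bk$ is a power-bounded o-minimal expansion of an RCF, not a pure RCF; the conclusion you need, that $(\bk,+)$ has no proper nontrivial definable subgroup, still holds, but by o-minimal dimension reasons rather than purity.)

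The paper avoids the bookkeeping entirely by a shorter route. It takes $B$ to be the union of all sub-balls of $G$ around $0$ --- a ball by definable completeness of $\Gamma$, nontrivial because $G$ has interior --- then observes that $B$ is divisible (each $x\mapsto nx$ maps $B$ onto itself) while $(K,+)$ is torsion-free, so $G/B$ is torsion-free and $[G:B]\in\{1,\infty\}$; and when $[G:B]=\infty$ it derives a contradiction from the (Cballs) property of \cite{HaHaPeVF} combined with the same residue-field step you use. Divisibility of $B$ plus torsion-freeness of $K$ is exactly what replaces your bookkeeping, and (Cballs) is the finiteness tool that plays the role you assign to the Holly/swiss-cheese decomposition.
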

\begin{proof}
    Since $\pi: K\to K/\CO$ is a group homomorphism, and the image of a ball (centered at $0$) under $\pi$ is again a ball, it suffices to show that the claim is true for definable subgroups of $(K,+)$.  So let $G$ be a subgroup of $(K,+)$.
   Since $(K,+)$ is torsion-free, if $G$ is finite it is trivial. So we assume may $G$ is infinite.
    Let $B$ be the union of all sub-balls of $G$ containing $0$. If $B=K$ then $G=K$ and we are done, so assume $B\neq K$. Because $\Gamma$ is definably complete, $B$ is a ball itself, possibly $\{0\}$.   Since every infinite definable subset of $K$ has an interior, and $G$ is a group $B\neq \{0\}$. We will show that $G=B$.

Assume for contradiction that $G\neq B$. In our settings,  $B$ is a divisible group (indeed, the maps $x\mapsto nx$ send $B$ onto itself for all non-zero $n\in \Nn$), and since $(K,+)$ is torsion-free, it must be that $[G:B]=\infty$. This means that $G$ contains infinitely many disjoint maximal balls, cosets of $B$.

Assume that $B$ is a closed ball. By the so-called (Cballs) property introduced in \cite{HaHaPeVF}, which holds in our settings \cite[Proposition 5.6, Lemma 5.10]{HaHaPeVF}, only finitely many translates of $B$ intersect $G$, so $G$ contains only finitely many cosets of $B$, contradiction.

Assume then that $B$ is open. After re-scaling $G$, we may assume that $B=\m$. Again, by (Cballs), $G$ intersects only finitely many closed $0$-balls. Consequently, $\CO\cap G$ is an additive subgroup of $K$ containing infinitely many cosets of $\m$. The image of $\CO\cap G$ is, therefore, an infinite definable subgroup of $(\bk,+)$. However, under our assumptions $\bk$ has no infinite definable proper subgroups, thus  $G\cap \CO=\CO$ contradicting the maximality of the ball $B=\m$. Thus, $G=B$, with the desired conclusion. 
\end{proof}

\begin{example}\label{E:counter to K/O p-adic}
    The lemma above does not hold in the $p$-adically closed case. For example, consider a finite residual extension $K$ of $\mathbb{Q}_p$. Let $H$ be a non-trivial finite proper subgroup of $(\bk_K,+)$, then $G=\{g\in K:\mathrm{res}(g)\in H\}$ is a  subgroup of $K$ that is not a ball.
\end{example}

The following computation should be well known.

\begin{fact}\label{F:prod-balls} 
Let $B_1,B_2\subseteq K$ be  balls (possibly the whole of $K$).
\begin{enumerate} 
\item Every ball containing $1$ but not $0$  is a multiplicative subgroup    of $K^\times$.
\item The point-set product $B_1\cdot B_2$ is also a ball.

\item If $0\notin B_2$ then their point-set quotient $B_1\cdot (B_2)^{-1}$ is also a ball.
\end{enumerate}
\end{fact}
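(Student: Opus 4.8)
The plan is to prove the three parts in order, proving (1) first and then using it to put balls into a convenient normal form for (2) and (3). For part (1) I would argue by a direct ultrametric computation: a ball containing $1$ but not $0$ has the form $B_{\ge\gamma}(1)$ with $\gamma>0$ or $B_{>\gamma}(1)$ with $\gamma\ge 0$, so every element $y$ of it satisfies $v(y)=0$; then the identities $xy-1=x(y-1)+(x-1)$ and $x^{-1}-1=-(x-1)x^{-1}$ show the set is closed under multiplication and inversion and contains $1$.

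Next I would record the normal form. Since the valuation is onto $\Gamma$, a ball around $0$ is $a\CO$ or $a\m$ for a suitable $a\in K^\times$; and if $0\notin B$, picking any $a\in B$ makes $a^{-1}B$ a ball containing $1$ but not $0$, hence by part (1) a multiplicative subgroup $G\le K^\times$, so $B=aG$. Thus every proper nontrivial ball equals $aI$ with $I\in\{\CO,\m\}$ or $I=G$ a multiplicative-subgroup ball around $1$; note such a $G$ lies in $\CO^\times$, since $v(y-1)>0$ forces $v(y)=0$.

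For part (2): products in which one factor is $K$ (and the other is nonzero), or in which a factor is $\{0\}$, are immediate, so I would assume $B_i=a_iI_i$ are proper nontrivial balls; then $B_1B_2=a_1a_2(I_1I_2)$ and it suffices to show $I_1I_2$ is a ball. The key observations are that two balls both passing through the point $1$ are nested by the ultrametric trichotomy, so a product of two multiplicative-subgroup balls is the larger of the two; and that, using $G\subseteq\CO^\times$ together with density of $\Gamma$ (which gives $\m\cdot\m=\m$), one checks $\CO\cdot\CO=\CO$, $\CO\cdot\m=\m$, $\CO\cdot G=\CO$ and $\m\cdot G=\m$. In every case $I_1I_2\in\{\CO,\m,G\}$ is a ball, hence so is $B_1B_2$.

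For part (3): if $0\notin B_2$, write $B_2=a_2G$ as above; since $G$ is a group, $(B_2)^{-1}=a_2^{-1}G$ is again a ball not containing $0$, and then $B_1\cdot(B_2)^{-1}$ is a product of two balls, hence a ball by part (2). The whole argument is elementary; the only mildly delicate points are the normal-form step — where part (1) is exactly what lets us recognize an off-center ball as a coset of a multiplicative subgroup — and checking that the ``atomic'' products among $\CO,\m,G$ stay within $\{\CO,\m,G\}$, which is the sole place the hypotheses on $\CK$ (the valuation being onto a densely ordered $\Gamma$) are actually used.
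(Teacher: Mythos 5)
Your proof is correct and follows essentially the same strategy as the paper's: use (1) to recognize a ball off $0$ as a coset of a multiplicative-subgroup ball around $1$, normalize the factors by scaling, and then handle the remaining atomic products (balls around $0$ with each other, a ball around $0$ with a subgroup-ball, and two nested subgroup-balls) case by case. The only difference is that the paper declares (1) "well known" and is slightly terser about the atomic products, while you spell both out explicitly; the substance is the same.
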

\begin{proof} 
We assume both $B_1$ and $B_2$ are not equal to $K$. The proof can be easily adapted to include this case as well.

(1) Well known.

(2) Let $B_1$ and $B_2$ be balls. It will suffice to show that $cB_1B_2$ is a ball for some $c\neq 0$. So, as we proceed,  we may freely replace $B_i$ with $cB_i$ for any such constant $c$. 

Assume, first, that $0\in B_1$ but $0\notin B_2$, thus $B_1B_2=\bigcup \{B_1b: b\in B_2\}$ is a chain of balls centered at $0$. After multiplying by a suitable element, we may assume that $v(b)=0$ for all $b\in B_2$ and so $B_1b=B_1$ for all $b\in B_2$, which gives $B_1B_2=B_1$. If $0\in B_1\cap B_2$ then after multiplying by suitable elements we may assume that $B_1,B_2\in \{\CO,\m\}$; in any of these cases $B_1B_2$ is obviously a ball.

Assume, now,  that $0\neq B_1\cup B_2$. By multiplying by appropriate elements, we may assume that $1\in B_1\cap B_2$, so both are multiplicative subgroups of $K^\times$. Without loss of generality,  $B_1\sub B_2$. Then $B_2\sub B_1B_2\sub B_2B_2=B_2$.

(3) If $0\notin  B_2$ then after possibly multiplying by an appropriate element, we get that $B_2$ is a multiplicative subgroup of $K^\times$. Thus $B_2^{-1}=B_2$ and (2) applies.
\end{proof}

\begin{lemma}\label{a general lemma}
Let $I,J, H\subseteq K$ be definable subgroups, $I\subseteq H\cap  J$, and let $T:H/I\to K/J$ be a definable homomorphism.
Then there is $d\in K$ such that, $d\cdot I\subseteq J$ and for every $x\in H$,  $T(x+I)=d\cdot (x+I)+J$.
\end{lemma}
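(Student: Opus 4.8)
The plan is to reduce everything to the case $I=\{0\}$ and then to the classification of definable subgroups of $K$ (and endomorphisms) established in the preceding lemmas and facts. I would argue as follows.

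\textit{Step 1: reduce to $I=\{0\}$.} By Lemma~\ref{L:definable subgroup of K or K/O is a ball}, each of $I,J,H$ is a (possibly trivial) ball. If $I$ is the trivial ball $\{0\}$ we are already in the reduced situation. Otherwise $I=B_{\ge\gamma}(0)$ or $B_{>\gamma}(0)$ for some $\gamma$, hence $I$ is divisible, and I want to lift $T$ through the quotient maps. Write $\pi_I:H\to H/I$ and $\pi_J:K\to K/J$. The composite $T\circ\pi_I:H\to K/J$ is a definable homomorphism whose kernel contains $I$; its image $T(H/I)$ is a definable subgroup of $K/J$, hence by Lemma~\ref{L:definable subgroup of K or K/O is a ball} it is (the image of) a ball $J'\supseteq J$. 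Pulling back, I get a definable homomorphism $H\to J'/J$; it suffices to find $d$ with $d\cdot I\subseteq J$ and $T\circ\pi_I(x)=d x+J$ for all $x\in H$, since then $d\cdot I\subseteq J$ forces $T\circ\pi_I$ to factor through $H/I$ exactly as $x+I\mapsto dx+I+J=dx+J$ (the last equality because $dI\subseteq J$). So from now on assume $I=\{0\}$ and we have a definable homomorphism $T:H\to K/J$.

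\textit{Step 2: the case $I=\{0\}$ — produce the scalar $d$.} Here $H$ is a ball containing $0$, so either $H=K$ or (after rescaling $H$, which is harmless) $H=\CO$ or $H=\m$. Lift $T$ once more: let $\tilde H=\pi_J^{-1}(T(H))\subseteq K$; this is a definable subgroup of $K$ containing $J$, hence a ball, and $T$ induces a surjective definable homomorphism $H\to \tilde H/J$. The key point is to show that any such $T$ is given by multiplication by some $d\in K$. Consider the graph $\Gamma_T=\{(x,y)\in H\times \tilde H: T(x)=y+J\}$; this is a definable subgroup of $(K^2,+)$, and modulo $\{0\}\times J$ it is the graph of a homomorphism. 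Using that $H$ is a ball around $0$ and $\bk$ has no infinite proper definable subgroups (as in the proof of Lemma~\ref{L:definable subgroup of K or K/O is a ball}), together with the structure of definable subsets of $K$ (boolean combinations of balls and intervals in our settings), one shows $\Gamma_T/(\{0\}\times J)$ is again essentially a ball, hence of the form $\{(x,dx+J):x\in H\}$ for a single $d\in K$; the constraint $T(H)\subseteq \tilde H/J\subseteq K/J$ being well-defined on all of $H$ forces $d\cdot H\subseteq \tilde H$ and in particular, when we later re-impose $I$, $d\cdot I\subseteq J$. Concretely, pick any $x_0\in H$ generic over the defining parameters with $v(x_0)$ minimal possible, choose a representative $y_0\in K$ of $T(x_0)$, set $d=y_0/x_0$, and check using additivity and divisibility of balls (Fact~\ref{F:prod-balls}) that $T(x)=dx+J$ for all $x$: first for $x$ in the sub-ball $x_0\CO\cap H$ by a scaling/continuity argument, then for all of $H$ since finitely many such cosets cover $H$ and $T$ is a homomorphism.

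\textit{Step 3: assemble.} Combining Steps 1 and 2, for the original data we obtain $d\in K$ with $T(x+I)=dx+J$ for all $x\in H$, and the identity $d\cdot I\subseteq J$ is exactly the well-definedness of this formula on cosets of $I$. I expect the main obstacle to be Step~2: pinning down that an arbitrary \emph{definable} homomorphism between ball-quotients is literally scalar multiplication, rather than merely piecewise so. This is where one must use the valued-field structure decisively — either invoke that definable subsets of $K$ are boolean combinations of balls and intervals (van den Dries–Tyne in the $T$-convex case, and the analogous fact in the $V$-minimal case), or argue directly via genericity and the filter of sub-balls, ruling out "twisted" homomorphisms by the absence of infinite proper definable subgroups of $\bk$ and divisibility of the relevant balls.
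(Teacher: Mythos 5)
Your Step~1 reduction to $I=\{0\}$ is logically harmless (once one has $d$ with $T(x+I)=dx+J$ for all $x\in H$, the containment $d\cdot I\subseteq J$ is automatic since $I\subseteq\ker$), but the real content is Step~2, and that is where the proposal breaks down. You yourself flag it: ``I expect the main obstacle to be Step~2: pinning down that an arbitrary definable homomorphism between ball-quotients is literally scalar multiplication.'' That obstacle is never actually overcome. The argument you sketch — take the graph $\Gamma_T\subseteq K^2$ and show that ``$\Gamma_T/(\{0\}\times J)$ is again essentially a ball, hence of the form $\{(x,dx+J)\}$'' — is, in effect, an appeal to a classification of definable subgroups of $K^2$ together with a linearity statement for definable homomorphisms. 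But in the paper that classification is Lemma~\ref{end-groups2}, whose clause $(2)_1$ \emph{is} the $I=\{0\}$ case of the lemma you are trying to prove, and whose clause $(1)_2$ is proved \emph{using} $(2)_1$. So the route you gesture at is circular. Likewise, Lemma~\ref{L:definable subgroup of K or K/O is a ball} only classifies subgroups of $K$, not of $K^2$, and Fact~\ref{F:prod-balls} concerns products of balls, not graphs of homomorphisms; neither gives you the needed shape of $\Gamma_T$.

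The concrete fallback you offer is also unsound: ``pick $x_0\in H$ generic with $v(x_0)$ minimal, \dots, check $T(x)=dx+J$ first on $x_0\CO\cap H$ by a scaling/continuity argument, then for all of $H$ since finitely many such cosets cover $H$.'' If $H$ is open (e.g.\ $\m$), there is no $x_0$ with minimal valuation; and if $H$ strictly contains $x_0\CO$, then $H$ is a union of \emph{infinitely} many cosets of $x_0\CO$, so the ``finitely many cosets cover $H$'' step fails. The paper instead attacks the problem directly: for each $x\in H\setminus I$ it forms the ``candidate slope'' set $S_x=\{w/z: z\in x+I,\ w\in T(x+I)\}$, which is a ball by Fact~\ref{F:prod-balls}; it proves that the auxiliary sets $H_d=I\cup\{x: d\in S_x\}$ are subgroups (hence balls by Lemma~\ref{L:definable subgroup of K or K/O is a ball}), deduces from this that $\{S_x\}$ is a chain of balls, and then invokes definable spherical completeness to extract a common $d\in\bigcap_x S_x$; finally it verifies $\hat H_d:=\{z\in H:d z\in T(z+I)\}$ equals all of $H$. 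That chain-of-balls plus spherical-completeness argument is precisely the missing ingredient in your Step~2, and it does not reduce to $I=\{0\}$ — the general $I$ is handled uniformly. You should replace your graph-based sketch with an argument of this kind (or find another non-circular mechanism); as it stands the proposal has a genuine gap at its central step.
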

\begin{proof}
Since $I,J, H$ are definable subgroups of $K$, they are balls and so are their cosets, and because $T$ is a group homomorphism, the image under $T$ of a coset of $I$ is also a coset of a subgroup, so viewed as a subset of $K$ it is a ball. 
Given $x\in H\setminus I$, let
\[S_x=\{w/z\in K:z\in x+I \wedge w\in T(x+I)\}\] 
As a quotient of two balls  $S_x$ is a ball, too (note that $0\notin x+I$ so  Fact \ref{F:prod-balls} applies).  For $d\in K$, let
\[H_d=I\cup \{x\in H\setminus I :d\in S_x\}.\]

We claim that each $H_d$ is a subgroup of $K$ (and when $I=0$, possibly a singleton). To see this, let $H_d'=\{x\in H\setminus I :d\in S_x\}$; by definition $H_d'\cap I=\emptyset$. It follows directly from the definition of $H_d'$ that if $x_1\in I$ and $x_2\in H_d'$, then $x_1\pm x_2\in H_d'$. So it remains to show that if $x_1,x_2\in H_d'$ then   $x_1-x_2\in H_d$. By assumption, $d\in S_{x_1}\cap S_{x_2}$, so  we can write,  $d=w_1/z_1=w_2/z_2$ with $w_i\in T(x_i+I)$ and $z_i\in x_i+I$. So $d(z_1-z_2)=w_1-w_2$. If $z_1-z_2\in I$ then $x_1+I=x_2+I$ so obviously $x_1-x_2\in H_d$. Otherwise, $d=(w_1-w_2)/(z_1-z_2)$,  $z_1-z_2\in x_1-x_2+I$ and $w_1-w_2\in T(x_1+I)-T(x_2+I)=T(x_1-x_2+I)$.

Hence, by Lemma \ref{L:definable subgroup of K or K/O is a ball}, $H_d$ is a ball around $0$. We use this fact now to show that the family $\{S_x:x\in K\}$ forms a chain of balls with respect to inclusion. Namely, we show that for $x_1,x_2\in H\setminus I$, if $v(x_1)\leq v(x_2)$ then $S_{x_1}\sub S_{x_2}$.  Let $d\in S_{x_1}$. Since $H_d$ is a ball and $v(x_1)\leq v(x_2)$ then $x_1\in H_d$ implies that  $x_2\in H_d$, i.e., $d\in S_{x_2}$.

Since $V$-minimal and power bounded $T$-convex valued fields are $1$-h-minimal (see \cite[Section 6]{hensel-min}) they are  definably spherically complete (\cite[Lemma 2.7.1]{hensel-min}, namely the intersection of a definable chain of non-empty balls is non-empty. 
Thus, $\bigcap\limits_{x\in H\setminus I} S_x\neq \0$,  and we let $d$ be an element in the intersection.

Let $\hat H_d=\{ z\in H: d\cdot z\in T(z+I)\}$. Since $T: H/I \to K/J$ is a homomorphism, $\hat H_d$  is a subgroup of $(K,+)$. By definition $H_d'\subseteq \hat H_d$ and as both $\hat H_d$ and $I$ are balls, either $I\subseteq \hat H_d$ or $\hat H_d\subseteq I$. Since $H_d'\cap I=\emptyset$ necessarily, $I\subseteq \hat H_d$ and thus $H_d\subseteq \hat H_d$. On the other hand, by the choice of $d$, for all $x\in H\setminus I$, $d\in S_x$,  so $H=H_d=\hat H_d$.

Finally, as $I\subseteq \hat H_d$, $d\cdot I\subseteq T(I)=J$. Thus $T(x+I)=d\cdot (x+I)+J$ for any $x\in H$.
\end{proof}

We are now ready to describe all definable subgroups of $K^n$ and the associated homomorphisms.
\begin{lemma} \label{end-groups2}
The following holds for all  $n$:

$(1)_n$ If $H\subseteq K^n$ is a definable subgroup of $K^n$ then there is $g\in \gl_n(\CO)$ such that $g(H)$ is a cartesian product  of balls, possibly trivial. 

$(2)_n$ If $H\subseteq K^n$ and $J\subseteq K$ are definable subgroups and $T:H\to K/J$ is a definable homomorphism then there are elements $\alpha_1,\ldots, \alpha_n\in K$ such that for all $x=(x_1,\ldots, x_n)\in H$, 
\[T(x_1,\ldots,x_n)=\alpha_1x_1+\cdots+\alpha_nx_n+J.\]
\end{lemma}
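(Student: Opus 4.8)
The plan is to prove $(1)_n$ and $(2)_n$ simultaneously by induction on $n$, using Lemma \ref{a general lemma} as the key tool for the homomorphism statement and Lemma \ref{L:definable subgroup of K or K/O is a ball} for the base case.

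\medskip

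\textbf{Base case $n=1$.} Statement $(1)_1$ is exactly Lemma \ref{L:definable subgroup of K or K/O is a ball} together with the observation that $\gl_1(\CO) = \CO^\times$ can rescale any ball around $0$ to a ``standard'' one; actually $(1)_1$ holds trivially since $H\subseteq K$ is already a ball. For $(2)_1$, we have $H\subseteq K$ a definable subgroup, $J\subseteq K$ a definable subgroup, and $T:H\to K/J$ a homomorphism. Set $I = \{0\}$ (or rather, we want to apply Lemma \ref{a general lemma} with $I \subseteq H\cap J$; take $I=\{0\}$, which is contained in both). Then Lemma \ref{a general lemma} gives $\alpha_1 = d\in K$ with $T(x) = d\cdot x + J$ for all $x\in H$, which is exactly $(2)_1$.

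\medskip

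\textbf{Inductive step for $(1)_n$.} Given a definable subgroup $H\subseteq K^n$, let $\pi:K^n\to K$ be projection onto the last coordinate and let $H' = \pi(H)\subseteq K$, a definable subgroup, hence a ball, and let $H_0 = H\cap (K^{n-1}\times\{0\})$, which we identify with a definable subgroup of $K^{n-1}$. By $(1)_{n-1}$, after applying some $g_0\in \gl_{n-1}(\CO)$ (extended by the identity on the last coordinate) we may assume $H_0$ is a cartesian product of balls $B_1\times\cdots\times B_{n-1}\times\{0\}$. Now consider the ``slice'' structure: for each fixed value $t\in H'$, the fiber $H\cap (K^{n-1}\times\{t\})$ is a coset of $H_0$ in $K^{n-1}\times\{0\}$; choosing a definable section $s:H'\to K^{n-1}$ with $(s(t),t)\in H$, the map $t\mapsto s(t) \bmod H_0$ is (mod the ambiguity) a homomorphism from $H'$ into $K^{n-1}/H_0$. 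Composing with each coordinate projection $K^{n-1}/H_0 \to K/B_i$, we get homomorphisms $H'\to K/B_i$, to which we apply $(2)_1$ (the one-variable case of $(2)$, already available) to write each as $t\mapsto \beta_i t + B_i$. Subtracting off a suitable $\CO$-linear (indeed $\CO$-coefficient, to keep the transforming matrix in $\gl_n(\CO)$ — here one must check the coefficients $\beta_i$ land in $\CO$, which follows because $B_i\subseteq \CO$-module structure forces $\beta_i H' \subseteq$ appropriate ball; one rescales so that $\beta_i\in\CO$) change of the first $n-1$ coordinates by multiples of the last, we straighten $H$ into $B_1\times\cdots\times B_{n-1}\times H'$, a product of balls. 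The composite of all the elementary transformations used lies in $\gl_n(\CO)$.

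\medskip

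\textbf{Inductive step for $(2)_n$.} Given $H\subseteq K^n$, $J\subseteq K$ definable subgroups and $T:H\to K/J$ a definable homomorphism. Let $H_0 = H\cap(K^{n-1}\times\{0\})$ as above, viewed in $K^{n-1}$, and restrict $T$ to $H_0$; by $(2)_{n-1}$ there are $\alpha_1,\ldots,\alpha_{n-1}\in K$ with $T(x_1,\ldots,x_{n-1},0) = \alpha_1 x_1 + \cdots + \alpha_{n-1}x_{n-1} + J$. Subtracting this off, we may assume $T$ vanishes on $H_0$, so $T$ factors through $H/H_0$. Projection to the last coordinate identifies $H/H_0$ with a subgroup of $K/(\text{something})$ — more precisely, it gives a homomorphism from the definable subgroup $H' = \pi(H)$ (after choosing a section as above) into $K/J$, to which $(2)_1$ / Lemma \ref{a general lemma} applies, yielding $\alpha_n\in K$ with the induced map $x_n\mapsto \alpha_n x_n + J$. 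Reassembling, $T(x_1,\ldots,x_n) = \alpha_1 x_1 + \cdots + \alpha_n x_n + J$.

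\medskip

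\textbf{Main obstacle.} I expect the technical heart to be the bookkeeping around keeping the transforming matrix in $\gl_n(\CO)$ rather than just $\gl_n(K)$ in $(1)_n$: one must verify that the linear coefficients produced by $(2)_1$ in the straightening step have valuation $\ge 0$ (or can be arranged so after the permissible $\CO^\times$-rescalings of individual coordinate balls), using that each $B_i$ is a subgroup so $\beta_i\cdot(\text{ball}) $ must land inside the ambient structure compatibly. A secondary subtlety is the handling of trivial balls ($\{0\}$ or all of $K$) as degenerate cases throughout — e.g. when some $B_i = \{0\}$ the section argument degenerates but the conclusion is easier — and the mild abuse in treating $H/H_0$ via a definable section, which requires noting that the ``homomorphism'' obtained is well-defined modulo $H_0$ and that Lemma \ref{a general lemma} applies to the genuinely subgroup-valued pieces after this reduction.
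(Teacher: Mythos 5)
Your framework — simultaneous induction via Lemma \ref{a general lemma} and Lemma \ref{L:definable subgroup of K or K/O is a ball} — is reasonable, and your $(2)_n$ step (reduce $T$ modulo its restriction to $H_0$, descend to $H'$, apply $(2)_1$) is sound, though the paper does $(2)_n$ more simply by first proving $(1)_n$ and then splitting $T$ across a product of balls. The genuine gap is in your $(1)_n$ step, at exactly the point you flag as the ``main obstacle,'' and your proposed remedy does not hold.

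Projecting to the last coordinate, straightening $H_0=H\cap(K^{n-1}\times\{0\})$ to $B_1\times\cdots\times B_{n-1}\times\{0\}$, and writing the fibering maps as $T_i(t)=\beta_i t+B_i$ gives no control on $v(\beta_i)$. Concretely, with $n=2$, pick $\beta\in K$ with $v(\beta)<0$ and set
\[H=\{(y,t)\in K^2:\ y-\beta t\in\CO\}.\]
This is a definable subgroup; $H_0=\CO\times\{0\}$ (so $B_1=\CO$), $H'=K$, and $T_1:K\to K/\CO$ is $t\mapsto\beta t+\CO$, so $\beta_1=\beta\notin\CO$. The ambiguity in $\beta_1$ is modulo $\{\gamma:\gamma K\subseteq\CO\}=\{0\}$, so it cannot be moved into $\CO$; and rescaling coordinate balls by units preserves valuations, so the $\CO^\times$-rescalings you invoke do not help. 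Thus your shear $(y,t)\mapsto(y-\beta t,\,t)$ lies outside $\gl_2(\CO)$ and the induction does not close. The lemma is nonetheless true for this $H$, but the shear must go the other way: $(y,t)\mapsto(y,\,t-\beta^{-1}y)$ is in $\gl_2(\CO)$ since $v(\beta^{-1})>0$, and carries $H$ to $K\times\beta^{-1}\CO$. The point is that you cannot always clear the last coordinate against the first $n-1$; you must identify, among all coefficients coupling the coordinates (including the implicit coefficient $1$ on the distinguished one), the one of minimal valuation and eliminate against \emph{it}, so that every multiplier lands in $\CO$. This is precisely what the paper's permutation step and its choice of the map $S$ accomplish (working with $\pi(H)$ rather than $\ker\pi$, a dual but equivalent decomposition). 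That valuation bookkeeping is the real content of $(1)_n$; without it the argument is incomplete.
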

\begin{proof}
$(1)_1$ By Lemma \ref{L:definable subgroup of K or K/O is a ball}, every definable subgroup of $K$ is a ball, possibly trivial.

$(2)_1$ This is  Lemma \ref{a general lemma} for $I=\{0\}$.

We now proceed with the induction step, assuming $(1)_{n-1}, (2)_{n-1}$ and prove $(1)_{n}$: 

Let $\pi:K^n\to K^{n-1}$  be the projection onto the first $n-1$ coordinates. By $(1)_{n-1}$, we may assume that $\pi(H)=H_1\times \cdots \times H_{n-1}$, for balls $H_i\subseteq K$.   Also, write $\ker(\pi)=H\cap (\{0\}^{n-1}\times K)$ as $\{0\}^{n-1}\times J$, for a definable subgroup $J\subseteq K$. 

 Notice that for every $(a,b), (a,c)\in H\sub K^{n-1}\times K$ we have $b-c\in J$ and hence $H$ can be viewed as the graph of a function $T:\pi(H) \to K/J$, mapping $a$ to $b+J$, i.e.
 \[H=\{(a,b)\in K^n:a\in \pi(H)\, \wedge b\in T(a)\}. \]
 By $(2)_{n-1}$, there are $\alpha_1,\ldots, \alpha_{n-1}\in K$, such that $T(x)=\sum_{i=1}^{n-1}\alpha_ix_i+J$.

Hence, 
\[H=\{(x_1,\ldots, x_n)\in K^n: (x_1,\ldots, x_{n-1})\in \pi(H) \wedge x_n-\sum_{i=1}^{n-1} \alpha_i x_i\in J\}. \]
 
 The groups $J$ and $\alpha_iH_i$, for $i=1,\ldots, n-1$, are subgroups of $(K,+)$, hence they are balls. Thus, for every $i=1,\ldots, n-1$,  either $J\sub \alpha_i H_i$ or $\alpha_i H_i\sub J$.  Note that if $\alpha_{i_0} H_{i_0}\sub J$ for some $i_0$  and $(x_1,\dots, x_{n-1})\in \pi(H)$ then $x_n-\sum\limits_{i\neq i_0} \alpha_i x_i\in J$ iff $x_n-\sum\limits_i \alpha_ix_i\in J$.  So there is no harm assuming that $\alpha_i=0$ whenever $J\supseteq \alpha_i H_i$  and that $J\sub \alpha_i H_i$ whenever $\alpha_i\neq 0$. 
Also, we may assume that for some $i$,  $\alpha_i\neq 0$, for otherwise $H=\pi(H)\times J$, and we are done.

Fix $\alpha_1, \dots, \alpha_{n-1}$ as above. Permuting the coordinates, if needed, we may assume that $v(\alpha_1)\leq v(\alpha_j)$, for all $j=2,\ldots, n-1$. Thus, we can write 
\[H=\{(x_1,\ldots, x_n): (x_1,\ldots, x_{n-1})\in \pi(H)\,\wedge \frac{1}{\alpha_1}x_n-(x_1+\sum_{i=2}^{n-1} \frac{\alpha_i}{\alpha_1} x_i)\in \frac{1}{\alpha_1}J\}.\]

Let $S(x_2,\ldots, ,x_n)=\frac{1}{\alpha_1}x_n-\sum_{i=2}^{n-1} \frac{\alpha_i}{\alpha_1}x_i.$ Then $S:K^{n-1}\to K$ is a linear map defined over $\CO$ and we have,

\begin{equation}\label{eq.1}H=\{(x_1,\ldots, x_n): (x_1,\ldots, x_{n-1})\in \pi(H)\,\wedge \, x_1
-S(x_2,\ldots, x_n) \in \frac{1}{\alpha_1}J\}\end{equation}

Let $\hat \pi(x_1,x_2,\ldots, x_n)=(x_2,\ldots, x_n)$ be the projection onto the last $n-1$ coordinates. 
\begin{claim}\label{claim1}
    For every $\hat x=(x_2,\ldots, x_n)\in \hat \pi (H)$, we have $(S(\hat x),\hat x)\in H$.
\end{claim}
\begin{claimproof}
    Let $\hat x=(x_2,\dots,x_n)\in \hat \pi(H)$ and let $x_1=S(\hat x)$, then clearly $x_1-S(\hat x)=0\in \frac{1}{\alpha}J$, so by (\ref{eq.1}), it is sufficient to see that $(x_1,x_2,\ldots, x_{n-1})\in \pi(H)$. Since $\hat x\in \hat \pi(H)$,  there exists $x_1'$ such that $(x_1',x_2,\ldots, x_n)\in H$. In particular, $x_2\in H_2,\ldots, x_{n-1}\in H_{n-1}$, so for $(x_1,\ldots, x_{n-1})$ to be in $\pi(H)$, we only need to verify that $x_1=S(\hat x)\in H_1$. By assumption, $(x_1',x_2,\ldots, x_{n-1}, x_{n})\in H$, so by (\ref{eq.1}), $x_1'\in H_1$ and 
$x_1'-S(\hat x)\in \frac{1}{\alpha_1} J$, so $S(\hat x)\in \frac{1}{\alpha_1}J+x_1'$. However, we assumed that  $J\sub \alpha_1 H_1$ so $\frac{1}{\alpha_1}J\sub H_1$, and therefore $S(\hat x)\in H_1$, hence $(S(\hat x),\hat x)\in H$. 
\end{claimproof}

We get that 
\[H=\{(x_1,x_2,\ldots, x_n):(x_2,\ldots, x_n)\in \hat \pi(H) \,\wedge\, x_1-S(x_2,\ldots, x_n)\in \frac{1}{\alpha_1}J\}.\]

So $H\cap (K\times  \{0\}^{n-1})=\frac{1}{\alpha_1}J\times \{0\}^{n-1}$ and, in particular, the  map $(x_1,\dots, x_n)\mapsto x_1-S(x_2,\dots, x_n)$ from $H$ to $\frac{1}{\alpha_1}J$ is surjective. 
We now define $F:K^n\to K^n$ by \[F(x_1,x_2,\ldots, x_n)=(x_1-S(x_2,\ldots, x_n),x_2, \ldots x_n).\]

Then $F$ is over $\CO$, and by a direct computation one sees that it has determinant $1$, hence $F\in \gl_n(\CO)$. It follows from the definition of $F$ and the observation above that the restriction $F\restriction H$ is definable, injective and onto $\frac{1}{\alpha_1}J\times \hat \pi(H)$.

By induction, there is $h\in \gl_{n-1}(\CO)$ such that $h(\hat \pi (H))$ is a product of balls. Hence, there is $g\in \gl_n(\CO)$ sending $H$ to a product of balls. This ends the proof of $(1)_n$. 

For $(2)_n$, we start with $T:H\to K/J$. with $H\subseteq K^n$, By $(1)_n$, we may assume that $H=V_1\times \cdots \times V_n$, for definable subgroups $V_i\subseteq K$. Thus, 
\[T(x_1,\ldots, x_n)=T(x_1,0,\ldots, 0)+\cdots +T(0,\ldots, 0,x_n),\] with all elements still in $H$. The result follows from  the case $n=1$.
\end{proof}

\begin{remark}
     Lemma \ref{end-groups2}(1) is inspired by the work of Hrushovski-Haskell-Macpherson on  definable $\CO$-submodules of $K^n$ in algebraically closed valued fields,  \cite[Lemma 2.2.4]{HaHrMac1}. In that work the authors prove that up to an automorphism in $\gl_n(K)$ every definable $\CO$-submodule is a finite cartesian product of $K$, $\CO$, $\bm$ and $\{0\}$. 

    In our setting, if $G\sub K^n$ is a definable subgroup then it is an  $\CO$-submodule (the converse is clearly true), since  $\{d\in \CO:dG\sub G\}$ is a definable subgroup of $(K,+)$ containing $1$, so by Lemma \ref{L:definable subgroup of K or K/O is a ball}, it must be the whole of $\CO$.

    Thus Lemma \ref{end-groups2} (1) can be seen as a strengthening of
\cite[Lemma 2.2.4]{HaHrMac1} even in the ACVF$_{0,0}$ setting.
\end{remark} 

We may now conclude:
\begin{lemma}\label{K/O end-groups2}
Let $H\sub (K/\CO)^n$ be a definable subgroup.

$(1)$ There is a definable automorphism $T$ of $(K/\CO)^n$ such that $T(H)=H_1\times \dots \times H_n$, where each $H_i$ is a, possibly trivial,  ball.  

$(2)$ If $T:H\to K/\CO$ is a  definable homomorphism then there are scalars $d_1,\ldots, d_n\in \CO$ such that for all $x=(x_1+\CO,\ldots, x_n+\CO)\in H$, 
\[T(x_1+\CO,\ldots,x_n+\CO)=d_1x_1+\cdots+d_nx_n+\CO.\]
\end{lemma}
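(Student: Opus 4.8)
The plan is to deduce Lemma~\ref{K/O end-groups2} from Lemma~\ref{end-groups2} by pulling everything back along the quotient map $\pi\colon K^n\to (K/\CO)^n$. For part $(1)$, given a definable subgroup $H\sub (K/\CO)^n$, let $\tilde H=\pi^{-1}(H)\sub K^n$, which is a definable subgroup of $K^n$ containing $\CO^n=\ker\pi$. By Lemma~\ref{end-groups2}$(1)_n$ there is $g\in\gl_n(\CO)$ with $g(\tilde H)$ a cartesian product of balls $B_1\times\dots\times B_n$. The key point is that $g\in\gl_n(\CO)$ preserves $\CO^n$, hence descends to a definable automorphism $T$ of $(K/\CO)^n$ with $\pi\circ g=T\circ\pi$; therefore $T(H)=T(\pi(\tilde H))=\pi(g(\tilde H))=\pi(B_1\times\dots\times B_n)=\pi(B_1)\times\dots\times\pi(B_n)$. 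Since $\tilde H\supseteq\CO^n$ and $g\in\gl_n(\CO)$, each $B_i\supseteq\CO$, so each $\pi(B_i)$ is a ball in $K/\CO$ (possibly trivial, i.e.\ $\{0\}$ when $B_i=\CO$, or all of $K/\CO$), giving $H_i:=\pi(B_i)$ as required.

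For part $(2)$, start with a definable homomorphism $T\colon H\to K/\CO$. Pull back to get $\tilde T:=T\circ\pi\colon \tilde H\to K/\CO$, a definable homomorphism on the subgroup $\tilde H=\pi^{-1}(H)\sub K^n$. Apply Lemma~\ref{end-groups2}$(2)_n$ with $J=\CO$: there are $\alpha_1,\dots,\alpha_n\in K$ with $\tilde T(x_1,\dots,x_n)=\alpha_1x_1+\dots+\alpha_nx_n+\CO$ for all $(x_1,\dots,x_n)\in\tilde H$. Now use that $\CO^n\sub\tilde H$: applying the formula to the standard basis vectors of $\CO^n$ gives $\alpha_i\cdot 1=\tilde T(e_i)=T(\pi(e_i))=T(0)=\CO$, i.e.\ $\alpha_i\in\CO$ for each $i$. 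Setting $d_i:=\alpha_i\in\CO$ and noting that $\tilde T$ factors through $\pi$ by construction yields exactly $T(x_1+\CO,\dots,x_n+\CO)=d_1x_1+\dots+d_nx_n+\CO$, well-defined precisely because each $d_i\in\CO$.

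I expect the only genuinely delicate points — really just bookkeeping — to be (a) checking that $g\in\gl_n(\CO)$ induces a \emph{well-defined} automorphism of the quotient $(K/\CO)^n$ (immediate, since $g(\CO^n)=\CO^n$ and $g^{-1}\in\gl_n(\CO)$ as well) and that the image of a product of balls containing $\CO$ in each coordinate is again a product of balls in $K/\CO$; and (b) verifying that the preimage $\pi^{-1}(H)$ of a definable subgroup is again a definable subgroup of $K^n$, which is clear. One should also record that a ball in $K$ containing $\CO$ maps under $\pi$ onto a ball in $K/\CO$ in the sense of the earlier definitions (with the convention that $\CO$ itself maps to the trivial ball $\{0\}$), which is exactly the content recalled at the start of Section~\ref{ss:groups in K/O}. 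No serious obstacle is anticipated; the whole statement is a transfer of Lemma~\ref{end-groups2} across the homomorphism $\pi$, and the $\gl_n(\CO)$-integrality in Lemma~\ref{end-groups2} is precisely what makes the descent work.
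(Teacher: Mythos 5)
Your proposal is correct and for part $(1)$ it is word-for-word the paper's argument: pull $H$ back to $\pi^{-1}(H)\sub K^n$, apply Lemma~\ref{end-groups2}$(1)_n$, and descend the $\gl_n(\CO)$ change of coordinates. For part $(2)$ the paper takes a marginally different route — it first uses part $(1)$ to reduce to $H=V_1\times\cdots\times V_n$, then decomposes $T$ into coordinate maps and applies Lemma~\ref{a general lemma} with $I=J=\CO$ to each, reading off $d_i\in\CO$ from the conclusion $d_i\CO\subseteq\CO$ of that lemma — whereas you apply Lemma~\ref{end-groups2}$(2)_n$ directly to the pullback $\tilde T$ on $\pi^{-1}(H)$ and get $\alpha_i\in\CO$ by evaluating on the basis of $\CO^n\sub\ker\tilde T$; since Lemma~\ref{end-groups2}$(2)_n$ is itself proved by reducing to Lemma~\ref{a general lemma}, the two arguments are essentially the same and both are complete.
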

\begin{proof} 
(1) Consider $\hat H\sub K^n$ the preimage of $H$ in $K^n.$ By Lemma  \ref{end-groups2}, there is $g\in \gl_n(\CO)$ such that $g(\hat H) $ is a product of (possibly trivial) balls in $K$. Since $g\in \gl_n(\CO)$, it descends to an automorphism of $(K/\CO)^n$ sending $H$ to a product of balls in $(K/\CO)$ (possibly trivial ones).

For (2), we may assume that $H=V_1\times \cdots \times V_n$ for $V_i\sub K/\CO$ and then 
\[T(x_1+\CO,\ldots, x_n+\CO)=T(x_1+\CO,0,\ldots,0)+\cdots+T(0,\ldots,0,x_n+\CO),\] with each element on the right inside $H$. We apply Lemma \ref{a general lemma} with $I=J=\CO$, so there are $d_1,\ldots, d_n\in \CO$ (because $d_i\CO\subseteq \CO$), such that $T(x_1+\CO,\ldots, x_n+\CO)=d\cdot x_1+\cdots+d_n\cdot x_n+\CO$. 
\end{proof}

Finally, we want:
\begin{lemma} \label{automorphism} 
Let $H\sub (K/\CO)^n$ be a definable group and $T:H\to (K/\CO)^n$
a definable homomorphism. Then $T$ can be extended definably to an endomorphism of $(K/\CO)^n$.

In addition, if $T$ is injective, then we can choose the extension to be an automorphism of $(K/\CO)^n$.
\end{lemma}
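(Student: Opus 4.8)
The plan is to reduce to the previous two lemmas by treating $T$ coordinate by coordinate. Write $T = (T_1,\dots,T_n)$ where each $T_j\colon H\to K/\CO$ is a definable homomorphism. By Lemma \ref{K/O end-groups2}(2), for each $j$ there are scalars $d_{j1},\dots,d_{jn}\in\CO$ such that $T_j(x_1+\CO,\dots,x_n+\CO)=\sum_i d_{ji}x_i+\CO$ for all $(x_1+\CO,\dots,x_n+\CO)\in H$. Thus the matrix $M=(d_{ji})_{j,i}\in M_n(\CO)$ defines an endomorphism $\tilde T$ of $(K/\CO)^n$ by $\tilde T(x+\CO)=Mx+\CO$ (this is well-defined precisely because $M\in M_n(\CO)$, so $M\CO^n\subseteq\CO^n$), and $\tilde T\restriction H = T$. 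This proves the first assertion.

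For the second assertion, suppose $T$ is injective. The issue is that the extension $\tilde T$ just constructed need not itself be injective on all of $(K/\CO)^n$ — we only know its restriction to $H$ is injective, and a priori $\det M$ could have positive valuation (or even vanish). So the task is to modify $M$, keeping $M\restriction H$ fixed, so as to make $\det M$ a unit. First I would apply Lemma \ref{K/O end-groups2}(1): composing with a definable automorphism of $(K/\CO)^n$ (which is harmless, as it does not affect injectivity of the extension and the composite is still an endomorphism restricting to something conjugate to $T$), we may assume $H=H_1\times\cdots\times H_n$ with each $H_i$ a ball in $K/\CO$ — in the $p$-adic case a ball of the relevant radius $<\Zz$, while in the other cases an arbitrary ball. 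Now reorder so that the trivial balls $H_i=\{0\}$ (if any) come last, say $H_i$ is nontrivial for $i\le r$ and $H_i=\{0\}$ for $i>r$; then the values of $T$ depend only on the first $r$ coordinates, so the columns $r+1,\dots,n$ of $M$ are irrelevant and can be chosen freely.

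The heart of the matter is then the $r\times r$ block $M'=(d_{ji})_{1\le j,i\le r}$ governing the behaviour of $T$ on $\prod_{i\le r}H_i$. Injectivity of $T$ on this product forces, for each generic $x$ in the product, that $M'x\in\CO^r$ implies $x\in\CO^r$; one shows from this (using that each $H_i$ is a ball with radius below the relevant threshold, so that $H_i$ contains elements of arbitrarily negative valuation approaching $\CO$) that $M'$ has a left inverse with entries in $\CO$ up to scaling, i.e. that after multiplying rows and columns by suitable field elements we may take $M'\in\gl_r(\CO)$. More carefully: I would argue that $T\restriction\prod_{i\le r}H_i$ extends to an injective homomorphism $\prod_{i\le r}(K/\CO)\to(K/\CO)^r$ given by a matrix in $\gl_r(\CO)$ — using that an injective definable homomorphism between balls in $K/\CO$ is, by the one-dimensional analysis (Lemma \ref{a general lemma} with $I=J=\CO$) together with the linear algebra over the valuation ring already developed in Lemma \ref{end-groups2}, realized by a unit scalar after rescaling — and then pad this $\gl_r(\CO)$ matrix out to an element $g\in\gl_n(\CO)$ by putting an identity block in the lower right corner and zeros off-diagonal. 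This $g$ descends to an automorphism of $(K/\CO)^n$ whose restriction to $H$ agrees with $T$ (on the first $r$ coordinates it is $M'$ by construction, and the last $n-r$ coordinates of $H$ are $0$ so contribute nothing), completing the proof.

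The main obstacle I anticipate is exactly this last rescaling step — showing that injectivity of $T$ on a product of balls in $K/\CO$ forces the governing matrix to be, up to the torus action by diagonal matrices over $K^\times$, an element of $\gl_r(\CO)$. One must be careful that the balls $H_i$ are genuinely ``large'' inside $K/\CO$ in the relevant sense (in the $p$-adic case this is why the restriction $\gamma<\Zz$ on balls matters), since a homomorphism that is injective merely on a small ball need not extend injectively. I would handle this by a valuation-theoretic computation on the Smith-type normal form of $M'$ over $\CO$: bring $M'$ to the form $uD v$ with $u,v\in\gl_r(\CO)$ and $D=\mathrm{diag}(\pi^{e_1},\dots,\pi^{e_r})$ (or the analogue over the value group in the non-discrete cases), and show that each $e_i$ can be absorbed into a rescaling of $H_i$ and of the target without destroying the ball structure, so that effectively all $e_i=0$.
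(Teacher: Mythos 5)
Your proof of the first assertion matches the paper's (coordinate-wise application of Lemma \ref{K/O end-groups2}(2)). For the injective case you take a genuinely different route, via a Smith-normal-form computation over $\CO$, but it has two unclosed gaps. The paper's route is cleaner: after reducing to $H=B_1\times\cdots\times B_m\times\{0\}^{n-m}$ with the $B_i$ nontrivial, it handles the full-rank case $m=n$ by showing the endomorphism $L$ from part (1) is \emph{already} injective --- if $\ker L$ were nontrivial it would be an infinite subgroup of the torsion-free $(K/\CO)^n$, which must meet the full-rank product of nontrivial balls $H$ in more than $\{0\}$, contradicting $\ker T=\{0\}$ --- and handles $m<n$ by \emph{also} applying Lemma \ref{K/O end-groups2}(1) to the codomain, putting $T(H)$ in the form $C_1\times\cdots\times C_m\times\{0\}^{n-m}$, after which the resulting automorphism of $(K/\CO)^m$ is padded with an identity block.

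Your first gap is precisely the missing codomain normalization: after arranging $H_{r+1}=\cdots=H_n=\{0\}$, the map $T$ on $\prod_{i\leq r}H_i$ is governed by the first $r$ \emph{columns} of $M$ --- an $n\times r$ matrix --- and is not controlled by the top-left $r\times r$ block $M'$ as you assert, unless you first compose with a definable automorphism of the target so that $T(H)\sub(K/\CO)^r\times\{0\}^{n-r}$, exactly as the paper does. Your second gap is the one you flag yourself as ``the main obstacle'': the claim that injectivity forces the invariant factors of the resulting matrix to be units is sketched, not proved. The idea is sound (divisibility in a valuation ring is a total order, so the Smith decomposition of the $n\times r$ matrix exists, and $\mathrm{diag}(\pi^{e_i})$ acting on a nontrivial ball of $K/\CO$ is injective only if each $e_i=0$), but none of it is carried out. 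A minor point: your aside about $p$-adic balls of radius $<\Zz$ is moot --- Lemma \ref{automorphism} lives in Section 3.1, which restricts to the power-bounded $T$-convex and $V$-minimal cases.
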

\begin{proof}
    For the first part, we may think of $T$ in coordinates and apply Lemma \ref{end-groups2}$(2)_n$ to each coordinate map, obtaining $L\in \mathrm{End}((K/\CO)^n)$ extending $T$.

Assume now that $T$ is injective, and we shall see that so is $L$. By Lemma \ref{K/O end-groups2}$(1)_n$, after composing with a definable automorphism of $(K/\CO)^n$ we may assume that $H=B_1\times\dots\times B_n$,  where each $B_i\sub K/\CO$ is a ball around $0$ (possibly trivial). 

Assume first that, for all $i$, $B_i$ is not the zero ball. If $L$, the extension of $T$ provided above, were not injective then, after permutation of the coordinates, we may assume the projection of $\ker(L)$ into $B_1$ is infinite.  But then, $\ker(L)\cap B_1\times \{0_{n-1}\}$ is nontrivial, contradicting the injectivity of $T$.  

  So without loss of generality, we assume that $H=B_1\times\dots\times B_m\times \{0\}^{n-m}$ and that $B_i$ is non-trivial for $i\leq m$. Since $T$ is injective, $\dpr(T(H))=m=\dpr(H)$ and hence, after a definable automorphism of $(K/\CO)^n$ (the range) we may assume that $T(H)=C_1\times\dots\times C_m\times \{0\}^{n-m}$, where the $C_i\sub K/\CO$ are balls with $r(C_i)<0$ (possibly $C_i=K/\CO$). Setting $H_1=B_1\times\dots \times B_m$ and $H_2=C_1\times\dots\times C_m$, the map $T$ thus induces an injective isomorphism of $H_1$ and $H_2$, that, by what we have already noted, can be extended to a definable automorphism $L_1$ of $(K/\CO)^m$.  

  Now, for $(x,y)\in (K/\CO)^m\times (K/\CO)^{n-m}$, let $S(x,y)=(L_1(x),y)$. This is an extension of $T$ to an automorphism of $(K/\CO)^n$.
\end{proof}
As a corollary, we obtain:

\begin{corollary}\label{C: same valuation}
    Assume that  $f:(K/\CO)^n\to (K/\CO)^n$ is a definable group automorphism.
    Then there is $g\in \gl_n(\CO)$ such that for all $x\in K^n$, $f(x+\CO^n)=gx+\CO^n$. In particular, $T$ preserves the valuation.
\end{corollary}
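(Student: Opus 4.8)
The plan is to reduce the whole statement to the linearity result of Lemma \ref{K/O end-groups2}(2). Writing $f=(f_1,\dots,f_n)$, each component $f_i\colon (K/\CO)^n\to K/\CO$ is a definable homomorphism, so Lemma \ref{K/O end-groups2}(2) supplies scalars $g_{ij}\in\CO$ with $f_i(x_1+\CO,\dots,x_n+\CO)=\sum_{j}g_{ij}x_j+\CO$. Assembling these into the matrix $g=(g_{ij})\in M_n(\CO)$ gives $f(x+\CO^n)=gx+\CO^n$ for every $x\in K^n$. Applying the same reasoning to the inverse automorphism $f^{-1}$ produces a matrix $h\in M_n(\CO)$ with $f^{-1}(y+\CO^n)=hy+\CO^n$ for all $y\in K^n$.

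Next I would upgrade $g$ to an element of $\gl_n(\CO)$. From $f\circ f^{-1}=\mathrm{id}$ we get $(gh-\mathrm{Id})y\in\CO^n$ for every $y\in K^n$. If some entry $(gh-\mathrm{Id})_{k\ell}$ were nonzero, taking $y=c\cdot e_\ell$, with $e_\ell$ the $\ell$-th standard basis vector and $c\in K$ of sufficiently negative valuation, would push the $k$-th coordinate of $(gh-\mathrm{Id})y$ out of $\CO$, a contradiction; hence $gh=\mathrm{Id}$. Then $\det g\cdot\det h=1$ with $\det g,\det h\in\CO$, so $\det g\in\CO^\times$ and therefore $g\in\gl_n(\CO)$.

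For the last clause, note that $g$ and $g^{-1}=h$ both have entries in $\CO$, so for every $x\in K^n$ one has $v(gx)\ge v(x)$ and, symmetrically, $v(x)=v(g^{-1}(gx))\ge v(gx)$, whence $v(gx)=v(x)$; since moreover $g(\CO^n)=\CO^n$, this descends to $(K/\CO)^n$ and says exactly that $f$ preserves the valuation. I do not anticipate any genuine obstacle here: all the substance is already contained in Lemma \ref{K/O end-groups2}, and what remains is the elementary fact that a matrix over $\CO$ whose inverse also lies over $\CO$ is a valuation isometry of $K^n$ lying in $\gl_n(\CO)$.
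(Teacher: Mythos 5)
Your proposal is correct and follows essentially the same route as the paper: apply Lemma \ref{K/O end-groups2}(2) to both $f$ and $f^{-1}$ to get matrices over $\CO$, observe that their composition differs from the identity by a $K$-linear map landing in $\CO^n$ on all of $K^n$, and conclude that this map is zero. You merely spell out two steps the paper leaves implicit (the valuation argument forcing $gh=\mathrm{Id}$, and the determinant argument placing $g$ in $\gl_n(\CO)$), which is fine.
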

\begin{proof}
    By Lemma \ref{K/O end-groups2}(2), there exist $L_1,L_2\in M_n(\CO)$ such that for every $x\in K^n$, 
    $$f(x+\CO^n)=L_1(x)+\CO^n,\,\,\,\, f^{-1}(x+\CO^n)=L_2(x)+\CO^n.$$
    
    It follows that for all $x\in K^n$, we have $L_1\circ L_2(x)-x\in \CO^n$. It is easy to see that this forces the $K$-linear map $L_1\circ L_2(x)-x$ to be $0$. Thus, $L_2=L_1^{-1}$ and both belong to $\gl_n(\CO)$.
\end{proof}

 \subsection{$\CK$ $p$-adically closed}\label{ss: p-closed}
In the present subsection, we assume that $\CK$ is $p$-adically closed. As we have already seen, definable subgroups of $K/\CO$ need not be balls, so the analysis of definable subgroups of $(K/\CO)^n$ is more subtle than in the $V$-minimal and the power-bounded $T$-convex settings. Our aim in this section is to prove the result below, a weak version of Lemma \ref{K/O end-groups2}(1) that will suffice for our needs. Recall that balls in $\CK/\CO$ are by definition infinite, and we call $\CK$ a {\em trivial ball}.

	\begin{proposition}\label{P:local homeo K/O}		
    For any infinite definable subgroup $H\leq (K/\CO)^n$ 
		there exist $k\in \mathbb N$ and a coordinate projection $\pi_0:(K/\CO)^n\to (K/\CO)^m$, with $m=\dpr(H)$, such that $\pi_0\rest p^kH$ is injective.
	\end{proposition}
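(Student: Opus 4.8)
The plan is to find a coordinate projection that is ``almost injective'' on $H$ and then kill the finite kernel by multiplying by a suitable power of $p$. First I would let $\widehat H\leq K^n$ be the preimage of $H$ under the quotient map $K^n\to(K/\CO)^n$; this is a definable subgroup of $K^n$, hence an $\CO$-submodule of $K^n$ (the argument that a definable subgroup of $K^n$ is automatically an $\CO$-submodule, given in the remark after Lemma \ref{end-groups2}, does not use equicharacteristic $0$ or power-boundedness and goes through here since definable subgroups of $(K,+)$ are still balls when infinite — and the finite ones are trivial as $(K,+)$ is torsion-free). By the Haskell--Hrushovski--Macpherson classification of definable $\CO$-submodules of $K^n$ (\cite[Lemma 2.2.4]{HaHrMac1}, valid in any Henselian valued field / $P$-minimal setting by their proof), up to an element $g\in\gl_n(\CO)$ we may arrange $\widehat H=\prod_{i=1}^n V_i$ where each $V_i\in\{K,\CO,\bm,\{0\}\}$; more conservatively, without $\gl_n(\CO)$ one can run the inductive argument of Lemma \ref{end-groups2} to reduce to the case where $\widehat H$ is, after a $\gl_n(\CO)$-change of coordinates, a product of balls in $K$, where now ``ball'' includes $K$, a closed or open ball around $0$, and the singleton; the point is that this conclusion is coordinate-wise and only uses $1$-h-minimality / spherical completeness type inputs, which also hold in $\qp^{an}$ and $P$-minimal $1$-h-minimal fields with definable Skolem functions.

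Once $\widehat H=g^{-1}\!\left(\prod_i V_i\right)$ with each $V_i$ a ball around $0$ or $\{0\}$, let $S\subseteq\{1,\dots,n\}$ be the set of indices with $V_i\supsetneq\CO$ (equivalently $V_i$ projects onto something infinite in $K/\CO$), and put $\pi_0$ to be the coordinate projection of $(K/\CO)^n$ onto the coordinates in $S$ — after first undoing $g$; but note $g\in\gl_n(\CO)$ descends to an automorphism of $(K/\CO)^n$ by Corollary \ref{C: same valuation}'s proof (really just: $\gl_n(\CO)$ acts on $(K/\CO)^n$), so it suffices to produce the projection in the $g$-transformed coordinates and then conjugate back; alternatively absorb $g$ and simply prove the statement for $H$ of the form $\prod_i (V_i/\CO)$. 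For such $H$, the kernel of $\pi_0$ restricted to $H$ is $\prod_{i\notin S}(V_i/\CO)$, and each $V_i/\CO$ with $V_i\subseteq\CO$ is a finite subgroup of $K/\CO$: indeed $V_i\in\{\CO,\bm,\{0\}\}$ gives $V_i/\CO\in\{0, \bm/\CO\}$... wait — $\bm\subseteq\CO$ so $\bm/\CO=0$; the only way to get a nontrivial finite kernel is a $V_i$ strictly between $\bm$ and $\CO$, i.e. $V_i=\{x:\mathrm{res}(x)\in H_0\}$ for a finite subgroup $H_0\leq(\bk,+)$ (Example \ref{E:counter to K/O p-adic}). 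In that case $V_i/\CO\cong H_0$ is a finite $p$-group (as $(\bk,+)$ is an elementary abelian $p$-group), killed by $p$. So multiplying by $p^k$ for $k$ large enough to annihilate the (finite, bounded) kernel of $\pi_0\rest H$ makes $\pi_0\rest p^kH$ injective; and $\dpr(H)=|S|=m$ because the infinite factors are exactly those in $S$ and each contributes dp-rank $1$, the rest dp-rank $0$.

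The cleanest route, which I would actually write, is: (i) reduce to $\widehat H$ a product of balls via the inductive $\gl_n(\CO)$-argument already essentially carried out in the proof of Lemma \ref{end-groups2}(1), checking that the only ingredients used — definable subgroups of $(K,+)$ are balls or trivial, spherical completeness, the structure of $1$-variable definable homomorphisms via Lemma \ref{a general lemma} — survive in the $p$-adic ($P$-minimal $1$-h-minimal) setting; (ii) identify $S$ = indices projecting onto an infinite subset of $K/\CO$, note $\pi_0\rest H$ has finite kernel $K_0=\ker(\pi_0)\cap H$ with $|K_0|$ bounded uniformly (it is a product of finite subgroups of residue-field type), and $K_0$ is a $p$-group; (iii) choose $k$ with $p^k K_0=0$, so $\pi_0\rest p^kH$ is injective, and $\dpr(H)=m=|S|$ by sub-additivity/additivity of dp-rank on products of dp-minimal sets. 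The main obstacle is step (i): Lemma \ref{end-groups2} is stated and proved only in the power-bounded $T$-convex / $V$-minimal case, so I need to isolate exactly which parts of that inductive proof are purely valuation-theoretic and re-run them in the $p$-adic setting (in particular re-deriving the $n=1$ homomorphism lemma, Lemma \ref{a general lemma}, which used definable spherical completeness — available via $1$-h-minimality of $\qp^{an}$ and $P$-minimal $1$-h-minimal fields). If re-running that induction is too delicate, the fallback is to cite \cite[Lemma 2.2.4]{HaHrMac1} directly for the classification of definable $\CO$-submodules (it holds in considerable generality), apply it to $\widehat H$, and proceed from there; in either case the passage from ``product of balls in $K$'' to ``$p^k$-scaled coordinate projection is injective on $H$'' is the routine bookkeeping sketched above.
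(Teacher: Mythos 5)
Your plan hinges on a step that is false in the $p$-adic case: you assert that $\widehat H\leq K^n$ is automatically an $\CO$-submodule, supporting this by claiming that ``definable subgroups of $(K,+)$ are still balls when infinite.'' This is exactly what fails for $p$-adically closed fields. Lemma \ref{L:definable subgroup of K or K/O is a ball} is proved only in the power-bounded $T$-convex and $V$-minimal settings, and the paper immediately follows it with Example \ref{E:counter to K/O p-adic}, which exhibits a definable subgroup of $(K,+)$ that is not a ball, precisely because the residue field has proper nontrivial finite additive subgroups. Consequently the ring $\{d\in\CO: dG\subseteq G\}$ in the remark after Lemma \ref{end-groups2} need not be all of $\CO$, so $\widehat H$ need not be an $\CO$-submodule, the citation of \cite[Lemma 2.2.4]{HaHrMac1} does not apply, and the induction in Lemma \ref{end-groups2}(1) cannot be re-run: its base case and Lemma \ref{a general lemma} both lean on the ball property (to see that the sets $H_d$ are balls and hence form a chain, before invoking spherical completeness). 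You notice the symptom yourself when ``residue-field type'' subgroups $V_i=\{x:\mathrm{res}(x)\in H_0\}$ appear in your decomposition, but this possibility is incompatible with your earlier claim of a product-of-balls normal form; the two halves of your argument contradict each other. There is also a secondary gap: the statement asks for a genuine coordinate projection $\pi_0$, so even granting a $\gl_n(\CO)$-normal form $g(\widehat H)=\prod V_i$, composing a coordinate projection with the induced automorphism $\bar g$ of $(K/\CO)^n$ does not yield a coordinate projection, and you do not explain how to ``conjugate back'' while staying in that class.

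The paper's actual proof avoids any global classification of $\widehat H$ altogether. It proceeds by induction on $n$, viewing $H$ (after an inductive reduction) as the graph of a definable homomorphism $T:\pi(H)\to (K/\CO)/J$ for a finite subgroup $J\geq\CO$, and the technical core (Lemmas \ref{L:locally scalar} and \ref{L:private case}) linearizes a lift $\widehat T$ only \emph{locally}, on a small ball around a generic point, using generic strict differentiability. The finite group $J$ and the valuation of the derivative are tracked explicitly; the finite-to-one defect of the coordinate projection is then killed by multiplying by $p^k$ with $p^kJ\subseteq\CO$. This local-around-a-generic-point strategy is what substitutes for the missing ``definable subgroups of $K$ are balls'' input, and it is the essential idea your proposal is missing.
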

\begin{remark}
   For any natural number $k$, since $H/p^kH$ is an interpretable group in $\CK$ with bounded exponent it must be finite, \cite[Theorem 7.12(4b)]{HaHaPeGps}.
\end{remark}

	Let us fix some notation for the rest of Section \ref{ss: p-closed}. Let $J\supseteq \CO$ be a subgroup of $(K/\CO, +)$ with $J/\CO$ finite and  $\rho:K\to K/\CO$ the quotient map. Let $B_J$ be the smallest closed ball around $0$ containing $J$. 
	
	Recall that since $\CK$ has definable Skolem functions, each (partial) definable function $f: K/\CO\to K/J$ lifts to a (partial) definable function $\widehat f: K\to K$. Namely, $\dom(\widehat f)+\CO=\dom(\widehat f)$ and for every $a\in \dom(\widehat f)$, $\widehat f(a)+J= f(\rho(a))$. In particular, for $a,b\in \dom(\widehat f)$, if  $a-b\in \CO$, then $\widehat f(a)-\widehat f(b)\in J$.

	We break the proof into several lemmas. The first is an adaptation of \cite[Proposition 3.21]{HaHaPeGps}, so we may be terse at times.

	\begin{lemma}\label{L:locally scalar}
		Let $H,J\leq K$ be definable subgroups containing $\CO$ with $J/\CO$ finite and $H/\CO$ a ball in $K/\CO$. Let $\widehat T:H\to K$ be a definable function lifting a definable homomorphism $T:H/\CO\to K/J$. Then  there exists a non-trivial ball $U$ in $ K$, $0\in U\leq H$,  and $c\in B_J$ such that $\widehat T(x)-cx\in J$ for all $x\in U$.
	\end{lemma}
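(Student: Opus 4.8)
The plan is to first reduce to a situation where $H/\CO$ is all of $K/\CO$ (or to handle the case of a proper ball separately), then exploit the fact that after lifting, $\widehat T$ behaves, on small enough balls, like a derivative. More precisely, I would consider the lifted function $\widehat T: H \to K$ and look at its behaviour on a generic ball. Since we are in a $P$-minimal $1$-h-minimal field with definable Skolem functions, there is a well-behaved notion of generic differentiability for definable functions $K \to K$: on a cofinite (in the appropriate sense) collection of sufficiently small balls, $\widehat T$ is approximated to first order by an affine map $x \mapsto cx + d$. The group-theoretic condition — that $T$ descends to a \emph{homomorphism} $H/\CO \to K/J$ — should force the linear coefficient $c$ to be locally constant and the map to be locally exactly affine (not just approximately so) when restricted to a small enough ball contained in $H$.

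\textbf{Key steps.} First, pick a point $a \in H$ generic over the parameters defining $\widehat T$ and $J$, where $\widehat T$ is differentiable (in the relevant $h$-minimal sense); call the derivative $c = (\widehat T)'(a)$. Second, show $c \in B_J$: for $x$ in a small ball $U$ around $a$ (inside $H$, which we may translate to contain $0$ by replacing $\widehat T(x)$ with $\widehat T(x) - \widehat T(a)$ shifted appropriately — here one uses that $T$ is a homomorphism so $\widehat T(x) - \widehat T(a) \equiv \widehat T(x-a) \pmod J$), the approximation $\widehat T(x) - cx \in J$ should hold because: if $x - y \in \CO$ then $\widehat T(x) - \widehat T(y) \in J$ (Skolem lift property), so $c(x-y) = \widehat T(x)-\widehat T(y) + (\text{error}) \in J + (\text{error})$, and the error term is controlled by the Taylor estimate to lie in a ball strictly smaller than $B_J$; letting $x - y$ range over $\CO$ forces $c\CO \subseteq B_J$, i.e. $c \in B_J$. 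Third, upgrade the approximate identity $\widehat T(x) - cx \in J + (\text{small})$ to the exact statement $\widehat T(x) - cx \in J$ on some possibly smaller ball $U$: use the homomorphism property — for $x, x' \in U$ with $x + x' \in U$, $\widehat T(x) + \widehat T(x') - \widehat T(x+x') \in J$, while $cx + cx' - c(x+x') = 0$; combined with the Taylor estimate this pins down the error to be in $J$ itself after shrinking. This is essentially the argument of \cite[Proposition 3.21]{HaHaPeGps}, adapted, which is why the statement says we may be terse.

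\textbf{Main obstacle.} The delicate point is the interaction between the finite group $J/\CO$ and the valuative estimates: $B_J$ is the smallest closed ball containing $J$, but $J$ itself is not a ball (it is a finite union of cosets of $\CO$ inside $B_J$), so the Taylor error term must be shown to land not merely in $B_J$ but in $J$. This requires carefully choosing the radius of $U$ small enough that the first-order Taylor remainder of $\widehat T$ on $U$ has valuation large enough to be absorbed — combined with the discreteness of $J/\CO$ (a finite set) — into the lattice $J$ rather than just into the ball $B_J$. Getting the quantifiers right here (how small $U$ must be, uniformly, versus the fixed finite data of $J$) is where the real work lies; the rest is bookkeeping with the lifting property of Skolem functions and the homomorphism identity. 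I would also need to treat the trivial cases ($H/\CO = \{0\}$, i.e. $H = \CO$, where the statement is vacuous with $U = \CO$ and $c = 0$) separately at the start.
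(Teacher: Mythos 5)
Your overall framework is right and matches the paper's — fix a generic type on $H/\CO$, lift to a type $\widehat p$ on $H$, use generic differentiability of $\widehat T$ at a generic point $a$, set $c = \widehat T'(a)$, and in the final step use the homomorphism property to translate from a ball around $a$ to a ball around $0$ (the paper takes $U = B - a$ and writes $\widehat T(x)+J = \widehat T(y)-\widehat T(a)+J = c(y-a)+J = cx+J$, exactly as you suggest). You also correctly identify the source argument as \cite[Proposition 3.21]{HaHaPeGps}.

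However, you misidentify the main obstacle, and in doing so leave a genuine gap. You believe the delicate point is upgrading an error in $B_J$ to an error in $J$ (``the Taylor error must land not merely in $B_J$ but in $J$''), and you propose to do this by combining the homomorphism identity $\widehat T(x)+\widehat T(x')-\widehat T(x+x')\in J$ with the Taylor estimate. But the paper never needs such an upgrade: its Taylor estimate directly gives $v\bigl(\widehat T(y)-\widehat T(a)-c(y-a)\bigr)>0$, i.e.\ the remainder is in $\m$, which is automatically contained in $\CO\subseteq J$. Your proposed homomorphism-based refinement step is not part of the argument, and as sketched it is not clearly sound (the homomorphism property makes the error map $x\mapsto \widehat T(x)-cx+J$ a homomorphism $U\to K/J$, but that alone does not force its image to be trivial).

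The actual delicate point — which the paper explicitly flags as the one place it cannot just cite \cite[Lemma 3.17(3)]{HaHaPeGps} verbatim — is establishing that Taylor estimate at all, namely producing a ball $B\ni a$ of valuative radius $r(B)<\Zz$ on which $v(\widehat T''(b))+2r(B)>0$. This is where the finiteness of $J/\CO$ actually enters, but in a different way than you use it: since $\widehat T'(B_{>r_0}(a))\subseteq B_J$ and $B_J$ has radius a \emph{finite} negative integer (precisely because $J/\CO$ is finite), one deduces that $v(\widehat T''(a))+r_0$ is either positive or a finite negative integer, and can then pick $r>\Zz$ with $r-n>r_0$ for all $n\in\Zz$ to force the second-order term into $\m$. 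Without this claim there is no Taylor estimate to work with, since the balls in play have radii below every integer (they are ``$K/\CO$-infinitesimal''), so the standard generic-differentiability remainder bound does not by itself give a remainder of positive valuation. Your sketch omits this step entirely, so it does not constitute a complete proof.

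Two minor points: (i) your proposed initial reduction to $H/\CO = K/\CO$ is unnecessary — the paper simply works with a rank-one type concentrated on $H/\CO$ and its unique lift $\widehat p$; (ii) your argument that $c\in B_J$ is in the right spirit (it uses that $x-y\in\CO$ implies $\widehat T(x)-\widehat T(y)\in J$), but as written it conflates the difference quotient at scale $\CO$ with the derivative, which is computed at much smaller scales; the paper instead invokes the analogue of \cite[Lemma 3.17(2)]{HaHaPeGps} for this.
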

	\begin{proof}
		
		Assume everything is defined over some parameter set  $A$ and let $p$ be a complete type over $A$ which is concentrated on  $H/\CO$ with $\dpr(p)=1$. As in \cite[Section 3.2]{HaHaPeGps}, there exists a unique complete  type $\widehat p$ over $A$ concentrated on $H$ such that  $\rho_*\widehat p=p$.  In particular, for any $a\models \widehat p$ also   $a+\CO\models \widehat p$ .
		
		By generic differentiability,  $\widehat T$ and $\widehat T'$ are both  differentiable on $\widehat p$ (see \cite[Lemma 3.17(1)]{HaHaPeGps}). A similar proof to that of \cite[Lemma 3.17(2)]{HaHaPeGps} gives, for any $b\models \widehat p$, that  $\widehat T'(b)\in B_J$.

		\begin{claim}
			For every $a\models \widehat p$ there exists a $\widehat \CK$-definable ball $B\ni a$ contained in $\widehat p(\widehat \CK)$  of valuative radius $r(B)<\mathbb{Z}$  such that for all $b\in B$, $v(\widehat T''(b))+2r(B)>0$. 
		\end{claim}
		\begin{claimproof}
			The proof mimics \cite[Lemma 3.17(3)]{HaHaPeGps}. Since there is one delicate adjustment towards the end, we give the whole argument. The reader may refer to \cite[Section 3.2]{HaHaPeGps}  for the relevant definitions and notions. 
			
			By saturation of $\CK$ and the definition of $\widehat p$, there exists a ball $B_0\sub \widehat p(\CK)$ around $a$ with $r(B_0)< \mathbb{Z}$ (see \cite[Section 3.2]{HaHaPeGps}) and let $r_0:=r(B_0)$. Note that $B_{>r_0+m}(a)\subseteq \widehat p(\CK)$ for any natural number $m$. 
			
			By \cite[Fact 3.13]{HaHaPeGps} applied to the function $\widehat T'$ there are an $A$-definable finite set  $C$  and $m\in \Nn$  such that  \[\tag{$\dagger$}
			v(\widehat T'(a)-\widehat T'(x))=v((\widehat T''(a))+v(a-x)
			\]
			for all $x$ in any ball $m$-next to $C$ around a, and $v(\widehat T''(x))$ is constant on that ball.  
			By definition,  the ball $B_{> r_0+m}(a)$ is contained in a ball $m$-next to $C$, so after possibly shrinking $B_0$, we may assume that $v(\widehat T''(x))$ is constant on $B_0$ and that $(\dagger)$ holds on $B_0$ (see also \cite[Lemma 3.14]{HaHaPeGps}).

			If $\widehat T''(t)\equiv 0$, the claim holds trivially. Otherwise, by \cite[Fact 3.13]{HaHaPeGps}, $\widehat T'(B_{>r'}(a))$ is an open ball of radius $v(\widehat T''(a))+r'$ for any $r'\geq r_0$. 
			
			As $B_{>r_0}(a) \sub \widehat p(\widehat \CK)$, we have  $\widehat T'(B_{>r_0}(a))\sub B_J$. Since $J/\CO$ is finite, we deduce that  $v(\widehat T''(a))+r_0$ is either positive or a finite negative integer. Either way, for any $r'>\mathbb{Z}$ satisfying that for any $n\in \mathbb{Z}$, $r'-n>r_0$, we get that  $\widehat T'(B_{>r'}(a))$ is an open ball of radius $v(\widehat T''(a))+r'>0$.
			
			So let $r$ be such an element. Since $r/2$ also satisfies the same requirements, we deduce that $\widehat T'(B_{>r/2}(a))$ is an open ball of radius $v(\widehat T''(a))+r/2>v(\widehat T''(a))+r>0$.
			
			We conclude that for any $b\in B:= B_{>r/2}(a)$, $v(\widehat T''(b))+r>0$.
		\end{claimproof}
		
		Now, the proof of \cite[Lemma 3.18]{HaHaPeGps} is applicable word-for-word and we get that  for every $a\models \widehat p$ there is a  ball $B$, $a\in B\sub \widehat p(\CK)$, such that for all $y\in B$,
		\[
		v(\widehat T(y)-\widehat T(a)-\widehat T'(a)(y-a))>0.
		\]
		
		Setting $c:=\widehat T'(a)\in B_J$, we get that for all $y\in B$,  $\widehat T(y)-\widehat T(a)-c(y-a)\in \m \sub J$. 
		
		Let $U=B-a$; it is a subgroup of $H$. Let $x=y-a$ be an element of $U$ (so $y\in B$).  Since $\widehat T $ is a lift of a homomorphism, $\widehat T(x)+J=\widehat T(y)-\widehat T(a)+J=c(y-a)+J=cx+J$.
	\end{proof}

We note that for groups definable in $K/\CO$ injectivity of definable homomorphisms can be detected locally: 
	
	\begin{lemma}\label{L:subgroup intersects ball}\begin{enumerate}
			\item    Let $N\leq (K/\CO)^n$ be a non-trivial definable subgroup and $B\ni 0$ a ball in $(K/\CO)^n$. Then $N\cap B$ is non-trivial.
			\item Let $H\sub (K/\CO)^n$ be a definable group, $f:H\to (K/\CO)^m$
			a definable homomorphism and $B\ni 0$  ball in $(K/\CO)^n$. Then $f$ is injective if and only if $f\rest(B\cap H)$ is injective.
		\end{enumerate}
	\end{lemma}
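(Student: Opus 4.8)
\textbf{Proof plan for Lemma \ref{L:subgroup intersects ball}.}

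For part (1), the plan is to reduce to the one-dimensional case and use the structure of definable subgroups of $(K/\CO,+)$. Let $N\leq(K/\CO)^n$ be a non-trivial definable subgroup and $B=\prod_i B_i\ni 0$ a ball, so each $B_i\ni 0$ is a ball of some common radius $\gamma<\Zz$. Pick a coordinate $j$ for which the projection $\pi_j(N)\sub K/\CO$ is infinite (such $j$ exists since $N$ is infinite — indeed $N$ cannot be contained in a finite product of finite subgroups as each $\pi_i(N)$ is definable, and if all were finite $N$ would be finite). Now $\pi_j(N)$ is an infinite definable subgroup of $K/\CO$; any infinite definable subgroup of $K/\CO$ must, by pulling back to $K$ via $\rho$ and using Lemma \ref{L:definable subgroup of K or K/O is a ball}-type reasoning adapted to the $p$-adic case (or directly: an infinite definable subgroup of $K/\CO$ contains arbitrarily large balls around $0$ since its preimage in $K$ is an infinite additive subgroup, hence by the description of such subgroups contains a ball of radius $<\Zz$), contain a ball $B'\ni 0$ with $r(B')$ arbitrarily small; in particular we may take $r(B')<\gamma$. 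Then $\pi_j(N)\supseteq B'\supseteq B_j$. Pick a non-zero $x_j\in B_j$; lift it to some $x\in N$ with $\pi_j(x)=x_j$. This $x$ need not lie in $B$, but we can adjust: replacing $x$ by $p^k x$ for large $k$ — wait, that may kill $x_j$ — instead we argue directly on each coordinate. Here I would iterate: having arranged $\pi_j$ surjects onto $B_j$ on a subgroup of full rank of $N$, use the same argument coordinate by coordinate, or more cleanly, apply Proposition \ref{P:local homeo K/O} to get a coordinate projection $\pi_0$ injective on $p^kN$ and work with $p^kN$ which is still infinite. On $p^kN$, the remaining coordinates are definable functions of the $\pi_0$-coordinates; shrinking the ball of allowed $\pi_0$-values so that these functions (which are continuous, hence locally bounded in valuation) map into the corresponding $B_i$, we obtain a non-trivial element of $p^kN\cap B\sub N\cap B$.

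For part (2), one direction is trivial: if $f$ is injective on $H$ then it is injective on the subset $B\cap H$. For the converse, suppose $f\rest(B\cap H)$ is injective but $\ker(f)$ is non-trivial. Then $\ker(f)$ is a non-trivial definable subgroup of $(K/\CO)^n$ contained in $H$. By part (1), $\ker(f)\cap B$ is non-trivial, i.e. there is a non-zero element of $(B\cap H)\cap\ker(f)$, contradicting injectivity of $f\rest(B\cap H)$. Hence $\ker(f)=\{0\}$ and $f$ is injective.

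\textbf{Main obstacle.} The delicate point is part (1): establishing that an infinite definable subgroup of $(K/\CO)^n$ must meet \emph{every} ball around $0$, given that in the $p$-adic setting such subgroups need not be balls (Example \ref{E:counter to K/O p-adic}) and $K/\CO$ is not a geometric structure. The cleanest route is to lean on Proposition \ref{P:local homeo K/O} to reduce to the graph of a tuple of definable functions over a product of balls in the $\pi_0$-coordinates, and then use continuity (local boundedness of valuation) of these functions to shrink into any prescribed ball; I expect the bookkeeping to confirm that $p^kH$ remains infinite and that the non-$\pi_0$ coordinate functions are indeed continuous at $0$ in the relevant sense, which should follow from the standard local analysis of definable functions in the $p$-adically closed case. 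An alternative, more self-contained argument for (1) in one variable: the preimage $\rho^{-1}(\pi_j(N))$ is an infinite definable additive subgroup of $K$, and any such subgroup is a union of $\CO$-cosets whose valuations form an infinite subset of $\Gamma$ unbounded below (else the subgroup would be contained in a single ball of radius $\geq\gamma$ for all large negative $\gamma$, forcing finiteness modulo $\CO$); this directly gives a ball of radius $<\gamma$ inside it. I would prefer whichever of these the authors' preceding lemmas make shortest.
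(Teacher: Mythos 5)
Your part (2) matches the paper exactly: apply (1) to $\ker f$. Part (1), however, goes by a genuinely different and much more laborious route than the paper's, and has a concrete gap.

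The paper's proof of (1) is a two-line torsion argument: by results from the earlier paper (\cite[Lemma 3.1(3), 3.10(1)]{HaHaPeGps}), every ball around $0$ in $(K/\CO)^n$ contains \emph{all} torsion points, and by \cite[Lemma 3.10(2)]{HaHaPeGps} every non-trivial definable subgroup of $(K/\CO)^n$ has non-trivial torsion; so $N\cap B$ contains a non-trivial torsion point. (This is essentially the content packaged later in the paper as Fact~\ref{F: torsion}.) No reduction to one coordinate, no Proposition~\ref{P:local homeo K/O}, no local analysis of definable functions. Your plan misses this idea entirely, and the torsion route is the one that makes the proof work cleanly in the $p$-adic setting, where $K/\CO$ has a rich finite torsion structure and your ball-shape intuitions (imported from the $V$-minimal/$T$-convex setting) fail.

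The gap: the statement assumes $N$ is \emph{non-trivial}, not infinite. Your very first step --- ``Pick a coordinate $j$ for which the projection $\pi_j(N)\sub K/\CO$ is infinite (such $j$ exists since $N$ is infinite)'' --- silently upgrades the hypothesis. In the $p$-adic case there are non-trivial \emph{finite} definable subgroups of $(K/\CO)^n$ (any subgroup of the torsion group $\mathrm{Tor}((K/\CO)^n)\cong(\mathbb{F}/\CO_\mathbb{F})^n$), and for such $N$ every coordinate projection is finite, so your argument never starts. This is not an edge case one can wave away: in the application to part (2), the relevant $N=\ker(f)$ could very well be finite and non-trivial, and that is exactly the configuration part~(1) must detect. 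A secondary concern, even restricting to infinite $N$: your claim that $\rho^{-1}(\pi_j(N))$ ``contains a ball of radius $<\Zz$'' is not justified in the $p$-adic case. Containing $\CO$-cosets of unboundedly negative valuation is strictly weaker than containing a ball around $0$, and the ``description of such subgroups'' you invoke is Lemma~\ref{L:definable subgroup of K or K/O is a ball}, which holds only for $V$-minimal and $T$-convex $\CK$; your own citation of Example~\ref{E:counter to K/O p-adic} flags that it fails here. The fallback through Proposition~\ref{P:local homeo K/O} and continuity of coordinate functions could perhaps be made rigorous for infinite $N$, but it would be considerably longer than the paper's proof and still would not cover the finite case.
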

	\begin{proof} 
		(1) By \cite[Lemmas 3.1(3), 3.10 (1)]{HaHaPeGps}, the ball $B$ contains all torsions points in $(K/\CO)^n$. 
		By  by \cite[Lemma 3.10 (2)]{HaHaPeGps}, 
		$N$ has non-trivial torsion. Thus $N\cap B$ contains a non-trivial torsion point.
		
		(2) Apply (1)  to $N=\ker(f)$.
	\end{proof}
	
The following is the technical core of the proof: 	
	
	\begin{lemma}\label{L:private case}
		Let $J\supseteq \CO$ be a group with $J/\CO$ finite, $T:B\to (K/\CO)/J$ be a group homomorphism and let $H\subseteq (K/\CO)^n$  be a definable subgroup of the form \[\{(h_1,\dots,h_n)\in (K/\CO)^n:(h_1,\dots,h_{n-1})\in N\, \wedge h_n+J= T(h_1,\dots,h_{n-1})\},\] where $N\leq (K/\CO)^{n-1}$ is some subgroup of dp-rank $n-1$.
		
		Then there exists a natural number $k$ 
		such that the projection of $p^{k}H$ on some $n-1$ coordinates is injective.
	\end{lemma}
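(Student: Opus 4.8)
The plan is to let $\pi_0$ forget a single, suitably chosen coordinate, and then absorb the remaining finite ambiguity by passing to $p^kH$. Two candidates frame the situation: forgetting the last coordinate, the restriction to $H$ of the projection onto the first $n-1$ coordinates has kernel $\{0\}^{n-1}\times J$, a fixed finite group; forgetting a coordinate $i\le n-1$, the kernel is the $i$-th coordinate slice of $N$ that lies inside $\ker T$, which may be infinite. Which one to use depends on the homomorphism $T$ defining $H$, so the first task is to obtain a tractable description of $T$.

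First I would lift $T$, using the definable Skolem functions in $K$, to a definable $\widehat T$ on the preimage $\widehat N\le K^{n-1}$ of $N$, and then run the argument of Lemma \ref{L:locally scalar} (generic differentiability of $\widehat T$, now as a function of several variables) to produce a nontrivial ball $U\ni 0$ and scalars $c_1,\dots,c_{n-1}\in B_J$ such that $\widehat T(x)-\sum_i c_ix_i\in J$ for every $x\in U\cap\widehat N$. The crucial feature inherited from $B_J$ is that the $c_i$ have uniformly bounded denominators.

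Next I would choose the coordinate. If some $c_{i_0}$ has minimal valuation among the $c_i$ and is already invertible modulo $J$ (for instance if $v(c_{i_0})$ equals the radius of $B_J$), then on $U$ the coordinate $h_{i_0}$ is determined by the remaining ones together with $h_n$, so $\pi_0:={}$``forget coordinate $i_0$'' is injective on $H$ near $0$, hence on $H$ by Lemma \ref{L:subgroup intersects ball}(2), and we are done with $k=0$. Otherwise $h_{i_0}$ is determined only up to a fixed finite torsion group, or, if all $c_i\in\CO$, the same analysis applies to the last coordinate; in every remaining case the chosen projection restricted to $H$ has a \emph{finite} kernel $F$, whose $p$-power exponent is bounded in terms of the denominators of the $c_i$ and the size of $J$. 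One then has to find $k$ with $F\cap p^kH=0$. Since $F\cap p^kH$ stabilizes to $F\cap\bigcap_j p^jH$, the point is to exclude nonzero torsion lying in $\ker\pi_0$ that is infinitely $p$-divisible in $H$: such an element would be of the form $p^kg_k$ with $\pi_0(g_k)$ a $p^k$-torsion point of $\pi_0(H)$ for every $k$, and pushing this through $\widehat T(x)\equiv\sum_ic_ix_i\pmod J$ with bounded denominators forces $p^kg_k=0$ once $k$ exceeds an explicit bound; one more application of Lemma \ref{L:subgroup intersects ball}(2) then yields the required $k$.

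The hard part is this last step. Unlike in the $V$-minimal and power bounded $T$-convex settings, definable subgroups of $(K/\CO)^n$ always carry finite torsion, so $\pi_0\rest H$ is itself never injective once $J\ne\CO$; and multiplying by $p^k$ does not in general kill a torsion kernel, since $\bigcap_j p^jH$ may contain torsion — already for a ball of non-standard radius in $K/\CO$. The linear description of the previous step is exactly the tool that controls this: because the $c_i$ lie in $B_J$ and so have uniformly bounded denominators, torsion that survives every $p^k$ cannot be a $T$-image of torsion of $N$, which is what collapses the relevant kernel on $p^kH$.
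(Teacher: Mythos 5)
Your overall plan — lift and linearize $T$ via \ref{L:locally scalar}, drop the coordinate whose coefficient has smallest valuation, and absorb the finite residual ambiguity by passing to $p^kH$ — is exactly the paper's strategy, and your discussion of which projection to use matches the paper's choice. However, your execution has two genuine gaps.

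First, you invoke \ref{L:locally scalar} ``as a function of several variables.'' That lemma is stated and proved only for a single-variable homomorphism $T:H/\CO\to K/J$, and its proof is a delicate $1$-$h$-minimal ball-and-Taylor argument (bounds on $\widehat T'$, $\widehat T''$, balls $m$-next to a finite set, etc.). A multivariable version is not a formal corollary, and the paper does not prove one. The paper avoids the issue entirely: after shrinking $N$ to a product of balls $H_1\times\dots\times H_{n-1}$ via \ref{L:subgroup intersects ball}, it writes $T(x_1,\dots,x_{n-1})=\sum_i T(0,\dots,x_i,\dots,0)$ using the homomorphism property, and then applies \ref{L:locally scalar} to each single-variable summand $\widehat T_i$ separately. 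You should do the same; note that this reduction to a box is needed anyway so that the tuples $(0,\dots,x_i,\dots,0)$ lie in the domain.

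Second, the final step is only asserted. Your reframing as ``$F\cap p^kH$ stabilizes, so exclude $\ker\pi_0$-torsion that is infinitely $p$-divisible'' is circuitous compared with what the paper actually does, which is to rewrite the defining condition of $\rho^{-1}(H)$ in the normal form $a_1-\sum_{i\ge 2}e_ia_i\in c_1^{-1}J$ with all $e_i\in\CO$, and then observe directly that $p^k(c_1^{-1}J)\subseteq\CO$ forces the first coordinate of an element of $p^kH$ to be determined by the others. Your version can be made to work, but the key estimate you wave at — that for the chosen coordinate one has both $v(c_{i_0})\le 0$ (which requires comparing against $c_n=1$, not just among $c_1,\dots,c_{n-1}$) and $v(c_{i_0})\ge r(B_J)$, so the required $k$ is bounded by the exponent of $J/\CO$ — is exactly the content of the lemma and cannot be left implicit. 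In particular, ``bounded denominators'' alone is not enough: if you drop the \emph{last} coordinate (which is what you fall back to when all $c_i\in\CO$), the residual ambiguity in $h_n$ is controlled because $e_i=c_i\in\CO$; if you drop a middle coordinate $i_0$ with $v(c_{i_0})<0$, the relevant group is $c_{i_0}^{-1}J/\CO$ and one needs both inequalities on $v(c_{i_0})$ to bound its exponent. Spell this out, and the proof closes.
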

	\begin{proof}
		Since $\dpr(N)=n-1$, there exists a ball $B\subseteq N$ around $0$. If there exists a coordinate projection $\pi$ and a natural number $k$ for which $\pi\rest p^k (H\cap (B\times K/\CO))$ is injective then as $p^k(H\cap (B\times K/\CO))=p^kH\cap (p^kB\times K/\CO)$ we may apply Lemma \ref{L:subgroup intersects ball} (2) and deduce that it is injective on $p^kH$ as well. Consequently, we may assume that $N=B=H_1\times \dots\times  H_{n-1}$ is a product of balls.
		
				Recall that  $\rho:K\to K/\CO$ is the  quotient map.  	
Since \[T(x_1,\ldots, x_{n-1})=T(x_1,0,\ldots, 0)+\cdots +T(0,\ldots, 0,x_{n-1}),\] and denoting  $\widehat T_i$  for a lift of $T(0,\dots,x_i,\dots 0)$ to a partial map from $K$ to $K$, we obtain
		\[\rho^{-1}(H)=\{(a_1,\dots,a_{n-1},a_n)\in K^n:(a_1,\dots,a_{n-1})\in \rho^{-1}(B)\, \wedge a_n+J= \sum_{i=1}^{n-1} \widehat T_i(a_i)+J\}.\]
		
		Applying Lemma  \ref{L:locally scalar} to the $\widehat T_i$,  for each $i$ we find $c_i\in K$  and sub-balls $H_i'\leq H_i$ such that $\widehat T_i(x)-c_ix\in J$ for elements of $H_i'$. Letting $B'=H_1'\times\dots \times H_{n-1}'$, we may, as above,  replace $B$ by $B'$ and $H$ by $H\cap (B'\times K/\CO)$. So we may assume that
		\[\rho^{-1}(H)=\{(a_1,\dots,a_{n-1},a_n)\in K^n:(a_1,\dots,a_{n-1})\in \rho^{-1}(B)\, \wedge a_n+J= \sum_{i=1}^{n-1} c_i\cdot a_i+J\}.\]
		If $c_i=0$ for all $1\leq i\leq n-1$ then $H$ is equal to a product of $n-1$ balls together with $J/\CO$. If we choose $p^k$ large enough so that $p^kJ \sub \CO$, then $p^kH \sub (K/\CO)^{n-1}\times \{0\},$ so projects injectively into the first $n-1$ coordinates.
		
		We thus assume that  $c_i\neq 0$  for some $i$.
		Setting $c_n=1$, assume, without loss of generality,  that $v(c_1)=\min_{1\leq i\leq n}\{v(c_i)\}$.
		
		\begin{claim}
			$\rho^{-1}(H)$ is equal to
			\[X:=\{(a_1,\dots,a_n)\in K^n:(a_2,\dots,a_n)\in P\, \wedge a_n+J= \sum_{i=1}^{n-1} c_i\cdot a_i+J\},\] where $P$ is the projection of $\widehat E$ on the last $n-1$ coordinates.
		\end{claim}
		\begin{claimproof}
			Obviously $\widehat E$ is contained in $X$, so we show the reverse inclusion.  Let $(a_1,\dots,a_n)\in X$. As $(a_2,\dots,a_n)\in P$, there exists $t$ such that $(t,a_2,\dots,a_n)\in  \widehat E$ so $a_n- c_1t -\sum_{i=2}^{n-1}c_ia_i\in J$. On the other hand $(a_1,\dots,a_n)\in X$ so $a_n-\sum_{i=1}^n c_ia_i\in J$ implying that $c_1t-c_1a_1\in J$.
			So in order to show that $(a_1,\dots,a_n)\in \rho^{-1}(H)$ we only have to verify that if $t\in \rho^{-1}(H_1)$ then also $a_1\in \rho^{-1}(H_1)$. But $a_1-t\in c_1^{-1}J$ which is a finite subgroup of $K/\CO$. As $\rho^{-1}(H_1)$  is a ball it contains all torsion elements (\cite[Fact 3.1, Lemma 3.10]{HaHaPeGps}) so it contains $a_1-t$ as well and the conclusion follows.
		\end{claimproof}
		
		We get 
		\[\rho^{-1}(H) =\{(a_1,\dots,a_n)\in K^n:(a_2,\dots,a_n)\in P\, \wedge a_1-\sum_{i=2}^n e_ia_i\in c_1^{-1}J\},\] for some $e_i\in \CO$.
		
		As $c_1^{-1}J/\CO$ is finite as well, we can find some  $k\in \Nn$ large enough so that $p^k(c_1^{-1}J)\subseteq \CO$. We claim that for any $(h_1,\dots,h_n)\in p^kH$, $h_1$ is uniquely determined by $(h_2,\dots,h_{n})$. We will show that for a tuple in $\rho^{-1}(p^kH)$ the first coordinate is  determined, up to  $\CO$-equivalence, by the last $n-1$ coordinates .
		
		To simplify the notation we give the argument for $n=2$,  the general case is similar. Let $(a,b),(c,d)\in \rho^{-1}(p^kH)$, with  $b-d\in \CO$. We want to prove that $a-c\in \CO$. As $\rho^{-1}(p^kH)=p^k\widehat H+\CO$, we can write $(a,b)=(p^ka'+o_1,p^kb'+o_2)$
		and $(c,d)=(p^kc'+o_3,p^kd'+o_4)$, with $(a',b'),(c',d')\in \rho^{-1}(H)$.
		
		We thus have 
		$a'-e_2(b'+o_2), c'-e_2(d'+o_4) \in c_1^{-1}J$. Since $p^k(c_1^{-1}J)\sub \CO$, we get that  
		\[p^k(a'-c')-p^k(e_2(b'-d'))=p^k(a'-c')-e_2p^k(b'-d')\in \CO.\]  
		By our assumption that $b-d\in \CO$ (and since $p^k(b'-d')+\CO=(b-d)+\CO$), it follows that $p^k(b'-d')\in \CO$. and since $e_2 $ is assumed to be in $\CO$, it follows from the above that $p^k(a'-c')\in \CO$. By our assumptions, $p^k(a'-c')+\CO=(a-c)+\CO$, and therefore $a-c\in \CO$ as claimed.
	\end{proof}

	We can finally prove Proposition \ref{P:local homeo K/O}. 
	
		
	\begin{proof}[Proof of Proposition \ref{P:local homeo K/O}]
	
		We proceed by induction. The case $n=1$ is trivially true (take $k=0$ and $\pi_0=\id$). 
		
		Let $\pi:(K/\CO)^n\to (K/\CO)^{n-1}$  be the projection onto the first $n-1$ coordinates. We may assume that the kernel of this projection is finite: Indeed, let $H^i:=\ker(\pi^i\rest H)$  for $\pi^i$ the projection dropping the $i$-the coordinate. If all $H^i$ were infinite then, since $H\supseteq H^1\times\dots\times H^n$, we would conclude that $\dpr(H)=n$ and there is nothing to prove. Thus, we may assume that one of the $H^i$ is finite, and after permuting coordinates, assume that $i=n$.
		
		Write $\ker(\pi\rest H)$ as $\{0\}^{n-1}\times J$, for a finite subgroup $J\subseteq K/\CO$. Since $\pi\rest H$ is finite-to-one,  $\dpr(H)=\dpr(\pi(H))$. Notice that for every $(a,b), (a,c)\in H\sub (K/\CO)^{n-1}\times K/\CO$ we have $b-c\in J$ and hence $H$ can be viewed as the graph of a function $T:\pi(H) \to (K/\CO)/J$, mapping $a$ to $b+J$, i.e.
		\[H=\{(a,b)\in (K/\CO)^n:a\in \pi(H)\, \wedge b+J= T(a)\}. \]
		
		By the induction hypothesis applied to $\pi(H)\sub (K/\CO)^{n-1}$,  there exists $\ell\in \mathbb N$, and a coordinate projection $\pi_1:(K/\CO)^{n-1}\to (K/\CO)^m$ such that $\pi_1\rest p^\ell \pi(H)$ is injective and $m=\dpr(\pi(H))$. Without loss of generality, assume that $\pi_1$ is the projection onto  the last $m$-coordinates $n-m,\dots, n-1$. Let
		\[H_2=
		\{(a_1,\dots,a_{n-1},a_n)\in (K/\CO)^n:(a_1,\dots,a_{n-1})\in p^\ell \pi(H)\, \wedge a_n+J= T(a_1,\dots,a_{n-1})\}\]
		and note that $p^\ell H\subseteq H_2$.
		
		By assumption, $H_2$ is definably isomorphic via $(\pi_1,\mathrm{id})$ to
		\[H_3=
		\{(a_{n-m},\dots,a_{n-1},a_n)\in (K/\CO)^{m+1}:(a_{n-m},\dots,a_{n-1})\in \pi_1(p^\ell \pi(H))\,\]\[ \wedge a_n+J= S(a_{n-m},\dots,a_{n-1})\},\] 
		for $S=T\circ (\pi_1\rest p^\ell\pi(H))^{-1}$.
		
		Since $\dpr(\pi_1(p^\ell\pi(H)))=m$, we may apply Lemma \ref{L:private case} to $H_3$ and find $r\in\mathbb{N}$ and a coordinate projection $\pi_2:(K/\CO)^{m+1}\to (K/\CO)^m$ (on some $m$ coordinates) such that $\pi_2\rest p^rH_3$ is injective. As $H_2$ is isomorphic to $H_3$ via $(\pi_1,\mathrm{id})$, by composing the coordinate projections, we get that $\pi_0=\pi_2\circ (\pi_1,id)$ is injective on $p^rH_2$. Hence it is also injective on $p^{r\ell}H\subseteq p^rH_2 $.
	\end{proof}

\section{Topology and dimension}
If $D$ is a distinguished sort which is an SW-uniformity, it follows from \cite{HaHaPeGps} (see below for details) that definable $D$-groups inherit a group topology, $\tau_D$, from $\nu_D$. On the other hand, since $\CK$ is geometric, $\CK^{eq}$ inherits a notion of dimension (that turns out to be non-trivial for $K$-groups). In the present section, we first recall the basic properties of the dimension induced from $K$ to $\CK^{eq}$, and then study its relation with the topology $\tau_G$ in $K$-groups. 

\subsection{Geometric dimension and equivalence relations}
A sufficiently saturated (one sorted) structure is \emph{geometric} if $\acl(\cdot)$ satisfies Steinitz Exchange and the quantifier  $\exists^\infty$ can be eliminated. Elimination of $\exists^\infty$, sometimes referred to as \emph{uniform finiteness}, means that in definable families there is a uniform bound on the size of finite sets. 

In \cite{Gagelman}, Gagelman shows that for geometric structures, the dimension associated with the $\acl(\cdot)$-pre-geometry can be extended naturally to imaginary sorts. In the present section, we review this extension of dimension and exploit it to show that in $\CK$ the  $K$-rank and the almost $K$-rank of definable sets coincide (compare with \cite[Corollary 4.37]{JohnTopQp}).

Given a geometric structure $\CM$, we remind Gagelman's extension of $\dim_{\acl}$ to $\CM^{eq}$:  Given a definable equivalence relation $E$ on $M^n$ set, and $A\sub \CM^{eq}$ 

\[\dim^{eq}(a_E/A)=\max\{\dim(b/A)-\dim[a]:b\in [a]\},\] where $\dim:=\dim_{\acl}$,
the $E$-equivalence class of $a$ is $[a]\sub K^n$, $a_E:=a/E\in M^n/E$. 
For $Y\sub X/E$ defined over $A$, we define
\[\dim^{eq}(Y)=\max\{\dim^{eq}(a_E/A):a_E\in Y\}.\]

For a concise summary of the properties of $\dim^{eq}$ we refer to \cite[\S 2]{JohnTopQp}. In the present text we will mostly use additivity of $\dim^{eq}$: For $a,b\in \CM^{eq}$, 
\[
\dim^{eq}(a,b/A)=\dim^{eq}(a/Ab)+\dim^{eq} (b/A).
\]
Note that $\dim^{eq}$ coincides with $\dim_{\acl}$ on definable subsets of $M^n$, and on tuples in $M$, over parameters from $M$.  There is, therefore, no ambiguity in extending the notation $\dim$ (instead of $\dim^{eq}$) to imaginary elements and definable sets. Note, however, that in this notation for a definable set $Y$, $\dim(Y)=0$ does not imply that $Y$ is finite, unless $Y\sub M^n$. E.g., $\dim(K/\CO)=\dim(\Gamma)=0$.

Whenever $\CM$ is in addition dp-minimal, dp-rank coincides with dimension on  definable subsets of $M^n$  (\cite[Theorem 0.3]{Simdp}), a fact that we use without further mention. In our setting, as $\CK$ is a geometric structure, this implies directly from the definitions that $\dim(X)\leq \dpr(X)$ for any definable set $X$ in $\CK^{eq}$.  

Since dimension is preserved under definable finite-to-one functions, and infinite definable subsets of $K^n$ have positive dimension, it follows that if $X$ is locally almost strongly internal to $K$ then $\dim(X)>0$. \\

The above observation allows us to show that, in our setting, the $K$-critical and the almost $K$-critical ranks coincide. 
We start with the following result \cite[Lemma 3.8]{PePiSt}.

\begin{fact}
    \label{lemma1.1}
Let $\CM$ be a geometric structure and let $E$ be a definable equivalence relation on $M^n$. Then there exists a definable $S\sub M^n$ such that for every $x\in S$, $[x]\cap S$ is finite and $\dim(S)=dim(S/E)=\dim(M^n/E)$.
\end{fact}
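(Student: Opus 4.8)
Fix a set $A$ over which $E$ is defined and put $d=\dim(M^n/E)$. The plan is to shrink $M^n$ by $E$-invariant definable restrictions that do not decrease $\dim(-/E)$, until a single fibre of a coordinate projection can be used to build $S$. First I would reduce to the case where every $E$-class has one fixed dimension $e$: for $0\le e\le n$ the set $W_e=\{x:\dim[x]=e\}$ is definable (dimension is definable in definable families, $\CM$ being geometric) and $E$-invariant, and $M^n=\bigsqcup_e W_e$ gives $M^n/E=\bigsqcup_e(W_e/E)$, so $\dim(W_{e_0}/E)=d$ for some $e_0$; I replace $M^n$ by $W:=W_{e_0}$ and set $e:=e_0$. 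Next I fix a coordinate projection: for $I\sub\{1,\dots,n\}$ with $|I|=e$, the set $V_I=\{x\in W:\dim\pi_I([x])=e\}$ is again definable and $E$-invariant, and $W=\bigcup_I V_I$ because (by Exchange in the geometric structure $\CM$) an $e$-dimensional subset of $M^n$ projects onto an $e$-dimensional set under some coordinate projection onto $e$ coordinates; choosing $I$ with $\dim(V_I/E)=d$, I replace $W$ by $V_I$ and write $\pi:=\pi_I$, the projection onto the first $e$ coordinates after a permutation. Now every $E$-class $C\sub W$ satisfies $\dim C=\dim\pi(C)=e$, and from the definition of $\dim^{eq}$ one computes $\dim W=d+e$ and $\dim\pi(W)=e$; the same definition shows that if $b_0\in W$ is $A$-generic then its class $C_0:=[b_0]$ has $\dim^{eq}((b_0)_E/A)=d$.

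The crux is choosing a good fibre. For an $E$-class $C$ set $\mathrm{Bad}(C)=\{s\in M^e:C\cap\pi^{-1}(s)\text{ is infinite}\}$; since the $\pi$-fibres of $C$ over points of $\mathrm{Bad}(C)$ are at least $1$-dimensional while $\dim C=e$, we have $\dim\mathrm{Bad}(C)<e$. I would then pick a representative $b\in C_0$ that is $A$-generic in $W$ — equivalently, generic in $C_0$ over $Ac$, where $c:=(C_0)_E$ — with the extra property $\pi(b)\notin\mathrm{Bad}(C_0)$. Such a $b$ exists: otherwise every $Ac$-generic point of $C_0$ would project into $\mathrm{Bad}(C_0)$, whence $\pi(C_0)$ would be contained in $\mathrm{Bad}(C_0)$ together with the image of a set of dimension $<e$, contradicting $\dim\pi(C_0)=e$. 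Setting $t:=\pi(b)$, the fibre $C_0\cap\pi^{-1}(t)$ is finite, and genericity of $b$ in $C_0$ over $Ac$ then gives $\dim(t/Ac)=e$, hence $\dim(t/A)=e$ and $\dim(b/At)=d$.

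Finally I would take
\[
S=\{x\in W:\pi(x)=t\text{ and }[x]\cap\pi^{-1}(t)\text{ is finite}\},
\]
definable over $At$ thanks to elimination of $\exists^\infty$. It contains $b$, so $\dim S\ge\dim(b/At)=d$; and $S\sub W\cap\pi^{-1}(t)$, whose dimension is $\dim W-\dim\pi(W)=d$ since $t$ is $A$-generic in $\pi(W)$; hence $\dim S=d$. For every $x\in S$ one has $[x]\cap S\sub[x]\cap\pi^{-1}(t)$, which is finite by the definition of $S$, so the quotient map $S\to S/E$ is finite-to-one and $\dim(S/E)=\dim S=d=\dim(M^n/E)$, as required.

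The step I expect to be delicate is producing the representative $b$ with $\pi(b)\notin\mathrm{Bad}(C_0)$: it is precisely here that the second reduction — to a coordinate projection mapping \emph{every} class onto a full $e$-dimensional set — is used, together with the fact that fibre dimension can only jump on a lower-dimensional locus. The remaining steps are bookkeeping with additivity of $\dim^{eq}$ and definability of dimension in geometric structures.
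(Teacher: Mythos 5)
The paper does not prove this fact; it simply cites \cite[Lemma 3.8]{PePiSt}, so there is no internal argument to compare against. Your direct proof is essentially sound: the two $E$-invariant reductions (first to a stratum $W_{e_0}$ on which every class has dimension $e$, then to the locus $V_I$ where a fixed coordinate projection $\pi$ onto $e$ coordinates has full-dimensional image on each class) are legitimate, and the core idea — picking a generic class $C_0$, a generic representative $b\in C_0$ whose $\pi$-fibre $C_0\cap\pi^{-1}(\pi(b))$ is finite, and then taking $S$ inside the $d$-dimensional slice $W\cap\pi^{-1}(t)$ — does produce the required set.

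Two spots are written loosely and should be tightened. First, in arguing that a representative $b$ with $\pi(b)\notin\mathrm{Bad}(C_0)$ exists, you speak of the image of ``a set of dimension $<e$'' meaning the non-$Ac$-generic points of $C_0$, which is not a definable set; the clean route is to observe that $C_0'=C_0\setminus\pi^{-1}(\mathrm{Bad}(C_0))$ is $Ac$-definable, has finite $\pi$-fibres, and projects onto $\pi(C_0)\setminus\mathrm{Bad}(C_0)$, an $e$-dimensional set, so $\dim C_0'=e$ and any $Ac$-generic point of $C_0'$ works. Second, the closing claim that a finite-to-one quotient map forces $\dim(S/E)=\dim S$ is asserted without proof; it is true, but deserves the one-line justification: for $a\in S$, the set $[a]\cap S$ is finite and $At\,a_E$-definable, so $a\in\acl(At\,a_E)$ and $\dim^{eq}(a/At\,a_E)=0$, whence additivity gives $\dim^{eq}(a_E/At)=\dim(a/At)$, and taking maxima over $a\in S$ yields the equality. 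With these clarifications the argument is complete and correct.
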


In the setting where $\CM=\CK$ we can conclude the following:
\begin{corollary}\label{C:every thing is strongly internal to K}
Let $Y$ be a definable set in $\CK$ (so possibly in $\CK^{eq}$). If $Y_0\sub Y$ is almost strongly internal to $K$ then there exists a definable subset $Y'\subseteq Y_0$ with $\dpr(Y')=\dpr(Y_0)$ that is strongly internal to $K$. Moreover, the following are equal
\begin{enumerate}
    \item $\dim(Y)$
    \item The $K$-rank of $Y$
    \item The almost $K$-rank of $Y$.
\end{enumerate}
\end{corollary}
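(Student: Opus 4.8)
The plan is to reduce everything to Fact \ref{lemma1.1}. First I would establish that $\dpr(Y)\geq$ (almost $K$-rank of $Y$) $\geq$ ($K$-rank of $Y$) $\geq \dim(Y)$, where only the last inequality needs an argument: it follows from the observation just above the statement that any definable set strongly internal to $K$ has dimension equal to its dp-rank (being in finite-to-one correspondence with a subset of $K^n$, dimension is preserved, and on subsets of $K^n$ dimension equals dp-rank), so a $K$-critical $X\subseteq Y$ has $\dpr(X)=\dim(X)\leq\dim(Y)$. The real content is the reverse chain, for which it suffices to prove the ``moreover'' clause together with $\dim(Y)\leq$ ($K$-rank of $Y$): once we know every almost-$K$-internal subset contains a strongly-$K$-internal subset of the same dp-rank, the almost $K$-rank and the $K$-rank coincide, and if additionally $\dim(Y)$ is at most the $K$-rank we are done, since trivially $K$-rank $\leq$ almost $K$-rank $\leq\dpr(Y)$ and all are sandwiched.

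So the main step is: given $Y_0\subseteq Y$ almost strongly internal to $K$, witnessed by an $m$-to-one definable $f\colon Y_0\to K^n$, produce a definable $Y'\subseteq Y_0$ with $\dpr(Y')=\dpr(Y_0)$ that is strongly internal to $K$. I would realize $Y$ (or rather $f(Y_0)$ together with the fibering data) as a quotient of a definable subset of $K^N$ by a definable equivalence relation: write each element of $Y_0$ as $a/E$ for $a$ ranging over a definable set of representatives in some $K^N$, with $[a]$ finite (this is possible because $Y_0$ is almost strongly internal to $K$, hence its elements are coded by finite tuples from $K$ up to a definable finite equivalence relation — more carefully, one pulls back along $f$ and uses that fibers are finite and, working in the geometric structure $\CK$, the relevant data lives in $K^N/E$ for suitable $N$ and definable $E$ with finite classes). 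Apply Fact \ref{lemma1.1} to $E$: there is a definable $S\subseteq K^N$ meeting every $E$-class it touches finitely, with $\dim(S)=\dim(S/E)=\dim(K^N/E)$. Restricting to the part lying over $Y_0$, the image of $S$ in $Y_0$ is a definable subset which is the finite-to-one image of $S\subseteq K^N$; intersecting with a maximal-dp-rank piece and using that dimension computed on $S$ matches $\dim$ of its image, one extracts $Y'\subseteq Y_0$ which is in finite-to-one correspondence with a subset of $K^N$ \emph{and} — after passing to a definable subset where the correspondence is a bijection, again via Fact \ref{lemma1.1} style reasoning or directly since $S$ meets classes finitely and one can pick a definable section on a large subset — is strongly internal to $K$. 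The dp-rank bookkeeping: $\dpr(Y')=\dim(Y')$ (subset of $K^N$ up to bijection) $=\dim(S\text{-part})=\dim(\text{its }E\text{-quotient})$, which can be arranged to equal $\dpr(Y_0)$ by choosing $Y_0$ of maximal dp-rank among almost-$K$-internal subsets and noting $\dim$ and $\dpr$ agree on the representative side.

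For the equality $\dim(Y)\leq$ ($K$-rank of $Y$): apply Fact \ref{lemma1.1} directly to a definable equivalence relation presenting $Y$ itself (or a maximal-dimension definable subset of $Y$) as $M^n/E$; the resulting $S\subseteq M^n=K^n$ meets classes finitely and has $\dim(S/E)=\dim(Y)$, and its image in $Y$ is a definable subset in finite-to-one correspondence with $S\subseteq K^n$, hence almost strongly internal — and then by the previous paragraph contains a strongly-$K$-internal subset of the same dp-rank, which is $\geq\dim(S)=\dim(Y)$; this gives $K$-rank $\geq\dim(Y)$.

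\textbf{Main obstacle.} The delicate point is the passage from ``almost strongly internal'' (finite-to-one map \emph{into} $K^n$) to ``strongly internal'' (injection into $K^n$) without losing dp-rank: one must choose representatives/sections definably. I expect this to be handled exactly by Fact \ref{lemma1.1} applied to the fiber equivalence relation of $f$ — the fact produces a definable transversal-like set $S$ meeting each class finitely and of full dimension, and then a further application (or a Skolem-function / definable-choice argument available because we are ultimately choosing among finitely many points of $K^n$, where $\acl$ in a geometric structure lets us split into definable pieces on which a canonical choice exists) upgrades ``finitely many'' to ``exactly one'' on a definable subset of full dp-rank. Getting this bookkeeping clean — matching $\dim$ on the $K$-side with $\dpr$ on the imaginary side at each stage — is the part requiring care rather than any new idea.
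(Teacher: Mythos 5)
Your plan matches the paper's proof: both reduce to Fact~\ref{lemma1.1} by presenting the relevant set as a quotient $X'/E$, obtaining a definable $S$ meeting each $E$-class finitely and of full dimension, and then upgrading the finite-to-one correspondence $S\to S/E$ to an injection on a definable subset of the same dp-rank. Two remarks. First, there is a sign slip in your opening inequalities: you write ``$K$-rank$(Y)\geq \dim(Y)$'' but your argument (via $\dpr(X)=\dim(X)\leq\dim(Y)$ for $X$ $K$-critical) gives $K$-rank$(Y)\leq\dim(Y)$, which is the easy direction; you then correctly identify $\dim(Y)\leq K$-rank$(Y)$ as what needs proving, so this is only a typo. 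Second, for the delicate upgrade from ``finite-to-one into $K^n$'' to ``injective into $K^n$'' you propose a definable-section / Skolem-function argument, but definable choice is \emph{not} available in the $V$-minimal case. The paper sidesteps this by citing \cite[Lemma~2.6(1)]{HaHaPeGps}, a result about SW-uniformities that produces, from a definable set in finite-to-one correspondence with a subset of $K^n$, a definable subset of full dp-rank that is strongly internal to $K$; that lemma does the work uniformly in all three settings and does not rely on definable choice. Your overall plan is right, but this one step genuinely needs the cited SW-uniformity input rather than a transversal-picking argument.
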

\begin{proof}
We use the fact that, in our setting,  the sort $K$ is a geometric SW-uniformity. The proof relies on the following claim.
\begin{claim}
    For any $Z\subseteq Y$, there exists $Z_0\subseteq Z$ strongly internal to $K$ with $\dpr(Z_0)=\dim(Z)$.
\end{claim}
\begin{claimproof}
    Assume that $Z=X'/E$ for some $X'$. Let $S\subseteq X'$ be a definable set, as provided by Fact \ref{lemma1.1}.  I.e. $\dim(S)=\dim(Z)$ and $S$ intersects every $E$-class in a finite (possibly empty) set. Let $\pi:S\to S/E$ be the finite-to-one projection map; note that $S/E\subseteq Z$ and by \cite[Theorem 0.3(1)]{Simdp}, $\dpr(S/E)=\dpr(S)=\dim(S)=\dim(X'/E)$. 

By \cite[Lemma 2.6(1)]{HaHaPeGps}, as $K$ is an SW-uniformity,  there exists a definable subset $Z_0\subseteq S/E\subseteq Z$ strongly internal to $M$ and satisfying  $\dpr(Z_0)=\dpr(S/E)=\dim(Z)$.
\end{claimproof}
We now apply this claim to prove the statements of the corollary.
First, let $Y_0$ be as in the statement; applying the claim for $Z=Y_0$, we get $Y'\subseteq Y_0$ strongly internal to $K$ with $\dpr(Y')=\dim(Y_0)$. But since $Y_0$ is almost strongly internal to $K$, $\dpr$ and $\dim$ also coincide on $Y_0$ so $\dpr(Y')=\dim(Y_0)=\dpr(Y_0)$.

This result, assures that the $K$-rank and the almost $K$-rank of $Y$ are equal. To conclude, note that, since $\dim(Y)$ is obviously bounded below by the $K$-rank of $Y$, we only need to verify the other inequality. This is immediate by applying the claim to $Z=Y$. 
\end{proof}

\subsection{Topology}\label{section: topology} 
  Let $G$ be a definable group in $\CK$, locally strongly internal to a fixed definable SW-uniformity $D$ (for example  $D=K$). In particular, $D$ admits a definable basis for a topology. 
In this section, we review results from \cite{HaHaPeGps} on how to topologize $G$ using the $D$-topology. For $p$-adically closed fields, this was done using different techniques in \cite{JohnTopQp} for the case $D=K$.

 The group $G$ is automatically a $D$-group by \cite[Fact 4.25(1)]{HaHaPeGps}. By \cite[Proposition 5.8]{HaHaPeGps}, there is a type-definable subgroup $\nu_D:=\nu_D(G)$ of $G$ definably isomorphic to an infinitesimal type-definable group in $D$.  Specifically, given any $D$-critical set $X\subseteq G$, any definable injection $f:X\to D^n$ (for $n=\dpr(X)$)  and  any $c\in X$ generic over all the data we have  (recalling that we identify partial types with collections of definable sets):

\begin{equation}
    \label{eq: nuK}
\nu_D=\{f^{-1}(U)c^{-1}: U\subseteq D^n \text{ definable open containing $f(c)$}\}.\end{equation}

Before proceeding with the description of $\nu_D$ we give the proof of the statement in Remark \ref{R: D-sets}(2),  assuring that such an $X$ can always be found.

\begin{lemma}\label{L: Dsets}
   Let $D$ be an unstable distinguished sort in $\CK$ and  $G$ a $\CK$-definable $D$-group. Then  there exists a $D$-critical subset $X\sub G$ and a definable injection $f: X\to D^m$ for $m=\dpr(X)$. In particular, $X$ is a $D$-set.
\end{lemma}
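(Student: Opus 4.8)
The plan is to take an arbitrary witness of local strong internality and improve it. Since $G$ is a $D$-group it is in particular locally strongly internal to $D$, so I would fix a $D$-critical set $X_0\sub G$ together with a definable injection $g\colon X_0\to D^N$, and after shrinking $X_0$ arrange $\dpr(X_0)=m$, the $D$-rank of $G$. The task then reduces to a statement internal to the sort $D$: given the definable set $Y:=g(X_0)\sub D^N$ with $\dpr(Y)=m$, one must find a definable $Y'\sub Y$ with $\dpr(Y')=m$ and a coordinate projection $\pi\colon D^N\to D^m$ such that $\pi\rest Y'$ is injective. Granting this, $X:=g^{-1}(Y')$ and $f:=\pi\circ g\rest X$ give a definable injection $X\to D^m$; moreover $X$ is $D$-critical because $\dpr(X)=\dpr(Y')=m$, and Remark \ref{R: D-sets}(2) then shows that $X$ is a $D$-set.

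Next I would split into the three configurations of $(\CK,D)$. If $D$ is an SW-uniformity — so $D=K$ in all three settings, $D=\Gamma$ in the $V$-minimal and power bounded $T$-convex settings, and $D=K/\CO$ in the $T$-convex setting — then $D$ carries a definable topology for which, generically, coordinate projections of definable sets are local homeomorphisms; applying \cite[Proposition 4.6]{SimWal} to $Y$ at a generic point produces exactly such a $Y'$ and $\pi$. If $\CK$ is $p$-adically closed and $D=\Gamma\equiv\Zz_{Pres}$, then Presburger cell decomposition writes $Y\sub\Gamma^N$ as a finite union of cells; a cell of maximal dp-rank $m$ is, by its very definition, in definable bijection via a coordinate projection with a definable subset of $\Gamma^m$, and that cell serves as $Y'$.

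The remaining case, $\CK$ $p$-adically closed with $D=K/\CO$, is where I expect the real work to be, since $K/\CO$ is neither geometric nor an SW-uniformity, so neither of the previous arguments applies, and the one structural tool available, Proposition \ref{P:local homeo K/O}, concerns \emph{subgroups} of $(K/\CO)^{N'}$ rather than arbitrary definable sets. My plan here is to bring a subgroup into the picture through the infinitesimal group: by \cite{HaHaPeGps}, $\nu_{K/\CO}(G)$ is definably isomorphic to the infinitesimal subgroup of a definable subgroup $H\le(K/\CO)^{N'}$ with $\dpr(H)=m$, and this isomorphism is realized by a definable bijection $\varphi$ between a generic vicinity $V$ inside some $D$-set $Z\sub G$ and a generic vicinity inside $H$. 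Proposition \ref{P:local homeo K/O} supplies $k\in\Nn$ and a coordinate projection $\pi$ of $(K/\CO)^{N'}$ that is injective on $p^kH$; since $H/p^kH$ has bounded exponent it is finite, so $p^kH$ still has dp-rank $m$ in $H$, and therefore $\varphi(V)$ meets some coset $a+p^kH$ ($a\in H$) in a definable set of dp-rank $m$. Then $Z'':=\varphi^{-1}\bigl(\varphi(V)\cap(a+p^kH)\bigr)$ has dp-rank $m$, and $z\mapsto \pi(\varphi(z)-a)$ is a definable injection $Z''\to(K/\CO)^m$; as $\dpr(Z'')=m$ equals the $D$-rank of $G$, the set $Z''$ is $D$-critical, hence a $D$-set by Remark \ref{R: D-sets}(2). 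The genuinely delicate points are all in this last case: verifying that the intersection with $a+p^kH$ and the pullback along $\varphi$ do not drop dp-rank, and that the isomorphism extracted from \cite{HaHaPeGps} is concrete enough to transport a coordinate projection; the SW-uniformity and Presburger cases are essentially direct appeals to known structure theory.
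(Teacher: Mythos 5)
Your overall architecture matches the paper's: reduce to an internal statement about coordinate projections of definable subsets of $D^N$, then split on whether $D$ is an SW-uniformity, and handle the two non-SW $p$-adic sorts ($\Gamma$ and $K/\CO$) separately. However, your enumeration of SW-uniformities is incomplete: the paper treats $K/\CO$ as an SW-uniformity also in the $V$-minimal case (see Fact \ref{F:interior in K/O}, which cites \cite[Corollary 2.7]{SimWal} for both the $T$-convex and $V$-minimal settings), and $\bk$ in the power-bounded $T$-convex case — being o-minimal — is an SW-uniformity too. As written your proof leaves $V$-minimal $K/\CO$ and $T$-convex $\bk$ uncovered; your $p$-adic $K/\CO$ argument, which hinges on the family $\{p^k H\}$, cannot fill those gaps. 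The fix is only to correct the enumeration: once those two cases are filed under the SW-uniformity branch, your three-way split is exactly the paper's, with the residual cases being precisely $p$-adic $\Gamma$ and $p$-adic $K/\CO$.

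For $p$-adic $K/\CO$ your argument is correct but considerably more complicated than the paper's. The paper appeals directly to \cite[Theorem 7.11(3)]{HaHaPeGps} (restated at the start of Section \ref{ss:nu in K/O}), which produces a definable \emph{subgroup} $H\subseteq G$ with $\dpr(H)=m$ together with a definable injective \emph{group homomorphism} $f\colon H\to (K/\CO)^r$. Applying Proposition \ref{P:local homeo K/O} to $f(H)\le(K/\CO)^r$ gives $k$ and a coordinate projection $\pi_0$ injective on $p^k f(H)$; pulling $p^kf(H)$ back under $f$ yields a subgroup of $H$ of finite index (hence of the same dp-rank) that is strongly internal to $(K/\CO)^m$, and that subgroup is the desired $X$. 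Because the witness is a group embedding of a subgroup of $G$, there is no need to pass through $\nu_{K/\CO}$, generic vicinities, or cosets $a+p^kH$; the dp-rank bookkeeping you flag as delicate simply does not arise. Your detour is sound (the coset intersection does keep full dp-rank since $H/p^kH$ is finite, and injectivity of $\pi$ on $p^kH$ transfers to the translated set), but it re-derives, in a weaker form, what Theorem 7.11(3) hands you outright.
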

\begin{proof}
    If $D$ is an SW-uniformity this follows from \cite[Proposition 4.6]{SimWal}, so we may assume that $\CK$ is $p$-adically closed and $D$ is either $\Gamma$ or $K/\CO$. If $D=\Gamma$ this follows from cell-decomposition in Presburger arithmetic (as referred to in the proof of Fact \ref{F:minimal fibers in Gamma}). If $D=K/\CO$ then by \cite[Theorem 7.11(3)]{HaHaPeGps}, there exists a definable subgroup $H\subseteq G$ with $\dpr(H)=n$, the $K/\CO$-rank of $G$, definably isomorphic to a subgroup of $((K/\CO)^r,+)$ for some $r>0$. By Proposition \ref{P:local homeo K/O}, we may assume, replacing $H$ with a subgroup of the same dp-rank that $r=n$.
\end{proof}

We now return to the assumption that $D$ is an SW uniformity. Note that $\nu_D$ is given as a definable collection of sets $\{U_t:t\in T\}$ which forms a filter-base: for every $t_1,t_2\in T$ there is $t_3\in T$ such that  $U_{t_3}\sub U_{t_1}\cap U_{t_2}$.
By \cite[Corollary 5.14]{HaHaPeGps}, $G$ has a definable basis for a topology $\tau_D=\tau_{D}(G)$,  making $G$ a non-discrete Hausdorff topological group. {\bf For the rest of this section, all topological notions in $G$  refer to $\tau_D$}. 

A definable subset $X\subseteq G$ is open in this topology if and only if for all $a\in X$  $a\cdot \nu_D\sub X$. In particular, $\dpr(X)\ge \dpr(\nu_D)$, i.e., the dp-rank of any open definable subset of $G$ is at least the $D$-rank of $G$. Of course, it could be, for example, that $\dpr(G)>\dpr(\nu_D)$, so that definable open subsets need not all have the same dp-rank (but they all have the same $D$-rank). 

The next lemma shows that the topology $G$ inherits from $D$ shares some of its good properties. Toward that end,  recall that the $D$-rank of a set $Z$ is the maximal dp-rank of a definable subset strongly internal to $D$. We let $\fr(X)$, the frontier of $X$, denote   $\cl(X)\setminus X$.

\begin{lemma}\label{frontier}
If $X\subseteq G$ is definable, then the $D$-rank of $\fr(X)$ is strictly smaller than the $D$-rank of $X$.
\end{lemma}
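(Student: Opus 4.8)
The plan is to compare the $D$-rank of the frontier $\fr(X)$ with that of $X$ by exploiting the translation-invariance of $\nu_D$ together with the fact, recorded in Lemma~\ref{L:intersects largely}, that for a set $Y$ strongly internal to $D$ and $g\in Y$ generic, the infinitesimal coset $g\cdot\nu_D$ meets $Y$ in a set of full dp-rank. Suppose for contradiction that some definable $Y\subseteq \fr(X)$ strongly internal to $D$ realizes the $D$-rank of $X$, i.e.\ $\dpr(Y)$ equals the $D$-rank of $X$; say everything is defined over a parameter set $A$. Pick $g\in Y$ generic over $A$. Since $g\in\fr(X)=\cl(X)\setminus X$, every $\tau_D$-open neighbourhood of $g$ meets $X$; in particular $g\cdot\nu_D$, being the infinitesimal neighbourhood of $g$, has the property that every definable open $U\ni g$ contains some $g\cdot U_t\subseteq U$ meeting $X$, so the partial type "$x\in g\cdot\nu_D\wedge x\in \cl(X)$'' together with genericity forces, via saturation, a definable open vicinity $U$ of $g$ with $\dpr(U\cap X)$ large — more precisely, I want to produce a definable $Y'\subseteq X$, strongly internal to $D$, with $\dpr(Y')\ge$ the $D$-rank of $X$, which is the desired contradiction since $Y'\subseteq X$ should not exceed that rank, while the point is to locate it inside $X$ rather than on its frontier.

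Concretely I would argue as follows. Let $Z\subseteq G$ be a $D$-set defined over $A$ and realize $\nu_D=h^{-1}\nu_Z(h)$ for a suitable generic $h\in Z$, arranging $\dpr(g,h/A)=\dpr(Y)+\dpr(Z)$. By Lemma~\ref{L:intersects largely} applied to $Y$, $\dpr(Y\cap g\cdot\nu_D)=\dpr(Y)$. Now $g$ lies in the $\tau_D$-closure of $X$, and $g\cdot\nu_D$ is (the realization set of) the $\tau_D$-infinitesimal neighbourhood of $g$; hence for every definable open $U\ni g$ we have $U\cap X\ne\emptyset$. Passing to a generic vicinity: there is $C\supseteq A$ and a $C$-generic vicinity $Y'\subseteq Y$ of $g$ inside the $D$-critical witness, with $\dpr(g/C)=\dpr(g/A)$, such that the image of $Y'$ under the injection $f:Y\to D^n$ is contained in a small definable open ball around $f(g)$; intersecting this ball with $f(X)$ (which is nonempty, as $g\in\cl(X)$, and has interior relative to its closure because $X$ is strongly internal) yields a definable subset of $f(X)$ of dp-rank $\dpr(Y)$, hence pulling back a definable $Y''\subseteq X$, strongly internal to $D$, with $\dpr(Y'')=\dpr(Y)$ equal to the $D$-rank of $X$ — contradicting that $X$ itself only contains sets of $D$-rank at most that value but these must avoid $\fr(X)$... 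I will instead phrase the contradiction directly: $\dpr(Y'')\le$ ($D$-rank of $X$) with $Y''\subseteq X$ is fine, the contradiction is with $Y\subseteq \fr(X)\subseteq G\setminus X$ being disjoint from $X$ while $Y''$ is forced to lie both near $g\in Y$ and inside $X$. The cleanest route is: genericity of $g$ in $Y$ plus $g\in\cl(X)$ gives that $g\cdot\nu_D$ meets $X$ in a set of dp-rank $=\dpr(Y)$ by combining Lemma~\ref{L:intersects largely}-style reasoning with the filter-base description \eqref{eq: nuK} of $\nu_D$; but $g\cdot\nu_D\cap X$ being large and $g\cdot\nu_D\cap Y$ being large with $X\cap Y=\emptyset$ is not yet absurd, so one must instead observe that $X$ being strongly internal of its $D$-rank means $X$ cannot contain a set of larger-than-$D$-rank, whereas $X\cup\fr(X)=\cl(X)$ would, by the above, contain a set of $D$-rank $\dpr(Y)+$ something.

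The main obstacle, and what I would spend the most care on, is the transfer between the topological statement "$g\in\cl(X)$'' and a dp-rank (equivalently, via Corollary~\ref{C:every thing is strongly internal to K} when $D=K$, a dimension) statement about $X$ near $g$: one needs that the $\tau_D$-closure is computed "coordinate-wise'' through a $D$-critical injection $f:X\to D^n$, so that $\cl(X)$ near $g$ corresponds to the $D$-topological closure of $f(X)$ near $f(g)$, and then invoke the good dimension theory of $D$ (an SW-uniformity, hence every infinite definable subset of $D^n$ has interior, and frontiers drop dimension) to conclude $\dpr(\fr(X))<\dpr(X)$ at the level of $D^n$ and pull it back. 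Establishing this compatibility — that the $D$-set structure and the injection $f$ interact correctly with $\tau_D$-closure, using Fact~\ref{F: properties of nu} and the description \eqref{eq: nuK} — is the technical heart; once it is in place the rank count is routine, using subadditivity of dp-rank and the defining property of a $D$-group to rule out the rank of $\fr(X)$ reaching that of $X$.
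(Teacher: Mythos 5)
Your proposal correctly identifies the central tool — the fact that in SW-uniformities, frontiers of definable subsets of $D^n$ have strictly smaller dp-rank — and it also correctly brings in Lemma~\ref{L:intersects largely} to guarantee that a $D$-critical subset $X_1\subseteq \fr(X)$ meets a small open neighbourhood of a generic point in full rank. However, the argument as written has a genuine gap and never arrives at a clean conclusion (as you yourself notice while writing: ``is not yet absurd'', ``I will instead phrase the contradiction directly\dots''). The gap is this: you try to transfer ``closure of $X$ near $g$'' into $D^n$ via ``a $D$-critical injection $f:X\to D^n$'', but there is no such injection --- $X$ is an arbitrary definable subset of $G$ and is in general not strongly internal to $D$ at all, so one cannot apply the SW dimension theory to $X$ directly.

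The paper's fix, which you never quite state, is to intersect everything with a \emph{basic open} set $Y\ni g$, i.e.\ a $\tau_D$-open set that is itself strongly internal to $D$. The key bookkeeping identity is $\fr(X)\cap Y=\fr(X\cap Y)$ (valid because $Y$ is open), so after this intersection one is computing a frontier of the set $X\cap Y$, which \emph{does} sit inside $D^n$ via the injection coming from $Y$, and the SW-uniformity result \cite[Proposition 4.3, Lemma 2.3]{SimWal} applies to it. This also removes the need for a proof by contradiction: one simply chains $d=\dpr(X_1)=\dpr(X_1\cap Y)\le\dpr(\fr(X\cap Y))<\dpr(X\cap Y)\le(\text{$D$-rank of }X)$, using Lemma~\ref{L:intersects largely} for the second equality. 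Your version, by not localizing to a strongly internal neighbourhood before invoking the SW frontier property, lacks the step that makes the transfer to $D^n$ legitimate, and that is precisely the ``technical heart'' you flagged but did not carry out.
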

\begin{proof} 
Let $d$ denote  the $D$-rank of $\fr(X)$ and let $X_1\subseteq \fr(X)$ be $D$-critical over $A$.  Fix an $A$-generic $g\in X_1$  and $Y\ni g$ a definable basic open set. In particular, we can choose $Y$ to be strongly internal to $D$.

By definition of $\fr(X)$, it follows that  $\fr(X)\cap Y= \fr(X\cap Y)$. By Lemma \ref{L:intersects largely}, $\dpr(X_1\cap Y)=\dpr(X_1)$. By the properties of SW-uniformities, (\cite[Proposition 4.3, Lemma 2.3]{SimWal}), and since $X\cap Y$ can be identified with a subset of some $D^n$, $\dpr(\fr(X\cap Y))<\dpr(X\cap Y)$. Thus, as $X_1\cap Y\subseteq \fr(X\cap Y)$,
\[d=\dpr(X_1)\leq \dpr(\fr(X\cap Y))<\dpr(X\cap Y).\]

Since $X\cap Y$ is strongly internal to $D$ (as $Y$ was), its dp-rank is at most the $D$-rank of $X$, as needed.
\end{proof}

\begin{lemma}\label{L:groups are closed, open iff full D-rank}
If $H$ is a definable subgroup of $G$ then $H$ is closed in $G$ and
the following are equivalent:
\begin{enumerate}
    \item $H$ is open,
    \item the $D$-ranks of $H$ and $G$ are equal, 
    \item $\nu_D\vdash H$.
\end{enumerate}
\end{lemma}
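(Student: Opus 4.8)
The plan is to first prove the three equivalences, and then use them to deduce that every definable subgroup is closed. For the equivalences, I would work with the cyclic implications $(1)\Rightarrow(2)\Rightarrow(3)\Rightarrow(1)$. The implication $(2)\Rightarrow(3)$ is essentially Lemma \ref{L:nu of subgroup}(2) (i.e. \cite[Proposition 5.8]{HaHaPeGps} combined with Lemma \ref{L:nu of subgroup}): if $H$ and $G$ have the same $D$-rank then $\nu_D(H)=\nu_D(G)$, and since trivially $\nu_D(H)\vdash H$ (a $D$-set inside $H$ witnesses this, see Remark \ref{R: nu lives on any definable subgroup witnessing}), we get $\nu_D\vdash H$. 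For $(3)\Rightarrow(1)$: the characterization of $\tau_D$-open sets recalled just before the lemma says $X$ is open iff $a\cdot\nu_D\subseteq X$ for all $a\in X$; if $\nu_D\vdash H$ then for $a\in H$ we have $a\cdot\nu_D\vdash a\cdot H=H$, so $a\cdot\nu_D\subseteq H$ — wait, more carefully, $\nu_D\vdash H$ means some definable $Y\in\nu_D$ satisfies $Y\subseteq H$; then $aY\subseteq aH=H$ and $aY$ is a (basic) neighborhood of $a$, so $H$ is open.

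The remaining implication $(1)\Rightarrow(2)$ is where the content lies, and I expect this to be the main obstacle. Suppose $H$ is open. Then $\dpr(H)\geq\dpr(\nu_D)$ = the $D$-rank of $G$, but this bounds dp-rank, not $D$-rank, so it is not immediately enough. Instead I would argue as follows: since $H$ is open it contains a basic open set, which can be taken strongly internal to $D$ (by the description of $\tau_D$ via $\nu_D$ and the fact that $\nu_D$ is witnessed inside a $D$-critical set); pick a $D$-critical $X\subseteq G$ and a generic $g\in X$, so $g\cdot\nu_D$ is an infinitesimal neighborhood of $g$. Translating $H$ by an element of a coset meeting $X$, we may find inside $H$ (a translate of) a set strongly internal to $D$ of dp-rank equal to $\dpr(X)$ = the $D$-rank of $G$, using Lemma \ref{L:intersects largely}: for $g$ generic in $X$, $\dpr(X\cap g\cdot\nu_D)=\dpr(X)$, and $g^{-1}(X\cap g\nu_D)\subseteq\nu_D$; since $H$ is open it contains a set of the form $a\cdot Y$ with $Y\in\nu_D$, hence contains a subset strongly internal to $D$ of $D$-rank that of $G$. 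Thus the $D$-rank of $H$ equals that of $G$, giving $(2)$. I would need to be slightly careful that the $D$-critical set inside $H$ obtained this way genuinely achieves the $D$-rank of $G$ and not merely something comparing dp-ranks; the cleanest route is probably to note that $\nu_D(H)$ and $\nu_D(G)$ are both witnessed on $D$-sets, and openness of $H$ forces $\nu_D(G)\vdash H$ directly via the topology, then invoke $(3)\Rightarrow(2)$ by a dp-rank/$D$-rank count using that $\dpr(\nu_D(G))$ equals the $D$-rank.

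Finally, for closedness of an arbitrary definable subgroup $H$: apply Lemma \ref{frontier}, which says the $D$-rank of $\fr(H)$ is strictly smaller than the $D$-rank of $H$. But $\fr(H)=\cl(H)\setminus H$ is a union of cosets of $H$ (since $\cl(H)$ is a subgroup, as the closure of a subgroup in a topological group), so if $\fr(H)$ is nonempty it contains a coset $aH$, which has the same $D$-rank as $H$ — contradiction. Hence $\fr(H)=\emptyset$ and $H$ is closed. I should double-check that $\cl(H)$ is a subgroup: this is standard in any (Hausdorff) topological group and $\tau_D$ makes $G$ a topological group by \cite[Corollary 5.14]{HaHaPeGps}. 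The only subtlety is verifying $\fr(H)$ is a union of $H$-cosets, which follows since $H\cdot\fr(H)\subseteq\fr(H)$ again by $\cl(H)$ being an $H$-invariant set containing $H$.
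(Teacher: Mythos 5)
Your proposal is correct and essentially matches the paper's proof: both establish the equivalences via the cyclic implications (starting from a different vertex), with $(2)\Rightarrow(3)$ coming from the definition/invariance of $\nu_D$, $(3)\Rightarrow(1)$ from the characterization of $\tau_D$-open sets, and $(1)\Rightarrow(2)$ by noting openness forces $\nu_D\vdash H$ and $\nu_D$ has maximal $D$-rank; and both deduce closedness by observing that the closure is a definable subgroup, so a nonempty frontier would contain a full coset of $H$, contradicting Lemma~\ref{frontier}. The only difference is cosmetic: your initial hesitation on $(1)\Rightarrow(2)$ resolves into exactly the paper's one-line argument, so the detour through Lemma~\ref{L:intersects largely} is unnecessary.
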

\begin{proof}
Because $G$ is a topological group, and a basis for the topology is definable, the closure of $H$, call it $H_1$ is also a definable subgroup. Therefore, If $H_1\setminus H\neq \emptyset$ then $H_1$ must contain a coset of $H$ thus the $D$-rank of $H_1\setminus H$ is at least that of $H$ contradicting  Lemma \ref{frontier}. So $H$ is closed in $G$.

Now, assume that the $D$-ranks of $H$ and $G$ are equal. This implies (by definition of $\nu_D$),  that $\nu_D\vdash H$. Since $\nu_D$ is open, and $H$ is a group, this implies that  $H$ is open. Finally, as we have seen, if $H$ is open, then it contains $\nu_D$ as a subgroup, and therefore they have the same $D$-rank (since the $D$-rank of $\nu_D$ is maximal in $G$). 
\end{proof}

\begin{definition} \label{D: centralizer}
For $G$ locally strongly internal to $D$,  we let {\em the centralizer of the type $\nu_D$}, denoted by $C_G(\nu_D)$, be the set of all $g\in G$ such that for some definable $Y$ with $\nu_D\vdash Y$, $g$ commutes with all elements of $Y$.
\end{definition}

Since, as we noted, $\nu_D$ is given as a definable collection of sets $\{U_t:t\in T\}$, it 
follows that $C_G(\nu_D)$ is definable: $g\in C(\nu_D)$ if there exists $t\in T$ such that $g\in C_G(U_t)$. Moreover, by the filter-base property of the family, it is a subgroup of $G$. 

\begin{remark}
 Let us note that,  despite its name, if $\CK\prec \hat \CK$, and $g\in C_G(\nu_D)(\hat \CK)$ then $g$ does not necessarily centralize the set $\nu_D(\hat \CK)$. What we know is that  there exists $t\in T(\hat \CK)$ such that $g$ centralizes $U_t(\hat \CK)$. with possibly $U_t\vdash \nu_D(\CK)$.
\end{remark}

 Recall that definable sets in o-minimal structures can be decomposed into finitely many definably  connected sets (i.e. sets containing no non-trivial definable clopen sets). Thus, the same is true if $X\sub G$ is strongly internal to an o-minimal sort $D$.
The result below will be useful in the sequel.
\begin{lemma}\label{L:def connected}
 Assume that $D$ is one of the o-minimal distinguished sorts.   Assume that $X\subseteq G$ is definable,  strongly internal to $D$ and $e\in X$. If $X$ is  definably connected, then every $g\in C_G(\nu_D)$ centralizes $X$.
\end{lemma}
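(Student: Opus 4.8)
Fix $g \in C_G(\nu_D)$; by Definition \ref{D: centralizer} there is a definable set $Y$ with $\nu_D \vdash Y$ such that $g$ centralizes $Y$. The goal is to upgrade this "infinitesimal" centralizing information to the statement that $g$ centralizes all of the definably connected set $X$. The natural device is to consider the set
\[
C = \{x \in X : g x = x g\},
\]
which is a definable subset of $X$ containing $e$, and to show that it is clopen in $X$ (with respect to the subspace topology inherited from $\tau_D$ on $G$). Since $X$ is definably connected and $C$ is nonempty (it contains $e$), this will force $C = X$, which is exactly the conclusion.

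First I would verify that $C$ is closed in $X$: the map $x \mapsto x g x^{-1} g^{-1}$ is continuous in $\tau_D$ (multiplication and inversion are continuous since $G$ is a topological group by \cite[Corollary 5.14]{HaHaPeGps}), and $C$ is the preimage of the closed set $\{e\}$ (the topology is Hausdorff), so $C$ is closed in $G$, hence in $X$. The substantive point is that $C$ is open in $X$. Here I would use that $g$ centralizes a definable $Y$ with $\nu_D \vdash Y$, so by Remark \ref{R: nu lives on any definable subgroup witnessing}-type reasoning we may shrink and assume $Y$ is $\nu_D$-large, i.e. there is $Y_0 \in \nu_D$ with $Y_0 \subseteq Y$; concretely $e \cdot \nu_D \vdash Y$. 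Now suppose $x_0 \in C$, i.e. $g x_0 = x_0 g$. I claim $x_0 \cdot \nu_D \cap X \subseteq C$, which gives openness since $x_0 \cdot \nu_D$ is (the filter-base generating) a basic open neighborhood of $x_0$. Indeed, writing an element of $x_0 \cdot \nu_D$ as $x_0 \cdot u$ with $u \in \nu_D$: since $\nu_D$ is normal in $G$ (Fact \ref{F: properties of nu}(3)) and $\nu_D \vdash Y$ with $g$ centralizing $Y$, $g$ commutes with $u$ (as $u$ ranges over the realizations of $\nu_D$, which lie in $Y$ in a suitable elementary extension — and this is a first-order condition that then holds for the relevant definable sets). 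Combined with $g x_0 = x_0 g$, we get $g (x_0 u) = x_0 g u = x_0 u g$, so $x_0 u \in C$.

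The one delicate point — and the step I expect to be the main obstacle — is the passage between "$g$ centralizes the definable set $Y$" and "$g$ centralizes each realization $u$ of the type $\nu_D$", because of the subtlety flagged in the Remark after Definition \ref{D: centralizer}: in a proper elementary extension, membership of $g$ in $C_G(\nu_D)$ only guarantees it centralizes \emph{some} $U_t$ from the defining family, not the literal set $\nu_D(\hat{\CK})$. The resolution is to work entirely at the level of definable sets and the filter-base family $\{U_t : t \in T\}$ presenting $\nu_D$: if $g$ centralizes $U_{t_0}$, then for any $x_0 \in C$ the set $x_0 U_{t_0} \cap X$ is a definable $\nu_D$-neighborhood of $x_0$ contained in $C$ (the computation above goes through verbatim with $U_{t_0}$ in place of $\nu_D$, using only that $U_{t_0}$ is a group-conjugation-stable definable set on which $g$ acts trivially — the conjugation-stability of $U_{t_0}$ itself is not needed, only that $x_0 u x_0^{-1}$ and $g$ commute, which follows because $g$ commutes with $u$). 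Since $x_0 U_{t_0}$ contains a basic $\tau_D$-open neighborhood of $x_0$, this shows $C$ is open. Finally, one should note where definable connectedness and strong internality to an o-minimal $D$ are used: they guarantee (via o-minimal cell decomposition, as recalled just before the lemma) that $X$ decomposes into finitely many definably connected pieces and that "definably connected + nonempty definable clopen $\Rightarrow$ equals the whole set" is available, closing the argument $C = X$.
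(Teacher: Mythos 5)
Your proof is correct and captures the same essential idea as the paper: $g$ centralizes some definable $Y$ with $\nu_D\vdash Y$, hence $C_G(g)$ is a $\nu_D$-neighbourhood of $e$, and definable connectedness of $X$ then forces $X\subseteq C_G(g)$. The only real difference is one of economy: the paper obtains clopenness of $C_G(g)$ in a single line by invoking Lemma~\ref{L:groups are closed, open iff full D-rank} (a definable subgroup $H$ is closed, and open iff $\nu_D\vdash H$; here $\nu_D\vdash C_G(g)$ because $Y\subseteq C_G(g)$), whereas you re-derive the openness and closedness of $C=C_G(g)\cap X$ from first principles --- openness via the filter-base $\{U_t\}$ presenting $\nu_D$, closedness via continuity of the commutator map and Hausdorffness of $\tau_D$. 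Both are valid; your version essentially reproves the relevant special case of that lemma. You correctly identify the delicate point (that $g\in C_G(\nu_D)$ only guarantees $g$ centralizes some $U_{t_0}$, not the realizations of $\nu_D$ in every extension) and resolve it correctly; the parenthetical remark about conjugation-stability of $U_{t_0}$ is a red herring, as your own computation $g(x_0u)=(gx_0)u=(x_0g)u=x_0(gu)=x_0(ug)=(x_0u)g$ uses only that $g$ commutes with $x_0$ and with $u$. The closing appeal to cell decomposition is also unnecessary: that a definably connected set has no proper nonempty definable clopen subset is just the definition.
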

\begin{proof}
    Let $g\in C_G(\nu_D)$.  By definition $\nu_D\vdash C_G(g)$, so by Lemma \ref{L:groups are closed, open iff full D-rank}, $C_G(g)$ is a clopen subgroup of $G$. Now, $C_G(g)\cap X\neq \0$ (as $e$ is in the intersection), so definable connectedness of $X$ implies $X\subseteq C_G(g)$.
\end{proof}

 For the rest of this section  we focus our attention on the case $D=K$ (so, in particular, it is an SW-uniformity),  and the topology we discuss below is the one coming from $K$. \\

We start with an immediate  corollary of Lemma \ref{L:groups are closed, open iff full D-rank} and Corollary \ref{C:every thing is strongly internal to K}.
\begin{corollary}\label{C: open iff full dim}
    Let $G$ be a definable group and $H$ a definable subgroup. Then $H$ is open in $G$ if and only if $\dim(G)=\dim(H)$.
\end{corollary}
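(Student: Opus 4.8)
The claim is that a definable subgroup $H\le G$ is open in the topology $\tau_K$ if and only if $\dim(G)=\dim(H)$, and the plan is simply to combine two previously established facts. On one side, Lemma \ref{L:groups are closed, open iff full D-rank} (applied with $D=K$) tells us that $H$ is open if and only if the $K$-rank of $H$ equals the $K$-rank of $G$. On the other side, Corollary \ref{C:every thing is strongly internal to K} identifies, for any definable set $Y$ in $\CK$, the $K$-rank of $Y$ with $\dim(Y)$ (indeed with the almost $K$-rank as well). So the plan is: first invoke Lemma \ref{L:groups are closed, open iff full D-rank} to reduce the topological statement to an equality of $K$-ranks, and then rewrite each $K$-rank as a $\dim$ via Corollary \ref{C:every thing is strongly internal to K}.

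Concretely, I would write: by Corollary \ref{C:every thing is strongly internal to K}, the $K$-rank of $G$ equals $\dim(G)$ and the $K$-rank of $H$ equals $\dim(H)$; hence the condition ``the $K$-ranks of $H$ and $G$ are equal'' from Lemma \ref{L:groups are closed, open iff full D-rank}(2) is literally the condition $\dim(H)=\dim(G)$. Since that lemma gives the equivalence of (1) ($H$ open) and (2) (equal $K$-ranks), we conclude that $H$ is open in $G$ if and only if $\dim(H)=\dim(G)$. One should note in passing that $G$ and $H$ are both definable in $\CK^{\eq}$, so that the version of $\dim$ being used is Gagelman's $\dim^{\eq}$; there is no ambiguity, since that is exactly the dimension to which Corollary \ref{C:every thing is strongly internal to K} applies.

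There is essentially no obstacle here — the statement is a one-line corollary whose entire content is the translation between the two notions (``$K$-rank'' and ``$\dim$'') that was already carried out in Corollary \ref{C:every thing is strongly internal to K}. The only thing to be careful about is that $K$ is a \emph{geometric} SW-uniformity in all three settings, which is precisely the hypothesis under which both Lemma \ref{L:groups are closed, open iff full D-rank} and Corollary \ref{C:every thing is strongly internal to K} were proved, so both inputs are legitimately available. Thus the proof is just: apply Corollary \ref{C:every thing is strongly internal to K} to $G$ and to $H$ to replace $K$-ranks by dimensions, then quote the equivalence (1)$\Leftrightarrow$(2) of Lemma \ref{L:groups are closed, open iff full D-rank}.
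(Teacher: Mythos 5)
Your proposal is correct and matches the paper's approach exactly: the paper states this corollary as an ``immediate corollary of Lemma \ref{L:groups are closed, open iff full D-rank} and Corollary \ref{C:every thing is strongly internal to K},'' which is precisely the two-step translation you carry out (replace both $K$-ranks by $\dim$ via Corollary \ref{C:every thing is strongly internal to K}, then quote the equivalence (1)$\Leftrightarrow$(2) in Lemma \ref{L:groups are closed, open iff full D-rank} with $D=K$).
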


As the distinguished sorts,  $\Gamma$, $\bk$ and $K/\CO$ are $0$-dimensional, we get:

\begin{lemma}\label{l: pure 0dim}
    A definable set $S$ is $K$-pure if and only if every definable $0$-dimensional  $X\sub S$ is finite. 
\end{lemma}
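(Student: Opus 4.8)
The plan is to unpack the definition of $K$-purity and reduce it to a statement about dimension. Recall that $S$ is $K$-pure if it is locally almost strongly internal to $K$ but not to any other distinguished sort, and that by Corollary \ref{C:every thing is strongly internal to K} the $K$-rank, the almost $K$-rank and $\dim(S)$ all coincide. By Fact \ref{F: Reduction to sorts}, $S$ is always locally almost strongly internal to at least one of $K,\bk,\Gamma,K/\CO$, so $K$-purity is equivalent to saying that $S$ is \emph{not} locally almost strongly internal to any of $\bk$, $\Gamma$ or $K/\CO$.

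First I would handle the easy direction. Suppose $S$ is not $K$-pure. Then either $S$ is not locally almost strongly internal to $K$ at all --- but then, since $\dim$ is preserved under definable finite-to-one maps and infinite subsets of $K^n$ have positive dimension while $S$ must be locally almost strongly internal to one of the other three sorts by Fact \ref{F: Reduction to sorts}, one checks that $S$ itself, being infinite (the case $S$ finite is vacuous), contains an infinite $0$-dimensional subset --- or $S$ is locally almost strongly internal to one of $\bk,\Gamma,K/\CO$ in addition to $K$; in that case there is an infinite definable $X\subseteq S$ with a definable finite-to-one map into $D^n$ for $D\in\{\bk,\Gamma,K/\CO\}$, and since $\dim(D)=0$ and $\dim^{eq}$ is additive and preserved under finite-to-one maps, $\dim(X)=0$ while $X$ is infinite. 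Either way $S$ contains an infinite $0$-dimensional definable subset, proving the contrapositive of "only if". (Here one uses that $\dim^{eq}$ of each of $\bk,\Gamma,K/\CO$ is $0$, as noted in the excerpt.)

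For the converse, suppose $S$ is $K$-pure and let $X\subseteq S$ be definable and $0$-dimensional; I must show $X$ is finite. If $X$ were infinite, then by Fact \ref{F: Reduction to sorts} applied to $X$, it is locally almost strongly internal to one of the four sorts. It cannot be $K$, since by Corollary \ref{C:every thing is strongly internal to K} the almost $K$-rank of $X$ equals $\dim(X)=0$, yet a set almost strongly internal to $K$ via a finite-to-one map into $K^n$ with $n\geq 1$ (necessarily $n\geq 1$ since it is infinite) has positive dimension --- contradiction. Hence $X$, and therefore $S$, is locally almost strongly internal to one of $\bk,\Gamma,K/\CO$, contradicting $K$-purity of $S$. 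Thus $X$ is finite.

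The main point to get right --- the only place there is anything to prove beyond bookkeeping --- is the interplay in the easy direction between Fact \ref{F: Reduction to sorts} and the dimension count: one must make sure that when $S$ fails to be locally almost strongly internal to $K$, one genuinely extracts an \emph{infinite} $0$-dimensional definable subset, which follows by applying Fact \ref{F: Reduction to sorts} to $S$ itself and using that the witnessing $X$ is infinite by definition while $\dim(X)=0$ because it maps finite-to-one into a $0$-dimensional sort. Everything else is a direct application of Corollary \ref{C:every thing is strongly internal to K} and the additivity and finite-to-one invariance of $\dim^{eq}$.
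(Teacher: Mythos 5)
Your proposal is correct and follows essentially the same route as the paper: both directions are established via Fact \ref{F: Reduction to sorts} together with the observation that dimension is preserved under definable finite-to-one maps, with the key dichotomy being that infinite subsets of $K^n$ have positive dimension while the remaining sorts are $0$-dimensional. Your argument is somewhat more verbose --- the case split on whether $S$ is locally almost strongly internal to $K$ is unnecessary since in either case one extracts a witnessing $D\neq K$ directly, and the detour through Corollary \ref{C:every thing is strongly internal to K} and almost $K$-rank can be replaced by the direct observation that a finite-to-one image of $X$ in $K^n$ would be infinite and hence of positive dimension. Also a small bookkeeping slip: your first paragraph (not $K$-pure implies an infinite $0$-dimensional subset exists) is the contrapositive of the ``if'' direction, not the ``only if'' direction; the labels should be swapped.
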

\begin{proof} 
    Assume that  $X\sub S$ is infinite and $0$-dimensional. By Fact \ref{F: Reduction to sorts},
    $X$ (and hence also $S$) is locally almost strongly internal to some distinguished sort $D$. Namely, there is a definable infinite $X_1\sub X$ and a definable finite to one function $f:X_1\to f(X_1)\sub D^n$.  Since $\dim(X_1)\ge \dim (f(X_1))$,  necessarily $\dim (f(X_1))=0$ with $f(X_1)$ infinite. Hence,  $D\neq K$, so $S$ is not $K$-pure. 
    
    For the converse, assume that $S$ is not $K$-pure, witnessed by a definable infinite $X\sub S$ and a definable finite to one function  $f:X\to D^n$ for some $D\neq K$.  Since $\dim(D)=0$ for  $D\neq K$,  it follows that  $\dim(f(X))=0$ and hence $\dim(X)=0$.   So, $X$ is infinite and $0$-dimensional.
\end{proof}

For the sake of completeness, we  note that the $\tau_K$-topology on $G$ is {\em locally Euclidean}, in the following sense: for every $g\in G$ there exists a definable open $U\ni g$,  which is definably homeomorphic to an open subset of $K^{\dim(G)}$. Moreover, it is the unique such  group topology on $G$. 

We prove:
\begin{lemma} The $\tau_K$-topology on $G$ (taken to be discrete if $\dim G=0$) is locally Euclidean and if $\tau$ is any other locally Euclidean group topology on $G$ then $\tau=\tau_K$.

In particular, if $K$ is a $p$-adically closed field, $\tau_K$ equals Johnson's admissible topology from \cite{JohnTopQp}.
\end{lemma}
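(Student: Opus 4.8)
The plan is to establish two things: (i) that $\tau_K$ is locally Euclidean, and (ii) that any locally Euclidean group topology on $G$ coincides with $\tau_K$. For (i), recall from Corollary \ref{C:every thing is strongly internal to K} that $\dim(G)$ equals the $K$-rank of $G$, so there is a $K$-critical set $X\subseteq G$ with $\dpr(X)=\dim(G)$ and, by Lemma \ref{L: Dsets}, a definable injection $f:X\to K^{\dim(G)}$; since $K$ is an SW-uniformity, shrinking $X$ we may take $f$ to be a definable homeomorphism onto an open subset of $K^{\dim(G)}$ (using \cite[Proposition 4.6]{SimWal}). Translating, for every $g\in G$ the set $gX$ is a $\tau_K$-neighbourhood basis ingredient around $g$, and by construction of $\tau_K$ from $\nu_K$ (Equation \eqref{eq: nuK} and \cite[Corollary 5.14]{HaHaPeGps}) the topology $\tau_K$ restricted to $gX$ agrees with the topology pulled back from $f$. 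Hence $\tau_K$ is locally Euclidean. When $\dim G=0$ the set $X$ is finite, so $\nu_K$ is trivial and $\tau_K$ is discrete, which is the convention adopted.

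\textbf{Uniqueness.} For (ii), let $\tau$ be any locally Euclidean definable (or merely topological-group) topology on $G$; write $\dim G=n$. I would first note that both $\tau$ and $\tau_K$ make $G$ a topological group whose underlying ``local model'' is an open subset of $K^n$. The key point is that the identity map $(G,\tau)\to(G,\tau_K)$ is continuous: take $\tau$- and $\tau_K$-charts $\phi:U\to V\subseteq K^n$ and $\psi:U'\to V'\subseteq K^n$ around $e$; then $\psi\circ\phi^{-1}$ is a definable bijection between open subsets of $K^n$, and by generic differentiability / the good dimension theory in $\CK$ (the frontier inequality, Lemma \ref{frontier}, applied in $K^n$) it is continuous off a lower-dimensional set; an automorphism/genericity argument, together with the fact that a definable group homomorphism continuous at one point is continuous everywhere, upgrades this to continuity on all of $G$. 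By symmetry the inverse is continuous too, so $\tau=\tau_K$. The only subtlety is that one must know a definable bijection between open subsets of $K^n$ is a homeomorphism after removing a set of smaller dimension — this is exactly where the SW-uniformity frontier bound and dp-rank computations of Section 4 are used.

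\textbf{Main obstacle.} I expect the hardest step to be the uniqueness part, specifically showing that an abstract (not a priori definable) locally Euclidean group topology $\tau$ must coincide with $\tau_K$: one cannot directly invoke the definable machinery for $\tau$. The fix is to observe that a locally Euclidean topology on $G$ determines, and is determined by, the family of its identity neighbourhoods, and that the Euclidean charts give enough ``definable-like'' structure (via the group operation, which \emph{is} definable) to pin the topology down; concretely, the charts identify a neighbourhood of $e$ with an open ball in $K^n$, and the continuity of multiplication forces this identification to be compatible, up to a definable homeomorphism, with the $\nu_K$-description of $\tau_K$. Once continuity of $\mathrm{id}:(G,\tau)\to(G,\tau_K)$ is in hand at $e$, homogeneity under the definable group action closes the argument. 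The final sentence, that over a $p$-adically closed field $\tau_K$ agrees with Johnson's admissible topology from \cite{JohnTopQp}, then follows because Johnson's topology is also characterised as the unique locally Euclidean group topology (or, alternatively, is locally Euclidean by construction), so the uniqueness statement identifies the two.
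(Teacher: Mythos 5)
Your proof is essentially the same as the paper's: local Euclideanness comes from a definable chart at a generic point of a $K$-critical set (via the description of $\nu_K$), and uniqueness from comparing two charts and observing that the definable transition map between open subsets of $K^n$ is a homeomorphism at a generic point — the paper cites \cite[Corollary 3.8]{SimWal} for this directly, rather than re-deriving it from the frontier inequality as you do — after which one propagates by translation-homogeneity. Your concern about ``abstract (not a priori definable)'' topologies does not actually arise, since the paper's Definition \ref{D: loc gp} of locally Euclidean already builds definability of the charts into the notion, so the transition map $f_2f_1^{-1}$ is automatically definable.
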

\begin{proof} A non-discrete locally Euclidean topological group is, by definition, a $K$-group, so (by Corollary \ref{C:every thing is strongly internal to K} )  $\dim(G)>0$ and since discrete  groups are trivially locally Euclidean,  we assume $\dim(G)>0$.  Since the topology is invariant under translations, it is sufficient to find a single $g\in G$ at which the topology is locally Euclidean.  If $ n=\dim(G)>0$ then, by Lemma \ref{C:every thing is strongly internal to K}, there exists a definable $X\sub G$, $\dim(X)=\dim(G)$, such that $X$ is strongly internal to $K$, over some $A$, and $\dim(G)$ is the $K$-rank of $G$. Given $g_1$ generic in $X$ over $A$, it follows from Equation (\ref{eq: nuK})  at the beginning of Section \ref{section: topology}  that there exists a definable $\tau_K$-open set $U$, $g_1\in U\sub X$, which is definably homeomorphic to an open set in $K^n$.

Now, assume that $\tau_1,\tau_2$ are  two locally Euclidean group topologies on $G$. Then for $g\in G$, there are definable $U_1,U_2\ni g$, $U_i$ a  $\tau_i$-open set, and definable $f_i:U_i\to V_i\sub   K^n$, such that each $f_i$ is a homeomorphism between $U_i$ with the  $\tau_i$-topology and open $V_i$ with the  $K^n$-topology.

The map $f_2f_1^{-1}:f_1(U_1\cap U_2)\to V_2$, is a definable injection. However, in SW-uniformities, definable bijections are homeomorphisms at generic points, \cite[Corollary 3.8]{SimWal}. Thus,  there is some $g_1\in U_1\cap U_2$ such that on $\tau_1, \tau_2$ open neighborhood of $g_1$, the two topologies agree. Thus, $\tau_1=\tau_2$.

Since Johnson's admissible topology is locally Euclidean, the two topologies are the same.
\end{proof}

Using the exact same proof as above, one can show that for any distinguished sort $D$ which is an SW-uniformity, if $G$ is locally strongly internal to $D$ then every $g\in G$ has a $\tau_D$-open neighborhood which is definably homeomorphic to an open set in $D^m$, where $m$ is the $D$-rank of $G$.

\section{The infinitesimal group $\nu_D$ and local (differentiable) groups}\label{S:infnit and local}

In Section \ref{ss:infint and vicin} we gave an abstract description of $\nu_D(G)$ for an infinite definable $D$-group  $G$ and  an unstable distinguished sort $D$. In the present section, we collect -- for later use -- more specific information on the construction of $\nu_D(G)$, as $D$ ranges over the various distinguished sorts in the different settings we are interested in. Throughout, we fix an infinite group $G$ definable in $\CK$. 

\subsection{The sort of closed $0$-balls $K/\CO$}\label{ss:nu in K/O}
Let $G$ be an infinite definable $K/\CO$-group. In each of our three settings, there exists a definable  subgroup $H\subseteq G$ definably isomorphic to a subgroup of $((K/\CO)^m,+)$ for some $m>0$, such that $\dpr(H)$ is the  $K/\CO$-rank of $G$ \cite[Theorems 7.4(4), 7.7(4), 7.11(3)]{HaHaPeGps}. By Lemma \ref{end-groups2} (if $\CK$ is $V$-minimal or power-bounded $T$-convex), or by Proposition \ref{P:local homeo K/O} (if $\CK$ is $p$-adically closed), we can, after possibly shrinking $H$  but not its dp-rank, choose $m=\dpr(H)$. 




Recall that the valuation descends to $K/\CO$ and $(K/\CO)^n$, hence, so does the notion of a ball. However, we reserve the term ``ball'' for an infinite set, thus in the $p$-adically closed case we require the valuative radius to be infinitely negative, i.e., smaller than $n$ for all  $n\in  \mathbb{Z}$. 

We may now further assume that $H$ is definably isomorphic to a definable ball (of the same rank) centered at $0$:
\begin{fact}\label{F:interior in K/O}
    For any $A$-definable set $X\subseteq (K/\CO)^n$ with $\dpr(X)=n$ and any $A$-generic $a\in X$, there exists a ball $B\subseteq X$ with $a\in B$.
\end{fact}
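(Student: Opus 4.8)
The plan is to reduce Fact \ref{F:interior in K/O} to the following cleaner assertion, which I call $(\star)$: \emph{every definable $X\subseteq (K/\CO)^n$ with $\dpr(X)=n$ contains an infinite ball}. Granting $(\star)$, let $X$ be $A$-definable with $\dpr(X)=n$ and let $X_0\subseteq X$ be the ($A$-definable) set of points lying in some infinite ball contained in $X$. If $\dpr(X\setminus X_0)=n$ then $(\star)$ produces an infinite ball inside $X\setminus X_0$, yet every point of such a ball lies in $X_0$ — a contradiction; hence $\dpr(X\setminus X_0)<n$, so any $A$-generic $a\in X$ lies in $X_0$, which is exactly the content of the Fact. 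So it suffices to prove $(\star)$.

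When moreover $X=H$ is a definable subgroup and $\CK$ is $V$-minimal or power bounded $T$-convex, $(\star)$ is immediate for all $n$ at once, and this covers the application made immediately below. Let $\rho\colon K\to K/\CO$ be the quotient map, $\rho^n\colon K^n\to (K/\CO)^n$ its $n$-th power, and put $\widehat H=\rho^{-n}(H)$, a subgroup of $(K^n,+)$ which is invariant under translation by $\CO^n$ and has $\dpr(\widehat H)=n$. By Lemma \ref{end-groups2}$(1)_n$ some $g\in\gl_n(\CO)$ carries $\widehat H$ to a cartesian product $\prod_i B_i$ of balls around $0$. Since $g$ stabilizes $\CO^n$, the product is again $\CO^n$-invariant, so $B_i+\CO=B_i$, i.e. $B_i\supseteq\CO$, for every $i$; the hypothesis $\dpr(H)=n$ rules out $B_i=\{0\}$ and $B_i=\CO$, so each $B_i$ is either $K$ or a ball of negative valuative radius. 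Letting $\gamma<0$ be the largest of these radii, the equal-radii box $B_{>\gamma}(0)^n$ lies in $\prod_i B_i$ and is fixed by $g^{-1}\in\gl_n(\CO)$, hence $B_{>\gamma}(0)^n\subseteq\widehat H$; its image under $\rho^n$ is an infinite ball contained in $H$, and translating by a generic $a\in H$ gives an infinite ball around $a$ inside $H$.

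For general $X$ I would argue by induction on $n$. The case $n=1$ should follow from the description of definable subsets of $K/\CO$ in each setting — weak o-minimality in the power bounded $T$-convex case, the analogous Boolean-combination-of-balls picture in the $V$-minimal case, and the analysis of $K/\CO$ in \cite[\S 3]{HaHaPeGps} in the $p$-adically closed case — each of which yields that an infinite definable subset of $K/\CO$ contains an infinite ball. For the inductive step take $a=(a',a_n)\in X$ generic over $A$; sub-additivity of dp-rank forces $\dpr(a'/A)=n-1$ and $\dpr(a_n/Aa')=1$, so $a'$ is generic in the projection $\pi(X)$ (of dp-rank $n-1$) and the fibre $X_{a'}$ is infinite. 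By the inductive hypothesis there is an infinite box $B'\ni a'$ inside $\pi(X)$, and after shrinking $B'$ (inside the $A$-definable, dp-rank $n-1$ set of $x'$ with $X_{x'}$ infinite) we may assume every fibre $X_{x'}$, $x'\in B'$, is infinite, hence contains an infinite ball by the case $n=1$. The genuinely technical point is then to glue these fibrewise balls into a single infinite box inside $X$: writing $c(x'),\delta(x')$ for a definable choice of centre and valuative radius of the fibre ball, one uses o-minimality of $\Gamma$ together with tameness of definable functions on boxes (generic monotonicity / local constancy) to pass to a sub-box $B''\ni a'$ of $B'$ on which $\delta$ is bounded and all centres $c(x')$ lie in a single ball around $c(a')$; then a fixed infinite ball $C\ni a_n$ works over all of $B''$, and $B''\times C\subseteq X$ is the desired box.

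The main obstacle, and the only place where the $p$-adically closed case genuinely differs, is controlling the valuative radius of these boxes. In that setting ``infinite ball'' means valuative radius $<\Zz$, which is not a definable condition, so the gluing over $\Gamma$ above cannot be run naively. Here I would instead work at the level of types: using the lifting $p\mapsto\widehat p$ and the generic-differentiability machinery of \cite[\S 3]{HaHaPeGps}, produce around a generic lift into $K^n$ of the generic point of $X$ a box of valuative radius $<\Zz$ inside $\rho^{-n}(X)$, and push it down to $(K/\CO)^n$, using Proposition \ref{P:local homeo K/O} (and the coordinate-projection bookkeeping in its proof) to keep the ambient dimension equal to $\dpr(X)$ throughout. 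I expect this radius control to be the crux: essentially free once the fibrewise balls are aligned in the $V$-minimal and $T$-convex settings, but exactly what forces the detour through the type-lifting technology in the $p$-adic case.
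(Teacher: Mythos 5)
The paper's own proof of this fact is a one-line citation: to \cite[Corollary 2.7]{SimWal} in the $V$-minimal and power-bounded $T$-convex cases, and to \cite[Lemma 3.6]{HaHaPeGps} in the $p$-adically closed case. Your proposal instead attempts a self-contained re-derivation, and it has genuine gaps.

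The most concrete problem is already in your opening reduction. In the $p$-adically closed case the paper reserves the term ``ball'' in $K/\CO$ for sets of valuative radius $\gamma<\Zz$, and the condition ``$\gamma<\Zz$'' on an element of $\Gamma$ is a partial type, not a formula --- it is the intersection of the infinitely many conditions $\gamma<-m$, $m\in\Nn$, and $\Zz$ is not a definable subset of the saturated $\Gamma$. Consequently your set $X_0$, defined as the set of points of $X$ lying in some (infinite) ball contained in $X$, is in general \emph{not} $A$-definable in the $p$-adic setting, and the reduction ``$\dpr(X\setminus X_0)<n$, so any $A$-generic point of $X$ lies in $X_0$'' does not go through: $X\setminus X_0$ need not be a definable set, so it has no dp-rank in the sense being used, and genericity of $a$ over $A$ only constrains $A$-definable sets containing $a$. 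You do flag the $p$-adic radius control as ``the crux,'' but only in the context of the inductive gluing; in fact it already derails the reduction on which the whole plan rests. The inductive gluing step itself is also only a sketch: you invoke a definable choice of centre for the fibre balls (definable Skolem functions on $K/\CO$ are not available in the $V$-minimal case), appeal to ``generic monotonicity / local constancy'' without specifying which statements are applied to which functions, and explicitly defer the $p$-adic version to unwritten type-lifting machinery. As written these are acknowledged placeholders, not arguments.

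What you do get right --- and it is a nice observation --- is the subgroup case in the $V$-minimal and power-bounded $T$-convex settings: lifting $H$ to $\widehat H\subseteq K^n$, applying Lemma \ref{end-groups2}$(1)_n$, and using $\CO^n$-invariance to force each factor ball to contain $\CO$, hence have negative radius, cleanly produces an equal-radii box. But this covers neither arbitrary definable $X$ nor the $p$-adically closed case, and it is in any event a different route from the paper's, which does not go through Lemma \ref{end-groups2} here at all.
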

\begin{proof}
    If $\CK$ is power-bounded $T$-convex or $V$-minimal then this is \cite[Corollary 2.7]{SimWal}, and if $\CK$ is $p$-adically closed this is \cite[Lemma 3.6]{HaHaPeGps}.
\end{proof}

We can now give, keeping the above notation and assumptions,  a more specific description of the construction of $\nu_{K/\CO}$: 
\begin{lemma}\label{F: nu in K/O}
Let $f:H\to (K/\CO)^n$ be an $A$-definable injective homomorphism,  $\dpr(H)=n$  the $K/\CO$-rank of $G$. Then 

 \[\nu_{K/\CO}=\{f^{-1}(U): U\subseteq (K/\CO)^n \text{ is an open ball in $(K/\CO)^n$ centered at $0$}\}.\]
\end{lemma}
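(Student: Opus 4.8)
The plan is to identify $\nu_{K/\CO}$, which is defined abstractly via $D$-sets, with the concrete filter base of preimages of open balls centered at the origin. The key is to exhibit a single $D$-set that is also a subgroup, carried by the isomorphism $f$ onto an honest ball in $(K/\CO)^n$, and then invoke the general description of $\nu_D$ together with the invariance results from Section \ref{ss:infint and vicin}.

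First I would fix the subgroup $H\leq G$ with $\dpr(H)=n$ equal to the $K/\CO$-rank of $G$ and the $A$-definable injective homomorphism $f:H\to (K/\CO)^n$. By Remark \ref{R: D-sets}(2) (together with Lemma \ref{L: Dsets} in the $p$-adically closed case), $G$ contains a $K/\CO$-set, and since $H$ has maximal $K/\CO$-rank in $G$ we may take the $D$-set $Z$ inside $H$; by Fact \ref{F:interior in K/O} and Fact \ref{F: nu in K/O}'s hypothesis we can moreover arrange that $f(Z)$ contains a ball around a generic point, so after translating, the image $f(H)$ itself contains a ball $B_0$ centered at $0$ with $\dpr(B_0)=n$, which is then a $K/\CO$-set (being $D$-critical, with $f$ the injection and $n=\dpr(B_0)$). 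Let $X=f^{-1}(B_0)\leq H\leq G$; this is a definable subgroup which is a $D$-set in $G$. By Fact \ref{F: properties of nu}, computing $\nu_{K/\CO}$ on any $D$-set gives the same type, so we may compute it on $X$.

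Now I would apply Equation (\ref{eq: nuK}) (equivalently the description in Fact \ref{F: properties of nu}(1)) to $X$: for a generic $c\in X$ over all the data, $\nu_{K/\CO}=\{ f^{-1}(U)c^{-1} : U\subseteq B_0 \text{ definable open containing } f(c)\}$, where we use $f$ to transport the topology. Since $f$ is a group isomorphism onto its image, $f^{-1}(U)c^{-1}$ corresponds under $f$ to $U\cdot f(c)^{-1}$, i.e.\ to the translate of an open neighborhood of $f(c)$ by $-f(c)$, which is an open neighborhood of $0$; and since $B_0$ is a ball, a cofinal family of such neighborhoods is given by the open sub-balls of $(K/\CO)^n$ centered at $0$ (here one uses that the definable topology on $K/\CO$ has open balls as a neighborhood basis of $0$, and that translation by $-f(c)$ sends a neighborhood basis of $f(c)$ inside $B_0$ to one of $0$). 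Pulling back through $f$, which is defined on all of $H$ and injective, gives exactly $\nu_{K/\CO}=\{f^{-1}(U): U\subseteq (K/\CO)^n \text{ an open ball centered at }0\}$. One should note the translate subtlety: strictly $f^{-1}(U)c^{-1}$ sits in $G$ while $f^{-1}(U')$ for $U'$ a ball around $0$ also sits in $G$; these agree because $f$ is a homomorphism, so $f(f^{-1}(U)c^{-1}) = U f(c)^{-1}$.

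The main obstacle I expect is the bookkeeping around genericity and the translation: one must be careful that the abstract $\nu_Z(d)$ for a generic $d$ in a $D$-set, once we know $X$ can be taken to be a subgroup carried by $f$ onto a ball, really does collapse to the stated coordinate-free family, independent of the choice of generic $c$. This is exactly where Fact \ref{F: properties of nu}(1) (that $\nu_X(a)a^{-1}=\nu_X(b)b^{-1}=a^{-1}\nu_X(a)$ for all generic $a,b$) and Remark \ref{R: nu lives on any definable subgroup witnessing} do the work, so the proof is essentially an assembly of those cited facts plus the observation that open balls around $0$ form a neighborhood basis; no new computation is needed.
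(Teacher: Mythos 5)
Your argument hinges on Equation (\ref{eq: nuK}), which Section \ref{section: topology} establishes only for $D$ an SW-uniformity. The sort $K/\CO$ is an SW-uniformity when $\CK$ is power bounded $T$-convex or $V$-minimal, but \emph{not} when $\CK$ is $p$-adically closed: there the relevant ``balls'' are the ones of valuative radius strictly below $\Zz$, which do not form a definable basis for a topology (the definable balls of integer radius are singletons), so the Simon--Walsberg machinery, and with it the identification of $\nu_D$ with pullbacks of definable open neighborhoods, is unavailable. Thus the central step of your outline has a genuine gap in the $p$-adic case. Note also that Equation (\ref{eq: nuK}) is not ``equivalently the description in Fact \ref{F: properties of nu}(1)'': the latter only gives the abstract identity $\nu_D=\nu_X(a)a^{-1}$ via generic vicinities, whereas the passage from generic vicinities to preimages of definable open sets is exactly the nontrivial content (this is \cite[Proposition 5.6]{HaHaPeGps}, proved for SW-uniformities).

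The paper's proof sidesteps the issue by arguing both inclusions directly from the abstract definition of $\nu_{K/\CO}$ as an intersection of generic vicinities. After arranging $f(H)$ to be a ball, it shows that for every open ball $U\ni 0$ the set $f^{-1}(U)$ lies in $\nu_{K/\CO}$ by producing a sub-ball $Y\subseteq U+f(c)$ containing $f(c)$, definable over some $B\supseteq A$ with $\dpr(f(c)/B)=n$; this genericity-preserving shrinking to a ball is supplied by \cite[Proposition 3.12]{HaHaPeVF} (for $T$-convex and $V$-minimal) and by \cite[Proposition 3.8]{HaHaPeGps} (for $p$-adically closed), and it is the ingredient missing from your argument. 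The reverse inclusion uses Fact \ref{F:interior in K/O}. Your outline is essentially sound in the $V$-minimal and power bounded $T$-convex settings; to cover the $p$-adic case you would need to replace the appeal to Equation (\ref{eq: nuK}) by this two-sided vicinity argument.
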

\begin{proof} 

Let  $\nu_1:=\{f^{-1}(U): U\subseteq (K/\CO)^n \text{ is an open ball in $(K/\CO)^n$ centered at $0$}\}$.

By definition,  $\nu_{K/\CO}=\nu_H(c)c^{-1}$ for any $A$-generic $c\in H$. Let $H_1 :=f(H)\le  (K/\CO)^n$.
Since $\dpr(H_1)=n$, by Fact \ref{F:interior in K/O}, we may assume, shrinking $H$ (but not its rank) if needed, that  $H_1$ is a ball in $(K/\CO)^n$. 

We first show that $\nu_{K/\CO}\vdash \nu_1$.
Let $U\subseteq H_1$ be an open ball, $0\in U$. By \cite[Proposition 3.12]{HaHaPeVF} (if $\CK$ is power-bounded $T$-convex or $V$-minimal) or \cite[Proposition 3.8]{HaHaPeGps} (if $\CK$ is $p$-adically closed), there exists a ball $Y\subseteq U+f(c)$, $f(c)\in Y$, definable over some $B\supseteq A$ such that $\dpr(f(c)/B)=n$. Since $H_1$ is a subgroup, we have $Y\subseteq H_1$. Now, as $f$ is a group homomorphism, $f^{-1}(Y-f(c))=f^{-1}(Y)c^{-1}\subseteq f^{-1}(U)$, $c\in f^{-1}(Y)$,  and $\dpr(c/B)=n$. Thus, by the definition of $\nu_{K/\CO}$, we have $\nu_{K/\CO}\vdash f^{-1}(U)$, so $\nu_{K/\CO}\vdash \nu_1$. 

Similarly,  $f(\nu_1)c\vdash \nu_{H_1}(c)$, so we conclude that $\nu_1\vdash \nu_{K/\CO}$.
\end{proof}

\subsection{The valuation group $\Gamma$.}\label{ss:infnit of gamma}
When $\CK$ is either power bounded $T$-convex our $V$-minimal, the valuation group $\Gamma$ is o-minimal, when it is $p$-adically closed, it is a model of Presburger arithmetic.  In order to get a uniform treatment (and formulation of results) we make the following definition:

\begin{definition}\label{D:box}
A subset $B\subseteq \Gamma^n$ is called a $\Gamma$-box (around $a=(a_1,\dots,a_n)$) if it is of the following form:
\begin{enumerate}
    \item (In the non $p$-adic case) $\prod_{i=1}^n (b_i,c_i)$ for some $b_i<a_i<c_i$ in $\Gamma$.
    \item  (In the $p$-adic case)   A cartesian product of $n$-many sets of the form $(b_i,c_i)\cap \{x_i: x_i-a_i\in P_{m_i}\}$ where both intervals $(b_i,a_i)$ and $(a_i,c_i)$ are infinite and $P_{m_i}$ is the predicate for $m_i$-divisibility.
\end{enumerate}
\end{definition}

\begin{fact}\label{F:minimal fibers in Gamma} 
Let $Y\subseteq \Gamma^m$ be a definable set with $\dpr(Y)=n\leq m$. Then there exists a definable $Z\subseteq Y$ with $\dpr(Z)=n$ projecting injectively onto a $\Gamma$-box in $\Gamma^n$. 
\end{fact}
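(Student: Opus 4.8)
The statement is a purely geometric one about definable subsets of $\Gamma^m$, so I would prove it by induction on $m$, the ambient dimension, splitting into the o-minimal case ($V$-minimal and power bounded $T$-convex) and the Presburger case ($p$-adic) since the notion of $\Gamma$-box differs. The base case $m=n$ is immediate: a definable $Y\subseteq\Gamma^n$ with $\dpr(Y)=n$ has nonempty interior (in the o-minimal case by cell decomposition, in the Presburger case by cell decomposition in $\Zz_{Pres}$, using that $\dpr$ coincides with the o-minimal/Presburger dimension on $\Gamma^n$), so it contains a $\Gamma$-box and we take $Z$ to be that box with $\pi=\mathrm{id}$.

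For the inductive step with $m>n$, I would use cell decomposition to write $Y$ as a finite union of cells; at least one cell $C$ has $\dpr(C)=n$, so we may assume $Y=C$ is a cell. By reordering coordinates, a cell is presented as the graph of a partial definable continuous (or, in the Presburger case, linear-modulo-congruence) function over its projection to some coordinate subspace, together with ``band'' coordinates lying strictly between two boundary functions. Let $\pi_1:\Gamma^m\to\Gamma^{m-1}$ drop the last coordinate. If $x_m$ is a ``graph'' coordinate of the cell, then $\pi_1\rest C$ is injective and $\dpr(\pi_1(C))=n$; apply the induction hypothesis to $\pi_1(C)\subseteq\Gamma^{m-1}$ to get $Z'\subseteq\pi_1(C)$, $\dpr(Z')=n$, projecting injectively onto a $\Gamma$-box, and pull back: $Z:=(\pi_1\rest C)^{-1}(Z')$ works, with the projection being the composition. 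If instead $x_m$ is a ``band'' coordinate, then the fiber of $\pi_1\rest C$ over each point is an infinite interval (intersected with a congruence class in the $p$-adic case); here I would first fix the last coordinate cleverly—choose a definable section of $\pi_1\rest C$ (e.g. pick $x_m$ to be, say, the lower boundary function plus one, or in the $p$-adic case plus a constant in the right congruence class), reducing to a set of the same dp-rank on which $\pi_1$ is injective, then recurse as before; alternatively, since $\dpr(C)=n<m$, not all of the $m-n+\dots$ coordinates can be independent, and one locates an $n$-element sub-tuple of coordinates that is $\acl$-independent generically on $C$ and shows the projection onto those coordinates is generically finite-to-one, then shrinks to make it injective and passes to an interior box in the image.

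I expect the main obstacle to be the bookkeeping in the band-coordinate case: ensuring that after fixing a coordinate to a definable section one still has a set of full dp-rank $n$ and that the resulting projection, after composing through all the inductive steps, lands injectively on a genuine $\Gamma$-box (in particular that the congruence constraints in the $p$-adic case are preserved and that all the defining intervals remain infinite, as required by Definition \ref{D:box}). A clean way to organize this, avoiding repeated ad hoc choices, is to prove the slightly stronger statement that one can find $Z\subseteq Y$ of dp-rank $n$ and a coordinate projection onto $\Gamma^n$ restricting to a bijection between $Z$ and a $\Gamma$-box; then the cell-decomposition structure makes the choice of which $n$ coordinates to project onto canonical (the ``free'' coordinates of the cell), and the inductive step just amounts to verifying that dropping one dependent coordinate and intersecting with the preimage of an inductively-obtained box preserves all the required properties. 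The o-minimal case is then routine; the only real content is the careful treatment of divisibility predicates in the Presburger case, which is handled by noting that inside any infinite interval and any congruence class one can still find an infinite sub-interval of that class, so $\Gamma$-boxes are closed under the relevant shrinking operations.
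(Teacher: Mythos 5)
Your proposal is correct and takes essentially the same approach as the paper: the paper's own proof is a one-line citation to cell decomposition in the o-minimal case (resp. Cluckers' Presburger cell decomposition plus the coincidence of dp-rank with Presburger dimension in the $p$-adic case), and your argument simply unwinds that cell-decomposition induction explicitly.
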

\begin{proof}
    If $\CK$ is power-bounded $T$-convex or $V$-minimal, $\Gamma$ is o-minimal and the result follows by cell-decomposition.

    If $\CK$ is $p$-adically closed, then $\Gamma$ is a model of Presburger arithmetic. It also admits a cell-decomposition  \cite{ClucPresburger} (see also \cite[Fact 2.4]{OnVi} for a more explicit formulation), and thus the result follows from the fact that dimension coincides with dp-rank (\cite[Theorem 0.3]{Simdp}).
\end{proof}

Using Fact \ref{F:minimal fibers in Gamma} and \cite[Lemma 4.2]{HaHaPeGps} repeatedly (as in the proof of Lemma \ref{F: nu in K/O} above) we get the following.

\begin{lemma}\label{L: inf neigh in Gamma}
    Let $G$ be a definable $\Gamma$-group and $g:Y\to \Gamma^n$ be a definable injection with $\dpr(Y)=n$ the $\Gamma$-rank of $G$. Assume everything is defined over some parameter set $A$, and $c\in Y$ is $A$-generic. Then
     
    \[\nu_Y(c)=\{g^{-1}(U): U\subseteq \Gamma^n \text{ a $\Gamma$-box around $g(c)$}\}.\]
\end{lemma}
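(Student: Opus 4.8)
The plan is to transcribe the proof of Lemma~\ref{F: nu in K/O}, with Fact~\ref{F:interior in K/O} replaced by a ``local'' version of Fact~\ref{F:minimal fibers in Gamma} and the generic-sub-ball input replaced by \cite[Lemma 4.2]{HaHaPeGps}. Set $\nu_1:=\{g^{-1}(U): U\subseteq\Gamma^n \text{ a }\Gamma\text{-box around }g(c)\}$, where throughout $g^{-1}(U)$ denotes $\{y\in Y: g(y)\in U\}\subseteq Y$. First I would record the preliminaries: $Y$ is a $\Gamma$-set over $A$ (it is $\Gamma$-critical and $g\colon Y\to\Gamma^n$ is injective with $n=\dpr(Y)$, so Remark~\ref{R: D-sets}(2) applies), so $\nu_Y(c)$ is defined; and $g$ restricts to an $A$-definable bijection $Y\to g(Y)$, so $c$ and $g(c)$ are interdefinable over $A$, whence $\dpr(c/B)=\dpr(g(c)/B)$ for all $B\supseteq A$ and $g(c)$ is $A$-generic in $\Gamma^n$. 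For $\nu_Y(c)\vdash\nu_1$: given a $\Gamma$-box $U$ around $g(c)$, apply \cite[Lemma 4.2]{HaHaPeGps} to obtain a $\Gamma$-box $U'\subseteq U$ around $g(c)$, definable over some $B\supseteq A$, with $\dpr(g(c)/B)=n$; then $V:=g^{-1}(U')$ is $B$-definable, contains $c$, and satisfies $\dpr(c/B)=\dpr(g(c)/B)=n=\dpr(Y)$, hence is a $B$-generic vicinity of $c$ in $Y$ with $V\subseteq g^{-1}(U)$, so $\nu_Y(c)\vdash g^{-1}(U)$.

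For the reverse $\nu_1\vdash\nu_Y(c)$: let $V$ be a $B$-generic vicinity of $c$ in $Y$ with $B\supseteq A$, and consider $W:=(\Gamma^n\setminus g(Y))\cup g(V)\subseteq\Gamma^n$. This is $B$-definable and contains $g(c)$; since $\dpr(g(c)/B)=\dpr(c/B)=n=\dpr(\Gamma^n)$, the point $g(c)$ is $B$-generic in $\Gamma^n$ and hence in $W$, so $\dpr(W)=n$. The step playing the role of Fact~\ref{F:interior in K/O} (resp.\ of the SW-uniformity input) in Lemma~\ref{F: nu in K/O} is then the claim that there is a $\Gamma$-box $U\subseteq W$ around $g(c)$. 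Given such a $U$, since $U\subseteq W$ we get $U\cap g(Y)\subseteq g(V)$, hence $g^{-1}(U)\subseteq g^{-1}(g(V))=V$ by injectivity of $g$ on $Y$; thus $\nu_1\vdash V$, and as $V$ was arbitrary $\nu_1\vdash\nu_Y(c)$.

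The main obstacle is exactly the claim just invoked: a $B$-generic point of a $B$-definable set $W\subseteq\Gamma^n$ of dp-rank $n$ lies in a $\Gamma$-box contained in $W$. I would deduce this from Fact~\ref{F:minimal fibers in Gamma} together with cell decomposition in $\Gamma$: the set of points of $W$ admitting no $\Gamma$-box inside $W$ is $B$-definable and, by cell decomposition, of dp-rank $<n$ (a full-dimensional cell in $\Gamma^n$ is $\Gamma$-box-open --- immediate when $\Gamma$ is o-minimal; in the Presburger case one checks that a full-dimensional Presburger cell in $\Zz^n$ contains a $\Gamma$-box, with moduli dividing those appearing in a fixed cell decomposition of $W$, around each of its points lying outside a definable proper subcell), so it cannot contain the generic $g(c)$. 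The sub-box input to \cite[Lemma 4.2]{HaHaPeGps} used in the first inclusion is of the same nature. Apart from this cell-decomposition bookkeeping, the argument is a word-for-word analogue of the proof of Lemma~\ref{F: nu in K/O}.
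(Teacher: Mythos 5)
Your proposal is correct and takes exactly the approach the paper indicates: the paper's own ``proof'' is the one-line remark that one should transcribe the argument of Lemma~\ref{F: nu in K/O}, using Fact~\ref{F:minimal fibers in Gamma} and \cite[Lemma 4.2]{HaHaPeGps} in place of Fact~\ref{F:interior in K/O} and the generic-sub-ball input, and that is precisely what you do. One small simplification: the local $\Gamma$-box claim needs no separate cell-decomposition bookkeeping, since if the set $W_0$ of points of $W$ lying in no $\Gamma$-box inside $W$ had dp-rank $n$, Fact~\ref{F:minimal fibers in Gamma} (with $m=n$) would produce a $\Gamma$-box inside $W_0\subseteq W$, contradicting the definition of $W_0$.
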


We can now conclude:

\begin{lemma}\label{L: nu in Gamma}
Let $G$ be a definable $\Gamma$-group. There exists $X\subseteq G$, a $\Gamma$-critical set with $\nu_{\Gamma}\vdash X$, and $f:X\to \Gamma^n$ a definable injection satisfying:
\begin{enumerate}
    \item $f(X)$ is a $\Gamma$-box around $0$,
    \item $f(xy^{\pm 1})=f(x)\pm f(y)$ for any $x,y\in X$ with $xy^{\pm 1}\in X$ and
    \item $\nu_\Gamma=\{f^{-1}(U): U\subseteq \Gamma^n \text{ a $\Gamma$-box around $0$}\}.$
\end{enumerate}
\end{lemma}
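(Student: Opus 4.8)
The plan is to bootstrap from Lemma~\ref{L: inf neigh in Gamma}, which already gives us a $\Gamma$-critical set $Y$ and a definable injection $g\colon Y\to \Gamma^n$ whose image is a $\Gamma$-box and such that $\nu_Y(c)$ is described by $g$-preimages of $\Gamma$-boxes around $g(c)$, for $c\in Y$ generic over the defining parameters $A$. The only two things missing from that statement relative to what we want are: (i) that the ``exponential-coordinate'' chart can be normalized so that $0\in f(X)$, the identity $e\in X$, and $f$ turns multiplication into addition on $X$ (property~(2)); and (ii) that $\nu_\Gamma\vdash X$ (so that $X$ really witnesses $\nu_\Gamma$ and the boxes around $0$ describe $\nu_\Gamma$ itself, not merely $\nu_Y(c)c^{-1}$).

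First I would pass from $Y$ to $X := Yc^{-1}$ (translating by the generic point $c$), and set $f(x) := g(xc) - g(c)$ for $x\in X$. Then $e\in X$, $f(e)=0$, and $f(X)=g(Y)-g(c)$ is again a $\Gamma$-box, now around $0$; this gives property~(1). For property~(2), the point is that multiplication in a $\Gamma$-group is, infinitesimally, modeled by addition in $\Gamma^n$: this is exactly the content of the construction of $\nu_\Gamma$ in \cite{HaHaPeGps} — the group operation restricted to a small enough $\Gamma$-set, read through the chart, becomes coordinatewise addition. So after further shrinking $X$ (without dropping its dp-rank, which stays equal to the $\Gamma$-rank $n$ of $G$) to a $B$-generic vicinity of $e$ on which the local group law is literally $f(x)+f(y)$, we get $f(xy^{\pm 1}) = f(x)\pm f(y)$ whenever both sides are defined. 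One should double-check that $X$ can simultaneously be taken to be a $\Gamma$-set (so that $\nu_X(e) = \nu_\Gamma$ by Fact~\ref{F: properties of nu}(2)); by Remark~\ref{R: D-sets}(2) the injection $f$ with $n = \dpr(X)$ already makes $X$ a $\Gamma$-set, so this is automatic.

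For~(3), once $X$ is a $\Gamma$-set and $e\in X$, Fact~\ref{F: properties of nu}(1) gives $\nu_\Gamma = \nu_X(e)e^{-1} = \nu_X(e)$, and Lemma~\ref{L: inf neigh in Gamma} (applied with the base point $e$, which is the image under the translation of the generic point $c$ — one must be slightly careful here, since $e$ is not $A$-generic in $X$, so I would instead keep a generic $c'\in X$ and transport: $\nu_X(c')c'^{-1} = \nu_\Gamma$, and the description of $\nu_X(c')$ via $f$-preimages of boxes around $f(c')$ translates, using property~(2), to $f$-preimages of boxes around $0$ for $\nu_\Gamma$). Finally $\nu_\Gamma\vdash X$ is immediate from Remark~\ref{R: nu lives on any definable subgroup witnessing}-style reasoning once we know $X$ is a $\Gamma$-set, or directly: every box around $0$ inside $f(X)$ pulls back into $X$, and $\nu_\Gamma$ is generated by such pullbacks.

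I expect the main obstacle to be bookkeeping the shrinking steps and base points cleanly: each time we shrink $X$ to make the local group law exactly additive we must verify dp-rank is preserved and that $X$ remains a $\Gamma$-set, and we must be careful that the identity $e$ plays the role the generic point plays in Lemma~\ref{L: inf neigh in Gamma} only after transporting via a genuinely generic $c'$. The actual additivity of the chart is not new work — it is extracted from the construction in \cite{HaHaPeGps} and Lemma~\ref{L: inf neigh in Gamma} — so the proof is essentially a normalization/translation argument, and I would keep it short, citing \cite{HaHaPeGps} for the fact that the local $\Gamma$-group law is additive in suitable coordinates.
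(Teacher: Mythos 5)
Your plan is sound in outline and lands close to the paper's one-paragraph proof, but as written the key step is asserted rather than established. The paper's proof starts from Theorems~7.4(3), 7.7(3), 7.11(2) of \cite{HaHaPeGps}, which give a definable \emph{group isomorphism} from $\nu_\Gamma$ onto a type-definable subgroup of $(\Gamma^r,+)$; compactness (after cutting $r$ down to $n$ via Fact~\ref{F:minimal fibers in Gamma}) then hands over a definable $X$ together with an $f$ that is already a local group homomorphism, so property~(2) is built into the chart by construction. You instead take an \emph{arbitrary} injection $g\colon Y\to\Gamma^n$ from Lemma~\ref{L: inf neigh in Gamma}, translate it to $f(x)=g(xc)-g(c)$, and then assert that after shrinking the group law becomes coordinatewise addition. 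Translation alone cannot produce this: a bijection onto a $\Gamma$-box has no reason to intertwine multiplication with addition, and Lemma~\ref{L: inf neigh in Gamma} makes no group-compatibility claim about its chart. The citation you give (``the construction of $\nu_\Gamma$ in \cite{HaHaPeGps}'') is really the group isomorphism theorem, which applies to \emph{its own} chart, not to the one you picked. Your closing remark about ``suitable coordinates'' shows you sense this, but the body of the proof does not act on it.

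There are two honest fixes. The first is to simply use the chart furnished by the isomorphism theorem and then translate to place $e$ at $0$; this is what the paper does, and then your first and third paragraphs go through unchanged. The second, which would actually justify the assertion you made, is a rigidity argument: in both the o-minimal and the Presburger case, a definable germ at a generic point is affine (Corollary~7.6 of \cite[\S 1.7]{vdDries} in the first case, Presburger cell decomposition in the second), so after translating $e\mapsto 0$ and shrinking to a single piece the transported group law is $\tilde m(u,v)=Au+Bv+c$, and the identities $\tilde m(u,0)=u$ and $\tilde m(0,v)=v$ force $A=B=\id$, $c=0$, hence $\tilde m(u,v)=u+v$. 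Either route works, but one of them must actually appear; as written, the passage from an arbitrary chart to an additive one is a gap. The remainder of your argument -- transporting the box filter by $f^{-1}(U)c'^{-1}=f^{-1}(U-f(c'))$ once additivity is in hand, and reading off $\nu_\Gamma\vdash X$ by taking $U=f(X)$ -- is fine.
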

\begin{proof}
    By \cite[Theorems 7.4(3), 7.7(3), 7.11(2)]{HaHaPeGps}, $\nu_\Gamma$ is definably isomorphic (as groups) to a type-definable subgroup of $(\Gamma^r,+)$ for some $r>0$, and using Fact \ref{F:minimal fibers in Gamma}, we may further assume that $r=n$.
    As this isomorphism is witnessed by an isomorphism of groups, the result follows by compactness and Lemma \ref{L: inf neigh in Gamma}.
\end{proof}

\subsection{The valued field and the residue field}\label{ss: valued field and residue field}
For this section $D$ is either the valued field $K$ or the residue field $\bk$ when $\CK$ is power bounded $T$-convex. We first describe the infinitesimal group $\nu_D$ and then show how in these situations the type-definable group $\nu_D$  gives rise to a definable, differentiable local group with respect to either $K$ or $\bk$.

\subsubsection{Local differential groups}\label{sss:local differ group}
Let $\CF$ be an expansion of either a real closed field or a valued field with valuation $v$. Let us recall some standard definitions.  We later apply them for when $\CF=D$.

\begin{definition}
Given $U\subseteq \CF^n$ open, a map $f:U\to \CF^m$ is \emph{differentiable} at $x_0\in U$ if there exists a linear map $D_{x_0}f:\CF^n\to \CF^m$ such that:

\noindent In the ordered case:
\[\lim_{x\to x_0}\frac{|f(x)-f(x_0)-(D_{x_0}f)\cdot (x-x_0)|}{|x-x_0|}=0,\]
and in the valued case:
\[\lim_{x\to x_0}\left[ v(f(x)-f(x_0)-(D_{x_0}f)\cdot (x-x_0))-v(x-x_0)\right]=\infty.\]

Also, in the valued setting, $f$ is called {\em strictly differentiable at $x_0$} if there exists a linear map $D_{x_0}f$ which satisfies: for all $\epsilon\in \Gamma$ there exists $\delta\in \Gamma$, such that for all $x_1,x_2\in B_{>\delta}(x_0)$, 
\[v(f(x_1)-f(x_2)-(D_{x_0}f)\cdot(x_1-x_2))-v(x_1-x_2)>\epsilon.\]

 \end{definition}

We are going to work extensively with the notion of {\em a local group}, so we first recall some additional  definitions: 

\begin{definition} \label{D: loc gp}
A \emph{local group with respect to $\CF$} is a tuple $\mathcal{G}=(X,m,\iota,e)$ such that 
\begin{itemize}
   \item $X$ is a topological space and there exists a homeomorphism  $\phi: U\to V$  between an  open neighborhood of $e$ in $X$ and  an open $V\sub \CF^n$, for some $n$. 
    \item  the maps $m:X\times X\dashrightarrow X$ and $\iota:X\dashrightarrow X$ are continuous \textit{partial} functions, with open domains containing $(e,e)$ and $e$, respectively.
     
\end{itemize}
such that the following equalities hold \textbf{whenever both sides of the equations are defined}:
\begin{enumerate}
    \item For any $x\in X$, $x=m(x,e)=m(e,x)$
    \item For any $x\in X$, $e=m(x,\iota(x))=m(\iota(x),x)$.
    \item For all $x,y,z\in X$, $m(x,m(y,z))=m(m(x,y),z)$.
\end{enumerate}

The local group $\mathcal{G}$ is  {\em differentiable}  if  $\phi( m(\phi^{-1}(x), \phi^{-1}(y))$  and $\phi (\iota(\phi^{-1}(x))$  are differentiable. {\em Strictly differentiable} local groups are defined analogously.  

The local group  $\mathcal{G}$ is \emph{definable} in $\CF$, if $X,m,\iota$ and $\varphi$ are definable. 
\vspace{.2cm}

For $G$  a definable group,  a \emph{definable local subgroup with respect to $\CF$} is a local subgroup with respect to $\CF$ whose universe is a definable subset of $G$ and whose multiplication agrees with $G$-multiplication.
\end{definition}

\begin{definition}
    Let $\mathcal{G}=(X,m, e, \iota)$ and $\mathcal{G}'=(X',m', e', \iota')$ be local groups. A \emph{ homomorphism}  of local groups $f:\mathcal{G}\to\mathcal{G}'$  is a continuous function $f: U\to X'$, where $U\sub X$ is an open neighborhood of $e$, such that $f(e)=e'$ and $f(m(x,y))=m(f(x), f(y))$ in a neighborhood of $e$. Such an $f$ is a \emph{local isomorphism} if, in addition, it is a homeomorphism onto its image.     If $\CG, \CG'$ are (strictly) differentiable local groups, then such an $f$ is \emph{(strictly) differentiable} if $\varphi'\circ f\circ \varphi^{-1}$ is (strictly) differentiable. 

    For $G$ a definable group, a local subgroup $\CG$ is called \emph{normal in $G$}  if for every $g\in G$, the map $x\mapsto x^g$ restricts to  a local automorphism of $\CG$. In particular -- in the notation of local subgroups --  for any definable open neighborhood $U\sub X$ of $e$ there exists an open neighborhood $V\subseteq X$ of $e$ such that $x\mapsto x^g$ maps $V$ into $U$.
\end{definition}

Assume further that  every definable function in $\CF$  is (strictly) \emph{generically differentiable}, i.e. for every definable open $U\subseteq \CF^n$, and definable $f:U\to \CF$,  the set of points $x\in U$ such that $f$ is not (strictly) differentiable at $x$ has empty interior. See \cite[Section 4.3]{HaHaPeVF} for more information.

Now, if $\CG, \CG'$ as above are (strictly) differentiable local groups and $f:\CG\to \CG'$ is a definable homomorphism of local groups then $f$  is also (strictly) differentiable. Indeed, since definable functions are generically (strictly) differentiable with respect to $\CF$, the corresponding map $\phi'\circ f\circ \phi$ is $\CF$-(strictly) differentiable at a generic point, and then, using the local group structure, it is (strictly) differentiable  on an open neighborhood of $e$.
\begin{definition}
    
\label{N:Ad}
Let $G$ be a definable group in $\CM$ and let  $\mathcal{G}=(X,\cdot,^{-1})$  be a differentiable normal local  subgroup of $G$ with respect to  $\CF$, witnessed by a map $\varphi:X\to \CF^n$.   The {\em Adjoint map with respect to $\CF$} is the map $\ad^{\CG}_\CF:G\to \gl_n(\CF)$, which assigns to every $g\in G$ the Jacobian matrix of the map $D_e(\varphi\circ \tau_g\circ\varphi^{-1})$.
\end{definition}

By the chain rule in $\CF$, $\ad^\CG_\CF$ is a group homomorphism. 

Note that while the matrix $D_e(\tau_g)$ may depend on the choice of $\phi$ (up to conjugation), the definable group  $\ker(\ad^\CG_\CF)$ does not.

\subsubsection{The infinitesimal group}
Under the assumptions of this section, the sort $D$ is an SW-uniformity expanding a field. Therefore, if $X\sub D^k$ is definable, $f: X\to D^m$ is a definable injection, then by possibly shrinking $X$, but not its rank, we may compose $f$ with a projection $\pi: X\to D^{\dpr(X)}$ such that $ \pi\circ f (X)$  is a basic open set. 
 
 Furthermore, every definable function in $D$ is generically differentiable with respect to $D$ in the o-minimal case and generically strictly differentiable in the valued case. Indeed, if $D=\bk$ in the power bounded $T$-convex case, then $\bk$ is a o-minimal so every definable function is generically differentiable. In the other cases, it follows from 1-$h$-minimality (\cite[Proposition 3.12]{AcHa}).

\begin{fact}\label{F: nu in K}
Let $G$ be a definable $D$-group, locally strongly internal to $D$ over $A$, witnessed by the definable injection $f:X\to D^n$, with $\dpr(X)=n$, the $D$-rank of $G$. Given $c\in X$, generic over $A$, 
\[\nu_D(G)=\{f^{-1}(U)c^{-1}:U\subseteq D^n \text{ open  containing } f(c)\}.\]
\end{fact}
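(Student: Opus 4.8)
The plan is to reduce the statement to the general description of $\nu_D$ recorded in Equation (\ref{eq: nuK}) at the beginning of Section \ref{section: topology} (equivalently, to Fact \ref{F: properties of nu}), whose only defect is that it refers to a \emph{$D$-critical} set together with an injection into $D^n$ for $n=\dpr(X)$ \emph{and} requires that this set be a $D$-set. So the first step is to observe, using Lemma \ref{L: Dsets} (or Remark \ref{R: D-sets}(2)), that since $D$ is an SW-uniformity expanding a field, the set $X$ in the statement — which is $D$-critical, witnessed by an injection $f:X\to D^n$ with $n=\dpr(X)$ — is automatically a $D$-set in $G$. Hence $\nu_D(G)=\nu_X(c)c^{-1}$ for any $c\in X$ generic over $A$, and by Equation (\ref{eq: nuK}) this type is generated by the sets $f^{-1}(U)c^{-1}$ as $U$ ranges over definable open subsets of $D^n$ containing $f(c)$. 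If $X$ is presented so that $f(X)$ is already open in $D^n$ this is exactly the asserted formula; in general one shrinks $X$ (without lowering its dp-rank) and post-composes $f$ with a coordinate projection so that the image becomes a basic open set, using the remark at the start of Section \ref{ss: valued field and residue field} that in an SW-uniformity expanding a field this is always possible.

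The second step is to check that shrinking $X$ this way does not change the resulting type. This follows the pattern already used in the proof of Lemma \ref{F: nu in K/O}: if $X'\subseteq X$ is a $D$-critical (equivalently, full $D$-rank) subset defined over some $B\supseteq A$ and $c'\in X'$ is $B$-generic, then $\nu_{X'}(c')(c')^{-1}=\nu_X(c)c^{-1}$ by Fact \ref{F: properties of nu}(2) (both $X$ and $X'$ are $D$-sets in the $D$-group $G$, so both compute $\nu_D(G)$). One also needs that the choice of the generic point $c$ is immaterial, which is again part of Fact \ref{F: properties of nu}(1): $\nu_X(a)a^{-1}=\nu_X(b)b^{-1}$ for all $A$-generic $a,b\in X$. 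Combining these, the right-hand side of the displayed equation is well-defined independently of the presentation, and equals $\nu_D(G)$.

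The only genuinely delicate point — and the one I expect to be the main obstacle — is matching the two descriptions of the generating family: namely that ``definable $B$-generic vicinities of $c$ in $X$'', as $B$ varies (the definition of $\nu_X(c)$ from Section \ref{ss:infint and vicin}), generate the same filter as ``$f^{-1}(U)$ for $U\subseteq D^n$ open containing $f(c)$''. One inclusion is easy: each $f^{-1}(U)$ with $U$ open basic is a $B$-definable generic vicinity of $c$ (it contains $c$, has full dp-rank since $f$ is injective and $U$ has nonempty interior, and is definable over the parameters of $U$ together with $A$), so $\nu_X(c)\vdash f^{-1}(U)$. For the reverse inclusion one invokes that $D$ is an SW-uniformity: a $B$-generic vicinity $Y$ of $c$ in $X$ has $\dpr(Y)=\dpr(X)=n$, so by Fact \ref{F:interior in K/O}-type interior results for SW-uniformities (\cite[Corollary 2.7]{SimWal}) its image $f(Y)$ contains a basic open neighborhood $U$ of $f(c)$; then $f^{-1}(U)\subseteq Y$ witnesses $\nu_1\vdash Y$, where $\nu_1$ denotes the right-hand side. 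Since this holds for every generic vicinity $Y$, we get $\nu_1\vdash \nu_X(c)$, and together with the easy inclusion, $\nu_1=\nu_X(c)$. Translating by $c^{-1}$ gives $\nu_D(G)=\{f^{-1}(U)c^{-1}:U\subseteq D^n\text{ open containing }f(c)\}$, as required. (Throughout, one works in a sufficiently saturated extension so that the filter-base property of $\nu_X(c)$ from \cite[Lemma 4.20]{HaHaPeGps} is available.)
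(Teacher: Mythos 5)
Your overall route — matching the filter of $B$-generic vicinities of $c$ with the filter of preimages of open neighborhoods of $f(c)$ — is exactly what the cited \cite[Proposition~5.6]{HaHaPeGps} accomplishes, and which the paper's proof defers to entirely. You are therefore reproving that proposition rather than quoting it, which is fine in principle, but the argument you give has a genuine gap precisely at the crux.

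The problem is in your ``easy'' inclusion. You claim that for $U\subseteq D^n$ open with $f(c)\in U$, the set $f^{-1}(U)$ is a $B$-generic vicinity of $c$, where $B$ consists of $A$ together with the parameters of $U$. But the definition of a $B$-generic vicinity of $c$ requires $\dpr(c/B)=\dpr(X)$, and you only verify the weaker consequence $\dpr(f^{-1}(U))=\dpr(X)$. These are not equivalent: if, say, $U$ is a small ball centered at $f(c)$ and defined using $f(c)$ as a parameter, then $c\in\dcl(Af(c))\subseteq\dcl(B)$, so $\dpr(c/B)=0$, and $f^{-1}(U)$ is \emph{not} a $B$-generic vicinity of $c$. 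Consequently ``$\nu_X(c)\vdash f^{-1}(U)$'' does not follow just from the observation you make. The missing step is a shrinking argument: given any open $U\ni f(c)$, one must produce an open ball $V\ni f(c)$ with $V\subseteq U$ that is defined over a parameter set $B$ satisfying $\dpr(f(c)/B)=n$; then $f^{-1}(V)$ is a genuine $B$-generic vicinity of $c$ inside $f^{-1}(U)$. This is exactly what the paper does in the analogous Lemma~\ref{F: nu in K/O}, where it invokes \cite[Proposition~3.12]{HaHaPeVF} (resp.\ \cite[Proposition~3.8]{HaHaPeGps}) for precisely this purpose; the corresponding fact for $K$ and $\bk$ is hidden inside \cite[Proposition~5.6]{HaHaPeGps}, and is not an automatic consequence of $D$ being an SW-uniformity. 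Your reverse inclusion (passing from a generic vicinity $Y$ to an open $U$ with $f^{-1}(U)\subseteq Y$) is sound, but the forward one needs this extra ingredient.
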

\begin{proof}
 By \cite[Proposition 5.6]{HaHaPeGps}, for $c\in X$ $A$-generic $\nu_X(c)=f^{-1}(\mu(f(c))$, where $\mu(f(c))$ is the infinitesimal neighborhood of $f(c)$ in the topology on $D$. The result now follows. 
\end{proof}

\begin{lemma}\label{L:existence of local group}
    Let $G$ be a definable $D$-group locally strongly internal to $D$.
    
    Then there exists a definable  differentiable local normal subgroup $\CG=(X,\cdot, ^{-1},e)$ with respect to the field $D$, with $\nu_D(G)\vdash X$. When $D=K$ the local group is strictly differentiable.

    If $G$ is definable over some $\CK_0\prec \CK$ then the local group and the map $\varphi:X\to D^n$ witnessing it can be found definable over $\CK_0$.
\end{lemma}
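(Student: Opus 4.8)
The plan is to construct $\CG$ by transporting a $D$-set to the identity. First I would apply Lemma~\ref{L: Dsets} to obtain a $D$-critical $X_0\sub G$ and a definable injection $f_0\colon X_0\to D^n$ with $n=\dpr(X_0)$ the $D$-rank of $G$; composing $f_0$ with a coordinate projection and shrinking $X_0$ without lowering its rank, as in the discussion preceding Fact~\ref{F: nu in K}, I may assume $f_0(X_0)$ is a basic open subset of $D^n$. Fix $c\in X_0$ generic over the parameter set $A$ over which everything is defined, put $X:=X_0c^{-1}$ (so $e\in X$) and $\varphi\colon X\to D^n$, $\varphi(x):=f_0(xc)$. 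By Fact~\ref{F: nu in K}, $\nu_D(G)=\{\varphi^{-1}(U)\colon U\sub D^n\text{ open},\ f_0(c)\in U\}$; in particular $\nu_D\vdash X$, and since by \cite[Corollary 5.14]{HaHaPeGps} the $\tau_D$-open subsets of $G$ are exactly the definable $W$ with $a\nu_D\sub W$ for all $a\in W$, it follows that, after replacing $X$ by $\varphi^{-1}$ of a small open ball around $f_0(c)$, the map $\varphi$ is a definable homeomorphism of the $\tau_D$-open set $X$ onto an open subset of $D^n$.

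Next I would equip $X$ with the restrictions of the operations of $G$: let $m$ be the restriction of multiplication to $\{(x,y)\in X\times X\colon xy\in X\}$ and $\iota$ the restriction of inversion to $\{x\in X\colon x^{-1}\in X\}$. Since $(G,\tau_D)$ is a topological group and $X$ is $\tau_D$-open, these domains are open and contain $(e,e)$ and $e$ respectively, and the identities in Definition~\ref{D: loc gp} hold whenever both sides are defined because they hold in $G$. Thus $\CG=(X,\cdot,{}^{-1},e)$ is a definable local subgroup of $G$ with respect to $D$. For normality: by Fact~\ref{F: properties of nu}(3), $\nu_D$ is invariant under every definable automorphism of $G$, in particular under conjugation by any $g\in G$; combined with the continuity of $x\mapsto x^g$, this yields for each $g$ a $\tau_D$-open neighborhood $V\sub X$ of $e$ with $V^g\sub X$ on which $x\mapsto x^g$ restricts to a local automorphism of $\CG$. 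Hence $\CG$ is normal in $G$.

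The substance of the lemma is differentiability. The transported operations $\mathbf m:=\varphi\circ(\,\cdot\,)\circ(\varphi^{-1}\times\varphi^{-1})$ and $\mathbf i:=\varphi\circ({}^{-1})\circ\varphi^{-1}$ are definable partial functions defined on open subsets of $D^{2n}$ and $D^n$; by the standing assumption of this section — $\bk$ is o-minimal in the $T$-convex case, and $1$-$h$-minimality (\cite[Proposition 3.12]{AcHa}) in the case $D=K$ — every definable function in $D$ is generically (strictly) differentiable, so $\mathbf m$ and $\mathbf i$ are (strictly) differentiable on dense open subsets of their domains. To upgrade this to (strict) differentiability on a full neighborhood of $(\varphi(e),\varphi(e))$, respectively $\varphi(e)$, I would use the homogeneity coming from $G$: left and right translations are $\tau_D$-homeomorphisms whose transports to $D^n$ are differentiable at generic points, and from the identity $xy=g_1^{-1}\bigl((g_1x)(yg_2)\bigr)g_2^{-1}$ one rewrites $\mathbf m$ near the identity as a composition of transported translations with $\mathbf m$ near a generic point of its graph; feeding in generic differentiability at such a point and then propagating by the local-group structure exactly as in the argument given just before the statement of the lemma (this is the analogue, in the present setting, of Pillay's argument that a group definable in an o-minimal structure carries a definable manifold structure) yields (strict, when $D=K$) differentiability on a neighborhood of $e$. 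This propagation step — turning \emph{generic} differentiability into differentiability at the identity, where genericity is not directly available — is the point that I expect to require the most care.

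Finally, for the last clause: if $G$ is definable over $\CK_0\prec\CK$, then $X_0$ and $f_0$ may be chosen over $\CK_0$, since the inputs to Lemma~\ref{L: Dsets} (namely \cite[Proposition 4.6]{SimWal}, cell decomposition, or Proposition~\ref{P:local homeo K/O}) use only that $G$ is a $D$-group and $\CK_0$ is itself a model of the theory; a point $c\in X_0$ generic over $A\sub\CK_0$ may then be found inside $\CK_0$ by elementarity, so that $X$, $\varphi$ and the operations of $\CG$ produced above are all $\CK_0$-definable.
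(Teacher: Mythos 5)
Your overall strategy is genuinely different from the paper's. The paper does not try to establish differentiability of the local operations directly: it instead passes to a $|\CK|^+$-saturated $\widehat\CK\succ\CK$, invokes the theorems from \cite{HaHaPeGps} (Theorems 7.4(1,2), 7.7(1), 7.11(1)) stating that $\nu_D(\widehat\CK)$ is already a (strictly) differentiable Lie group in the structure induced by $\varphi$, and then uses compactness to pull this back to a definable (strictly) differentiable local normal subgroup $\CG$ with $\nu_D\vdash\CG$. You instead try to re-derive the Lie structure from scratch by restricting the group operations to a definable open set and upgrading generic differentiability to differentiability at $e$. That route is in principle more self-contained, but it quietly re-opens the main work that the cited theorems already do.

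The propagation step, which you correctly flag as the one "requiring the most care," is not a detail — it is the entire content that the paper avoids by citing \cite{HaHaPeGps}. The discussion just before the lemma in the paper treats a different situation (a definable local \emph{homomorphism} between two local groups whose differentiable structure is already given), and does not address differentiability of the multiplication map itself at $(e,e)$. Pillay-style propagation for a full group uses that translations by arbitrary elements are globally available and already known to be differentiable in the chart; for a \emph{local} group, translations are only partially defined, and the identity $xy=g^{-1}\bigl((gx)(yh)\bigr)h^{-1}$ you gesture at requires $L_{g^{-1}}$ and $R_{h^{-1}}$ to be differentiable at the relevant points, which is itself a differentiability-of-multiplication statement at a non-generic argument. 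So as written this is circular; the gap has to be closed by an actual argument, and that argument is exactly what \cite{HaHaPeGps} supplies and the paper cites.

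The final clause also has a gap. You claim that a point $c\in X_0$ generic over $A\sub\CK_0$ "may be found inside $\CK_0$ by elementarity." Genericity (i.e., $\dpr(c/A)=\dpr(X_0)$) is not a first-order condition on $c$, and $\CK_0$ is not assumed saturated, so $\CK_0$ may realize no such $c$ (e.g.\ $\CK_0$ could be the definable closure of $A$). The paper's argument is different and works: once the construction has been carried out in $\CK$, the \emph{existence} of a definable $X$ and $\varphi$ with the requisite first-order properties (local normal subgroup structure, differentiability, the dimension being the $D$-rank, etc.) is expressible over the parameters defining $G$, which lie in $\CK_0$; elementarity of $\CK_0\prec\CK$ then yields witnessing parameters in $\CK_0$. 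You should replace the "find a generic $c$ in $\CK_0$" step with this first-order-existence argument.
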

\begin{proof}
     Let $\nu_D=\nu_D(G)$. By Fact \ref{F: nu in K},  $\nu_D\vdash X$, for some definable $\nu_D$-open set $X\sub G$, and there exists a definable injection $\varphi:  X\to D^n$, with $\varphi(X)$ a definable open subset of $D^n$ and $n$ the $D$-rank of $G$ (indeed, in the notation of the above Fact, replace $Xc^{-1}$ by $X$).
     
     Let $\widehat \CK\succ \CK$ be a $|\CK|^+$ saturated elementary extension. By \cite[Theorem 7.4(1,2), Theorem 7.7(1), Theorem 7.11(1)]{HaHaPeGps}, $\nu_D(\widehat \CK)$ is a (differentiable) Lie group with respect to the structure induced  by $\varphi$. Furthermore, since every definable function in the valued field case is generically strictly differentiable, a similar proof shows in this case that $\nu_D(\widehat \CK)$ is a strictly differentiable Lie group. Furthermore,  $g\nu_D g^{-1}=\nu_D$ for any $g\in G(\CK)$ (Fact \ref{F: properties of nu}).

     Using  compactness,  we can endow $X$ with the structure of a (strictly) differentiable local normal subgroup of $G$ with respect to the field $D$.

     Lastly, if $G$ is definable over $\CK_0$ then since the existence of $X$ and $\varphi$ with the desired properties is first order, such can be found over $\CK_0$ as well.
\end{proof}

Combining the last lemma with Definition \ref{N:Ad} we can find a definable group representation $\ad_D^\CG: G\to \gl_n(D)$,  for $n$ the $D$-rank of $G$. As noted after Definition \ref{N:Ad},  the map $\ad_D^\CG$ depends on $\CG$ (i.e. on $X$ and $\varphi$), only up to a change of coordinates. In particular, the group $\ker(\ad^\CG_D)$  does not depend on the choice of $\CG$ and the image $\ad_D^\CG(G)$ is independent of  $\CG$, up to conjugation.  As for the latter, since we do not care about the particular embedding in $\gl_n(D)$,  the choice of $\CG$ is unimportant, and \textbf{we will write, from now on, $\ad_D(G)$ without specifying any choice of local subgroup $\CG$.}    

For future reference we single out the following corollary of Lemma \ref{L:existence of local group} and the above discussion: 
\begin{remark}\label{R: H1H2 over the same parameters}
    Given a $D$-group $G$ defined over a model $\CK_0$ the subgroup $\ker(\ad_D(G))$ is definable over $\CK_0$. 
\end{remark}

\section{Groups locally strongly internal to  $\Gamma$}

As above, $\CK$ denotes a saturated model of one of our valued fields, $\Gamma$ its valued group. Since  $\Gamma^n$ and $(K/\CO)^n$ are commutative, so are their (local) subgroups. In the present and the next section, we show that this is reflected in a strong sense in definable $\Gamma$-groups or $K/\CO$-groups. For $\Gamma$-groups, we get a clean result:  definable $\Gamma$-groups contain infinite definable normal abelian subgroups. We prove (keeping the notation and conventions of the previous sections): 
\begin{proposition}\label{P: Gamma}
Assume that $G$ is a definable group  locally strongly internal to $\Gamma$. Then $G$ contains a  definable normal subgroup $G_1$ of finite index, defined over the same parameters as $G$, such that  $\nu_\Gamma\vdash Z(G_1)$. In particular, $G$ contains a definable (over the same parameter set) infinite normal abelian subgroup.
\end{proposition}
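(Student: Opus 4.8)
The plan is to reduce everything to showing that $C:=C_G(\nu_\Gamma)$, the centralizer of the type $\nu_\Gamma$ in the sense of Definition \ref{D: centralizer}, has \emph{finite} index in $G$. Granting this, take $G_1$ to be the normal core $\bigcap_{g\in G}C^g$: it is a definable normal subgroup of finite index by Baldwin--Saxl (Fact \ref{F: Baldwin saxl}); it is defined over the same parameters as $G$, since $\nu_\Gamma$ is independent of all choices and invariant under automorphisms fixing those parameters (Fact \ref{F: properties of nu}), hence type-definable over them, and so is $C$; one has $\nu_\Gamma\vdash G_1$ because a finite-index subgroup of the $\Gamma$-group $G$ has the same $\Gamma$-rank (Lemma \ref{L:nu of subgroup}); and every realization of $\nu_\Gamma$ is centralized by $G_1\leq C$, so, $Z(G_1)$ being definable and containing every realization of $\nu_\Gamma$, the filter-base property of $\nu_\Gamma$ yields $\nu_\Gamma\vdash Z(G_1)$. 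There is no loss in assuming $G$ is a $\Gamma$-group: in general Fact \ref{F: existence of finite normla to get D-group} supplies a finite normal abelian $H\trianglelefteq G$, contained in every finite-index subgroup, with $G/H$ a $\Gamma$-group, and a routine pull-back -- using Lemma \ref{L:groups 1} to make $H$ central in a finite-index subgroup and to descend from the centre of the quotient -- transfers the conclusion, in particular producing the infinite definable normal abelian subgroup of the ``in particular'' clause.

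Two easy observations feed into this. By Lemma \ref{L: nu in Gamma} there are a $\Gamma$-critical $X\subseteq G$ with $\nu_\Gamma\vdash X$, defined over some $A$, and an $A$-definable injection $f\colon X\to\Gamma^n$ onto a $\Gamma$-box around $0$ with $f(xy^{\pm1})=f(x)\pm f(y)$ whenever both sides are defined, and with $\nu_\Gamma=\{f^{-1}(U):U\text{ a }\Gamma\text{-box around }0\}$. First, $\nu_\Gamma$ is abelian: the $f$-image of a realization of $\nu_\Gamma$ lies in every $\Gamma$-box around $0$, so the sum of two such images again lies in $f(X)$ and in every $\Gamma$-box around $0$ (shrinking the box so that its double fits, and noting the congruence conditions survive), whence $f$ converts the product of two realizations of $\nu_\Gamma$ into commuting sums. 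Second, if $x\in X$ is $A$-generic then for every realization $h$ of $\nu_\Gamma$ one has $f(x)+f(h)\in f(X)$ -- genericity keeps $f(x)$ away from the boundary of the box, and in the $p$-adically closed case the congruence conditions are preserved since $f(h)$ is divisible by every integer -- so $x$ centralizes $\nu_\Gamma$. Consequently $C$ contains an $A$-definable $\Gamma$-critical subset of $X$ and therefore has the same $\Gamma$-rank as $G$.

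The crux is to upgrade ``$C$ has full $\Gamma$-rank'' to ``$C$ has finite index'', and here the structure of the value group enters. Conjugation makes $Q:=G/C$ act definably and faithfully on the abelian type-definable group $\nu_\Gamma$, which by Lemma \ref{L: nu in Gamma} (and the corresponding facts of \cite{HaHaPeGps}) is a type-definable subgroup of $(\Gamma^r,+)$; hence $Q$ embeds in the group of definable automorphisms of $\nu_\Gamma$. When $\CK$ is $V$-minimal or power bounded $T$-convex, $\Gamma$ is o-minimal and stably embedded and each such automorphism is the restriction of a $\bk$-linear map of $\Gamma^r$, $\bk$ being the field of exponents; were $Q$ infinite, then for a fixed nonzero $\gamma\in\Gamma$ the definable map sending $g\in G$ to a suitable coordinate of $g\cdot\gamma$ would have infinite image inside the set $\bk\gamma\subseteq\Gamma$, impossible since an infinite definable subset of the o-minimal $\Gamma$ contains an interval while $\bk\gamma$ contains none. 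When $\CK$ is $p$-adically closed, $\Gamma$ is a model of Presburger arithmetic and $\nu_\Gamma$ lives inside the divisible subgroup $(\bigcap_m m\Gamma)^r$, on which every definable endomorphism is a $\Qq$-scalar operation, so the definable automorphism group of $\nu_\Gamma$ lies in $\gl_r(\Qq)$, which is countable; as a saturated $\CK$ has no infinite definable set of cardinality less than $|\CK|$, $Q$ must be finite. In either case $[G:C]<\infty$.

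The main obstacle is exactly this last step: that the centralizer of $\nu_\Gamma$ has full $\Gamma$-rank is almost immediate, but turning full $\Gamma$-rank into finite index requires controlling the definable automorphisms of a $\Gamma$-internal abelian group, and the two available mechanisms -- o-minimality of $\Gamma$ in the $V$-minimal and power bounded $T$-convex cases, versus cardinality together with the rigidity of Presburger arithmetic in the $p$-adic case -- are genuinely different, as already signalled in the introduction.
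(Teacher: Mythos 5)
Your overall strategy — prove that $C:=C_G(\nu_\Gamma)$ has finite index and take $G_1$ to be $C$ (or its normal core, but $C$ is already normal since $\nu_\Gamma$ is) — is a reasonable rearrangement, and your arguments that $C$ has full $\Gamma$-rank and then finite index are essentially sound and close in spirit to the paper's (the germ-of-$\tau_g$-is-an-$F$-linear-map mechanism in the o-minimal case, and rationality-plus-saturation in the Presburger case; the paper's p-adic version instead quotes uniform definability of types over $\Zz_{\mathrm{Pres}}$ and the finiteness of definable families of homomorphisms, but this is a cosmetic difference).

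The genuine gap is in the step ``every realization of $\nu_\Gamma$ is centralized by $G_1\leq C$, so \dots\ $\nu_\Gamma\vdash Z(G_1)$.'' Knowing $G_1\leq C_G(\nu_\Gamma)$ tells you that each $g\in G_1(\CK)$ centralizes \emph{some} $U_t$ with $\nu_\Gamma\vdash U_t$ — and hence centralizes $\nu_\Gamma(\hat\CK)$ — but the $U_t$ depends on $g$. To conclude $\nu_\Gamma\vdash Z(G_1)$ you must exhibit a single definable $Y$ with $\nu_\Gamma\vdash Y$ and $Y\subseteq Z(G_1)$, equivalently rule out some $g\in G_1(\hat\CK)$ failing to commute with some $h\models\nu_\Gamma$. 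This is exactly the subtlety the paper flags in the remark right after Definition \ref{D: centralizer}: a member of $C_G(\nu_D)(\hat\CK)$ need not centralize $\nu_D(\hat\CK)$; it only centralizes $U_t(\hat\CK)$ for some $t\in T(\hat\CK)$, and that $U_t$ may sit \emph{inside} $\nu_D$. The paper patches this differently in the two cases: in the o-minimal case it applies Lemma \ref{L:def connected} — since for $g\in C$ the centralizer $C_G(g)$ is clopen in $\tau_\Gamma$ (Lemma \ref{L:groups are closed, open iff full D-rank}) and the $\Gamma$-box $X$ is definably connected, one gets the \emph{uniform} statement $X\subseteq C_G(G_1)$, hence $X\cap G_1\subseteq Z(G_1)$ with $\nu_\Gamma\vdash X\cap G_1$; in the p-adic case, where $\Gamma$-boxes are not definably connected, it builds $G_1$ as the pointwise stabilizer of an explicit finite-index subgroup $H\leq\Zz^n$ and shows directly that $Z(G_1)$ contains a $\Gamma$-box by working in $\Zz_{\mathrm{Pres}}$. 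Your proposal neither invokes a connectedness argument nor reproduces the Presburger computation, so this step is unjustified as written.
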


The proof splits between the $p$-adic case (where $\Gamma$ is discrete) and the remaining cases (where $\Gamma$ is dense and o-minimal). 

\subsection{$\CK$ $p$-adically closed}
We assume that $\CK$ is $p$-adically closed and thus $\Gamma$ is a model of Presburger arithmetic. Let $\mathbb{Z}$ be a prime (and minimal) model for $\Gamma$. We denote by $\mathbb{Z}_{Pres}$ the structure $(\mathbb{Z},+,<)$. \\

Before proceeding to the proof of  Proposition \ref{P: Gamma} in this setting, we need some preparatory results: 

\begin{lemma}\label{F: def over Z}
For any definable family, $\{X_t\}_{t\in T}$, of subsets of $\Gamma^n$ the family $\{X_t\cap \Zz^n\}_{t\in T}$  is  definable in $\Zz_{Pres}$.
\end{lemma}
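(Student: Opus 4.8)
The plan is to reduce the statement to a purely Presburger-arithmetic fact and then exploit that $\Zz$ sits inside $\Gamma$ as the prime model. First, recall that $\Gamma$ is stably embedded in $\CK$ and the structure induced on it from $\CK$ is that of Presburger arithmetic, $(\Gamma,+,-,<,0,1)$. By stable embeddedness (in its uniform form) there are a single Presburger formula $\psi(\bar x;\bar s)$, with $|\bar x|=n$ and $|\bar s|=m$ for some $m$, and, for each $t\in T$, a tuple $\bar s_t\in\Gamma^m$, such that $X_t=\{\bar a\in\Gamma^n:\Gamma\models\psi(\bar a;\bar s_t)\}$. Using quantifier elimination for Presburger arithmetic in the language $\{+,-,<,0,1\}\cup\{\equiv_N:N\ge 2\}$ (each $\equiv_N$ being $\emptyset$-definable already in $(\Zz,+,<)$), I would replace $\psi$ by an equivalent Boolean combination of finitely many atoms of the two shapes $\mu_\ell(\bar x)+\nu_\ell(\bar s)\;\square\;0$ (with $\square\in\{<,\le,=\}$) and $\mu_\ell(\bar x)+\nu_\ell(\bar s)\equiv 0\pmod{N_\ell}$, where each $\mu_\ell$ is a homogeneous $\Zz$-linear form in $\bar x$ and each $\nu_\ell$ is a $\Zz$-linear form (with a constant) in $\bar s$. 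In particular only finitely many linear forms and moduli occur.

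The heart of the argument is an observation about how a single element of $\Gamma$ is placed relative to $\Zz$. Since the Presburger theory proves discreteness and $\Zz$ is its prime model, every $\gamma\in\Gamma$ satisfies \emph{exactly one} of: (i) $\gamma>z$ for all $z\in\Zz$; (ii) $\gamma<z$ for all $z\in\Zz$; (iii) $\gamma\in\Zz$ --- indeed, if neither (i) nor (ii) holds then $\{z\in\Zz:z\le\gamma\}$ is non-empty and bounded above, hence has a maximum $z_0$, and then $z_0\le\gamma<z_0+1$ forces $\gamma=z_0$ by discreteness. Similarly, since $[\Gamma:N\Gamma]=N$ for all $N$, the residue $\gamma\bmod N$ equals one of $0,\dots,N-1\in\Zz$. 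Applying this to each of the finitely many values $\gamma_\ell:=\nu_\ell(\bar s_t)$ (and to each relevant modulus), I would attach to $\bar s_t$ a finite \emph{configuration}: a tuple of flags, one per $\ell$, recording which of (i)--(iii) holds, together with a tuple of integers recording $\gamma_\ell$ itself in case (iii) and the residues $\gamma_\ell\bmod N_\ell$.

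It then remains to check that $X_t\cap\Zz^n=\psi(\Zz^n;\bar s_t)$ depends on $\bar s_t$ only through its configuration, and in a uniformly Presburger-definable way. Evaluating the atoms at $\bar a\in\Zz^n$: an atom $\mu_\ell(\bar a)\;\square\;-\gamma_\ell$ is a constant (true or false, independently of $\bar a$) when $\gamma_\ell$ is of type (i) or (ii), and equals the Presburger condition $\mu_\ell(\bar a)\;\square\;-z_\ell$ with integer parameter $z_\ell$ when $\gamma_\ell=z_\ell\in\Zz$; a congruence atom becomes $\mu_\ell(\bar a)+i_\ell\equiv 0\pmod{N_\ell}$ with $i_\ell=\gamma_\ell\bmod N_\ell\in\{0,\dots,N_\ell-1\}$. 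Fixing the finitely many possible flag-patterns yields finitely many Presburger formulas $\theta_{\bar\epsilon}(\bar x;\bar u)$ in $\bar x$ and integer parameters $\bar u$; coding the flag-patterns by distinct integers produces one $\Zz_{Pres}$-formula $\theta(\bar x;\bar z)$ such that, for every $t$, $X_t\cap\Zz^n=\{\bar a\in\Zz^n:\Zz_{Pres}\models\theta(\bar a;\bar c_t)\}$ for a suitable integer tuple $\bar c_t$. This exhibits $\{X_t\cap\Zz^n\}_{t\in T}$ as a family definable in $\Zz_{Pres}$.

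The one genuinely substantive step is the trichotomy in the second paragraph --- that an element of $\Gamma$ is either an integer or lies above, or below, all of $\Zz$; once this is in hand, together with the finiteness of the list of forms and moduli supplied by quantifier elimination, the last paragraph is routine case bookkeeping. A minor point to handle with care is the precise reading of ``definable in $\Zz_{Pres}$'' and the fact that, although the congruence predicates are not quantifier-free over $(\Zz,+,<)$, they are definable there, so their use in $\theta$ is harmless.
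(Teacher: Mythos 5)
Your proof is correct. Both you and the paper start by invoking (uniform) stable embeddedness of $\Gamma$ in $\CK$ to reduce to a single Presburger formula $\psi(\bar x;\bar s)$ with parameters ranging over $\Gamma^m$. At that point the paper finishes in one line by citing that, in Presburger arithmetic, types over $\Zz$ are \emph{uniformly definable} (referring the reader to the literature). You instead unwind that citation: the Presburger QE puts $\psi$ into a Boolean combination of linear inequalities and congruences, the trichotomy you isolate (a $\gamma\in\Gamma$ is either in $\Zz$, above all of $\Zz$, or below all of $\Zz$, plus the integer residue data mod each $N_\ell$) shows exactly how each atom's truth over $\Zz^n$ collapses to a constant or to a $\Zz$-parametrized Presburger condition, and coding the finitely many ``flag patterns'' assembles the uniform $\Zz_{Pres}$-formula. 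This is precisely a hands-on proof of the special instance of uniform type-definability over the prime model of Presburger arithmetic that the paper uses as a black box; the trade-off is that your argument is self-contained and elementary, at the cost of being considerably longer than the paper's citation-based one line. The key non-trivial step is the trichotomy, whose proof you give correctly (discreteness plus the prime-model property), and the rest is bookkeeping as you note.
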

\begin{proof}
Because $\CK$ is $p$-adically closed, $\Gamma$ is stably embedded, so we may assume that $T\sub \Gamma^k$ for some $k$. Since in  Presburger arithmetic types over $\mathbb{Z}$ are (uniformly) definable, the family $\{X_t\cap \Zz^n:t\in T\}$ is definable in $\Zz_{Pres}$. See  \cite[Theorem 0.7]{CoVo} (and also \cite{delon-def}).
\end{proof}

\begin{lemma}\label{L:bound on index}
    Let $\{X_t:t\in T\}$ be a definable family of subsets of $\Gamma^n$ and assume that for all $t\in T$, $X_t\cap \mathbb{Z}^n$ contains a subgroup of $\mathbb{Z}^n$ of finite index. Then there is a uniform upper bound on $l(t)$,  the minimal $l\in \Nn$ such that $X_t\cap \Zz^n$ contains a subgroup $\Zz^n$ of index $l$.  
\end{lemma}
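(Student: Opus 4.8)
The plan is to reduce the statement to a compactness argument in the saturated structure $\CK$, the point being to make the relevant conditions definable despite the fact that $\Zz^n$ is not definable in $\CK$. For $l\in\Nn$ set $T_l:=\{t\in T: X_t\cap\Zz^n\text{ contains a subgroup of }\Zz^n\text{ of index }\le l\}$, so that $l(t)\le l$ exactly when $t\in T_l$, and $T_1\subseteq T_2\subseteq\cdots$; the hypothesis says precisely that $T=\bigcup_{l\in\Nn}T_l$. I would first show that each $T_l$ is definable in $\CK$ over a small parameter set, and then conclude by compactness: if $T\ne T_l$ for every $l$, then the partial type $\{t\in T\}\cup\{t\notin T_l:l\in\Nn\}$ is finitely satisfiable (by nestedness of the $T_l$) and hence, by saturation, realized by some $t^*\in T$; but then $X_{t^*}\cap\Zz^n$ would contain no finite-index subgroup of $\Zz^n$, contradicting the hypothesis. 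Thus $T=T_l$ for some $l$, which is the desired uniform bound.

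To see that $T_l$ is definable I would use Lemma~\ref{F: def over Z}. By that lemma, together with the uniform definability of Presburger types invoked in its proof, there are a formula $\chi(x,z)$ in the language of $\Zz_{Pres}$ and a $\CK$-definable function $t\mapsto c_t$ taking values in $\Zz^m$ such that $X_t\cap\Zz^n=\chi(\Zz^n,c_t)$ for all $t\in T$. Now fix $l$. There are only finitely many subgroups $H_1,\dots,H_N\le\Zz^n$ of index $\le l$ (each contains $l!\,\Zz^n$, since the exponent of $\Zz^n/H_i$ divides $l!$, and $\Zz^n/l!\,\Zz^n$ has only finitely many subgroups), and each $H_i$ is a finite union of residue classes modulo $l!$, so membership in $H_i$ is expressed by a fixed $\Zz_{Pres}$-formula. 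Hence ``$\chi(\Zz^n,z)$ contains some $H_i$'' is expressed by a single $\Zz_{Pres}$-formula $\beta_l(z)$, namely $\bigvee_{i\le N}\forall x\,(x\in H_i\to\chi(x,z))$. Since $\Zz$ embeds elementarily in $\Gamma$ as its prime model (equivalently, Presburger arithmetic has quantifier elimination in the language with congruence predicates, which the inclusion respects), for $c\in\Zz^m$ the truth of $\beta_l(c)$ is the same whether computed in $\Zz_{Pres}$ or in $\Gamma$. Therefore $T_l=\{t\in T:\Gamma\models\beta_l(c_t)\}$, which is definable in $\CK$, being the preimage of the $\Gamma$-definable set defined by $\beta_l$ under the definable map $t\mapsto c_t$. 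All parameters occurring here lie in the fixed copy of $\Zz$ together with the parameters defining the family $\{X_t\}$, a small set, so the compactness step above is legitimate.

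The main obstacle is exactly the definability of $T_l$: the naive rendering of ``$X_t\cap\Zz^n$ contains a subgroup of index $\le l$'' quantifies over the non-definable set $\Zz^n$, and it is precisely here that Lemma~\ref{F: def over Z}, the finiteness of subgroups of $\Zz^n$ of bounded index, and the elementarity $\Zz_{Pres}\preceq\Gamma$ are all used. Once $T_l$ is known to be definable, the remaining argument is a routine saturation/compactness computation.
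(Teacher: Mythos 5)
Your proposal and the paper's proof share the same high-level strategy (a compactness argument in the saturated $\CK$, exploiting that $\Zz$ is a small subset of $\Gamma$), but the specific route you take has a genuine gap at its central step, the claim that each $T_l$ is definable.

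The justification you offer for definability of $T_l$ hinges on the existence of a \emph{$\CK$-definable} map $t\mapsto c_t$ with $c_t\in\Zz^m$ and $X_t\cap\Zz^n=\chi(\Zz^n;c_t)$. Uniform definability of Presburger types over $\Zz$ gives, for each $t$, the \emph{existence} of such a $c_t$ (a parameter coding the $\chi$-type of $t$ over $\Zz$), but it does not and cannot provide a definable assignment $t\mapsto c_t$: the graph of such a map would allow one to definably isolate the non-definable set $\Zz\subseteq\Gamma$. And indeed $T_l$ need not be definable at all, even under the hypothesis of the lemma: take $n=1$, $T=\Gamma_{>0}$ and $X_t=(-t,t)\cup P_2(\Gamma)$. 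Then $X_t\cap\Zz\supseteq 2\Zz$ for every $t\in T$, so the hypothesis holds, but $T_1=\{t\in\Gamma: t>n \text{ for all } n\in\Zz\}$, which is not definable in $\Gamma$ (its complement would be a proper, nonempty, convex, successor-closed, $\Zz$-definable subset, impossible in a model of Presburger). Consequently the ``partial type'' $\{t\in T\}\cup\{t\notin T_l:l\in\Nn\}$ in your compactness step is not actually a collection of formulas, and the argument does not go through.

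The paper sidesteps this by not asserting definability of the $T_l$ at all. Instead it considers the partial type $\rho(t)=\{D\not\subseteq X_t : D\subseteq\Zz^n \text{ finite, generating a finite-index subgroup}\}$, which is a bona fide type over the small parameter set $\Zz^n$ (each $D\not\subseteq X_t$ is a single formula in finitely many parameters from $\Zz^n$). A realization of $\rho$ would violate the hypothesis, since a realizing $t^*$ has $D\not\subseteq X_{t^*}$ for every finite generating set $D$ of every finite-index subgroup, so $X_{t^*}\cap\Zz^n$ contains no finite-index subgroup at all. The subtle point, which both proofs must address, is finite satisfiability of whatever type/chain is used; in the paper's version this is where the hypothesis that there is no bound on $l(t)$ enters. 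Your proof transfers the difficulty into a definability claim that is, in general, false, so it does not succeed. If you want to salvage your proposal, you would need to replace the definable sets $T_l$ by a genuine partial type over a small set, which is essentially what the paper's $\rho$ does.
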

\begin{proof}
    Assume towards a contradiction  that there is no bound on $l(t)$ for $t\in T$.  So the following type is consistent:
    \[\rho(t):=\{D\not\subseteq X_t: D\subseteq \mathbb{Z}^n \text{ finite, generating a definable subgroup of finite index}\},\] contradicting the assumption.
\end{proof}

\begin{lemma}\label{F: full dp-rank, presburger}
\begin{enumerate}
    \item Let $Y\subseteq \Gamma^n$ be a definable set. If $Y\cap \mathbb{Z}^n$ contains a subgroup of $\mathbb{Z}^n$ of finite index, then $\dpr(Y)=n$.
    \item Every finite index subgroup $H\leq \Gamma^n$ is definable. 
\end{enumerate}
\end{lemma}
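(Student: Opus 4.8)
The plan is to dispose of part (2) first, since it is elementary, and then reduce part (1) to a dimension count inside $\Zz_{Pres}$ via Lemma \ref{F: def over Z}.

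For (2): given $H \leq \Gamma^n$ with $d := [\Gamma^n : H] < \infty$, the finite quotient $\Gamma^n/H$ has order $d$, so $d\cdot x \in H$ for every $x \in \Gamma^n$; that is, $(d\Gamma)^n = d\Gamma^n \subseteq H$. Now $(d\Gamma)^n = \{x \in \Gamma^n : \bigwedge_{i=1}^n P_d(x_i)\}$ is definable, and since $\Gamma$ is a model of Presburger arithmetic $[\Gamma : d\Gamma] = d$, whence $[\Gamma^n : (d\Gamma)^n] = d^n < \infty$. Thus $(d\Gamma)^n$ also has finite index in $H$, so $H$ is the union of the finitely many $(d\Gamma)^n$-cosets it meets; each such coset $g + (d\Gamma)^n$ (with $g \in \Gamma^n$) is definable, and therefore so is $H$. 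I expect no real obstacle here.

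For (1): one inequality is free — $\dpr(Y) \leq n$ because $\dpr(\Gamma) = 1$ and dp-rank is sub-additive — so the content is $\dpr(Y) \geq n$. I would pick a finite index subgroup $L \leq \mathbb{Z}^n$ with $L \subseteq Y \cap \mathbb{Z}^n$ and set $c = [\mathbb{Z}^n : L]$; exactly as in (2), $c\mathbb{Z}^n \subseteq L$, hence $(c\mathbb{Z})^n \subseteq Y \cap \mathbb{Z}^n$. Since $(c\mathbb{Z})^n$ has finite index in $\mathbb{Z}^n$, it has full dimension $n$ in $\Zz_{Pres}$ (as $\mathbb{Z}^n$ is a finite union of its cosets, each a translate of $(c\mathbb{Z})^n$), so $\dim_{\Zz_{Pres}}(Y \cap \mathbb{Z}^n) = n$ — noting that $Y \cap \mathbb{Z}^n$ is definable in $\Zz_{Pres}$ by Lemma \ref{F: def over Z}. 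It then remains to bound this dimension from above by $\dpr(Y)$.

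To do so I would decompose $Y$ into finitely many Presburger cells $Y = C_1 \sqcup \dots \sqcup C_r$ in $\Gamma$ (cell decomposition, \cite{ClucPresburger}); since $\dim = \dpr$ on definable subsets of $\Gamma^m$ (\cite[Theorem 0.3]{Simdp}), $\dpr(Y) = \max_i \dim(C_i)$. Each cell $C_i$ of dimension $d_i$ admits a coordinate projection $\pi_i : \Gamma^n \to \Gamma^{d_i}$ injective on $C_i$, hence injective on $C_i \cap \mathbb{Z}^n$ (which is again definable in $\Zz_{Pres}$ by Lemma \ref{F: def over Z}), giving $\dim_{\Zz_{Pres}}(C_i \cap \mathbb{Z}^n) \leq d_i$. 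Summing over $i$ yields $n = \dim_{\Zz_{Pres}}(Y \cap \mathbb{Z}^n) \leq \max_i d_i = \dpr(Y)$, which finishes (1). The one delicate point — and the only place Lemma \ref{F: def over Z} is genuinely needed — is that the cell data, in particular the projections $\pi_i$, descends to an honest definable description over $\mathbb{Z}$, so that the estimate $\dim_{\Zz_{Pres}}(C_i \cap \mathbb{Z}^n) \leq d_i$ may legitimately be computed inside $\Zz_{Pres}$; everything else is routine bookkeeping with dimension in Presburger arithmetic.
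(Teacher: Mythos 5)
Your proof of part (2) is essentially the paper's: observe that $d\cdot\Gamma^n\subseteq H$ for $d=[\Gamma^n:H]$ and express $H$ as a finite union of cosets of the definable subgroup $d\Gamma^n$. No issues there.

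For part (1) you take a genuinely different route. The paper's proof is a two-step transfer: by Lemma~\ref{F: def over Z} the trace $Y\cap\mathbb{Z}^n$ is $\Zz_{Pres}$-definable and, containing a finite-index subgroup, has dp-rank $n$ in $\Zz_{Pres}$; then a single citation to \cite[Lemma~3.10]{HaHaPeVF} (a transfer lemma for dp-rank along the elementary inclusion $\Zz_{Pres}\prec\Gamma$) finishes. You instead carry out the transfer by hand: you use Cluckers' cell decomposition of $Y$ in $\Gamma$, observe that each cell of dimension $d_i$ admits a coordinate projection injective on it, note (via Lemma~\ref{F: def over Z}) that each $C_i\cap\mathbb{Z}^n$ is $\Zz_{Pres}$-definable and hence injects $\Zz_{Pres}$-definably into $\mathbb{Z}^{d_i}$, and conclude $n=\dim_{\Zz_{Pres}}(Y\cap\mathbb{Z}^n)=\max_i\dim_{\Zz_{Pres}}(C_i\cap\mathbb{Z}^n)\leq\max_i d_i=\dpr(Y)$. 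This is correct and self-contained, avoiding the external lemma at the cost of a longer argument; be aware that the step you call ``summing over $i$'' is really a maximum (dimension of a finite union), and the ``delicate point'' is not really the descent of the coordinate projections $\pi_i$ (these are literal coordinate maps, hence $\emptyset$-definable in Presburger) but the $\Zz_{Pres}$-definability of the traces $C_i\cap\mathbb{Z}^n$, which is exactly what Lemma~\ref{F: def over Z} supplies. Both proofs are valid; the paper's is shorter because it outsources the transfer step, while yours makes the mechanism explicit via cell decomposition.
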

\begin{proof}
By Fact \ref{F: def over Z}, $Y\cap \mathbb{Z}^n$ is definable in $\Zz_{Pres}$, as a subset of $\mathbb{Z}^n$. Since it contains a finite index subgroup, it has dp-rank $n$. Thus, we have $\mathbb Z_{pres}\prec \Gamma$ and $\dpr(Y\cap \mathbb Z^n)=n$.  It follows by \cite[Lemma 3.10]{HaHaPeVF} that $\dpr(Y)=n$. For Clause (2) let $H\le G$ be a definable subgroup of finite index, and note that since $H$ has finite index, there is $k\in \mathbb N$ such that the map $x\mapsto kx$ sends $\Gamma^n$ into $H$.  Because $k\Gamma^n$ has finite index in $\Gamma^n$,  it follows that $H$ is a union of finitely many cosets of $k\Gamma^n$, $H$ is  definable.
\end{proof}

Recall Definition \ref{D:box} of a $\Gamma$-box.

\begin{lemma}\label{L: family of functions, presburger}
Let $Y\sub \Gamma^n$ be a definable set such that $Y\cap \Zz^n$ contains a subgroup $H$ of $\Zz^n$ of  finite index. Assume that $\{f_t\}_{t\in T}$ is a definable family of definable functions $f_t:Y \to Y$ such that for all $a,b\in Y$ with $a+b\in Y$,  we have $f_t(a+b)=f_t(a)+f_t(b)$. Then: 

\begin{enumerate}
    \item For every $t\in T$, $f_t(H)\subseteq \mathbb{Z}^n$.
    \item The family $\{f_t\restriction H:t\in T\}$ is uniformly definable in $\Zz_{Pres}$ and therefore finite.
\end{enumerate}
\end{lemma}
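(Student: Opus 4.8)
The plan is to leverage the definability of Presburger types over $\Zz$ that was already exploited in Lemma \ref{F: def over Z}, combined with the fact that a homomorphism defined on a finite-index subgroup of $\Zz^n$ is essentially a rational matrix of bounded denominator. First I would prove (1): fix $t\in T$ and consider $f_t\restriction H$. Since $H$ is a finite-index subgroup of $\Zz^n$, pick $k\in\Nn$ with $k\Zz^n\sub H$. For $a\in H$, additivity gives $f_t(ka)=kf_t(a)$ whenever the relevant sums stay inside $Y$; but $H$ is a subgroup of $\Zz^n$, so all partial sums $a, 2a,\dots$ needed to iterate additivity do lie in $H\sub Y$, hence the identity $f_t(ka)=kf_t(a)$ is legitimate. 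Now $f_t(ka)\in Y\cap\Zz^n$ by hypothesis (as $ka\in H\sub\Zz^n$ and the range of $f_t$ is $Y$, and... wait — I need $f_t(ka)\in\Zz^n$, not merely in $Y$). Here is where the real content of (1) sits: the claim is that the image $f_t(H)$ lands in $\Zz^n$ and not merely in $\Gamma^n$. To see this, note $f_t\restriction H$ extends uniquely to a homomorphism $\bar f_t\colon\Zz^n\to\Gamma^n$ after composing with multiplication by $[\Zz^n:H]$, namely $\bar f_t(x)=f_t(mx)$ where $m=[\Zz^n:H]$ (using $mx\in H$ and additivity on $H$); such a homomorphism is determined by the images of the standard basis vectors $e_1,\dots,e_n$, i.e.\ $\bar f_t$ is given by an $n\times n$ matrix over $\Gamma$ with columns $\bar f_t(e_i)\in\Gamma^n$. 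But $\bar f_t(e_i)=f_t(me_i)\in Y\cap \Zz^n\sub \Zz^n$ since $me_i\in H\sub\Zz^n$. Hence $\bar f_t$, and therefore $f_t\restriction H$ (on the subgroup $m\Zz^n$ at least, and then all of $H$ by the divisibility argument below), maps into $\Qq^n\cap$(something), and by the $\Zz$-valuedness of $f_t$ on $H$ we get $f_t(H)\sub\Zz^n$. Making this last step clean — passing from "$f_t$ is $\Qq$-linear with $\Zz$-valued outputs on a subgroup" back to "$f_t(H)\sub\Zz^n$" — is the main obstacle I anticipate, but it is bookkeeping with denominators dividing $m=[\Zz^n:H]$.

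For (2), once (1) is established the family $\{f_t\restriction H : t\in T\}$ is a definable family (in $\CK$, hence by stable embeddedness of $\Gamma$ a definable family indexed by $T\sub\Gamma^k$) of functions from the definable-in-$\Zz_{Pres}$ set $H\sub\Zz^n$ into $\Zz^n$; applying Lemma \ref{F: def over Z} to the graphs of the $f_t$ (a definable family of subsets of $\Gamma^{2n}$) shows that $\{\gr(f_t\restriction H):t\in T\}$ is a definable family in $\Zz_{Pres}$. It then remains to observe that there are only finitely many homomorphisms $H\to\Zz^n$ with the required properties — indeed each is determined by a matrix over $\Zz$ (with denominators cleared as above), and the entries are bounded: $f_t(e_i')\in Y\cap\Zz^n$ lies inside $\Zz^n$, and... actually the cleanest route is to note that a definable-in-$\Zz_{Pres}$ family of functions $H\to\Zz^n$ is automatically finite, since $\Zz_{Pres}$ eliminates $\exists^\infty$ and any infinite definable family of distinct functions would, by compactness, produce a definable family of pairwise-distinct homomorphisms of unbounded "size", contradicting that each is a $\Qq$-linear map with bounded denominator and that linear maps are rigid. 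I would phrase this last point as: the set of group homomorphisms $H\to\Zz^n$ is, after identifying each with the tuple of images of a fixed finite generating set of $H$, a definable subset of $(\Zz^n)^{\text{(finite)}}$ cut out by the linear relations among the generators, and by Lemma \ref{L:bound on index}-style uniformity the relevant coordinates are pinned down up to finitely many choices — hence finiteness. So the skeleton is: (a) iterate additivity safely because $H$ is a genuine subgroup, (b) identify $f_t\restriction H$ with a bounded-denominator rational matrix, (c) deduce $\Zz$-valuedness hence (1), (d) transfer the family to $\Zz_{Pres}$ via Lemma \ref{F: def over Z}, (e) conclude finiteness from rigidity of linear maps plus elimination of $\exists^\infty$ in Presburger.
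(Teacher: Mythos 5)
There is a genuine gap in your proof of clause (1), and since (2) rests on (1) the whole argument is incomplete.

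The problematic step is the sentence ``$\bar f_t(e_i)=f_t(me_i)\in Y\cap \Zz^n\sub \Zz^n$ since $me_i\in H\sub\Zz^n$.'' Knowing that the \emph{input} $me_i$ lies in $\Zz^n$ tells you nothing about where the \emph{output} $f_t(me_i)$ lives: all you are given is $f_t(me_i)\in Y\subseteq\Gamma^n$, and $Y$ can contain nonstandard elements of $\Gamma$. Asserting $f_t(me_i)\in\Zz^n$ at this point is precisely what clause (1) asks you to prove, so the argument is circular. Nothing you have said so far rules out, say, a definable family of partial homomorphisms sending a generator of $H$ to a nonstandard $\gamma\in\Gamma\setminus\Zz$; additivity alone does not force integrality. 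You noticed the issue yourself (``wait --- I need $f_t(ka)\in\Zz^n$, not merely in $Y$'') but then slid past it rather than resolving it.

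What is actually needed is some input from the definability of $f_t$ beyond the homomorphism property. The paper gets this from Presburger cell decomposition: near an $A$-generic $c\in Y$, each coordinate of $f_t$ has the explicit piecewise-linear form $x\mapsto\sum_i s_i\!\left(\frac{x-t_i}{k_i}\right)+\gamma$ with $s_i,t_i,k_i\in\Nn$ on a $\Gamma$-box around $c$; from this one reads off that $f_t(c+k\bar r)-f_t(c)\in\Zz$ for $\bar r\in\Zz^n$ and suitable $k$, and additivity turns this difference statement into $f_t(km_i)\in\Zz$ for generators $m_i$ of $H$. Finally, purity of $\Zz$ in $\Gamma$ (i.e.\ $\Zz\prec\Gamma$) lets one divide by $k$ to conclude $f_t(m_i)\in\Zz$. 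Your sketch never invokes the piecewise-linear structure or any such ``local integrality,'' so there is no way to reach the conclusion from the ingredients you use. Once that step is supplied, your clause (2) --- transfer via Lemma~\ref{F: def over Z} and finiteness of definable families of homomorphisms in Presburger --- is essentially the paper's argument (the paper cites quantifier elimination and a fact of Onshuus--Vicaría rather than $\exists^\infty$-elimination, but the conclusion is the same).
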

\begin{proof}
Assume everything is definable over some parameter set $A$. By stable embeddedness of $\Gamma$, the family $\{f_t: t\in T\}$ is uniformly definable in $\Gamma$ so we may assume that $T\sub \Gamma^k$. Since $H$ is a subgroup of finite index of $\mathbb{Z}^n$ it is generated by some finite set $\{m_1,\dots,m_s\}\sub \mathbb Z^n$.

(1) Fix some $t\in T$.  It suffices to prove that each  coordinate function of $f_t$ sends $H$ into $\mathbb{Z}$. So we may assume $f_t:Y\to \Gamma$.  Let $c\in Y$ be $A$-generic in $Y$.

Since $\dpr(Y)=n$, it follows from  cell decomposition,  \cite[Theorem 1]{ClucPresburger}, and  \cite[Lemma 3.4] {OnVi} that there is an $A$-definable $n$-dimensional  $\Gamma$-box, $B=\prod_i J_i\sub Y$, centered at $c=(c_1,\ldots, c_n)\in B$,  such that 
\[
(f_t\restriction B)(x)=\sum_i s_i\left( \frac{x-t_i}{k_i}\right) +\gamma,
\] 
with  $\gamma\in \Gamma^n$, $s_i,t_i,k_i\in \mathbb N$  and $J_i=I_i\cap \{x-t_i\in P_{k_i}\}$, for some infinite interval $I_i$.

By shrinking $B$, if needed (over the same parameters), we may assume that $B$ is a product of boxes of the form $I_i\cap P_k(x_i-t_i)$ (i.e., that $k_i=k$ for all $i$). 

Note that for every $\bar r\in\mathbb Z^n$, we have  by the above description of $f_t$, that 
$f_t(c+k\bar r)-f_t(c)\in \mathbb{Z}$.  In particular, if  $m_i$, $1\leq i\leq s$, is any of the generators of $H$ we fixed earlier then we have $c, c+km_i$ and $km_i$ all in $Y$, so by the additivity assumptions,  
$$f_t(km_i)=f_t(c+k\bar r)-f_t(c)\in \mathbb Z.$$

However, since $f_t(km_i)=kf_t(m_i)$ this implies that $f_t( m_i)\in \mathbb{Z}$ and, as this is true of a set of generators of $H$, we see that $f_t(H)\subseteq \mathbb{Z}$, as claimed.

(2) The first part of the claim  is a consequence of Fact \ref{F: def over Z} using Lemma \ref{F: full dp-rank, presburger}. The second part follows from quantifier elimination in Presburger arithmetic, by noting that  any definable family of group homomorphisms is finite (see also \cite[Fact 2.10]{OnVi}).
\end{proof}

We can now give the proof of Proposition \ref{P: Gamma} in $p$-adic case. 
\proof[Proof of Proposition \ref{P: Gamma} in the $p$-adic case]
We assume that $G$ is locally strongly internal to $\Gamma$. By Lemma \ref{L: nu in Gamma} there are a definable  $X\subseteq G$, with $\nu_{\Gamma}\vdash X$, and a definable function, $f:X\to \Gamma^n$, with $\dpr(X)=n$ for $n$ the $\Gamma$-rank of $G$. For simplicity of notation, identify $X$ with its image in $\Gamma^n$ and $e_G$ with $0_{\Gamma^n}$. We may further assume that, restricted to $X$, $G$-multiplication coincides with addition and the same for the inverse.
By Lemma  \ref{L: nu in Gamma}, we may further assume that $\nu_{\Gamma}$ is the intersection of  $\Gamma$-boxes around $0$. We fix one such $\Gamma$-box $B\subseteq X\subseteq \Gamma^n$, $\nu_\Gamma\vdash B$. 
   
   By \cite[Proposition 5.8]{HaHaPeGps}, $g\nu_{\Gamma} g^{-1}=\nu_{\Gamma} $ for every $g\in G$ and thus $\nu_{\Gamma}\vdash B^g\cap B$. By compactness, for every $g\in G$, there exists a  $\Gamma$-box $B_0\subseteq B\cap B^g$ around $0$. By Lemma \ref{L: family of functions, presburger}(1),   $B\cap \mathbb{Z}^n$ is a subgroup of $\mathbb{Z}^n$ of finite index (though $B^g$ need not be  contained in $\Gamma^n$).
    
    By Lemma \ref{L:bound on index} there is some natural number $k$ such that for any $g\in G$, $B^g\cap B$ contains a box $B_g$ with $B_g\cap \mathbb{Z}^n$ a subgroup of index at most $k$ in $\Zz^n$. Consequently, there exists some subgroup $H\subseteq \mathbb{Z}^n$ of finite index such that $H\subseteq B\cap B^g\cap \mathbb{Z}^n$ for all $g$.

    Let $Y=\bigcap\limits_{g\in G}B^g$. It is a definable set, contained in $B\subseteq \Gamma^n$, invariant under conjugation by all elements of $G$ and containing $H$.
     Let $Y_0:=Y\cap \mathbb{Z}^n$  (note that $H\subseteq Y_0$) and for every $g\in G$ let $\tau_g:Y\to Y$  denote the restriction of conjugation by $g$ to $Y$. By Lemma \ref{L: family of functions, presburger}(1), $\tau_g(H)\subseteq \mathbb{Z}^n$. By Lemma \ref{L: family of functions, presburger}(2), $\{\tau_g\restriction H\}_{g\in G}$ is a  family of group homomorphisms uniformly definable in $\mathbb{Z}$, so it is finite. We may now replace $H$ by the (finite) intersection of all $\tau_g(H)$, and obtain another subgroup of $\Zz^n$ of finite index. Thus, we may assume that $H$ is invariant under all $\tau_g$.

    Let $R$ be a finite set of generators for $H$ and let $E(g,h)$ be the definable equivalence relation on $G$ given by $d^g=d^h$ for all $d\in R$. Since addition on $H$ coincides with $\Gamma$-multiplication, and  for all $g,h\in G$ both $\tau^g\restriction H$  and $\tau^h\restriction H$  are homomorphisms preserving $H$, it follows that  $E(g,h)$ holds if and only if $\tau_g\restriction H=\tau_h\restriction H$. The  definable quotient $G/E$ can be identified with a finite subgroup of $\aut(H)$, and the map $\sigma:G \to G/E$ is a definable group homomorphism. Its kernel, call it $G_1$, is a definable normal subgroup of $G$ of finite index, that --  by definition -- centralizes $H$, hence $H\sub Z(G_1)$. We claim that $\nu_{\Gamma}\vdash Z(G_1)$.
    
    By Lemma \ref{F: full dp-rank, presburger}(2), $H$ is definable in $\Zz_{Pres}$ and  $Z(G_1)$ contains all finite boxes of the form $[-a,a]^n\cap H$, for $a\in \mathbb N$. Since $H$ is definable, $Z(G_1)$ must contain a set of the form $I^n\cap H(\CK)$, for an infinite interval $I\sub \Gamma$, so in particular, it contains a  $\Gamma$-box. It follows that $\nu_{\Gamma}\vdash Z(G_1)$ and therefore $Z(G_1)$ is a definable infinite normal subgroup of $G$. \qed$_{(\Gamma\text{ Presburger})}$\\

    We postpone the proof that $G_1$ can be taken to be definable over the same parameters as $G$ to the next section (since the proof is similar). 

\subsection{$\CK$ is power bounded $T$-convex or $V$-minimal}
We now assume that  $\CK$ is either power bounded $T$-convex or $V$-minimal, so that $\Gamma$ is an (o-minimal) ordered vector space. Recall  Definition \ref{D:box} of a $\Gamma$-box.

\proof[Proof of Proposition \ref{P: Gamma} for o-minimal $\Gamma$]
  By the description of $\nu_\Gamma$ (Lemma \ref{L: nu in Gamma}), there exists a definable subset $X\subseteq G$, with $\nu\vdash X$, definably isomorphic to a $\Gamma$-box (around $0$) in $\Gamma^n$. 
  Identifying $X$ with its image, we assume (by compactness) that for every $x,y\in X$ with $xy^{\pm 1} \in X$ we have  $xy^{\pm 1}=x\pm y$. 
  
  Because $\Gamma$ is o-minimal, and $X$ is identified with a $\Gamma$-box in $\Gamma^n$,  there is a definable neighborhood base, $\{W_t: t\in T\}$, of $0$ in $X$. 
  
  For every $g\in G$, let $\tau_g$ denote the map $x\mapsto x^g$, and for $g,h\in G$ write $g\sim h$ if $\tau_g$ and $\tau_h$ have the same germ at $0$, namely there exists an open neighborhood $U\sub \Gamma^n$ of $0$, such that $\tau_g|U=\tau_h|U$. By the above, this is a definable equivalence relation. 
Let $\sigma$ be the definable function mapping $g\in G$ to $[g]_\sim$. It is a homomorphism of groups, with the group operation on the set of equivalence classes given by composition of germs.

  We know that for every $g\in G$, $\nu^g=\nu$ (as types), thus there is some $W_t\subseteq X$ such that $W_t^g\subseteq X$ is also a neighborhood of $0$. So $\sigma(G)$ can be viewed as a definable family of definable germs on $X$.
Since $\Gamma$ is a pure ordered vector space over a field $F$ (the field of exponents in the o-minimal $T$), it follows  that  $\sigma(G)$ is finite. Indeed, by \cite[\S1.7 Corollary 7.6]{vdDries}, each germ is the restriction of some $T\in \gl_n(F)$ to an open neighborhood of $0$. Since each such $T$ is $\emptyset$-definable, a definable family of such germs must be finite.

Hence, the  definable group $G_1:=\ker(\sigma)$ has finite index in $G$.

  By definition, for every $g\in G_1$ there exists a $\tau_\Gamma$-open neighborhood of $0$, on which $x^g=x$.
Thus, $G_1\sub C_G(\nu_\Gamma)$ (recall Definition \ref{D: centralizer}).  Since $X\sub \Gamma^n$ is a $\Gamma$-box, it is definably connected, so we may apply Lemma \ref{L:def connected} and conclude that $X\sub C_G(\nu_\Gamma)$

By Lemma \ref{L:nu of subgroup}, $\nu_\Gamma\vdash G_1$. Thus, $\nu_\Gamma\vdash X\cap G_1\sub Z(G_1)$, as claimed.
 Since $G_1$ is normal in $G$ it follows that $Z(G_1)$ is a definable infinite abelian normal subgroup of $G$. \\

Finally, let us verify that in both the current case, and in the $p$-adically closed case, we can replace $G_1$ with a subgroup defined over the same parameters as $G$. Without loss of generality, assume that $G$ is $\0$-definable and let $\{G_s:s\in S\}$ be a $\0$-definable family of normal subgroups of $G$ whose index in $G$ is $[G:G_1]$, and such that $G_1=G_s$ for some $s\in S$. We may further assume that for each $s\in S$, $Z(G_s)$ has a definable subset which is in definable bijection with a $\Gamma$-box (in $\Gamma^n$) around $0$. By Lemma \ref{L: inf neigh in Gamma}, $\nu_\Gamma \vdash Z(G_s)$. 
By Fact \ref{F: Baldwin saxl}, $\bigcap_s G_s$ has finite index in $G$. It is $\0$-definable, and its center contains $\nu_\Gamma$. 

We have thus finished the proof of Proposition \ref{P: Gamma} in all cases. 
\label{section: Gamma}

\section{Groups locally strongly internal to $K/\CO$.}
We still assume $\CK$ is a saturated model in one of our cases. 
In the present section, we extend the results of the previous section from $\Gamma$-groups to $K/\CO$-groups. The result we get is somewhat weaker. Explicitly, we prove:

\begin{proposition}\label{P: K/O}
  Let $\CK_0\prec \CK$ be an elementary substructure, $G$ a $\CK_0$-definable  $K/\CO$-group not locally strongly internal to $\bk$. Let $\CA=\{\lambda_s:s\in S\}$ be a $\CK_0$-definable family of automorphisms of $G$, fixing the partial type $\nu_{K/\CO}$. Then there is a $K_0$-definable normal abelian subgroup $N\leq G$ which is stabilized under all of the $\lambda_g$ such that $\nu_{K/\CO}\vdash N$. In particular, $\dpr(N)$ is at least the $K/\CO$-rank of $G$. 
\end{proposition}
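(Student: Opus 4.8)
## Proof Strategy for Proposition \ref{P: K/O}

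\textbf{The plan is to} mimic the structure of the proof of Proposition \ref{P: Gamma} (for o-minimal $\Gamma$) but replace the finiteness-of-germs argument, which fails for $K/\CO$, with the rigidity results for definable subgroups of $(K/\CO)^n$ developed in Section 3 --- specifically Lemma \ref{automorphism} and Corollary \ref{C: same valuation} in the $V$-minimal and power-bounded $T$-convex cases, and Proposition \ref{P:local homeo K/O} in the $p$-adic case. First I would invoke Lemma \ref{F: nu in K/O}: since $G$ is a $K/\CO$-group (this uses that $G$ is not locally strongly internal to $\bk$, so $\bk$ is not the relevant sort and $K/\CO$ is), there is a $\CK_0$-definable subgroup $H\leq G$ with $\dpr(H)=n$, the $K/\CO$-rank of $G$, together with a $\CK_0$-definable injective homomorphism $f:H\to (K/\CO)^n$ onto a ball around $0$, and $\nu_{K/\CO}\vdash H$, with $\nu_{K/\CO}$ being the filter of preimages of open balls around $0$. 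Identify $H$ (up to shrinking, not changing dp-rank) with a ball $B\subseteq (K/\CO)^n$ on which $G$-multiplication is addition.

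\textbf{Next I would analyze conjugation.} For each $g\in G$, conjugation $\tau_g$ fixes $\nu_{K/\CO}$ (by Fact \ref{F: properties of nu}(3)), so by compactness $\tau_g(B)\cap B$ contains a ball around $0$; on a suitable sub-ball $\tau_g$ is a definable homomorphism into $(K/\CO)^n$. The crucial point is to extract from $\tau_g$ a well-defined \emph{germ at $0$} which lives in a finite (or at least rigid) set. In the $V$-minimal / $T$-convex case, Lemma \ref{automorphism} and Corollary \ref{C: same valuation} tell us that such a locally-defined injective homomorphism extends to an element of $\gl_n(\CO)$ preserving the valuation; moreover, after composing with $f$, the germ of $\tau_g$ at $0$ is determined by a matrix in $\gl_n(\CO)$. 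The family $\{$germ of $\tau_g : g\in G\}$ is then a definable family of germs of $\gl_n(\CO)$-maps; I would argue this family is finite --- either because it is a definable subgroup of an appropriate quotient of $\gl_n(\CO)$ of bounded exponent (using that $G/C$ embeds in it and Lemma \ref{L:groups 1}-type reasoning), or more directly because the "germ at $0$" of a linear map over $K/\CO$, combined with the fact that such germs form a definable group, forces finiteness via the bounded-exponent theorems \cite[Theorems 7.4, 7.7, 7.11]{HaHaPeGps}. For the $p$-adic case I would instead pass to $p^kH$ via Proposition \ref{P:local homeo K/O} and use the Presburger-style rigidity (as in Lemma \ref{L: family of functions, presburger}) applied coordinatewise after lifting to $K$ through $\rho$; the lifted homomorphisms are locally scalar by Lemma \ref{L:locally scalar}, and the scalars lie in the finite set $B_J/\CO$-type quotients, giving finiteness of the relevant family of germs.

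\textbf{With finiteness of germs in hand}, let $\sigma:G\to (\text{finite germ group})$ be the resulting definable homomorphism and set $G_1=\ker(\sigma)$, a $\CK_0$-definable normal finite-index subgroup. By construction every $g\in G_1$ acts trivially on a $\tau_{K/\CO}$-neighborhood of $0$ in $B$, so $G_1\subseteq C_G(\nu_{K/\CO})$; since $B$ is a ball hence definably connected (in the o-minimal cases; in the $p$-adic case use Lemma \ref{L:subgroup intersects ball} directly), Lemma \ref{L:def connected} gives $B\cap G_1\subseteq Z(G_1)$, and by Lemma \ref{L:nu of subgroup} $\nu_{K/\CO}\vdash G_1$, hence $\nu_{K/\CO}\vdash B\cap G_1\subseteq Z(G_1)$. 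Then $N:=Z(G_1)$ is normal in $G$ (as $G_1\trianglelefteq G$), abelian, and $\nu_{K/\CO}\vdash N$, so $\dpr(N)\geq$ the $K/\CO$-rank of $G$. To handle the family $\CA=\{\lambda_s\}$ and the $\CK_0$-definability: run the same Baldwin--Saxl argument as at the end of Section \ref{section: Gamma} --- take a $\CK_0$-definable family of finite-index normal subgroups containing $G_1$ whose centers all contain a ball around $0$ (so $\nu_{K/\CO}\vdash$ each center by Lemma \ref{F: nu in K/O}), intersect over the family \emph{and} over the $\lambda_s$-translates using Fact \ref{F: Baldwin saxl}, obtaining a $\CK_0$-definable finite-index normal $\Lambda$-invariant subgroup whose center still contains $\nu_{K/\CO}$; its center is the desired $N$.

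\textbf{The main obstacle} I anticipate is the finiteness of the family of germs of conjugations at $0$. Unlike the $\Gamma$-case, where germs are restrictions of $\gl_n(F)$-elements with $F$ a field of exponents and $\0$-definability forces finiteness, here germs of automorphisms of $(K/\CO)^n$ correspond to $\gl_n(\CO)$-matrices modulo the relation "agree on some ball around $0$" --- and two matrices in $\gl_n(\CO)$ agreeing on an infinitely-small ball around $0$ in $K/\CO$ need \emph{not} be equal (scaling by $\m$ kills the difference). So the correct invariant is not the full matrix but its class modulo an appropriate subgroup, and one must check this class-group is finite (or that the induced map $G_1 \to$ germs has finite image by a bounded-exponent argument). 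Getting this rigidity statement precisely right --- and uniformly across the three settings --- is where the real work lies; everything else is assembled from the lemmas already in place.
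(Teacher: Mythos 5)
Your strategy --- germ-rigidity for conjugations plus Baldwin--Saxl --- is genuinely different from the paper's, and the gap you correctly flag as ``the main obstacle'' is in fact fatal as you have set it up. Two matrices $L_1,L_2\in\gl_n(\CO)$ induce the same germ at $0$ on $(K/\CO)^n$ exactly when $L_1-L_2\in\m\cdot M_n(\CO)$ in the $V$-minimal/$T$-convex case (and when the entries of $L_1-L_2$ have valuation $>\Zz$ in the $p$-adic case). So the germ group of conjugations is a definable subgroup of $\gl_n(\bk)$ (resp.\ of $\gl_n(\CO/\m^{>\Zz})$), not a bounded-exponent group, and the bounded-exponent theorems you cite cannot force its finiteness: $\gl_n(\bk)$ is an infinite characteristic-zero group in each setting. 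The hypothesis that $G$ is not locally strongly internal to $\bk$ is precisely what should rule out an infinite image, but a definable surjection $\sigma:G\to\gl_n(\bk)$ with infinite image does not by itself produce an infinite \emph{subset} of $G$ mapping finite-to-one into $\bk^m$, so you cannot simply cite the hypothesis to close the gap --- you would need a further argument, and your draft does not supply one. There is a second, independent gap: you invoke Lemma~\ref{L:def connected} to get $B\cap G_1\subseteq Z(G_1)$, but that lemma is stated (and its proof only works) for the o-minimal distinguished sorts; a ball in $(K/\CO)^n$ is totally disconnected, so ``definable connectedness'' gives you nothing there, and without it you cannot promote ``each $g\in G_1$ centralizes \emph{some} ball around $0$'' to ``$Z(G_1)$ contains a fixed ball around $0$'' without a uniformity argument you have not supplied.

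The paper's proof is structured quite differently and avoids germs entirely. In the $V$-minimal/$T$-convex case (Proposition~\ref{a K/O result}), for $H$ the ball subgroup and $\sigma$ a definable automorphism, one writes $H^\sigma\cdot H\cong(H\times H)/H_3$ with $H_3$ the graph of a partial injective homomorphism $T$, extends $T$ to $\gl_n(\CO)$ via Lemma~\ref{automorphism}/Corollary~\ref{C: same valuation}, and then decomposes $H/\dom(T)$ into factors that are balls in $K/\CO$ and balls in $K/\m$ (the latter being $\bk$-internal); the hypothesis on $\bk$-rank kills the $K/\m$ factors, so $H^\sigma\cdot H$ stays strongly internal to $K/\CO$, and a dp-rank count forces $H^\sigma=H$. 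Thus $H$ itself is characteristic --- no finite-index passage is needed. In the $p$-adic case the paper instead exploits the torsion of $K/\CO$ (Fact~\ref{F: torsion}, Lemma~\ref{L:full subgroups have the same torsion}): all ball subgroups of maximal rank share the same (infinite) torsion, so $\bigcap_s H_0^{\lambda_s}$ is infinite abelian and $\CA$-invariant, and a definable-family argument then produces a $K_0$-definable such $N$. If you want to salvage your germ approach, the key realization is that the germ quotient lands in $\gl_n(\bk)$ and that this is where the $\bk$-rank-zero hypothesis must be applied --- but even then you would still need to replace the o-minimal connectedness step with a different uniformity argument.
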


\begin{remark}
    For convenience of presentation, we chose in Proposition \ref{P: K/O} a uniform statement for all cases. However, in fact, the results are slightly stronger in each case. For $p$-adically closed fields, the assumption that $G$ is not locally strongly internal to $\bk$ is vacuous, while in the remaining cases we obtain a group invariant under all definable automorphisms of $G$ (without the need to fix a family in advance).
\end{remark}

We say that a subgroup $H\leq G$ is $\CA$-invariant if for every $s\in S$, $\lambda_s(H)=H$. Since the proposition does not make any assumptions on $\CA$ we may assume that $\CA$ contains the family of all conjugations by elements of $G$, and thus an $\CA$-invariant subgroup will be in particular normal in $G$. 

As in Section \ref{section: Gamma} , the proof splits between the $p$-adically closed case and the remaining cases.

\subsection{$\CK$ is $p$-adically closed}
Since $\CK$ is $P$-minimal and saturated, there is a finite extension, $\mathbb F$ of $\qp$ embedding elementarily (as a valued field) into $K$. We identify the image of some fixed such embedding with a valued subfield of $K$. 

Since the value group $\Gamma_{\mathbb{F}}$ is isomorphic to $\mathbb{Z}$, as ordered abelian groups,  we identify $\Gamma_{\mathbb{F}}$ with $\mathbb{Z}$ and view it as a prime (and minimal) model for $\Gamma$. We denote $\Zz_{Pres}$ the structure $(\Zz, +, <)$.

The following fact is an easy consequence of the results of \cite{HaHaPeGps}:

\begin{fact}\label{F: torsion}
Let $\CK_0\equiv \CK$, $\CK_0$ not necessarily saturated, with $\CO_0$ its valuation ring. Let $\mathrm{Tor}(K_0/\CO_0)$ denote the torsion subgroup. Then
\begin{enumerate}
    \item  $\mathrm{Tor}(K_0/\CO_0)=\{a\in K_0/\CO_0:v(a)\in \mathbb{Z}\}$.
    \item $\mathrm{Tor}(K_0/\CO_0)$ is a finite direct sum of Pr\"ufer $p$-groups and is isomorphic to $\mathbb{F}/\CO_{\mathbb{F}}$. In particular, $\mathrm{Tor}(K_0/\CO_0)$ is a $p$-group. 
    \item Every ball in $(K_0/\CO_0)^n$ centered at $0$ contains $\mathrm{Tor}(K_0/\CO_0)^n$ and the $p^k$-torsion points are 
    exactly the points $b\in (K/\CO)^n$ with $v(b)\geq -k$.

\end{enumerate}
\end{fact}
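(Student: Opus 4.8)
The plan is to reduce to the case $n=1$ and then compute directly with the valuation, using only the description of balls in $K/\CO$ from \cite{HaHaPeGps}. First, for $a\notin\CO_0$ the coset $a+\CO_0$ is torsion iff $na\in\CO_0$ for some $n\geq 1$, i.e.\ $v(n)\geq -v(a)$. Only the prime $p$ has positive valuation, and ``$v(p)=e$'' is a first order statement, so $v(p)$ is a fixed standard positive integer and $\{v(n):n\geq 1\}=\{0,v(p),2v(p),\dots\}$ is cofinal in the standard non-negative part of $\Gamma_0$. Since $\Gamma_0$ is a discretely ordered group extending $\Zz$, each of its elements is either standard or infinite, so $v(n)\geq -v(a)$ holds for some $n$ iff $v(a)\in\Zz$; this gives~(1), and it shows that every torsion coset is already killed by a power of $p$, so $\mathrm{Tor}(K_0/\CO_0)$ is a $p$-group.

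For~(3), the same computation shows that $b\in(K_0/\CO_0)^n$ is $p^k$-torsion iff $v(p^kb)\geq 0$, i.e.\ iff $v(b)\geq -v(p^k)$, which is precisely the closed ball about $0$ of finite negative radius appearing in the statement. A definable ball about $0$ in $(K_0/\CO_0)^n$ has, by our running convention, infinitely negative radius, so it contains every ball of standard radius, in particular every $p^k$-torsion ball; by~(1) it therefore contains $\mathrm{Tor}(K_0/\CO_0)^n$. (This essentially repackages \cite[Lemma 3.1(3), Lemma 3.10]{HaHaPeGps}.)

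For~(2), I would write $\mathrm{Tor}(K_0/\CO_0)=\bigcup_k T_k$ with $T_k:=\{b:p^kb=0\}=p^{-k}\CO_0/\CO_0$, noting that multiplication by $p^k$ gives an isomorphism $T_k\cong\CO_0/p^k\CO_0$ under which the inclusion $T_k\hookrightarrow T_{k+1}$ becomes the ``multiplication by $p$'' map $\CO_0/p^k\CO_0\to\CO_0/p^{k+1}\CO_0$. Each $\CO_0/p^k\CO_0$ is finite, and both its isomorphism type and these transition maps are determined by $\theo(\CK_0)=\theo(\mathbb F)$; hence $\mathrm{Tor}(K_0/\CO_0)$ is isomorphic to the same direct system computed inside $\mathbb F$, i.e.\ to $\bigcup_k p^{-k}\CO_{\mathbb F}/\CO_{\mathbb F}=\mathbb F/\CO_{\mathbb F}$, and since $\CO_{\mathbb F}$ is $\zp$-free of rank $d=[\mathbb F:\qp]$ this group is $(\qp/\zp)^{d}$, a direct sum of $d$ Prüfer $p$-groups. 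The only genuine obstacle is this last step: because $\CK_0$ need not be saturated there is no a priori copy of $\mathbb F$ inside $K_0$, so one must transport the abstract group structure through the finite quotients $\CO_0/p^k\CO_0$ and the elementary equivalence $\CK_0\equiv\mathbb F$ before assembling the direct limit; parts~(1) and~(3) are routine once the set $\{v(n):n\geq 1\}$ is understood.
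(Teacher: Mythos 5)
Your argument is correct and, at the level of ideas, runs parallel to the paper's: both hinge on the observations that $\{v(n):n\geq 1\}=v(p)\cdot\Nn$, that $\Zz$ is the convex hull of this set in $\Gamma_0$, and that the $p^k$-torsion subgroup is first-order definable and finite, hence transferable across $\CK_0\equiv\CK$. The difference is one of presentation: the paper first reduces the whole statement to the saturated $\CK$ (where a named copy of $\mathbb F$ is available) and then quotes \cite[Lemma 3.1(3), Lemma 3.10]{HaHaPeGps} as black boxes, whereas you prove (1) and (3) by direct computation inside $\CK_0$ and handle (2) by building the torsion group from its finite $p^k$-torsion layers. That is perfectly legitimate and arguably more self-contained. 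The one place deserving a word more is your step in (2), "its isomorphism type and these transition maps are determined by $\theo(\CK_0)$, hence the direct limits agree": knowing that each term $T_k$ and each arrow $T_k\hookrightarrow T_{k+1}$ is determined up to isomorphism does not by itself yield an isomorphism of the direct limits — one must either exhibit a \emph{compatible} family of isomorphisms (possible here via a lifting argument through $\gl_d(\Zz/p^{k+1})\twoheadrightarrow\gl_d(\Zz/p^{k})$), or, more cleanly, observe directly from (1) that $\mathrm{Tor}(K_0/\CO_0)$ is a divisible torsion abelian $p$-group (given $a$ with $v(a)\in\Zz_{<0}$ and $n\geq 1$, $a/n$ also has valuation in $\Zz$), hence a direct sum of Prüfer $p$-groups, whose number of summands equals $\dim_{\mathbb F_p}T_1$, a finite first-order invariant. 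With that refinement in hand, your proof is complete.
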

\begin{proof}
Since, by the basic properties of Pr\"ufer groups the $p^n$-torsion is finite for all $n$, it will suffice to prove the claim in $\CK$: 

(1): If  $v(a)=n\in \mathbb Z_{<0}$ then $p^n a\in \CO$, so $a+\CO\in \mathrm{Tor}(\CK/\CO)$. The reverse inclusion follows from \cite[Lemma 3.1]{HaHaPeGps}(3).

 (2):
By \cite[Lemma 3.1]{HaHaPeGps}(3), every torsion element of $(K/\CO)^n$ is in $(\mathbb F/\CO_{\mathbb F})^n$, and with the previous clause (2) follows for $\CK$ since $\mathbb F/\CO_\mathbb F$ is isomorphic to a  of Pr\"ufer $p$-groups.

(3) follows from the structure of the Pr\"ufer group. \\

\end{proof}

\begin{lemma}\label{L:full subgroups have the same torsion}
Let $G$ be a definable $K/\CO$-group. Let $H_1, H_2\leq G$ be definable subgroups, and  $f_i:H_i\to (K/\CO)^n$ ($i=1,2$) definable group embeddings whose respective images are  open balls  in $(K/\CO)^n$, where $n$ is the $K/\CO$-rank of $G$. Then  $\dpr(H_1\cap H_2)=n$ and $$\mathrm{Tor}(H_1)=f_1^{-1}(\mathbb{F}/\mathcal{O}_{\mathbb{F}})=\mathrm{Tor}(H_2)=f_2^{-1}(\mathbb{F}/\mathcal{O}_{\mathbb{F}}).$$ 

In particular, all definable subgroups of $G$ of dp-rank $n$ that can be definably embedded into  $(K/\CO)^n$  share the same torsion subgroup.
\end{lemma}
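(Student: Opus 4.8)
The plan is to exploit two facts: that the infinitesimal type $\nu_{K/\CO}(G)$ is carried by every definable subgroup of $G$ of dp-rank equal to the $K/\CO$-rank $n$ of $G$, and that (Fact \ref{F: torsion}) every ball in $(K/\CO)^n$ centred at $0$ contains the full torsion subgroup $\mathrm{Tor}((K/\CO)^n)=(\mathbb{F}/\CO_{\mathbb{F}})^n$. Writing $\nu:=\nu_{K/\CO}(G)$, I would first invoke Lemma \ref{F: nu in K/O} with the embedding $f_1$ (its hypotheses hold since $\dpr(H_1)=n$ is the $K/\CO$-rank of $G$) to get $\nu=\{f_1^{-1}(U): U\subseteq (K/\CO)^n \text{ an open ball around }0\}$; taking $U=f_1(H_1)$, which is an open ball around $0$ by hypothesis, yields $\nu\vdash H_1$, and symmetrically $\nu\vdash H_2$, hence (filter-base property) $\nu\vdash H_1\cap H_2$. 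Since $\dpr(\nu)=n$ and $H_1\cap H_2\subseteq H_1$ has dp-rank $n$, this already gives $\dpr(H_1\cap H_2)=n$, the first assertion.

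For the torsion equalities, using the description of $\nu$ via $f_1$ once more, the relation $\nu\vdash H_1\cap H_2$ supplies an open ball $U$ around $0$ with $U\subseteq f_1(H_1)$ and $f_1^{-1}(U)\subseteq H_1\cap H_2$; applying $f_1$ gives $U=f_1(f_1^{-1}(U))\subseteq f_1(H_1\cap H_2)$. Then by Fact \ref{F: torsion}(2),(3),
\[(\mathbb{F}/\CO_{\mathbb{F}})^n=\mathrm{Tor}((K/\CO)^n)\subseteq U\subseteq f_1(H_1\cap H_2)\subseteq f_1(H_1)\subseteq (K/\CO)^n,\]
so both subgroups $f_1(H_1\cap H_2)$ and $f_1(H_1)$ lie between a group containing all the torsion of $(K/\CO)^n$ and $(K/\CO)^n$ itself, and hence each has torsion exactly $(\mathbb{F}/\CO_{\mathbb{F}})^n$. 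Pulling back along the isomorphism $f_1$ onto its image gives $\mathrm{Tor}(H_1\cap H_2)=\mathrm{Tor}(H_1)=f_1^{-1}((\mathbb{F}/\CO_{\mathbb{F}})^n)$; the identical argument applied to $f_2$ gives $\mathrm{Tor}(H_1\cap H_2)=\mathrm{Tor}(H_2)=f_2^{-1}((\mathbb{F}/\CO_{\mathbb{F}})^n)$, and chaining the two strings of equalities proves the displayed formula.

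For the final (``in particular'') assertion I would argue as follows. Given any definable $H\leq G$ with $\dpr(H)=n$ admitting a definable embedding $g:H\to (K/\CO)^n$, the image $g(H)$ is a dp-rank-$n$ definable subgroup of $(K/\CO)^n$; by Fact \ref{F:interior in K/O} it contains a ball, and translating that ball by one of its own points (legitimate since $g(H)$ is a subgroup) produces a ball $B_0\subseteq g(H)$ around $0$ of dp-rank $n$. Applying the already-proved part of the lemma to $H':=g^{-1}(B_0)$, with embedding $g\rest H'$ (image $B_0$), and to $H_1$ gives $\mathrm{Tor}(H')=\mathrm{Tor}(H_1)$; on the other hand $g(H)\supseteq B_0\supseteq \mathrm{Tor}((K/\CO)^n)$ forces $\mathrm{Tor}(H)=g^{-1}(\mathrm{Tor}((K/\CO)^n))=\mathrm{Tor}(H')$, whence $\mathrm{Tor}(H)=\mathrm{Tor}(H_1)$, independently of $H$ and $g$. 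I do not expect a genuine obstacle in any of this; the only points calling for some care are keeping the ball coming from $\nu$ inside $f_i(H_i)$ so that Fact \ref{F: torsion}(3) applies to it, and, for the last claim, replacing an arbitrary full-rank embedding by one whose image is genuinely a ball around $0$.
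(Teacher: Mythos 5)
Your proof is correct and follows essentially the same route as the paper: both arguments hinge on showing $\nu_{K/\CO}\vdash H_1\cap H_2$ (via Lemma~\ref{F: nu in K/O} and the fact that $f_i(H_i)$ is an open ball around $0$), and then on Fact~\ref{F: torsion} to identify the torsion of any subgroup of $(K/\CO)^n$ containing a ball around $0$ with $(\mathbb F/\CO_{\mathbb F})^n$. The only cosmetic difference is that the paper obtains the ball inside $f_i(H_1\cap H_2)$ by invoking Fact~\ref{F:interior in K/O} together with the computation $\dpr(f_i(H_1\cap H_2))=n$, whereas you read it off directly from the $\nu$-description with respect to $f_1$; these are interchangeable. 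Your explicit argument for the ``in particular'' clause (passing from an arbitrary full-rank embedding $g:H\to(K/\CO)^n$ to $H'=g^{-1}(B_0)$ with $B_0$ a translated ball around $0$, and then comparing $\mathrm{Tor}(H)$, $\mathrm{Tor}(H')$ and $\mathrm{Tor}(H_1)$) fills in a step the paper treats as implicit, and is correct; one minor omission is that, as in the paper, you should note that the statement is invariant under naming new constants, so $\mathbb F$ may be assumed named in $\CK$ when invoking Fact~\ref{F: torsion}.
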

\begin{proof}
The assumptions and the conclusions are invariant under naming new constants, so we may assume that $\mathbb{F}$ is named in $\CK$ and so we may apply the results from \cite{HaHaPeGps}.

By the construction of $\nu_{K/\CO}$ (see Lemma \ref{F: nu in K/O} and Remark \ref{R: nu lives on any definable subgroup witnessing}) we have $\nu_{K/\CO}\vdash H_i$, $i=1,2$, hence    $\nu_{K/\CO}\vdash H_1\cap H_2$. By Lemma \ref{F: nu in K/O}, this implies that $\dpr(H_1\cap H_2)=n$. 

 Since $f_i(H_i)$ is an open ball, for $i=1,2$, it follows from  Fact \ref{F: torsion} that $\mathrm{Tor}(H_i)=f_i^{-1}((\mathbb{F}/\CO_{\mathbb{F}})^n)$. As $\dpr(H_1\cap H_2)=n$ also $\dpr(f_i(H_1\cap H_2))=n$ for $i=1,2$, so  by  \cite[Lemma 3.6]{HaHaPeGps} $f_i(H_1\cap H_2)$ has non-empty interior, thus contains a sub-ball of $(K/\CO)^n$. Therefore, (since it is a group) it also contains a ball centered at $0$. Thus, $(\mathbb F/\CO_{\mathbb F})^n\sub f_i(H_1\cap H_2)$ and hence $f_i^{-1}((\mathbb F/\CO_{\mathbb F})^n)\sub H_1\cap H_2$. We conclude \[\mathrm{Tor}(H_1)=f_1^{-1}((\mathbb F/\CO_{\mathbb F})^n)=f_2^{-1}((\mathbb F/\CO_{\mathbb F})^n)=\mathrm{Tor}(H_2),\] as needed.
\end{proof}

We can now prove Proposition \ref{P: K/O} in the $p$-adic case.

\begin{proof}[Proof of proposition \ref{P: K/O} in the $p$-adic case.] Recall that $\CA=\{\lambda_s:s\in S\}$ is a definable family of automorphisms of $G$.
  First, we show that some infinite $\CA$-invariant abelian subgroup of $G$ is definable in $\CK$ and then we construct one that is definable over $K_0$ as needed.
  
  By Section \ref{ss:nu in K/O} we can find a definable subgroup $H_0$,  $\nu_{K/\CO}\vdash  H_0\leq G$,  that is definably isomorphic to  an open ball in $(K/\CO)^n$ centered at $0$, where $n$ is the $K/\CO$-rank of $G$. Let $f:H_0\to (K/\CO)^n$ be a group embedding witnessing this (note that $H_0$ and $f$ are not claimed to be $\CK_0$-definable).
  
  Let $H=\bigcap\limits_{s\in S} H_0^{\lambda_s}$, where $H_0^{\lambda_s}=\lambda_s(H_0)$. It is a definable $\CA$-invariant abelian subgroup, and by the previous lemma it is infinite, as claimed. We shall now replace $H$ by a group defined over $K_0$.

  By Lemma \ref{L:full subgroups have the same torsion}, $\mathrm{Tor}(H_0^{\lambda_s})=f^{-1}((\mathbb F/\CO_{\mathbb F})^n)$, for every $s\in S$.  It follows, using compactness and saturation, that there is   $r<\mathbb{Z}$ such that $B_{>r}(0)\sub f(H)$. Let $r_0$ be the minimal such $r$.
  
  Assume that $H$ and $f$ are definable over some $t_0\in \CK$ and let $\{(H_t,f_t):t\in T\}$ be the corresponding $K_0$-definable family of subgroups of $G$ and definable group embeddings $f_t:H_t\to (K/\CO)^n$, such that $(H,f)= (H_{t_0},f_{t_0})$.
Note that the statement that $H_{t_0}$ is $\CA$-invariant is a first order property of $t_0$, defined over $K_0$. 

 Thus we may assume that  each $H_t$ is $\CA$-invariant. 
 
 Define $\eta:T\to \Gamma$  by $$\eta(t)=\min\{r\in \Gamma:B_{>r}(0)\sub f_t(H_t)\}.$$ In particular, $\eta(t_0)\leq r_0$ and by Lemma \ref{L:full subgroups have the same torsion}, if $\eta(t), <\mathbb{Z}$ then $\hat H:=f_{t_0}^{-1}((\mathbb F/\CO_{\mathbb F})^n)\sub H_t.$

  Given  $r \in \Gamma_{<0}$, let 
  \[
    G(r):=\bigcap \{H_t:\eta(t)\leq r\}.
   \] 
     
 Because each $H_t$ is $\CA$-invariant so is $G(r)$, and as noted above,  $\hat H\sub G(r)$ for every $r\in \Gamma_{<0}$.

 The map $f_{t_0}$ restricts to an injective homomorphism from $G(r_0)$ into $(K/\CO)^n$, and since $\hat H\sub G(r_0)$, the  set $\{r\in \Gamma: f_{t_0}^{-1}(B_{>r}(0))\sub G(r_0)\}$  contains $\mathbb Z$. It follows that  there exists $r<\mathbb Z$ such that $f_{t_0}^{-1}(B_{>r}(0))\sub G(r_0) $ 
 and therefore   $\nu_{K/\CO}\vdash G(r_0)$ (by Lemma \ref{F: nu in K/O}).

  The family $\{G(r):r\in \Gamma\}$  is definable over $K_0$ and, by its definition, it is increasing as $r$ tends to $-\infty$. Hence, the directed union $$N:=\bigcup\limits_{r\in \Gamma_{<0}} G(r)$$  is   an  abelian subgroup defined over $K_0$, $\CA$-invariant and $\nu_{K/\CO}\vdash N$.  It follows that $\dpr(N)$ is at least the $K/\CO$-rank of $G$ (note however that we do not claim that $N$ is strongly internal to $K/\CO$).

This concludes the proof of Proposition \ref{P: K/O} in the $p$-adic case. \end{proof}

We now proceed to the remaining cases. 

\subsection{$\CK$ is power bounded $T$-convex or $V$-minimal}
We assume that $\CK$ is either power bounded $T$-convex or $V$-minimal. In both cases $K/\CO$ is an SW-uniformity and $\CK$ has residue characteristic $0$. 

Since $(K/\CO)^n$ is torsion-free we cannot use torsion elements as in the $p$-adic case, so we adopt a different approach. 
The key to our argument is the characterization of definable groups and endomorphisms of $(K/\CO)^n$ from  Section \ref{ss:groups in K/O}.

The conclusion of Proposition \ref{P: K/O}, in our case, will follow from the next proposition (recall that a ball containing $0$ in $K/\CO)^n$ is of the form $B^n$ for $B$ a ball in $K/\CO$): 

\begin{proposition} \label{a K/O result}
    Let $G$ be a definable group in $\CK$ and let $H\subseteq G$ be an infinite definable subgroup, definably isomorphic to a ball in $(K/\CO)^n$. Let $\sigma$ be a  definable automorphism of $G$
    and let $H^\sigma:=\sigma(H)$. Then $H^\sigma\cdot H\subseteq G$ is in definable bijection with a set of the form
    \[H\times \prod B_i\times \prod C_i,\]
    where each $B_i$ is a ball in $K/\CO$ and each $C_i$ is a  ball in $K/\m$.
    
    Furthermore,
    \begin{enumerate}
        \item If the $\bk$-rank of $G$ is $0$  then there are no $C_i$ in the above description, so $H^\sigma\cdot H$ is strongly internal to $K/\CO$.
        \item If $H^\sigma\neq H$ then $\dpr(H^\sigma\cdot H)>\dpr(H)$.
    \end{enumerate}
\end{proposition}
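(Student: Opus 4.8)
The plan is to reduce everything to the structure theory of subgroups of $(K/\CO)^n$ from Section~\ref{ss:groups in K/O}. Since $\sigma$ is a definable automorphism and $H$ is a subgroup of the prescribed type, I would first fix a definable group isomorphism $\psi\colon H^\sigma\to (B^n,+)$ onto a ball $B^n\subseteq (K/\CO)^n$. By Corollary~\ref{C: same valuation} every definable automorphism of $(K/\CO)^n$ preserves balls around $0$, so I may freely post-compose $\psi$ with the coordinate changes of Lemma~\ref{K/O end-groups2}(1) applied to the subgroup $\psi(H^\sigma\cap H)\leq (B^n,+)$, arranging that $\psi(H^\sigma\cap H)=\prod_{i=1}^n S_i$ with each $S_i\subseteq B$ a ball around $0$ (possibly $\{0\}$, possibly $B$); hence $H^\sigma/(H^\sigma\cap H)\simeq\prod_{i=1}^n(B/S_i)$ as definable groups. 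Next, the multiplication map $H^\sigma\times H\to H^\sigma H$ is surjective with fibres exactly the orbits of the free $(H^\sigma\cap H)$-action $k\cdot(a,b)=(ak^{-1},kb)$; choosing a definable section of $H^\sigma\to H^\sigma/(H^\sigma\cap H)$ (built coordinatewise from the explicit product description just obtained) gives a definable bijection $H^\sigma H\simeq (H^\sigma/(H^\sigma\cap H))\times H\simeq H\times\prod_{i=1}^n(B/S_i)$.

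The core of the argument is then to identify the factors $B/S_i$. If $S_i=B$ the factor is trivial and if $S_i=\{0\}$ it equals the $K/\CO$-ball $B$; in the remaining cases, rescaling by a scalar $c$ with $v(c)=-r(S_i)$ and applying the third isomorphism theorem, one checks that $B/S_i$ is definably isomorphic to a ball in $K/\CO$ when $S_i$ is ``closed'' relative to $B$ and to a ball in the sort $K/\m$ (the prototype being $B_{\ge\gamma}(0)/B_{>\gamma}(0)\cong\bk$) when $S_i$ is ``open'' relative to $B$. Renaming the nontrivial factors of the first kind as the $B_i$ and of the second kind as the $C_i$ yields the asserted form $H\times\prod B_i\times\prod C_i$.

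For clause (1): every nontrivial ball in $K/\m$ contains a definable copy of $(\bk,+)$, so a factor $C_i$ would produce a definable injection of $\bk$ into $H^\sigma H\subseteq G$, contradicting that $G$ is not locally strongly internal to $\bk$; hence there are no $C_i$, and $H\times\prod B_i$ embeds definably into a power of $K/\CO$, so $H^\sigma H$ is strongly internal to $K/\CO$. For clause (2), assume $H^\sigma\neq H$. I would first rule out $H^\sigma\subseteq H$: otherwise $\sigma$ restricts to an injective definable endomorphism of $H\cong(B^n,+)$, which extends by Lemma~\ref{automorphism} (and Lemma~\ref{K/O end-groups2}(2)) to a definable endomorphism $L$ of $(K/\CO)^n$; $L$ is injective since $\ker L$ is a finite — hence, by torsion-freeness, trivial — subgroup, so by Corollary~\ref{C: same valuation} $L$ is a valuation-preserving automorphism and must carry $B^n$ onto $B^n$, forcing $H^\sigma=H$, a contradiction. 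Thus $H^\sigma\cap H\subsetneq H^\sigma$, so $H^\sigma/(H^\sigma\cap H)$ is infinite and at least one $B_i$ or $C_i$ is nontrivial; since a product of $n+1$ nontrivial balls in $K/\CO$ and $K/\m$ has dp-rank $n+1$, it follows that $\dpr(H^\sigma H)\ge\dpr(H)+1$.

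The step I expect to be the main obstacle is the classification of the quotients $B/S_i$ in the second paragraph — pinning down the precise dividing line between the $K/\CO$-type and $K/\m$-type quotients and realizing each definably as a ball in the appropriate sort — together with the (easily overlooked but genuinely needed) construction of a definable section for the product decomposition in the first paragraph; once these are in place, clauses (1) and (2) follow by the short arguments above.
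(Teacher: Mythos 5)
Your overall plan matches the paper's in its targets: reduce to a product decomposition $H^\sigma H \simeq H \times \prod B_i \times \prod C_i$, identify quotients of balls by closed (resp.\ open) subballs with balls in $K/\CO$ (resp.\ $K/\m$), and deduce (1), (2). Your treatment of clause (2), ruling out $H^\sigma\subsetneq H$ via Lemma~\ref{automorphism} and Corollary~\ref{C: same valuation}, is a useful addition that the paper glosses over. The problem is the step you flag yourself: the definable section of $H^\sigma \to H^\sigma/(H^\sigma\cap H)$. After the coordinatewise reduction this amounts to producing, for each $i$, a definable system of coset representatives for $B/S_i$ inside $B$; but when $S_i$ is a proper nontrivial subball, its cosets in $B$ are themselves balls, and a ball has no definable point over the parameters defining it (this is precisely why $K/\CO$ is a non-eliminable imaginary sort). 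In the $V$-minimal case this is a hard obstruction, and even in the $T$-convex case definable Skolem functions live only in the home sort, not in $\CK^{eq}$. So there is no such section in general, and this is a genuine gap, not a technicality.

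The paper avoids the need for a section by a ``straighten the graph'' device. It presents $H^\sigma H$ as $(H\times H)/H_3$, where $H_3=\{(a,b)\in H\times H : a^\sigma b = e\}$ is a subgroup; observes that $H_3$ is the graph of a definable injective partial homomorphism $T$ with domain $H_1\subseteq H$; extends $T$ to a valuation-preserving automorphism $f$ of $(K/\CO)^n$ stabilizing $H$ (Lemma~\ref{automorphism}, Corollary~\ref{C: same valuation}); and applies the shear $F(x,y)=(x,y-f(x))$ of $H\times H$, which carries $H_3$ onto the product subgroup $H_1\times\{0\}$. The quotient $(H\times H)/(H_1\times\{0\}) \cong (H/H_1)\times H$ is then canonical, with no choice required, and Lemma~\ref{K/O end-groups2} together with Corollary~\ref{C: same valuation} puts $H_1$ in product form inside $B^n$ so that $H/H_1\cong\prod B/B_i\times\prod B/C_i\times\prod B$. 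If you replace your section by this device --- which uses only lemmas you already invoke elsewhere in your argument --- the remainder of your proof, including the identification of the $B/S_i$ and clauses (1) and (2), goes through.
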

\begin{proof} 
We identify $H$ with its image in  $(K/\CO)^n$ (but still write the group operations multiplicatively)  and let  $H_3=\{(a,b)\in H\times H: a^\sigma b=e\}$.

\begin{claim} 
$H_3$ is a subgroup of $H\times H$ and $(H\times H)/H_3$ is in definable bijection with $H^\sigma\cdot H$.
\end{claim}
\begin{claimproof}
Note that if $a^\sigma b=e$ then $a^\sigma $ and $b$ are in $H_0:=H\cap H^\sigma$, so they commute.  To see that $H_3$ is a subgroup, assume that  $a_1^\sigma b_1=a_2^\sigma b_2=e$ then $(a_2^{-1})^\sigma a_1^\sigma b_1b_2^{-1}=(a_1a_2^{-1})^\sigma (b_1b_2^{-1})=e$, so $(a_1a_2^{-1}, b_1b_2^{-1})\in H_3$.

We claim that for $a,b\in H$,
$a_1^\sigma b_1=a_2^\sigma b_2$ if and only if $(a_1,b_1)H_3=(a_2,b_2)H_3$, and therefore the map $(a,b)\mapsto a^\sigma b$ induces a well-defined bijection between $(H\times H)/H_3$ and $H^\sigma \cdot H$. Indeed, using the commutativity of $H^\sigma$,
\[a_1^\sigma b_1=a_2^\sigma b_2\Leftrightarrow (a_2^\sigma)^{-1} a_1^\sigma b_1b_2^{-1}=e\Leftrightarrow a_1^\sigma (a_2^\sigma)^{-1}b_1b_2^{-1}=e\Leftrightarrow (a_1,b_1)H_3=(a_2,b_2)H_3.\]\qedhere \end{claimproof}

The claim implies, in particular, that in order to compute $\dpr( (H^\sigma\cdot H)$ it will suffice to compute $\dpr\left( (H\times H)/H_3\right)$, to which we now turn our attention. 

By definition, $H_3$ is the graph of a definable injective partial function $T:H^\sigma\cap H\dashrightarrow H^\sigma\cap H$, $x\mapsto (x^{\sigma})^{-1}$, in particular $\dom(T)$ is a definable group. We want to study the map $T$. To do that we may  work solely inside $(K/\CO)^n\times (K/\CO)^n \supseteq H\times H$ so we switch to additive notation.

By Lemma \ref{automorphism}, there is a definable automorphism 
$f:(K/\CO)^n\to (K/\CO)^n$ extending $T$.  By Corollary \ref{C: same valuation}, $f$ preserves the valuation, and as $H$ is a ball, we get that $f(H)=H$. Let us replace $f$ by $f\restriction H$. As $H$ is abelian, $x\mapsto -f(x)$ is again an automorphism.

Consider the definable map $F:H\times H\to H\times H$: $F(x,y)=(x,y-f(x))$.
Because $f$ is an endomorphism of $H$, $F$ is an automorphism of $H\times H$. It maps $H_3$ onto a group of the form $H_1\times \{e\}$, where $H_1=\dom(T)$. Hence
\[(H\times H)/H_3 \cong (H\times H)/(H_1\times \{e_H\}) \cong (H/H_1)\times H.\] 

By Lemma \ref{K/O end-groups2}, there is a definable automorphism of $(K/\CO)^n$ mapping $H_1$ to a direct product of closed and open balls in $K/\CO$ (or $K/\CO$ or $\{0\}$). Since $H$ of the form $B^n$, for $B\sub K/\CO$, this automorphism preserves $H$ (Corollary \ref{C: same valuation}). Consequently, we may assume that 
\[
H_1=\prod B_i \times \prod C_i \times \prod \{0\},
\]
where $B_i$ are closed balls and $C_i$ are open balls. Therefore,    $H/H_1$ is definably isomorphic to
\[\prod B/B_i\times \prod B/C_i\times \prod B. \]

Each  $B/B_i$ is definably isomorphic to a ball in $K/\CO$ (so strongly internal in $K/\CO$) and 
each $B/C_i$ is definably isomorphic to  ball in $K/\bm$ (so strongly internal to $K/\bm$. This gives the desired form.

For $(1)$, if The $\bk$-rank of $G$ is $0$ then there are no open $C_i$ in the above description; so $H^\sigma\cdot H$ is strongly internal to $K/\CO$. 

For $(2)$, if $H^\sigma\neq H$ then $H^\sigma \cap H\subsetneq H$ and in particular $H_1\subsetneq H$. Since $\Gamma$ is dense, $[H:H_1]=\infty$ so  $\dpr(H/H_1)>0$ and thus $\dpr(H^\sigma\cdot H)>\dpr (H)$.
\end{proof}

We can now complete the proof of Proposition \ref{P: K/O} when $\CK$ is either power bounded or V-minimal.  Let $G$ be an infinite $\CK_0$-definable group whose $\bk$-rank is $0$. By Section \ref{ss:nu in K/O} we can find a definable subgroup $H\sub G$ definably isomorphic to an open ball in $(K/\CO)^n$ centered at $0$, where $n$ is the $K/\CO$-rank of $G$.  It follows from Proposition \ref{a K/O result} and the choice of $H$ that $H$ is invariant under every definable automorphism of $G$.
Indeed, assume towards contradiction that $H^\sigma\neq H$. Then by (1) of the proposition, $H^\sigma\cdot H$ is strongly internal to $K/\CO$ and by (2) $\dpr(H^\sigma\cdot H)>\dpr(H)$, contradicting the fact that $\dpr(H)$ is the $K/\CO$-rank of $G$.

Thus, $H$  is infinite, normal and abelian. Since any non-zero subgroup of $(K/\CO)^n$ is infinite, the existence of such a subgroup $H$ is an elementary property, which implies that such a group exists already in $\CK_0$, as claimed. \qed \\

We end this section with an example illustrating that in Proposition \ref{P: K/O} the assumption that the $\bk$-rank of $G$ is $0$ is essential.

\begin{example}
We produce an example of a group $G$ of dp-rank $2$ that is locally strongly internal to both $K/\CO$ and  $\bk$ but has no infinite definable normal abelian subgroup 
which is locally strongly internal to $K/\CO$.

    Let $\CK$ be either a $V$-minimal valued field or a power-bounded $T$-convex valued field, and let $\gamma>0$ be some element of $\Gamma$. Let $B_{\geq \gamma}$ and be $B_{\geq -\gamma}$ the closed balls of respective radii $\gamma$ and $-\gamma$ around $0$. 
    
    Pick any $\delta\in \Gamma$ with $2\delta>\gamma>\delta>0$, then $H=(1+B_{>\delta})/(1+B_{\geq \gamma})$ is a definable multiplicative group definably isomorphic (because of our choice of $\delta$)  to the additive group $B_{>\delta}/B_{\geq \gamma}$ (via the map $a+B_{\geq \gamma}\mapsto (1+a)(1+B_{\geq \gamma})$). This latter group is obviously definably isomorphic to a subgroup of $K/\CO$. Let $N=B_{>-\gamma}/\m$  (which is strongly internal to $\bk$).

    Set $G=N\rtimes H$, where $H$ acts on $N$ by multiplication (it is well-defined) and the latter is a normal subgroup of $G$. We identify both of these groups with their obvious images in $G$, namely we identify $g=\bar g+\bm\in N$ with $(\bar g+\bm,1+B_{\geq \gamma})$, and $a=\bar a(1+B_{\geq \gamma})\in H$ with $(\bm,\bar a(1+B_{\geq \gamma}))$.

    A direct computation gives that if $a\in H$ and $g\in N$ as above,  $$a^g=g^{-1}ag=(\bar g(\bar a-1)+\m,\bar a(1+B_{\geq \gamma})).$$ 
    
  Assume now that $L$ is a definable, normal subgroup of $G$ which is locally strongly internal to $K/\CO$. We will show that $L$ is not abelian.  By assumption,  $\nu_{\CK/\CO}\vdash L$, so $L\cap H$ is infinite and in particular contains a non identity element of the form $a=\bar a(1+B_{\geq \gamma})$, with $\gamma>v(\bar a-1)=\delta_1>\delta$. We claim that for a suitable choice of $g\in G$, $a^ga\neq aa^g$, implying that $L$ is not abelian.

  Indeed, choose $g=\bar g+\bm\in N$, so that $v(\bar g)+\delta_1<0$ (we can do that since $-\gamma+\delta_1<0$), and then, by the above computation
  $$a^ga=(\bar g(a-1)+\bar a \bar g+\bm, \bar a(1+B_{\geq \gamma})),\,\,\,\,\,  aa^g=(\bar g+\bar g(a-1)+\bm, \bar a(1+B_{\geq \gamma})).$$

  In order to see that $a^ga\neq aa^g$, it is enough to see that $\bar g(a-1)+\bar a\bar g-(\bar g +\bar g(a-1))+\bm \neq \bm$, namely that $\bar g(\bar a-1)\notin \bm$. This follows directly from our choice of $g$, since $v(\bar g)+v(\bar a-1)<0$.

    We end with noting that similar computations give \[H^g\cdot H=\{(\bar a(1-\bar g)+\m,\bar b(1+B_{\geq \gamma})):\bar a,\bar b\in 1+B_{>\delta}\},\] and thus it is not hard to see that  $H^g\cdot H=B_{>\delta+v(g)}/\m\times H$ which is line with the Proposition \ref{a K/O result}(1).
\end{example}

\section{Groups locally strongly internal to the residue field}
The results of the previous sections imply, in particular, that there are no definably semisimple groups locally strongly internal to $\Gamma$ (and in the $p$-adic case, nor to $K/\CO$). This is, clearly, not the situation for groups locally strongly internal to the valued field or to the residue field. So our aim in the present and in the next section is to study such groups. We begin with the study of groups locally strongly internal to $\bk$, where $\CK$ is either power-bounded $T$-convex or $V$-minimal. 

For the statement of the main result of this section, we need a weakening of definable semisimplicity: 

\begin{definition}
    Let $G$ be a definable group. A definable normal subgroup $H\trianglelefteq G$ is $G$-\emph{semisimple} if $H$ has no infinite abelian definable subgroups normal in $G$.  
\end{definition}

Note that, in the above notation, if  either $G$ or $H$ are definably semisimple, then $H$ is $G$-semisimple. We prove:

\begin{proposition}\label{P: k}
Let $G$ be a definably semisimple group locally almost strongly internal to $\bk$. Then there exists a finite normal subgroup $N\trianglelefteq G$ and two normal subgroups $G_1,G_2\trianglelefteq G/N$, all defined over any model over which  $G$ is defined, such that 
\begin{enumerate}   
    \item $G_1\cap G_2=\{e\}$,  $G_1,G_2$ centralize each other and  $G_1\cdot G_2$ has finite index in $G/N$.
    \item The almost $\bk$-rank of $G_1$ is $0$ and it is $G/N$-semisimple, 
    \item $G_2$ is definably semisimple, and it is definably isomorphic to a subgroup of $\gl_n(\bk)$. 
\end{enumerate}
\end{proposition}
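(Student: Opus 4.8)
The plan is to split according to whether $\CK$ is power bounded $T$-convex or $V$-minimal, since $\bk$ is o-minimal in the first case and a pure algebraically closed field in the second, and these require genuinely different tools. In both cases I would first reduce: using Fact \ref{F: existence of finite normla to get D-group} (in the $T$-convex case, where $\bk$ is unstable) and the analogous analysis of $\bk$-internal groups from \cite{HaHaPeGps} (in the $V$-minimal case, valid since $\bk$ is then a stably embedded pure algebraically closed field), pass to a quotient of $G$ by a finite normal subgroup, defined over any model over which $G$ is defined and contained in every definable finite index subgroup, so that the quotient has the same almost $\bk$-rank as $G$ and, in the $T$-convex case, is a $\bk$-group. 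By Corollary \ref{C: semisimple in quotients} the quotient remains definably semisimple, so after renaming I may assume $G$ itself is of this form; let $n$ be its $\bk$-rank, and plan to gather the various finite normal subgroups produced along the way into a single $N$ at the end.

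The core step is to produce a definable normal subgroup $G_2\trianglelefteq G$, defined over the base model, that is (i) definably semisimple, (ii) definably isomorphic to a subgroup of $\gl_n(\bk)$ for some $n$, and (iii) of $\bk$-rank equal to the $\bk$-rank $n$ of $G$. In the $T$-convex case I would exploit Section \ref{ss: valued field and residue field}: Lemma \ref{L:existence of local group} gives a differentiable local normal subgroup of $G$ with respect to $\bk$ with $\nu_\bk\vdash X$, hence a definable adjoint homomorphism $\ad_\bk\colon G\to\gl_n(\bk)$ whose kernel is defined over the base model (Remark \ref{R: H1H2 over the same parameters}). Using Pillay's theory of groups definable in o-minimal structures \cite{Pi5} and the description of definably semisimple such groups as almost direct products of definably simple \emph{linear} ones (as in \cite{PePiSt}), the semisimple part of $\ad_\bk(G)\leq\gl_n(\bk)$ should, after one more finite quotient, lift to the desired $G_2$; property (iii) here amounts to $\ad_\bk$ being injective on $\nu_\bk$ modulo a finite kernel, which I would deduce from definable semisimplicity of $G$. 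In the $V$-minimal case $\bk$ is a pure algebraically closed field, so groups definable in $\bk$ are algebraic; the $\bk$-analogue of the structure results of \cite{HaHaPeGps} supplies a definable subgroup $H\leq G$ of $\bk$-rank $n$ definably isomorphic to an algebraic group over $\bk$, and I would then argue, using definable semisimplicity of $G$ and the fact that a group built from finitely many $G$-conjugates of $H$ has finite Morley rank over $\bk$, that inside the normal closure of $H$ sits a definable normal $G_2\trianglelefteq G$ which is algebraic semisimple over $\bk$, hence linear (semisimple algebraic groups are linear) with $\aut(G_2)/\inn(G_2)$ finite, of $\bk$-rank $n$, and definably semisimple (its definable subgroups are algebraic, and semisimple algebraic groups have no infinite normal abelian subgroup).

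Granting $G_2$, I would set $G_1:=C_G(G_2)$, a definable normal subgroup of $G$ over the base model, centralizing $G_2$ by construction, with $G_1\cap G_2=Z(G_2)$ finite as $G_2$ is semisimple; since $Z(G_2)$ is characteristic in $G_2\trianglelefteq G$, modding out by it yields $G_1\cap G_2=\{e\}$. The conjugation action embeds $G/C_G(G_2)$ in $\aut(G_2)$ with image containing $\inn(G_2)\cong G_2$, and since $\aut(G_2)/\inn(G_2)$ is finite for semisimple $G_2$, this forces $G_1 G_2$ to have finite index in $G$. As $G$ is definably semisimple and $G_1\trianglelefteq G$, $G_1$ is $G$-semisimple. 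Finally, $G_1$ has almost $\bk$-rank $0$: otherwise, by the reduction already used on $G_1$ we obtain a definable subgroup $H_1\leq G_1$ of positive $\bk$-rank $r$ strongly internal to $\bk$; since $H_1\leq C_G(G_2)$ and $H_1\cap G_2=\{e\}$, the product set $X\cdot Y$ of a $\bk$-critical $X\subseteq G_2$ with a $\bk$-critical $Y\subseteq H_1$ is in definable bijection with $X\times Y$, hence strongly internal to $\bk$ with $\dpr(X\cdot Y)=\dpr(X)+\dpr(Y)=n+r>n$ (using that $\dpr=\dim$ on subsets of powers of $\bk$ and additivity of $\dim$), contradicting that the $\bk$-rank of $G$ is $n$. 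Gathering all the finite normal subgroups used into one $N\trianglelefteq G$ over the base model finishes the plan.

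The step I expect to be the main obstacle is the construction of $G_2$: transferring the merely \emph{local} $\bk$-structure supplied by the adjoint representation (or the non-normal $\bk$-internal subgroup in the $V$-minimal case) into a genuine normal, definably semisimple, $\bk$-linear subgroup of full $\bk$-rank. This is exactly where one must invoke Pillay's o-minimal group theory or the theory of groups of finite Morley rank, and where the bookkeeping of the successive finite quotients and of the field of definition is most delicate; verifying that $G_1 G_2$ has finite index also rests on the finiteness of $\aut(G_2)/\inn(G_2)$ for semisimple $G_2$, which I would cite or prove separately in each of the two settings.
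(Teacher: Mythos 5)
Your overall shape is right—split into the $T$-convex (o-minimal $\bk$) and $V$-minimal (algebraically closed $\bk$) cases, reduce by a finite quotient to a $\bk$-group, then produce a $\bk$-linear normal semisimple $G_2$ and take its centralizer for $G_1$—and several of your auxiliary steps match the paper. In the $V$-minimal case your argument is close to the paper's: there one does take $G_2$ to be a normal $\bk$-algebraic subgroup (already supplied normal by \cite[Proposition 6.2]{HaHaPeGps}, so your normal-closure detour is unnecessary), and $G_1:=C_G(G_2)$; the paper then invokes Humphreys ($\aut(H)=\inn(H)\rtimes\text{finite}$ for a centerless semisimple algebraic group) to get the finite-index statement, exactly as you sketch. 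Your product-set argument for the almost $\bk$-rank of $G_1$ being $0$ is also essentially the paper's argument in that case.

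The genuine gap is in the $T$-convex case, and it is precisely where you flag "the main obstacle." You propose to take the semisimple part $S\leq\ad_\bk(G)\leq\gl_n(\bk)$ and "lift" it to $G_2\trianglelefteq G$. But $\ad_\bk$ is typically far from injective: its kernel is $C_G(\nu_\bk)$, a definable normal subgroup that can be infinite (it will end up being $G_1$, the $\bk$-rank-$0$ part, in the final decomposition). Any preimage $\ad_\bk^{-1}(S)$ contains all of $\ker(\ad_\bk)$, so it is neither $\bk$-linear nor can it serve as $G_2$; and there is no reason a definable section of $\ad_\bk$ over $S$ should exist as a subgroup. The paper resolves this differently: it does not lift anything, but instead sets $H_1:=\ker(\ad_\bk)$ and $H_2:=C_G(H_1)$, and the whole proof turns on Lemma \ref{L:kerad in o-minimal}, namely $\ker(\ad_\bk)=C_G(\nu_\bk)$ (from generic differentiability and the analogue of \cite[Lemma 3.2(ii)]{OtPePi}) and, crucially, $\nu_\bk\vdash C_G(\ker\ad_\bk)$, which uses that $\nu_\bk$ has a definably connected $\bk$-internal neighborhood together with Lemma \ref{L:def connected}. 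With this one gets $\ker(\ad_\bk\restriction H_2)=H_1\cap H_2=Z(\ker\ad_\bk)$, finite by semisimplicity, so $H_2$ maps almost injectively into $\gl_n(\bk)$ and is the sought $\bk$-linear part; the finite-index statement is then proved via a Lie algebra dimension count (using \cite[Claim 2.8, Theorem 2.34]{PePiSt-defsimple}), rather than the $\aut/\inn$-finiteness you would need to establish separately. Absent the switch to $H_2=C_G(\ker\ad_\bk)$ and the accompanying connectedness lemma, the $T$-convex branch of your proposal does not go through.

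A smaller point: in the $T$-convex case you set $G_1:=C_G(G_2)$, whereas the paper takes $H_1:=\ker(\ad_\bk)$ and $H_2:=C_G(H_1)$; these are a priori different (the paper never claims $H_1=C_G(H_2)$). With the paper's choice, the facts you want about $G_1$—normality, almost $\bk$-rank $0$, $G$-semisimplicity—follow cleanly from Lemma \ref{L:kerad in o-minimal} and Corollary \ref{C: semisimple in quotients}, which is why that choice is the one that works.
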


Recall that a group $G$ is  \emph{a definably connected} if it has no definable subgroups of finite index. Note that for $G$ an arbitrary definable group, if there exists a definably connected subgroup of finite index, then it is necessarily unique and denoted by $G^0$.
Clearly, if $G^0$ exists then it is {\em definably characteristic in $G$}, namely invariant under all definable automorphisms of $G$.

\begin{fact}\cite[Fact 2.11]{PePiSt}\label{F:finite center, centerless}
    Let $G$ be a definably connected group definable in some structure $\CM$.
    \begin{enumerate}
        \item If $N$ is a finite normal subgroup, then $N\subseteq Z(G)$.
        \item If $Z(G)$ is finite, then $G/Z(G)$ is centerless.
    \end{enumerate}
\end{fact}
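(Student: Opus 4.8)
The plan is to prove both clauses by the standard centralizer-index argument, using the fact that a definable subgroup of finite index in a definably connected group must be the whole group.

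For (1), fix $n\in N$. Since $N$ is normal and finite, the conjugacy class $\{gng^{-1}:g\in G\}$ is a finite subset of $N$, and the definable map $g\mapsto gng^{-1}$ from $G$ onto this class has fibres which are exactly the left cosets of the centralizer $C_G(n)=\{g\in G: gn=ng\}$. Hence $C_G(n)$ is a definable subgroup of $G$ of index at most $|N|$, so by definable connectedness $C_G(n)=G$, i.e. $n\in Z(G)$. As $n\in N$ was arbitrary, $N\subseteq Z(G)$.

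For (2), write $\pi:G\to G/Z(G)$ for the quotient and suppose $\bar g=\pi(g)$ lies in $Z(G/Z(G))$; we must show $g\in Z(G)$. The hypothesis says $[g,h]\in Z(G)$ for every $h\in G$, so we may consider the definable map $\phi_g:G\to Z(G)$ given by $h\mapsto [g,h]$. Using the commutator identity $[g,h_1h_2]=[g,h_2]\cdot[g,h_1]^{h_2}$ together with the fact that $[g,h_1]\in Z(G)$ (so it is fixed under conjugation and commutes with $[g,h_2]$), one obtains $\phi_g(h_1h_2)=\phi_g(h_1)\phi_g(h_2)$; thus $\phi_g$ is a group homomorphism into the finite group $Z(G)$. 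Therefore $C_G(g)=\ker\phi_g$ is a definable subgroup of finite index in $G$, hence equals $G$ by definable connectedness, and so $g\in Z(G)$, i.e. $\bar g=e$. This shows $Z(G/Z(G))=\{e\}$.

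There is essentially no obstacle here: the only points requiring a moment's care are the definability of the centralizers (immediate, since they are cut out by the equation $gh=hg$ with $n$ or $g$ treated as a parameter) and the verification in (2) that $\phi_g$ is a homomorphism, which is a one-line commutator computation. Everything else is just the ``finite-index definable subgroup equals the group'' consequence of definable connectedness, and no tameness assumption on $\CM$ is needed.
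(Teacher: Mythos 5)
The paper does not prove this statement; it cites it as Fact 2.11 of \cite{PePiSt}, so there is no internal proof to compare against. Your argument is correct and is the standard one: for (1) the conjugation map $g\mapsto gng^{-1}$ does indeed have fibers equal to cosets of $C_G(n)$, giving finite index and hence $C_G(n)=G$ by definable connectedness; for (2) the commutator identity $[g,h_1h_2]=[g,h_2]\cdot[g,h_1]^{h_2}$ (with $[a,b]=a^{-1}b^{-1}ab$), together with $[g,h_1]\in Z(G)$, does give that $\phi_g$ is a homomorphism into the finite group $Z(G)$, so $C_G(g)=\ker\phi_g$ has finite index and equals $G$. (As a small remark, the homomorphism computation in (2) is not strictly needed: a direct check shows the fibers of $h\mapsto[g,h]$ are already cosets of $C_G(g)$, so finiteness of the image alone gives finite index. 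But your route is equally valid and cleaner to state.)
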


The proof of Proposition \ref{P: k} splits into two cases.

\subsection{$\bk$ is o-minimal}

In this subsection, we assume that $\CK$ is power bounded $T$-convex, thus $\bk$ is an  o-minimal expansion of a real closed field \cite[Theorem A]{vdDries-Tconvex}.
We first need a lemma allowing us, under suitable assumptions, to transfer definable semisimplicity under definable group homomorphisms: 

\begin{lemma}\label{L:connected subgroup of defsemisimple}
  Assume that  $G$ is a definable group in $\CK$,     $B\sub \bk^n$ is a definable group, and   $f:G\to B$ a definable surjective homomorphism.
    Let $H\trianglelefteq G$ be a normal definable subgroup with $\ker(f\restriction H)$ finite.
    Then:
    \begin{enumerate}
        \item $H^0$ exists.
        \item If $H$ is $G$-definably semisimple, then  $H^0$ and $f(H^0)$ are definably semisimple.
    \end{enumerate}
\end{lemma}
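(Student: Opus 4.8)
The plan is to use the fact that $B \subseteq \bk^n$ is a group definable in the o-minimal structure $\bk$, so it inherits the dimension theory and the Pillay--Peterzil--Steinhorn structure theory of o-minimal groups. The first step is to produce $H^0$. Since $f\restriction H: H \to f(H)$ has finite kernel and $f(H)$ is a subgroup of $B$, the group $f(H)$ is definable in the o-minimal field $\bk$, hence has a definably connected finite-index subgroup $f(H)^0$. Pulling this back via $f\restriction H$ and intersecting with $H$ gives a finite-index definable subgroup $H_1 \le H$ with $f(H_1) = f(H)^0$. I would then argue that $H_1$ (or a finite-index subgroup of it) is definably connected: any definable finite-index subgroup $H_2 \le H_1$ maps onto a finite-index subgroup of $f(H_1) = f(H)^0$, which must be all of $f(H)^0$ since the latter is definably connected; since $\ker(f\restriction H)$ is finite, this forces $[H_1 : H_2]$ to be bounded, and applying Fact \ref{F: Baldwin saxl} (Baldwin--Saxl) to the family of all definable finite-index subgroups of $H_1$ yields a smallest one, which is then definably connected and equals $H^0$. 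This also shows $H^0$ is defined over the parameters defining $G$ (and $f$, $H$), by the uniqueness of $H^0$ and invariance under the relevant automorphisms.

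For (2), assume $H$ is $G$-semisimple, i.e. has no infinite definable abelian subgroup normal in $G$. Since $H^0$ is definably characteristic in $H$ and $H \trianglelefteq G$, the subgroup $H^0$ is normal in $G$; moreover any infinite definable abelian subgroup of $H^0$ normal in $G$ would be one in $H$, so $H^0$ is also $G$-semisimple. Now consider $f(H^0)$, a definably connected group definable in the o-minimal field $\bk$. I want to show it is definably semisimple (no infinite definable \emph{normal} abelian subgroup, with normality inside $f(H^0)$ itself). Suppose $A \trianglelefteq f(H^0)$ is an infinite definable abelian normal subgroup; replacing $A$ by its connected component (which is characteristic) we may take $A$ definably connected, and by o-minimal group theory we may even take $A$ to contain the infinite definable connected abelian subgroup $Z(f(H^0))^0$ or to lie inside the solvable radical. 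The key point is to pull $A$ back: let $\tilde A = (f\restriction H^0)^{-1}(A) \cap H^0$. Then $\tilde A \trianglelefteq H^0 \trianglelefteq G$, $\tilde A$ is definable, infinite (since $f\restriction H^0$ has finite kernel and $A$ is infinite), and $\tilde A / (\tilde A \cap \ker f)$ is abelian with $\tilde A \cap \ker f$ finite. By Lemma \ref{L:groups 1}(1), applied with the finite normal subgroup $\tilde A \cap \ker f$, the group $\tilde A$ has a finite-index definable characteristic subgroup with abelian-by-finite structure; more precisely, the hypotheses of Corollary \ref{C: semisimple in quotients} are exactly set up for this: since $\tilde A/(\tilde A\cap\ker f)$ is abelian and $\tilde A\cap\ker f$ is finite and normal, $\tilde A$ contains an infinite definable abelian subgroup normal in $\tilde A$, and by taking the relevant centralizer/center as in the proof of that corollary (invariant under conjugation by $G$ since $\tilde A \trianglelefteq G$) we obtain an infinite definable abelian subgroup of $H^0$ normal in $G$ --- contradicting $G$-semisimplicity of $H^0$. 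Hence $f(H^0)$ is definably semisimple. Finally, to see $H^0$ itself is definably semisimple: an infinite definable abelian normal subgroup $C \trianglelefteq H^0$ would, via $f$, either map to an infinite abelian subgroup of $f(H^0)$ — but one must check it can be arranged normal in $f(H^0)$, which holds because $f$ is surjective onto $f(H^0)$ so images of $H^0$-normal subgroups are $f(H^0)$-normal — contradicting the previous paragraph, or map to a finite group, in which case $C \cap \ker(f\restriction H^0)$ is finite and $C$ is abelian, so $C$ is an infinite definable abelian subgroup; but $C$ is normal only in $H^0$, not a priori in $G$, so here I instead use that $C$ normal in $H^0$ which is normal in $G$ — the normal closure of $C$ in $G$ is generated by finitely many conjugates (by Baldwin--Saxl / NIP chain conditions on centralizers) and remains solvable-by-bounded, from which one extracts a $G$-normal infinite definable abelian subgroup of $H^0$, again a contradiction.

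\textbf{Main obstacle.} The delicate point is the passage between ``normal in $H^0$'' and ``normal in $G$'': $G$-semisimplicity of $H$ is a statement about $G$-normal subgroups, whereas definable semisimplicity of $H^0$ and of $f(H^0)$ concerns subgroups normal in $H^0$ (resp. $f(H^0)$). The bridge is that if $C \trianglelefteq H^0$ is definable and infinite abelian, its $G$-conjugates $C^g$ are again such, they all lie in $H^0$, and by Baldwin--Saxl only finitely many of them have distinct ``germs'', so the subgroup they generate — or rather an appropriate characteristic subgroup of it, such as the center of a finite-index subgroup as in the proof of Corollary \ref{C: semisimple in quotients} — is definable, infinite, abelian, and $G$-normal, contradicting $G$-semisimplicity of $H$. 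Making this reduction clean, while keeping track of finiteness of the relevant kernels so that Lemma \ref{L:groups 1} and Corollary \ref{C: semisimple in quotients} apply, is where the real work lies; the o-minimality of $\bk$ is used only lightly, to guarantee existence of connected components and that subgroups of $B$ are definable with a good dimension theory.
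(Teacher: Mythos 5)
Part (1) of your proof is correct and is essentially the paper's argument, dressed up slightly with Baldwin--Saxl where the paper simply observes that an infinite descending chain of finite-index subgroups of $H$ would project to one in $f(H)^0$. No issues there.

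Part (2) has a genuine gap at the crucial point. You set $\tilde A = (f\restriction H^0)^{-1}(A)\cap H^0$ and then assert ``$\tilde A\trianglelefteq G$'' (and later ``invariant under conjugation by $G$ since $\tilde A\trianglelefteq G$''). But normality is not transitive: from $\tilde A\trianglelefteq H^0$ and $H^0\trianglelefteq G$ you cannot conclude $\tilde A\trianglelefteq G$. An arbitrary normal abelian subgroup $A\trianglelefteq f(H^0)$ pulls back only to something normal in $H^0$, and the center/centralizer construction you invoke (as in Corollary \ref{C: semisimple in quotients}) then produces an infinite definable abelian subgroup normal merely in $H^0$, which does not contradict $G$-semisimplicity of $H$. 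You acknowledge exactly this issue in the ``Main obstacle'' paragraph, but the proposed remedy --- forming the normal closure of $C$ in $G$ and using Baldwin--Saxl --- does not work as stated: the subgroup generated by the conjugates $\{C^g\}$ is normal in $G$ but has no reason to be abelian or abelian-by-finite, so the mechanism of Lemma \ref{L:groups 1}/Corollary \ref{C: semisimple in quotients} (which starts from an abelian-by-finite group) does not apply to it. The paper's resolution is different: rather than pulling back $A$ itself, one first replaces $A$ by an infinite definably connected abelian subgroup $R_0$ that is \emph{definably characteristic in} $f(H^0)$ (extracted from the definable solvable radical of $f(H^0)$ via the cited result of Baro). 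Because $R_0$ is characteristic in $f(H^0)$ and $H^0\trianglelefteq G$, the preimage $f^{-1}(R_0)\cap H^0$ \emph{is} invariant under conjugation by $G$, and hence so are its connected component $A_1$ and $Z(A_1)$. The remaining step --- that $Z(A_1)$ is infinite --- is proved via Fact \ref{F:finite center, centerless}: if $Z(A_1)$ were finite then $A_1/Z(A_1)$ would be centerless, yet also (since $\ker(f\restriction A_1)\subseteq Z(A_1)$) a quotient of $R_0$ and hence abelian, forcing $A_1$ to be finite, a contradiction. Your appeal to ``o-minimal group theory'' to get $A$ inside the solvable radical is pointing in the right direction, but the operative fact is not that $A$ sits inside $R$ --- it is that one can find a \emph{characteristic} infinite connected abelian piece of $R$, and then track this characteristicity back along $f$ to $G$-invariance.
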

\begin{proof}

    (1) $f(H)$ is a definable group in the o-minimal structure $\bk$, so $f(H)^0$ exists. Since $\ker(f\restriction H)$ is finite, $H^0$ exists as well. Indeed, if not then there exists an infinite descending chain of finite index subgroups in $H$, which would give rise to  a proper finite index subgroup of  $f(H)^0$, contradiction.

    (2) Assume that $H$ is $G$-definably semisimple.  Let $N=f(H^0)$; it is a definably connected component. If $N$ is definably semisimple then so is $H^0$, so it suffices to show that $N$ is definably semisimple. Assume towards a contradiction that $N$ contains an infinite definable abelian normal subgroup $A$. 

    Recall that the {\em definable solvable radical of $N$} is
the subgroup of $N$ generated by all definably connected solvable normal subgroups of $G$. It is itself definable because of dimension considerations, and  clearly definably characteristic in $N$.  Let $R$ be the definable solvable radical of $N$. The group $A^0$ is contained in $R$ so $R$ is infinite. By \cite[Corollary 5.6]{baro-solvconn}, $R$ contains an infinite abelian definable definably connected subgroup $R_0$ that is definably characteristic in $N$.

    Let $A_1$ be the connected component of $f^{-1}(R_0)\cap H^0$. 
    Since $R_0$ is a definably connected group, $f(A_1)=R_0$. We claim that $Z(A_1)$ is infinite. Indeed, if it were finite then, by Fact \ref{F:finite center, centerless}, the group $A_1/Z(A_1)$
   is centerless. However, because  $\ker(f\restriction A_1)$ is finite, it follows from  the same fact that $\ker(f\restriction A_1)\sub Z(A_1)$. Thus, $A_1/Z(A_1)$ can also be written as a quotient of $f(A_1)=R_0$, so must be abelian, a contradiction.

     Since $R_0$ is a characteristic subgroup of $N=f(H^0)$ and $H^0$ is normal in $G$, the group $f^{-1}(R_0)\cap H^0$ is invariant under conjugation by elements of $G$; thus so are $A_1$ and $Z(A_1)$.  Thus, $Z(A_1)$ is an infinite abelian definable subgroup of $H$ and normal in $G$, contradicting the definable $G$-semisimplicity of $H$.
\end{proof}

Assume that $G$ is locally strongly internal to $\bk$. 
Let $\ad_{\bk}:G\to \gl_n(\bk)$ be the adjoint map, as discussed at the end of Section  \ref{S:infnit and local}. 

\begin{lemma}\label{L:kerad in o-minimal}
Let $G$ be locally strongly internal to $\bk$. Then,
\begin{enumerate}
    \item $\ker(\ad_{\bk})=C_G(\nu_\bk)$
    \item $\nu_\bk\vdash C_G(\ker(\ad_{\bk}))$
\end{enumerate}
\end{lemma}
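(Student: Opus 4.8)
The plan is to read off both clauses from the differentiable local group attached to $\nu_\bk$. Fix, as provided by Lemma~\ref{L:existence of local group}, a definable differentiable local normal subgroup $\CG=(X,\cdot,{}^{-1},e)$ of $G$ with respect to the field $\bk$, witnessed by a definable injection $\varphi\colon X\to\bk^n$ with $\varphi(X)$ open, $\varphi(e)=0$ and $\nu_\bk\vdash X$; by Fact~\ref{F: nu in K} (together with the normalization carried out in the proof of Lemma~\ref{L:existence of local group}) the filter base defining $\nu_\bk$ may be taken to be $\{\varphi^{-1}(U):U\subseteq\bk^n\text{ open},\,0\in U\}$. Since $\CG$ is normal in $G$, for each $g\in G$ conjugation $\tau_g\colon x\mapsto x^g$ restricts to a local automorphism of $\CG$, and $\ad_\bk(g)$ is by definition the differential $D_0(\varphi\circ\tau_g\circ\varphi^{-1})$ (Definition~\ref{N:Ad}). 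Finally, passing (as in the proof of Lemma~\ref{L:existence of local group}) to a $|\CK|^+$-saturated extension $\widehat\CK\succ\CK$, the type-definable group $\mathcal N:=\nu_\bk(\widehat\CK)=\bigcap_{U}\varphi^{-1}(U)(\widehat\CK)$ is, via $\varphi$, a differentiable local Lie group over the o-minimal real closed field $\bk(\widehat\CK)$ by \cite[Theorem~7.7(1)]{HaHaPeGps}, it is normalized by every element of $G(\widehat\CK)$ (Fact~\ref{F: properties of nu}(3)), and conjugation by $g\in G(\widehat\CK)$ acts on it as a local automorphism whose differential at $e$ is $\ad_\bk(g)$.

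For (1), the inclusion $C_G(\nu_\bk)\subseteq\ker(\ad_\bk)$ is immediate: if $g$ centralizes a definable $Y$ with $\nu_\bk\vdash Y$, then it centralizes some filter-base member $\varphi^{-1}(U)\subseteq Y$, so $\varphi\circ\tau_g\circ\varphi^{-1}$ is the identity on the open set $U\ni 0$, whence $\ad_\bk(g)=\id$. For the converse, take $g\in\ker(\ad_\bk)$. Then $\tau_g$ is a local automorphism of the differentiable local Lie group $\mathcal N$ with trivial differential at $e$; by the uniqueness of local homomorphisms of Lie groups in terms of their differential, available in the o-minimal differentiable setting by the Lie theory of \cite{AcHa}, $\tau_g$ is the identity on a neighborhood of $e$, and hence $g$ centralizes $\mathcal N$. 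Since $g$ is fixed, the partial type $\nu_\bk(x)\cup\{[g,x]\neq e\}$ is unrealized in $\widehat\CK$, hence (by saturation of $\widehat\CK$) inconsistent, so a finite intersection $\varphi^{-1}(U_1)\cap\dots\cap\varphi^{-1}(U_k)$ is contained in $C_G(g)$; by the filter-base property some single $\varphi^{-1}(U_0)$ lies in $C_G(g)$, i.e.\ $g\in C_G(\nu_\bk)$.

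For (2), put $L:=\ker(\ad_\bk)$, a definable normal subgroup of $G$ (see the remarks after Definition~\ref{N:Ad}); by (1), $L=C_G(\nu_\bk)$, and $C_G(L)$ is a definable subgroup. Running the rigidity argument of the previous paragraph over $\widehat\CK$: for \emph{every} $h\in L(\widehat\CK)$ conjugation by $h$ has trivial differential at $e$, hence fixes a $\tau_\bk$-neighborhood of $e$ pointwise, and in particular centralizes the infinitesimal group $\mathcal N$. Therefore $\mathcal N=\nu_\bk(\widehat\CK)\subseteq\bigcap_{h\in L(\widehat\CK)}C_G(h)(\widehat\CK)=C_G(L)(\widehat\CK)$, so the partial type $\nu_\bk(x)\cup\{x\notin C_G(L)\}$ is inconsistent; by compactness and the filter-base property some $\varphi^{-1}(U_0)$ is contained in $C_G(L)$, i.e.\ $\nu_\bk\vdash C_G(L)$. (Equivalently, by Lemma~\ref{L:groups are closed, open iff full D-rank}, $C_G(L)$ is $\tau_\bk$-open and has full $\bk$-rank.)

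The one non-formal ingredient, and the crux of the argument, is the rigidity statement invoked in the converse of (1) and again in (2): that a differentiable local automorphism of $\mathcal N$ whose differential at $e$ is the identity must coincide with the identity near $e$. Over $\mathbb R$ this is the classical identity $\psi\circ\exp=\exp\circ D_e\psi$; here one needs its analogue for groups definable in an o-minimal real closed field, which is exactly where the o-minimality of $\bk$ (i.e.\ the hypothesis that $\CK$ is power bounded $T$-convex) enters, and for which we appeal to \cite{AcHa}. Everything else is bookkeeping with the filter base of $\nu_\bk$ and routine compactness.
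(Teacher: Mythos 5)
Your proof of clause (1) is essentially the paper's: the easy inclusion is the same, and the rigidity statement you invoke (a local automorphism of the differentiable local group whose differential at $e$ is the identity must be the identity near $e$) is exactly what the paper uses — except the paper cites it as \cite[Lemma 3.2(ii)]{OtPePi}, the o-minimal statement, not \cite{AcHa}. The result \cite[Theorem~2]{AcHa} quoted in this paper as Fact~\ref{F:AcHa} is for strictly differentiable local groups over a $1$-h-minimal \emph{valued} field, i.e.\ over the sort $K$; it is not the o-minimal rigidity statement you need over $\bk$. That is a minor mis-citation, not a mathematical error.

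Clause (2), however, has a genuine gap. You deduce $\mathcal N=\nu_\bk(\widehat\CK)\subseteq C_G(L)(\widehat\CK)$ from ``for every $h\in L(\widehat\CK)$, $\tau_h$ fixes a $\tau_\bk$-neighborhood of $e$ pointwise, and in particular centralizes $\mathcal N$.'' This last deduction does not follow. The neighborhood $V_h$ on which $\tau_h$ is the identity is $\widehat\CK$-definable and depends on $h$; it can be strictly smaller than every $\CK$-definable open neighborhood of $e$. Meanwhile $\mathcal N$ consists of the realizations in $\widehat\CK$ of a partial type \emph{over $\CK$}, so it is contained in the $\CK$-definable members of the filter base but not, a priori, in $V_h$. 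In other words, $C_G(h)$ is an open subgroup of $G(\widehat\CK)$, but openness alone does not give you $\nu_\bk\vdash C_G(h)$ when $h\notin\CK$; Lemma~\ref{L:groups are closed, open iff full D-rank} would apply only to the infinitesimal type computed with parameters from $\widehat\CK$, which is a finer type than $\nu_\bk$, so your final inconsistency/compactness step addresses the wrong type.

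What is missing is a way to pass from ``$C_G(h)$ is open for each $h\in L$'' to ``$\nu_\bk\vdash C_G(L)$.'' The paper closes this gap using o-minimality again, but via a different mechanism than you use in (1): it chooses the witnessing set $X$ with $\nu_\bk\vdash X$ to be \emph{definably connected} (possible by o-minimal cell decomposition, since $X$ is identified with a subset of $\bk^n$), and then invokes Lemma~\ref{L:def connected}: since $C_G(h)$ is clopen and meets the connected $X$ at $e$, one gets $X\subseteq C_G(h)$ for all $h\in L$ at once, hence $X\subseteq C_G(L)$ and $\nu_\bk\vdash C_G(L)$. Connectedness is exactly what makes the conclusion uniform in $h$ (even for $h$ in an elementary extension), which your argument lacks. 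An alternative patch, avoiding connectedness, would be to note that $\{C_G(h):h\in L\}$ is a uniformly definable family of subgroups, so by Baldwin--Saxl (NIP) the intersection $C_G(L)$ equals a \emph{finite} subintersection $C_G(h_1)\cap\dots\cap C_G(h_k)$ with $h_i\in L(\CK)$; then by clause (1) each $\nu_\bk\vdash C_G(h_i)$, and the filter-base property of $\nu_\bk$ gives $\nu_\bk\vdash C_G(L)$. Either repair works; as written, your argument does not.
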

\begin{proof}
    Let $\nu=\nu_\bk$.
    
    (1) Let $g\in \ker(\ad_{\bk})$. By \cite[Lemma 3.2(ii)]{OtPePi}, for any group $H$ definable in $\bk$,  two definable automorphisms $H$ with the same differential at $e_H$ coincide on an open neighborhood of $e_H$ in $H$. While the proof is stated for groups, the analysis holds for local groups as well. Hence, if $g\in \ker(\ad_\bk)$ then $\tau_g(x)=x$ on some $\tau_{\bk}$-open neighborhood of $e$, so by definition $g \in C_G(\nu)$. The reverse inclusion is immediate from the  definitions.

    (2) Since $\nu$ is the intersection of definable sets strongly internal to $\bk$, we may choose $\nu\vdash X\sub G$  that we can identify with a definable subset of $\bk^n$.  By cell decomposition in o-minimal structures, we may further assume that $X$ is definably connected.   By Lemma \ref{L:def connected}, $X\subseteq C_G(C_G(\nu))=C_G(\ker(\ad_\bk))$, thus $\nu\vdash C_G(\ker (\ad_{\bk}))$.
\end{proof}

\begin{proposition}\label{P:general H1,H2 into o-minimal}
Let  $G$  be 
a definably semisimple group in $\CK$, locally strongly internal to $\bk$. 
Let $H_1=\ker(\ad_\bk)$ and $H_2=C_G(H_1)$. Then
\begin{enumerate}
    \item $H_1$ and $H_2$ are normal subgroups, $H_2^0$ is definably semisimple, $H_1\cap H_2$ is finite and $H_1$ and $H_2$ centralize each other. 
    \item $H_1\cdot H_2$ has finite index in $G$.
    \item If the $\bk$-rank of $G$ equals the almost $\bk$-rank then $\dpr(H_2)$ equals the $\bk$-rank of $G$.
\end{enumerate}
\end{proposition}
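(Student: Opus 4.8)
The statement to prove is Proposition~\ref{P:general H1,H2 into o-minimal}: for $G$ definably semisimple and locally strongly internal to $\bk$ (with $\bk$ o-minimal), setting $H_1 = \ker(\ad_\bk)$ and $H_2 = C_G(H_1)$, one has (1) normality, finiteness of $H_1\cap H_2$, mutual centralization and definable semisimplicity of $H_2^0$; (2) $H_1\cdot H_2$ has finite index in $G$; (3) the $\bk$-rank transfer. The overall strategy is to exploit the adjoint representation $\ad_\bk\colon G\to\gl_n(\bk)$ together with Lemma~\ref{L:kerad in o-minimal}, which identifies $\ker(\ad_\bk)$ with $C_G(\nu_\bk)$ and shows $\nu_\bk\vdash C_G(\ker(\ad_\bk)) = C_G(H_1) = H_2$.

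First I would establish normality. The kernel $H_1 = \ker(\ad_\bk)$ is normal in $G$ because $\ad_\bk$ is a homomorphism and, as noted after Definition~\ref{N:Ad}, $\ker(\ad_\bk^{\mathcal G}_\bk)$ does not depend on the choice of local subgroup $\mathcal G$ — in particular it is invariant under conjugation (conjugation by $g\in G$ sends one choice of differentiable normal local subgroup to another). Consequently $H_2 = C_G(H_1)$ is also normal in $G$. By construction $H_1$ and $H_2$ centralize each other, so $H_1\cap H_2$ is central in both; I claim it is finite. Since $\nu_\bk\vdash H_2$ (Lemma~\ref{L:kerad in o-minimal}(2)), the subgroup $H_2$ has full $\bk$-rank in $G$, so $\nu_\bk(H_2) = \nu_\bk(G)$ by Lemma~\ref{L:nu of subgroup}; now $H_1\cap H_2$ is contained in $C_G(\nu_\bk)\cap H_2 = H_1\cap H_2$ (tautologically), but more usefully: $H_1\cap H_2 = C_{H_2}(H_1) \supseteq Z(H_2)$ is centralized by $H_1$, and $H_1 = C_G(\nu_\bk)$ contains $\nu_\bk\vdash H_2$, so every element of $H_1\cap H_2$ is centralized by a generic vicinity inside a $\bk$-set of $H_2$; since $G$ is definably semisimple and $H_1\cap H_2\trianglelefteq G$ is abelian, it must be finite. (If one worries about whether $H_1\cap H_2$ is abelian: it is central in $H_2\supseteq\nu_\bk$ hence certainly abelian.) This gives (1) except for definable semisimplicity of $H_2^0$, which follows from Lemma~\ref{L:connected subgroup of defsemisimple}(2): take the surjective homomorphism $f = \ad_\bk\restriction H_2\colon H_2\to \ad_\bk(H_2)\sub\gl_n(\bk)$; its kernel on $H_2$ is $H_1\cap H_2$, which is finite, and $H_2$ is $G$-semisimple (being a normal subgroup of the definably semisimple $G$), so $H_2^0$ is definably semisimple.

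For (2), I would argue that $\ad_\bk(H_1\cdot H_2) = \ad_\bk(H_2)$ has finite index in $\ad_\bk(G)$, and combine with the kernel being $H_1 \subseteq H_1\cdot H_2$. Concretely: $\ad_\bk(H_2)$ is a definable subgroup of the o-minimal group $\ad_\bk(G)$ and it has full $\bk$-rank — indeed $\nu_\bk\vdash H_2$ so $\ad_\bk(H_2)$ contains the differential image of $\nu_\bk$, which is an open neighborhood of the identity in $\ad_\bk(G)$ by construction of the local group and the fact that $\ad_\bk$ is essentially the identity differential on $\nu_\bk$; equivalently, $\dim\ad_\bk(H_2) = \dim\ad_\bk(G)$. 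An open definable subgroup of a definable group in an o-minimal structure has finite index (standard, via Corollary~\ref{C: open iff full dim} type reasoning transported to $\bk$, or directly: its cosets are open and disjoint, there are finitely many by cell decomposition). Since $\ad_\bk^{-1}(\ad_\bk(H_2)) = H_1\cdot H_2$, we conclude $[G : H_1\cdot H_2] = [\ad_\bk(G) : \ad_\bk(H_2)] < \infty$.

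For (3), assume the $\bk$-rank of $G$ equals its almost $\bk$-rank. We want $\dpr(H_2)$ to equal the $\bk$-rank of $G$. The inequality $\dpr(H_2)\ge \bk\text{-rank}(G)$ holds because $\nu_\bk\vdash H_2$ and $\dpr(\nu_\bk)$ equals the $\bk$-rank of $G$ (discussion after the definition of $\nu_Z(d)$). For the reverse, since $\ad_\bk\restriction H_2$ has finite kernel $H_1\cap H_2$, we have $\dpr(H_2) = \dpr(\ad_\bk(H_2))$; and $\ad_\bk(H_2)\sub\bk^{n^2}$, so $\dpr(\ad_\bk(H_2)) = \dim_{\acl}$ in $\bk$, which is at most $n$; but $n$ is defined to be the $\bk$-rank of $G$ in the construction of $\ad_\bk$ (see Lemma~\ref{L:existence of local group} and the surrounding text: the local group $\mathcal G$ comes with $\varphi\colon X\to\bk^n$ for $n$ the $\bk$-rank of $G$). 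Hence $\dpr(H_2)\le n = \bk\text{-rank}(G)$, giving equality. The main obstacle I anticipate is the finiteness of $H_1\cap H_2$ and the precise bookkeeping that $\ad_\bk(H_2)$ is \emph{open} in $\ad_\bk(G)$ — both require carefully linking the type-definable infinitesimal $\nu_\bk$ to honest open sets in the o-minimal $\bk$ via the local group structure of Lemma~\ref{L:existence of local group}, and making sure the "$\nu_\bk\vdash H_2$ implies $H_2$ has full $\bk$-rank, hence finite index after $\ad_\bk$" chain does not secretly assume what we are proving.
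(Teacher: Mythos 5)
Part (1) of your proposal is essentially correct and follows the paper's approach (normality of $H_1$, $H_1\cap H_2 \trianglelefteq G$ abelian hence finite by semisimplicity, and Lemma~\ref{L:connected subgroup of defsemisimple}(2) applied with $f=\ad_\bk\colon G\to\ad_\bk(G)$ and $H=H_2$, whose restriction has finite kernel $H_1\cap H_2$).

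Part (2) has a serious gap. You assert that $\ad_\bk(\nu_\bk)$ is an open neighborhood of the identity in $\ad_\bk(G)$, i.e.\ that $\dim_\bk(\ad_\bk(G))=n$ (the $\bk$-rank of $G$). But this equality is precisely the hard content of the proposition: a priori $\ad_\bk(G)\subseteq\gl_n(\bk)$ could have $\bk$-dimension anywhere up to $n^2$, and your phrase ``$\ad_\bk$ is essentially the identity differential on $\nu_\bk$'' does not establish that $\ad_\bk(\nu_\bk)$ is open in $\ad_\bk(G)$ as opposed to in some lower-dimensional subgroup. What the paper does, after computing $G/(H_1\cdot H_2)\cong\ad_\bk(G)/\ad_\bk(H_2)$, is to bound $\dim_\bk(\ad_\bk(G))$ from above by $\dim_\bk(\ad_\bk(H_2))$ via the Lie-theoretic structure in o-minimal groups: since $\ad_\bk(H_2^0)$ is a definably connected definably semisimple group, its Lie algebra $\mathfrak{h}$ is a semisimple Lie algebra (by \cite[Theorem 2.34]{PePiSt-defsimple}), and one has $\dim(\mathfrak{h})=\dim_\bk(\aut(\mathfrak{h}))$ (\cite[Claim 2.8]{PePiSt-defsimple}); one then shows the conjugation action of $\ad_\bk(G)$ on $\mathfrak{h}$ is \emph{faithful} (using \cite[Lemma 3.2(ii)]{OtPePi} and the identification $\ker(\ad_\bk)=C_G(\nu_\bk)$), so $\ad_\bk(G)$ embeds into $\aut(\mathfrak{h})$, forcing $\dim_\bk(\ad_\bk(G))\le\dim(\mathfrak{h})=\dim_\bk(\ad_\bk(H_2^0))$. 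This is where definable semisimplicity enters the proof of (2), and your argument does not use it at all, which should be a red flag. Part (3) also contains an error: you claim $\dpr(\ad_\bk(H_2))\le n$ ``because $\ad_\bk(H_2)\subseteq\bk^{n^2}$,'' but that containment only gives a bound of $n^2$. The correct and simpler route is the one via the hypothesis: since $\ad_\bk\restriction H_2$ has finite kernel and image in powers of $\bk$, $H_2$ is \emph{almost} strongly internal to $\bk$, so $\dpr(H_2)$ is at most the almost $\bk$-rank of $G$, which is assumed to equal the $\bk$-rank $n$; combined with $\nu_\bk\vdash H_2$ this gives $\dpr(H_2)=n$.
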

\begin{proof}
    Let $\nu=\nu_\bk$.
    
    By Lemma \ref{L:kerad in o-minimal}, $H_1=C_G(\nu)$ and $\nu\vdash H_2$. By definition, $H_1$ is a definable normal subgroup, and thus so is $H_2$. 
    By the semisimplicity of $G$. the intersection of any definable normal subgroup $H$ with its centralizer is finite (otherwise, $Z(H)$ is infinite and normal in $G$).
    Thus  $H_1\cap H_2$  is finite, and by definition $H_1$ and $H_2$ centralize each other. By Lemma \ref{L:connected subgroup of defsemisimple}, $H_2^0$ is definably semisimple, completing the proof of (1).

    (2)  Note that
        \[G/(H_1\cdot H_2)\cong \frac{G/H_1}{(H_1\cdot H_2)/H_1}\cong \frac{G/H_1}{H_2/(H_1\cap H_2)}\cong \ad_\bk(G)/\ad_\bk(H_2),\]
    where $\ad_\bk(G)$ is the image of $\ad_\bk$ and $\ad_\bk(H_2)$ is the image of $\ad_\bk\restriction H_2$.

    Thus, we need to see that $\ad_\bk(G)/\ad_\bk(H_2)$ is finite. Since both images are subgroups of $\gl_n(\bk)$, we may freely use properties of groups definable in o-minimal expansions of fields. By o-minimality, showing  that $\ad_\bk(G)/\ad_\bk(H_2)$ is finite  amounts to showing that $\dim_\bk(\ad_\bk(G))=\dim_\bk(\ad_\bk(H_2))$ (we use $\dim_\bk$ for the o-minimal dimension in $\bk$). So, it is sufficient  to show that $\dim_\bk(\ad_\bk(G))\leq \dim_\bk(\ad_\bk(H_2))$. 
    
    As $G$ is definably semisimple, $H_2$ is $G$-definably semisimple. Since, by (1),  $\ker(\ad_\bk\restriction H_2)$  is finite, $H_2^0$ and $\ad_\bk(H_2^0)$ are definably semisimple by Lemma \ref{L:connected subgroup of defsemisimple}. Let $\mathfrak{h}$ be the Lie algebra (in the sense of \cite{PePiSt-defsimple})  of the definably connected  group $\ad_\bk(H_2^0)$ with its $\bk$-differential structure. By \cite[Theorem 2.34]{PePiSt-defsimple}, $\mathfrak{h}$ is a semisimple Lie algebra. Thus,  by \cite[Claim 2.8]{PePiSt-defsimple},  $\dim(\mathfrak{h})=\dim_\bk(\aut(\mathfrak{h}))$ (we use  the $\bk$-vector space dimension on the left and  the fact that $\aut(\mathfrak h)$ is definable in $\bk$).

    The group $\ad_\bk(G)$ acts on $\ad_\bk(H_2^0)$ by conjugation and thus also on $\mathfrak{h}$. We claim that the kernel of this action is trivial.

Indeed, assume that for some $g\in G$, the action of $\ad_\bk(g)$ on $\mathfrak h$ is the identity. By \cite[Lemma 3.2(ii)]{OtPePi}, it follows that for all $x\in \ad_\bk(H_2^0)$, $\ad_\bk(g^{-1}xg)=\ad_\bk(x)$, and hence for all $x\in H_2^0$, $g^{-1}xgx^{-1}\in \ker(\ad_\bk\restriction H_2^0)$. Since $\ker(\ad_\bk\restriction H_2^0)$ is finite, and $H_2^0$ is definably connected, it follows that for all $x\in H_2^0$, $g^{-1}xg=x$ and hence $g\in C_G(H_2^0)$. 
Because $\nu \vdash H_2$, then $g\in C_G(\nu)$, so by Lemma \ref{L:kerad in o-minimal}, $g\in \ker(\ad_\bk)$ and hence $\ad_\bk(g)=e$.

    We can therefore conclude that  $\ad_\bk(G)$ can be definably embedded into $\aut(\mathfrak{h})$  hence we get that 
    $\dim(\ad_\bk(G))\leq \dim(\aut(\mathfrak{h}))=\dim(\mathfrak{h})=\dim(\ad_\bk(H_2^0))$, so $\dpr(\ad_\bk(G))=\dpr(\ad_{\bk}(H_2^0))=\dpr(\ad_\bk(H_2))$, as required. 

    (3)  Because $\ker(\ad_k)\cap H_2$ is finite,  $H_2$ is almost strongly internal to $\bk$. Thus, the almost $\bk$-rank of $G$ is at least that of $H_2$. However, $\nu\vdash H_2$ so  $\dpr(H_2)$ is at least the $\bk$-rank of $G$. Because of the rank assumptions, we must have that $\dpr(H_2)$ is the $\bk$-rank of $G$.
    \end{proof}

\begin{remark}
As was noted in Remark \ref{R: H1H2 over the same parameters}, the groups $H_1$ and $H_2$ appearing in the statement of Proposition \ref{P:general H1,H2 into o-minimal} are definable over the same parameters as $G$. 
\end{remark}

We isolate the following direct consequences:
\begin{corollary}
    Let $G$ be locally strongly internal to $\bk$.
    \begin{enumerate}
        \item If $\ker(\ad_\bk)=G$ then $\nu_\bk\vdash Z(G)$. In particular, if $Z(G)$ is finite, then $\ker(\ad_\bk)$ is a proper subgroup of $G$.
        \item If $G$ is definably simple (namely non-abelian and has no non-trivial definable normal subgroup) then $G$ is definably isomorphic to a definable subgroup of $\gl_n(\bk)$.
    \end{enumerate}
\end{corollary}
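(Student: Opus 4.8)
The plan is to read off both clauses directly from Lemma \ref{L:kerad in o-minimal}, using only the elementary observation that $\nu_\bk$ is an infinite partial type: indeed $\dpr(\nu_\bk)$ equals the $\bk$-rank of $G$, which is at least $1$ since $G$ is locally strongly internal to $\bk$, so $\nu_\bk$ cannot be entailed by a finite set.

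For Clause (1), note that by Lemma \ref{L:kerad in o-minimal}(1) we have $\ker(\ad_\bk)=C_G(\nu_\bk)$, so the hypothesis $\ker(\ad_\bk)=G$ is exactly $C_G(\nu_\bk)=G$. Substituting this into Lemma \ref{L:kerad in o-minimal}(2) gives $\nu_\bk\vdash C_G(\ker(\ad_\bk))=C_G(G)=Z(G)$, which is the displayed assertion. For the ``in particular'' part: were $Z(G)$ finite, $\nu_\bk\vdash Z(G)$ would force $\nu_\bk$ to be finite, a contradiction; hence $\ker(\ad_\bk)$ is a proper subgroup of $G$ in that case. (It is always a definable normal subgroup, being the kernel of the definable homomorphism $\ad_\bk$.)

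For Clause (2), recall that a definably simple $G$ is non-abelian with no non-trivial definable normal subgroup; in particular $Z(G)$, a proper definable normal subgroup, is trivial, and $G$ is definably semisimple. The definable normal subgroup $\ker(\ad_\bk)$ must then equal $\{e\}$ or $G$; by Clause (1), $\ker(\ad_\bk)=G$ would give $\nu_\bk\vdash Z(G)=\{e\}$, which is impossible since $\nu_\bk$ is infinite. Hence $\ker(\ad_\bk)=\{e\}$, so $\ad_\bk\colon G\to\gl_n(\bk)$ (with $n$ the $\bk$-rank of $G$) is an injective definable homomorphism and $G$ is definably isomorphic to its image, a definable subgroup of $\gl_n(\bk)$. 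Alternatively, one can invoke Proposition \ref{P:general H1,H2 into o-minimal} with $H_1=\ker(\ad_\bk)$ and $H_2=C_G(H_1)$: definable simplicity collapses the decomposition, again forcing $H_1=\{e\}$. There is no real obstacle here, the content being entirely in Lemma \ref{L:kerad in o-minimal}; the only point to keep in view is that $\nu_\bk$ is genuinely infinite, which is exactly what converts the group-theoretic dichotomy ``$\ker(\ad_\bk)$ is $\{e\}$ or $G$'' into the embedding into $\gl_n(\bk)$.
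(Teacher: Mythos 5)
Your proof is correct and follows the paper's argument: both clauses read off from Lemma \ref{L:kerad in o-minimal}, and the paper's proof of (2) is exactly your contrapositive of (1) combined with the dichotomy on the definable normal subgroup $\ker(\ad_\bk)$. Your explicit note that $\nu_\bk$ is infinite (since $\dpr(\nu_\bk)$ equals the $\bk$-rank, which is positive) is the same unstated step the paper relies on, and the alternative appeal to Proposition \ref{P:general H1,H2 into o-minimal} is a harmless reformulation of the same content.
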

\begin{proof}
    (1) If $G=\ker(\ad_\bk)$ then by Lemma \ref{L:kerad in o-minimal}(2), $\nu_\bk\vdash C_G(G)=Z(G)$.
    (2) Since $G$ is definably simple, either $\ker(\ad_\bk)=G$ or $\ker(\ad_\bk)=\{e\}$ Since $G$ is non-abelian, it follows from (1) that $\ker(\ad_{\bk})$ must be equal to $\{e\}$. 
    \end{proof}

The proof of Proposition \ref{P: k} when $\bk$ is o-minimal reduces to collecting what we have done so far: 
\begin{proof}[Proof of Proposition \ref{P: k} for o-minimal $\bk$]
    Fix $G$ a definably semisimple group locally almost strongly internal to $\bk$. 
   
    To prove (1) we need to find a finite normal $N\trianglelefteq G$ and definable  $G_1, G_2\trianglelefteq G/N$ centralizing each other with $G_1\cap G_2=\{e\}$.  By  Fact \ref{F: existence of finite normla to get D-group}, there exists  a finite normal  subgroup $N_1\trianglelefteq G$ such that $G/N_1$ is a $\bk$-group and the almost $\bk$-rank and the $\bk$-rank agree in $G/N_1$. Furthermore,  $N_1$ is definable over any model over which $G$ is defined. By Corollary \ref{C: semisimple in quotients} $G/N_1$ is definably semisimple, so -- in order to  keep notation simpler -- we  denote $G/N_1$ by $G$. By Lemma \ref{L:existence of local group}, $G$  contains a definable  normal differentiable local subgroup $\CG$ with respect to $\bk$, 
    with $\nu_{\bk}\vdash \CG$. 
    
    Then Proposition \ref{P:general H1,H2 into o-minimal} provides us with two definable normal subgroups $H_1, H_2$  satisfying (1) of the proposition.
    By Remark \ref{R: H1H2 over the same parameters}, $H_1$ and $H_2$ are both definable over any model over which $G$ is defined.   The group $N=H_1\cap H_2$ is a finite normal subgroup of $G$. Replace $G$ by $G/N$ and set $G_i:=H_i/N$.  Then $G_1$ and $G_2$ satisfy (1) of the proposition.

    For (3) we need to show that $G_2$ is definably semisimple, and definably isomorphic to a $\bk$-linear group. The latter is clear, since $\ad_\bk(G)$ is $\bk$-linear.  For the first part,  note that since $H_2^0$ is definably semisimple (by Proposition \ref{P:general H1,H2 into o-minimal}), so is $H_2$ and thus so is $G_2$ by Lemma \ref{C: semisimple in quotients}. \\
    
    It remains to prove (2), i.e., that the almost $\bk$-rank of $G_1$ is $0$ and that $G_1$ is $G/N$-semisimple.
    
    The latter part follows from the fact that $G_1$ is normal in the definably semisimple group $G$. So we only need to compute its almost $\bk$-rank. 
        
        Assume toward a contradiction that  $G_1$ is locally almost strongly internal to $\bk$. By applying Fact \ref{F: existence of finite normla to get D-group} to $G_1$, we get a finite normal subgroup $H\trianglelefteq G_1$ such that $G_1/H$ is locally strongly internal to $\bk$. Note that $H$ is normal in $G_1\cdot G_2$ as well.

        By Lemma \ref{L:kerad in o-minimal}, $\nu_\bk(G)\vdash G_2$. Since $G_1\cdot G_2$ has finite index in $G$, by Lemma \ref{L:nu of subgroup}(2) $\nu_\bk(G_1\cdot G_2)=\nu_\bk(G)$, so $\nu_\bk(G_1\cdot G_2)\vdash G_2$ and thus $\nu_\bk(G_1\cdot G_2)/H\vdash G_2/H$. By Lemma \ref{L:passage of D-group under finite-to-one}(3), $\nu_\bk(G_1\cdot G_2/H)\vdash G_2/H$ and by Lemma\ref{L:nu of subgroup}(1)  $\nu_\bk(G_1/H)\vdash \nu_\bk(G_1\cdot G_2/H)\vdash G_2/H$.  On the other hand,  obviously $\nu_\bk(G_1/H)\vdash G_1/H$ thus $(G_1\cap G_2)/H$ must be infinite, contradiction.
\end{proof}

\subsection{Proof of Proposition \ref{P: k} for $\bk$ an algebraically closed field.}
Throughout this subsection $\CK$ is assumed $V$-minimal, hence $\bk$ is a stably embedded pure algebraically closed field.  In particular, $\bk$ is strongly minimal. Fix a $\CK$-definable, definably semisimple group $G$ which is locally almost strongly internal to $\bk$.   By \cite[Proposition 6.2]{HaHaPeGps}, there exist definable subgroups  $H_0\trianglelefteq H\trianglelefteq G$, with $H$ definably connected and $H_0$ finite normal in $G$ such that $H/H_0$ is strongly internal to $\bk$.


Fix $H_0\trianglelefteq G$ and $H$ as above and consider $H_1=H/H_0$. By \cite[Theorem 1]{BousHrWeil} it is a $\bk$-connected algebraic group.  
By a classical theorem of Rosenlicht \cite[Theorem 13]{rosenlicht}, as  $H_1$ is a connected algebraic group,   $H_1/Z(H_1)$ is a $\bk$-linear group. As $G/H_0$ is definably semisimple (Corollary \ref{C: semisimple in quotients})
 and $H_1$ is normal in $G/H_0$, $Z(H_1)$ is finite.  Since $H_1$ is  connected $H_1/Z(H_1)$ is centerless (Fact \ref{F:finite center, centerless}).

We now fix a finite $N\trianglelefteq G$, $H_0\sub N$,  such that 
$H/N$ is a connected centerless $\bk$-linear group. Note that $G/N$ is still definably semisimple by Corollary \ref{C: semisimple in quotients}. Below we work in $G/N$, and to simplify notation we still use $H$ for $H/N$. Note that, since $\bk$ has definable Morley Rank, the statement "$H$ is a normal subgroup of $G$ strongly internal to $\bk$ whose Morley Rank equals the $\bk$-rank of $G$" is definable in families, and we can choose $H$ to be definable over any model in which $G$ is defined. 

\begin{claim}
    $H$ has no infinite normal abelian subgroups, hence it is a semisimple algebraic group.
\end{claim}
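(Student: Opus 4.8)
The plan is to show that the solvable radical of the algebraic group $H$ is trivial. Recall that, as just established, $H$ is a connected (even centerless) linear algebraic group over the algebraically closed field $\bk$, and that $H$ is normal in $G/N$, where $G/N$ is definably semisimple by Corollary \ref{C: semisimple in quotients}. Let $R=R(H)$ be the solvable radical of $H$, i.e.\ its maximal connected normal solvable algebraic subgroup. Since $R$ is a characteristic subgroup of $H$, and since conjugation by any element of $G/N$ restricts to a (definable, hence algebraic) automorphism of $H$, the subgroup $R$ — and likewise every characteristic subgroup of $R$ — is normal in $G/N$.

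First I would observe that if $R\neq\{e\}$ then $R$ contains an infinite abelian subgroup which is characteristic in $H$. Indeed, being nontrivial and connected, $R$ is infinite, and its derived series $R=R^{(0)}\supseteq R^{(1)}\supseteq\cdots$ terminates at $\{e\}$ after finitely many steps. Each $R^{(i)}$ is again a connected algebraic group (the commutator subgroup of a connected algebraic group is connected) and is characteristic in $R$. Let $A=R^{(k)}$ be the last nontrivial term; then $A$ is connected and nontrivial, hence infinite since $\bk$ is infinite, it is abelian because $[A,A]=R^{(k+1)}=\{e\}$, and it is characteristic in $R$, hence characteristic in $H$. By the previous paragraph $A$ is then an infinite definable abelian subgroup normal in $G/N$, contradicting the definable semisimplicity of $G/N$. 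Therefore $R=\{e\}$, which is exactly the statement that the connected algebraic group $H$ is semisimple; in particular $H$ has no infinite normal abelian algebraic subgroup.

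The only points needing (routine) care are standard facts from the structure theory of linear algebraic groups over an algebraically closed field: that the solvable radical is characteristic, that the commutator subgroup of a connected algebraic group is connected (so that the derived series of $R$ consists of connected subgroups, each infinite once nontrivial), and that conjugation by elements of $G/N$ acts on the normal subgroup $H$ by automorphisms. The main conceptual step — and the reason the hypothesis on $G$ rather than on $H$ alone is used — is the passage from ``$A$ abelian and characteristic in $H$'' to ``$A$ normal in $G/N$'', which is what permits invoking the definable semisimplicity of $G/N$.
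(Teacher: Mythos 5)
Your proof is correct and follows essentially the same route as the paper: both reduce to showing the solvable radical $R(H)$ is trivial by producing from $R$ an infinite abelian subgroup characteristic in $H$, hence normal in $G/N$, contradicting definable semisimplicity of $G/N$. You make explicit (via the derived series of $R$) the step that the paper leaves implicit when it asserts that an infinite $R$ ``contains an infinite abelian algebraic subgroup that is definably characteristic in $H$''; this is a helpful expansion rather than a genuine departure.
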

\begin{proof} Assume towards contradiction that such a normal subgroup existed. Then its Zariski closure is an infinite normal abelian  algebraic subgroup. Its (algebraic) connected component 
is contained in the solvable radical $R$ of $H$ which is therefore infinite as well. This radical contains an infinite abelian algebraic subgroup that is definably characteristic in $H$, and therefore is normal in $G$, contradicting our assumption. 
\end{proof}

\begin{claim} 
The group $C_G(H)\cdot H$ has finite index in $G$.
\end{claim}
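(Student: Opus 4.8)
The plan is to analyze the conjugation action of $G$ on its normal subgroup $H$. Since $H$ is strongly internal to $\bk$, we may regard $H$ as a $\bk$-definable subset of some $\bk^m$, and since $\bk$ is a pure algebraically closed field it is stably embedded in $\CK$; hence for each $g\in G$ the conjugation map $\tau_g\rest H$ — a $\CK$-definable automorphism of $H$ — is in fact $\bk$-definable. As $H$ is a connected semisimple algebraic group over the algebraically closed field $\bk$, any such $\bk$-definable group automorphism of $H$ is algebraic (there are no non-trivial definable field automorphisms of $\bk$, and abstract automorphisms of semisimple algebraic groups over an algebraically closed field are algebraic). Therefore conjugation defines a group homomorphism $\phi\colon G\to \aut(H)$ into the \emph{algebraic} automorphism group of $H$, with $\ker(\phi)=C_G(H)$.

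Next I would invoke the classical structure of $\aut(H)$ for $H$ connected semisimple: $\aut(H)$ is itself a linear algebraic group, its connected component is $\inn(H)\cong H/Z(H)$, and the component group $\aut(H)/\inn(H)$ (the group of diagram automorphisms) is finite. Since $H$ is centerless, $\inn(H)\cong H$ and $\phi(H)=\inn(H)$ already has finite index in $\aut(H)$. Because $\phi(H)\subseteq \phi(G)\subseteq \aut(H)$, we get
\[
[\phi(G):\phi(H)]\ \le\ [\aut(H):\inn(H)]\ <\ \infty .
\]
Finally, translating back, $\phi(G)=G/C_G(H)$ and $\phi(H)=H\,C_G(H)/C_G(H)\cong H/(H\cap C_G(H))=H/Z(H)=H$, so $[\phi(G):\phi(H)]=[G:H\cdot C_G(H)]$, which is therefore finite, as claimed.

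The main obstacle is the first step: showing that the $\CK$-definable conjugation maps actually land in the algebraic automorphism group $\aut(H)$, rather than merely being abstract definable automorphisms. Stable embeddedness of $\bk$ reduces this to a statement about $\bk$-definable automorphisms of a semisimple algebraic group over an algebraically closed field, and the remaining rigidity input can be supplied by the standard theory of such groups. Everything after that is purely group-theoretic book-keeping, using only that the diagram-automorphism group of a semisimple algebraic group is finite and that $H$ is centerless.
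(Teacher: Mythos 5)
Your proposal is correct and follows essentially the same route as the paper: both pass from stable embeddedness of $\bk$ to the observation that conjugation gives a homomorphism $G\to\aut(H)$ into the \emph{algebraic} automorphism group with kernel $C_G(H)$, both invoke the classical fact (Humphreys, Theorem~27.4) that for a connected centerless semisimple group $\aut(H)$ is a finite extension of $\inn(H)$, and both translate $[\aut(H):\inn(H)]<\infty$ back to $[G:C_G(H)\cdot H]<\infty$. The only minor inaccuracy is the appeal to ``abstract automorphisms of semisimple algebraic groups are algebraic'' — that is not quite true without qualification, but it is unnecessary here since you already have $\bk$-definability, and in a pure algebraically closed field any definable group automorphism is a morphism of algebraic groups.
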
 
\begin{proof}
The group $G$ acts on $H$ by conjugation and because $\bk$ is stably embedded, each action is $\bk$-algebraic, so the map $f:g\mapsto \tau_g\restriction H$ sends $G$ into $\aut(H)$ the group of all algebraic automorphisms of $H$ (recall that $\tau_g: (x\mapsto x^g)$).
The kernel of the map is $C_G(H)$.

Applying \cite[Theorem 27.4]{Humph}, using the fact that $\bk$ is algebraically closed, we see that $\aut(H)$ is the semi-direct product of $\mathrm{Int}(H)$, the inner automorphisms of $H$,  and a finite group (we use here the fact that $H$ is assumed centerless). Since $f(H)=\mathrm{Int}(H)$, it follows that $f(G)$ has finite index in $\aut(H)$ so $C_G(H)\cdot H$ must have finite index in $G$.
\end{proof}

We now let $G_1=C_G(H)$ and $G_2=H$. Since $G_1$ and $G_2$ centralize each other and $G_2$ is centerless, $G_1\cap G_2=\{e\}$.
This ends the proof of (1).

By construction, $G_2$ is a linear $\bk$-group. Assume towards a contradiction that $G_1$ is locally almost strongly internal to $\bk$ as well. By \cite[Proposition 6.2]{HaHaPeGps}, there exists a finite definable normal subgroup $N'\trianglelefteq G_1$ such that $G_1/N'$ has a definable normal subgroup $B_1\trianglelefteq G_1/N'$ strongly internal to $\bk$. Since $G_1$ and $G_2$ intersect trivially, we may identify $G_2$ with $G_2/N'$. Moreover, the $\bk$-rank of $G_1\cdot G_2$, which equals that of $G$ (since it has finite index in it), is at most that of $(G_1\cdot G_2)/N'$, by Lemma \ref{L:passage of D-group under finite-to-one}; so $G_2=H$ is still $\bk$-critical in $(G_1\cdot G_2)/N'$. 
But then $B_1\cdot G_2 \cong  B_1\times G_2$ is strongly internal to $\bk$, with 
$\dpr(B_1\cdot G_2)>\dpr(G_2)$, contradicting the fact that $H=G_2$ was $\bk$-critical in $(G_1\cdot G_2)/N'$.

Finally,  we already saw that $G_2$ is definably semisimple. The fact that $G_1$  is $G/N$-semisimple, is immediate since $G/N$ is definably semisimple.

This finishes the proof of Proposition \ref{P: k} in the V-minimal case, and thus the proof of the proposition is now complete.

\section{$K$-groups}
In the notation of Section \ref{ss: valued field and residue field}, for a $K$-group $G$ there exists an infinitesimal type-definable subgroup $\nu_K(G)$  inducing a definable homomorphism $\ad_K:G\to \gl_n(K)$, for $n$ the $K$-rank of $G$.

Recall that a definable group $G$ is \emph{$K$-pure} if $G$ is locally  strongly internal to $K$ but not locally almost strongly internal to $\Gamma$, to $\bk$ or to $K/\CO$.  In the present section we collect some basic facts concerning  $K$-pure groups, as those appear naturally in our later analysis. \\

For the following result, we observe that all the valued fields we consider are $1$-h-minimal. The exact definition is immaterial here. See \cite{hensel-min} and \cite[Section 4.5]{HaHaPeVF}.

\begin{fact}\cite[Theorem 2]{AcHa}\label{F:AcHa}
    Let $\CK$ be a $1$-h-minimal field, $\CG=(X,\cdot, ^{-1})$ a definable strictly differentiable local group with respect to $\CK$ and $f:\CG\dashrightarrow \CG$ a definable strictly differentiable homomorphism of local groups. If $D_e(f)=\id$ then $\{y\in \dom(f):f(y)=y\}$ contains a definable open neighborhood of $e$
\end{fact}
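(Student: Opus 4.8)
The statement is that for a $1$-h-minimal field $\CK$, a definable strictly differentiable local group $\CG = (X,\cdot,^{-1})$ with respect to $\CK$, and a definable strictly differentiable local homomorphism $f:\CG \dashrightarrow \CG$ with $D_ef = \id$, the fixed-point set $\{y \in \dom(f) : f(y) = y\}$ contains a definable open neighborhood of $e$. The key idea is to pass to coordinates via the homeomorphism $\varphi : U \to V \subseteq \CK^n$ witnessing the local group structure, so that we may assume $X \subseteq \CK^n$ is open, $e = 0$, and that $\CG$-multiplication and $f$ are given by strictly differentiable maps in the valued field sort. Since $D_0 f = \id$, strict differentiability gives us quantitative control: for every $\epsilon \in \Gamma$ there is $\delta \in \Gamma$ so that $v(f(x_1) - f(x_2) - (x_1 - x_2)) - v(x_1 - x_2) > \epsilon$ for all $x_1, x_2 \in B_{>\delta}(0)$. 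In particular, iterating or combining this with the group axioms, one shows that $g(x) := f(x) \cdot x^{-1}$ (the "displacement map", using $\CG$-multiplication) is a definable strictly differentiable partial map near $e$ with $g(e) = e$ and $D_e g = \id - \id = 0$; here I would lean on the chain rule for strict differentiability together with the fact that $D_e(\iota) = -\id$ for any local group.

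\textbf{Main step.} The heart of the argument is to upgrade "$D_eg = 0$" to "$g \equiv e$ on a neighborhood of $e$". This is where $1$-h-minimality does the real work: a definable strictly differentiable function whose differential vanishes identically on a neighborhood would be locally constant, but here the differential vanishes only at the single point $e$, so one cannot conclude directly. Instead, the standard trick (and I expect this is what the paper does, mirroring the proof in \cite{AcHa}) is to use the \emph{homomorphism} property: $f(x \cdot y) = f(x) \cdot f(y)$ forces strong rigidity. Concretely, set $Y = \{y \in \dom(f) : f(y) = y\}$; this is a definable \emph{local subgroup} of $\CG$ (closed under the partial operations wherever defined, since $f$ is a homomorphism). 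I would argue that $Y$ has nonempty interior: if not, then since $\CK$ is $1$-h-minimal hence geometric, $\dim(Y) < n$, but one can produce fixed points filling up an open set by a fixed-point/contraction argument — using strict differentiability with $D_e f = \id$, the map $f$ is "$C^1$-close to the identity" near $e$, so for $x$ generic and small, the equation $f(y) = y$ can be solved for $y$ near $x$ via an inverse-function-theorem / Hensel-type argument available in $1$-h-minimal fields. Alternatively, and probably more in the spirit of the cited result: apply the $1$-h-minimal structure theory directly to the definable family of functions $x \mapsto f^{[k]}(x)$ (iterates) or to $g = f \cdot \id^{-1}$, using that strict differentiability with $D_e g = 0$ means $v(g(x)) - v(x) \to \infty$, and then a Taylor-approximation/Jacobian-property argument forces $g(x) = e$ for $x$ in a small ball; the local subgroup property of $Y$ then propagates this to a definable open neighborhood of $e$.

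\textbf{Anticipated obstacle.} The main obstacle is exactly the inverse-function-theorem-type step: deducing from $D_e f = \id$ and strict differentiability that $f$ is locally a homeomorphism and, more delicately, that its set of fixed points is open rather than just nonempty. In a general $1$-h-minimal field one must be careful — strict differentiability is precisely the hypothesis designed to make such arguments go through (it is the analogue of $C^1$), and the relevant inverse function theorem for $1$-h-minimal valued fields is what makes $f$ a local homeomorphism at $e$. Granting that, and granting the Jacobian property / Taylor approximation package for $1$-h-minimal fields (as developed in \cite{hensel-min}), the fixed-point set $Y$ being a definable local subgroup of full local dimension $n$ near $e$ will contain a ball around $e$ in coordinates, hence pull back under $\varphi^{-1}$ to a definable open neighborhood of $e$ in $X$. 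Since this is precisely the content of \cite[Theorem 2]{AcHa}, the cleanest route in the paper is simply to invoke that result — but if a self-contained argument is wanted, the above is the skeleton: coordinatize, form the displacement map, show it has vanishing differential at $e$, use strict differentiability plus $1$-h-minimal inverse/Taylor machinery to kill it on a ball, and conclude via the local-subgroup property.
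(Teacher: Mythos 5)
Your proposal ultimately invokes \cite[Theorem 2]{AcHa}, and that is exactly what the paper does: its proof consists of citing that theorem, noting it implies $\dpr\{y\in\dom(f):f(y)=y\}=\dpr(\dom f)$, and concluding that the fixed-point set therefore has non-empty interior. So the bottom-line approach is the same. Two small remarks on your surrounding sketch: (a) you are right to observe that the fixed-point set $Y$ is a definable local subgroup of $\CG$ — this is in fact what upgrades \emph{contains a definable open subset} (what the paper's one-line argument literally yields) to \emph{contains a definable open neighborhood of $e$}, a step the paper leaves implicit; (b) the heuristic in your ``main step'' that one can solve $f(y)=y$ near arbitrary small $x$ by an inverse-function-theorem/contraction argument is misleading as stated — with $D_e f=\id$ but no homomorphism hypothesis, $e$ may well be the unique fixed point (compare $y\mapsto y+y^2$ near $0$), and it is precisely the homomorphism property that \cite[Theorem 2]{AcHa} leverages to force the fixed-point set to have full dp-rank. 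Since you acknowledge the sketch is speculative and you fall back on the citation, the proposal is correct and aligned with the paper.
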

\begin{proof} This is a theorem of Acosta and the second author, 
    \cite[Theorem 2]{AcHa}, implying that $\dpr \{y\in \dom(f) :f(y)=y\}=\dpr \dom(f)$, so contains a definable open subset; the result follows.
\end{proof}

We still use $\dim$ to denote the $\acl$-dimension in $K$ and the induced dimension on $K^{eq}$ and $\tau_K$ for the topology on $G$.

\begin{proposition}\label{P:dim C_G(g)=dimG}
    Let $G$ be a definable group in $\CK$, locally strongly internal to $K$. If  $g\in \ker(\ad_K)$ then $\dim C_G(g)=\dim G$.
\end{proposition}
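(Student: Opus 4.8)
The plan is to show that $C_G(g)$ contains a $\tau_K$-open neighborhood of the identity, hence is $\tau_K$-open in $G$, and then to invoke Corollary \ref{C: open iff full dim}, which identifies the $\tau_K$-open definable subgroups of $G$ with those of full $\dim$.

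First I would fix a definable strictly differentiable local normal subgroup $\CG=(X,\cdot,{}^{-1},e)$ of $G$ with respect to $K$, together with its chart $\varphi:X\to K^n$ ($n$ the $K$-rank of $G$), as provided by Lemma \ref{L:existence of local group}; since $\ker(\ad_K)$ does not depend on the choice of local subgroup, I may compute it with respect to this particular $\CG$, so that $\ad_K=\ad_K^{\CG}$. Because $\CG$ is normal in $G$, the conjugation map $\tau_g\colon x\mapsto g^{-1}xg$ restricts to a local automorphism of $\CG$, hence is a definable homomorphism of local groups $\CG\dashrightarrow\CG$; and since definable functions over $K$ are generically strictly differentiable, $\tau_g$ is in fact strictly differentiable. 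By the very definition of the adjoint map (Definition \ref{N:Ad}), the hypothesis $g\in\ker(\ad_K)$ says precisely that the Jacobian of $D_e(\varphi\circ\tau_g\circ\varphi^{-1})$ is the identity, i.e.\ $D_e(\tau_g)=\id$ when read through the chart $\varphi$.

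Next I would apply Fact \ref{F:AcHa}, the Acosta--Hasson fixed-point theorem for strictly differentiable local homomorphisms over $1$-h-minimal fields, to $f=\tau_g$: it yields a definable $\tau_K$-open neighborhood $U$ of $e$ contained in $\{y\in\dom(\tau_g):\tau_g(y)=y\}$. Since $\tau_g(y)=y$ is equivalent to $yg=gy$, we get $U\subseteq C_G(g)$. As $C_G(g)$ is a definable subgroup of $G$ containing an open neighborhood of the identity, translating $U$ shows it is $\tau_K$-open in $G$, and Corollary \ref{C: open iff full dim} then gives $\dim C_G(g)=\dim G$.

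I do not expect a serious obstacle here: the substantive work has already been carried out in the construction of the local group $\CG$, of the adjoint map $\ad_K$ (Section \ref{ss: valued field and residue field}), and in Fact \ref{F:AcHa}. The only points requiring a word of care are (i) checking that $\tau_g$ genuinely restricts to a strictly differentiable local homomorphism of $\CG$ --- which is immediate from the normality clause in Lemma \ref{L:existence of local group} together with generic strict differentiability of definable functions over $K$ --- and (ii) confirming that the hypothesis ``$D_e(f)=\id$'' in Fact \ref{F:AcHa}, interpreted via the chart $\varphi$, is literally the condition $\ad_K(g)=\id$, which is exactly how $\ker(\ad_K)$ was defined.
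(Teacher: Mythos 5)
Your proof is correct and takes essentially the same approach as the paper's: the key step in both is to apply Fact \ref{F:AcHa} to $\tau_g$, using $g\in\ker(\ad_K)$ to conclude that $C_G(g)$ contains a $\tau_K$-open neighborhood of $e$. The only cosmetic difference is that you conclude by citing Corollary \ref{C: open iff full dim}, whereas the paper closes with a direct dimension count $\dim G=\dim X=\dim W\le \dim C_G(g)\le\dim G$ using Corollary \ref{C:every thing is strongly internal to K}; these are interchangeable.
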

\begin{proof}
    Let $\mathcal{G}=(X,\cdot,^{-1})$ be the definable strictly differentiable local group as provided by Lemma \ref{L:existence of local group}. If $g\in \ker(\ad_K)$ then by Fact \ref{F:AcHa}, the set  $W:=\{x\in X: x^g=x\}\sub C_G(g)$ is open in $X$. Since $\dim (X)$ is  the $K$-rank of $G$ (Corollary \ref{C:every thing is strongly internal to K}), we get that
    \[
    \dim (G)=\dim(X)=\dim (W)\leq \dim C_G(g)\leq \dim(G)
    .\qedhere\]
\end{proof}

The following is based on an analogous result of \cite{GisHalMac}: 

\begin{corollary}\label{C:dugaldetal}
    Let $G$ be a definable group, locally strongly internal to $K$ and let $g\in G$. If $G$ is $K$-pure and $\dim(C_G(g))=\dim(G)$ then $[G:C_G(g)]<\infty$. In particular, $[G:C_G(g)]<\infty$ for every $g\in \ker(\ad_K)$. 
\end{corollary}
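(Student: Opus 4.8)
The statement to prove is: if $G$ is $K$-pure, locally strongly internal to $K$, and $g \in G$ satisfies $\dim(C_G(g)) = \dim(G)$, then $[G : C_G(g)] < \infty$; in particular this holds for all $g \in \ker(\ad_K)$. The ``in particular'' clause is immediate from Proposition \ref{P:dim C_G(g)=dimG}, which already gives $\dim C_G(g) = \dim G$ for $g \in \ker(\ad_K)$, so the content is entirely in the first assertion. The plan is to use Corollary \ref{C: open iff full dim}: since $C_G(g)$ is a definable subgroup of $G$ with $\dim(C_G(g)) = \dim(G)$, that corollary tells us $C_G(g)$ is open in the $\tau_K$-topology. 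So it remains to show that an open definable subgroup of a $K$-pure group has finite index.

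For this, I would argue by contradiction: suppose $H := C_G(g)$ is open but of infinite index. Then the quotient set $G/H$ (as a definable set of cosets, or rather a definable set of representatives) is infinite. The key point is to extract from $G/H$ an infinite definable subset that is $0$-dimensional, thereby contradicting $K$-purity via Lemma \ref{l: pure 0dim} (which says $S$ is $K$-pure iff every $0$-dimensional definable subset of $S$ is finite). Concretely: since $H$ is open, every coset $aH$ is open, so the cosets form a definable partition of $G$ into open sets. Pick a definable set $T \subseteq G$ of coset representatives (one from each coset, using definable choice if available, or else work with $G/H$ as an imaginary sort directly). The dimension of $T$ should be $0$: indeed $\dim(G) = \dim(aH) = \dim(H)$ for each coset, and by additivity of $\dim$ (Gagelman's extension, which is available since $\CK$ is geometric) $\dim(G) = \dim(G/H) + \dim(H)$ when we view $G/H$ as the quotient, forcing $\dim(G/H) = 0$. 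If $[G:H] = \infty$, then $G/H$ is an infinite $0$-dimensional definable set, and since it embeds into $G$ (or is a definable quotient of $G$), this contradicts $K$-purity of $G$ via Lemma \ref{l: pure 0dim}.

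The main obstacle I anticipate is the bookkeeping around $G/H$ versus a definable set of representatives inside $G$: Lemma \ref{l: pure 0dim} is phrased for definable subsets $X \subseteq S$, so I want an actual $0$-dimensional infinite definable subset of $G$, not just of $G/H$. One clean way around this: rather than choosing representatives, observe that $K$-purity is equivalent (by Fact \ref{F: Reduction to sorts} and Lemma \ref{l: pure 0dim}) to the statement that $G$ is not locally almost strongly internal to $\Gamma$, $\bk$, or $K/\CO$; and if $[G:C_G(g)] = \infty$ then $G/C_G(g)$ is an infinite $0$-dimensional definable (imaginary) set, which by Fact \ref{F: Reduction to sorts} is locally almost strongly internal to one of $K, \bk, \Gamma, K/\CO$, and since it is $0$-dimensional and infinite it cannot be $K$ (infinite definable subsets of $K^n$ have positive dimension), so it is locally almost strongly internal to one of $\bk, \Gamma, K/\CO$ — and then so is $G$ (as $G$ maps onto $G/C_G(g)$ with open, hence full-$\dim$, fibers, so an infinite piece of $G$ maps finite-to-... no: fibers are infinite). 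Here I must be careful: local almost strong internality transfers from quotients back to $G$ only in the right direction. The correct move is: take a definable $Y \subseteq G/C_G(g)$ with a finite-to-one map to some $D^m$ ($D \neq K$); pull it back to a definable $\widehat Y \subseteq G$ which is a union of cosets; this $\widehat Y$ is not $0$-dimensional, so that does not immediately work. Instead, I would invoke definable Skolem functions in the valued field sort (available in all three settings, at least after passing to $\CK^{\eq}$ appropriately, or noting that $G$ is $\CK$-definable and we only need a section of a single definable map) to obtain a genuine definable section $s: G/C_G(g) \to G$; its image $T = s(G/C_G(g)) \subseteq G$ is then an infinite $0$-dimensional definable subset of $G$, contradicting $K$-purity by Lemma \ref{l: pure 0dim}. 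Verifying that such a section exists in each of the three settings — or alternatively rephrasing $K$-purity at the level of $\CK^{\eq}$ so that the quotient argument goes through directly — is the one delicate step; everything else is a direct application of Corollary \ref{C: open iff full dim}, additivity of $\dim^{eq}$, and Lemma \ref{l: pure 0dim}.
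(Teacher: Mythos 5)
Your overall strategy — derive $\dim(G/C_G(g)) = 0$ by additivity of $\dim^{\eq}$ and then appeal to Lemma \ref{l: pure 0dim} — is the right one, and it is indeed the paper's strategy. But you identify a genuine obstacle and do not close it: Lemma \ref{l: pure 0dim} requires an infinite $0$-dimensional definable subset \emph{of $G$}, whereas $G/C_G(g)$ is an imaginary sort, and your proposal to produce a definable section $s:G/C_G(g)\to G$ via Skolem functions does not obviously go through. Definable Skolem functions are available in the valued field sort $K$, but $G$ is itself an imaginary sort, so there is no a priori reason a definable section into $G$ exists; you acknowledge this is ``the one delicate step'' but leave it unverified, and in fact it is not easy to justify in this generality.

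The paper sidesteps the issue with a short, clean trick that you missed: the conjugacy class $g^G=\{g^a : a\in G\}$ is a subset of $G$ and is canonically in definable bijection with $G/C_G(g)$ via $aC_G(g)\mapsto g^a$. So one does not need to \emph{choose} a section — the conjugacy class \emph{is} one, living inside $G$. Then additivity gives $\dim(g^G)=\dim(G)-\dim(C_G(g))=0$, and Lemma \ref{l: pure 0dim} immediately yields $g^G$ finite, i.e.\ $[G:C_G(g)]<\infty$. Note also that the detour through Corollary \ref{C: open iff full dim} (showing $C_G(g)$ is open) plays no role in the actual argument and can be dropped. The ``in particular'' clause is handled as you say, by Proposition \ref{P:dim C_G(g)=dimG}.
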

\begin{proof}
    The conjugacy class $g^G$ is in definable bijection with the imaginary sort $G/C_G(g)$. By additivity of dimension we get that $\dim(g^G)=\dim(G)-\dim(C_G(g))$. If $\dim(C_G(g))=\dim(G)$ then  $\dim(g^G)=0$.
        By Lemma \ref{l: pure 0dim}, $g^G$ is finite, hence $[G:C_G(g)]$ is finite. 
        \end{proof}

\section{Definably semisimple groups}
We can finally prove the main results of the paper. Recall, first,  that a definable group is \emph{definably simple} if it is non-abelian and  has no definable normal subgroups, it is  \emph{definably semisimple} if it has no  definable infinite normal abelian subgroups. 

We point out that definable semisimplicity is not, a priori, an elementary property of groups definable in $\CK^{eq}$, as $\CK^{eq}$ may not eliminate the quantifier $\exists^\infty$. As we will see below, one of the  corollaries of the present work  is that in our setting, definable semisimplicity, is, in fact, elementary. 
i.e., if $\CK_0\prec \CK$ and $G$  is a $K_0$-definable group, such that $G$ is     definably semisimple in $K_0$ then it remains so in $\CK$.

As before, $\CK=\CK^{eq}$ is a sufficiently saturated valued field, either power-bounded $T$-convex, $V$-minimal or $p$-adically closed. Throughout the previous sections, we were working under the assumption that our definable group $G$ is a $D$-group (for some distinguished sort $D$).  As shown in \cite{HaHaPeGps}, this need not be the case as $G$ might not be locally strongly internal to any distinguished sort. The best we can obtain, in general,  that if $G$ is locally \emph{almost} strongly internal to $D$  and then there is a finite normal subgroup $H$ such that $G/H$ is a $D$-group (so in particular, locally strongly internal to $D$), Fact \ref{F: existence of finite normla to get D-group}. Fortunately, in our setting,  Corollary \ref{C: semisimple in quotients} assures that definable semisimplicity is preserved under finite quotients and under finite extensions. 

Before  stating the first of the results, recall from \cite[\S 9.3]{JohnTopQp} that a topological group $G$ is \emph{locally abelian} if there exists $W\ni e$, an open neighborhood of $e$ in $G$,  such that $xy=yx$ for all $x,y\in W$.

The next theorem gives conditions under which a definable,  infinite, abelian normal subgroup must exist in $G$. Recall that if $\dim(G)>0$ then by Corollary \ref{C:every thing is strongly internal to K} it is locally strongly internal to $K$.

\begin{theorem}\label{T:johnson question}
    Let $G$ be an infinite group definable over some $\CK_0\prec \CK$. 
    \begin{enumerate}
        \item If $G$ is $K$-pure (so locally strongly internal to $K$) and locally abelian with respect to $\tau_K$ then there exists a definable abelian subgroup  $G_1\trianglelefteq G$ of finite index, defined over $\CK_0$. In particular, $G_1$ is open. 
        
        \item 
        \begin{enumerate}
            \item If $G$ is locally almost strongly internal to $\Gamma$ then  there exists a $\CK_0$-definable infinite normal abelian subgroup $N\trianglelefteq G$, whose dp-rank is at least the almost $\Gamma$-rank of $G$.

            \item If $G$ is locally almost strongly internal to $K/\CO$ but not to $\bk$ then  there exists a $\CK_0$-definable infinite normal abelian subgroup $N\trianglelefteq G$ whose dp-rank is at least the $K/\CO$-rank of $G$.
\end{enumerate}       
    \end{enumerate}
    
\end{theorem}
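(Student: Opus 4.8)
\textbf{Proof strategy for Theorem \ref{T:johnson question}.}

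The plan is to treat the three items by reducing each to a result already established in the paper, paying attention to the parameter‑control requirement ($\CK_0$‑definability) and to the passage through finite normal subgroups.

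For item (1), suppose $G$ is $K$‑pure and locally abelian with respect to $\tau_K$. First I would use the fact that $G$ is automatically a $K$‑group (Corollary \ref{C:every thing is strongly internal to K} together with \cite[Fact 4.25(1)]{HaHaPeGps}), and build the differentiable local group $\CG=(X,\cdot,^{-1},e)$ with $\nu_K\vdash X$, over $\CK_0$, using Lemma \ref{L:existence of local group}. Local abelianity gives an open $W\ni e$ on which multiplication is commutative; after shrinking we may assume $W\subseteq X$ and $\nu_K\vdash W$, so $\nu_K$ is abelian and every element of $W$ lies in $C_G(\nu_K)=\ker(\ad_K)$ by Fact \ref{F:AcHa}‑type reasoning (an element fixing an open neighbourhood of $e$ under conjugation is in $C_G(\nu_K)$). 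Hence $\nu_K\vdash \ker(\ad_K)$, so by Lemma \ref{L:groups are closed, open iff full D-rank} $\ker(\ad_K)$ is open, i.e.\ $\dim\ker(\ad_K)=\dim G$. Now apply Corollary \ref{C:dugaldetal}: since $G$ is $K$‑pure, for every $g\in\ker(\ad_K)$ we get $[G:C_G(g)]<\infty$. The crux is then to extract from this a single \emph{definable} finite‑index abelian subgroup: I would look at the definable family $\{C_G(g): g\in W\}$, intersect it using Baldwin–Saxl (Fact \ref{F: Baldwin saxl}) to get a definable finite‑index $G_0$ centralising $W$, and then argue that $\langle W\rangle$ (which equals a definable finite‑index open subgroup since $W$ is open and $G/G_0$ is finite) is abelian — because $W$ generates it, $W$ is commutative, and all of $W$ centralises $G_0\supseteq$ an open subgroup, so the generators pairwise commute. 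Finally, to make $G_1$ $\CK_0$‑definable one runs the standard argument used at the end of the proof of Proposition \ref{P: Gamma}: take a $\CK_0$‑definable family of finite‑index subgroups realizing the given index, each of which contains an abelian open piece, and intersect the family via Fact \ref{F: Baldwin saxl}.

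For item (2)(a), $G$ is locally almost strongly internal to $\Gamma$. Apply Fact \ref{F: existence of finite normla to get D-group} to get a $\CK_0$‑definable finite normal abelian $H\trianglelefteq G$ with $G/H$ a $\Gamma$‑group whose $\Gamma$‑rank equals the almost $\Gamma$‑rank of $G$. Proposition \ref{P: Gamma} applied to $G/H$ yields a $\CK_0$‑definable finite‑index normal $G_1\trianglelefteq G/H$ with $\nu_\Gamma\vdash Z(G_1)$, so $Z(G_1)$ is an infinite normal abelian subgroup of $G/H$ of dp‑rank $\ge$ the $\Gamma$‑rank of $G/H$ = almost $\Gamma$‑rank of $G$. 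Pulling back along the quotient map $G\to G/H$ (the preimage of $Z(G_1)$ is a definable, $\CK_0$‑definable, normal subgroup, finite‑by‑abelian, hence by Lemma \ref{L:groups 1} contains a finite‑index characteristic abelian subgroup — here I would pass to $Z$ of a suitable finite‑index subgroup as in Corollary \ref{C: semisimple in quotients}) produces the desired $N\trianglelefteq G$, still of dp‑rank at least the almost $\Gamma$‑rank since finite extensions do not change dp‑rank. Item (2)(b) is identical with Proposition \ref{P: K/O} in place of Proposition \ref{P: Gamma}: apply Fact \ref{F: existence of finite normla to get D-group} to reduce to the case of a $K/\CO$‑group not locally strongly internal to $\bk$ (the hypothesis on $\bk$ passes to the finite quotient since almost $\bk$‑rank is preserved, Lemma \ref{L:passage of D-group under finite-to-one}(1)), obtain the $\CK_0$‑definable normal abelian $N$ with $\nu_{K/\CO}\vdash N$ from Proposition \ref{P: K/O} (taking $\CA$ to be the conjugations by elements of $G$, which automatically fix $\nu_{K/\CO}$), and pull back through the finite quotient using Corollary \ref{C: semisimple in quotients} and Lemma \ref{L:groups 1}.

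The main obstacle I anticipate is entirely in item (1): going from "$C_G(g)$ has finite index for every $g$ in an open set $W$" to a \emph{single} definable, finite‑index, genuinely abelian subgroup, and then descending it to $\CK_0$. The two moving parts are (i) checking that the open subgroup generated by $W$ is abelian — this needs that $W$ is not just locally commutative but that its elements pairwise commute once we know each centralises a common finite‑index subgroup, which should follow from openness of $W\cap C_G(W)$ and the fact that an open subgroup has the same $K$‑rank, hence generates a finite‑index subgroup — and (ii) the descent, where one must be slightly careful that "being abelian of index $d$" is an elementary condition on parameters so that Fact \ref{F: Baldwin saxl} can be applied to a $\CK_0$‑definable family; this is routine but needs to be spelled out. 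Everything else is bookkeeping with the already‑proved propositions.
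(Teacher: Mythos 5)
Your parts (2a) and (2b) are essentially the paper's proof: pass to a finite quotient via Fact \ref{F: existence of finite normla to get D-group}, apply Proposition \ref{P: Gamma} (resp.\ Proposition \ref{P: K/O} with $\CA$ the conjugations), and pull back to $G$ via Corollary \ref{C: semisimple in quotients}. You partially re-derive Corollary \ref{C: semisimple in quotients} by hand with Lemma \ref{L:groups 1}, but that is a minor inefficiency.

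For part (1), the ingredients you assemble are exactly the right ones — local differentiable group, $U\subseteq\ker(\ad_K)$, Corollary \ref{C:dugaldetal}, Baldwin--Saxl, and first-orderness for the descent to $\CK_0$ — but there is a genuine gap in the final step, and you correctly sense it. Passing to $\langle W\rangle$ is problematic on two counts: (i) you do not establish that $\langle W\rangle$ is definable (openness and finite index do not by themselves give definability in this setting), and (ii) your detour through ``all of $W$ centralises $G_0$'' is confused — pairwise commutativity of $W$ already makes $\langle W\rangle$ abelian without that observation, but it does not help with definability. The paper avoids $\langle W\rangle$ entirely by applying Baldwin--Saxl a \emph{second} time: after producing a definable normal finite-index $H_0\le C_G(U)$, one notes that for every $h\in H_0$ one has $U\subseteq C_G(h)$, hence $\dim C_G(h)=\dim G$ (Corollary \ref{C: open iff full dim}), hence $[G:C_G(h)]<\infty$ by Corollary \ref{C:dugaldetal}; Baldwin--Saxl applied to $\{C_G(h):h\in H_0\}$ then gives that $C_G(H_0)$ is definable of finite index, and $G_1:=C_G(H_0)\cap H_0$ is the desired definable finite-index abelian subgroup (abelian because it sits inside $H_0$ and centralises $H_0$). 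That construction sidesteps the definability issue cleanly, and the descent to $\CK_0$ is then exactly the first-order argument you describe.
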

\begin{proof} 
(1)   Since $G$ is locally strongly internal to $K$, it is a topological group with respect to the $\tau_K$-topology. All topological notions below refer to $\tau_K$.

 Assume that $G$ is locally abelian.
     By Lemma \ref{L:existence of local group}, there exists a local differentiable abelian subgroup $\CG=(U,\cdot, ^{-1})$ of $G$. Let $\tau_g$ denote conjugation by $g$. As $\tau_g\rest U=\id$ for all $g\in U$, we get that $U\subseteq \ker(\ad_K)$. This gives $\dim(\ker(\ad_K))=\dim(G)$.   
     
     The proof that $G$ is abelian-by-finite is an  adaptation of \cite[Proposition 2.3]{PiYao}. By Corollary \ref{C:dugaldetal}, since $G$ is $K$-pure, $[G:C_G(a)]<\infty$ for all $a\in U$. 
     By Fact \ref{Baldwin Saxl}, there is a definable normal subgroup of finite index $H_0\trianglelefteq G$ such that $H_0\leq C_G(U)$. 
     
     For every $h\in H_0$, $U\sub C_G(h)$ hence $\dim C_G(h)=\dim G$, e.g, by Corollary \ref{C: open iff full dim}. Therefore,    by  Corollary \ref{C:dugaldetal} and $K$-purity, we have $[G:C_G(h)]<\infty$ for every $h\in H_0$. Thus, applying Fact \ref{Baldwin Saxl} again, we see that  $C_G(H_0)$ has finite index in $G$, so in particular,  $G_1=C_G(H_0)\cap H_0$ has finite index in $G$ and is commutative. It follows that $G_1$ is open by Corollary \ref{C: open iff full dim}. The fact that $G_1$ is a definable, open, normal abelian, subgroup of index $k$ (some $k\in \Nn)$, is first order, so we can find such $G_1$ defined over  $\CK_0$. 

    (2) Assume now that $G$ is locally almost strongly internal to $D$, where $D=\Gamma$ or $D=K/\CO$. By  Fact \ref{F: existence of finite normla to get D-group} there exists $H\trianglelefteq G$ a finite normal subgroup such that  $G/H$ is locally strongly internal to $D$ and a $D$-group. Moreover, the $D$-rank of $G/H$ is the almost $D$-rank of $G$, and $H$ is $\CK_0$-definable. Also, if $G$ was not almost strongly internal to $\bk$ then neither is $G/H$.

    Assume  that $D=\Gamma$. By Proposition  \ref{P: Gamma}, we have $\nu_{\Gamma}(G/H)\vdash Z(G/H)$. In particular, $G/H$ contains a normal abelian subgroup whose dp-rank is at least the $\Gamma$-rank of $G/H$ (equivalently, the almost $\Gamma$-rank of $G$). By Corollary \ref{C: semisimple in quotients}, $G$ contains a definable normal abelian subgroup of the same dp-rank.
    
    Assume that $G$ is locally almost strongly internal to $K/\CO$ but not to $\bk$, so $G/H$ is locally strongly internal to $K/\CO$ (but not to $\bk$) and its $K/\CO$-rank equals the almost  $K/\CO$-rank of $G$. By Proposition \ref{P: K/O}, as $G$ and $H$ are both $\CK_0$-definable, there exists a $\CK_0$-definable infinite normal abelian subgroup of  $G/H$ whose dp-rank is at least the almost $\Gamma$-rank of $G/H$.
    By Corollary \ref{C: semisimple in quotients}, $G$ contains a definable normal abelian group of the same rank.
\end{proof}

The following example shows that the assumption of $K$-purity is needed in Theorem \ref{T:johnson question}(1), in order for local commutativity to imply the existence of a definable open normal abelian subgroup:

\begin{example}\label{E:counter}
Let $\CK$ be a $p$-adically closed field. Let $\CO^\times$ denote the multiplicative group of $\CO$. Consider the semi-direct product $G=\CO^\times \ltimes K/\CO$, where $(a,b+\CO)\cdot (c,d+\CO)=(ac, b+ad+\CO)$. Then $\dim(G)=1$ and $\dpr(G)=2$. It is locally abelian, as witnessed by $\CO^\times\times \{0\}$. We claim that $G$ has no definable open normal abelian subgroup. Assume, towards a contradiction, that  $H$ is such, in particular by \cite[Theorem 1.4(1)]{JohnTopQp} $\dim(H)=\dim(G)$ so $\pi_1(H)$, the projection on the first coordinate, must be infinite.

Let $(t,0)\in H$ for $t\neq 1$. Since the conjugation of $(t,0)$ by $(1,b+\CO)$ is $(t,b-bt+\CO)$, by letting $b$ vary we conclude that $\pi_2(H)$, the projection on the second coordinate, is equal to $K/\CO$. Thus, $H=U\ltimes K/\CO$ for some infinite definable subgroup $U$ of $\CO^\times$. Every element of $\CO^{\times}$ acts non-trivially on $K/\CO$, thus $U\ltimes K/\CO$ is not abelian unless $U=\{1\}$,  proving that $H$ as required does not exist. 

On the other hand, note that $\{1\}\times K/\CO$ is an infinite definable normal abelian subgroup (that is not open).
\end{example}

Theorem \ref{T:johnson question} together with the above example answers a question of Johnson's \cite[\S 9.3]{JohnTopQp} on locally abelian groups in $p$-adically closed fields.

We can now prove the main result of this paper. Note that below $\CK_0$ is not assumed to be saturated. 
\begin{theorem}\label{T: main}
    Let $\CK_0$ be either a power bounded $T$-convex field, a $V$-minimal field or a $p$-adically closed field. Let $G$ be an infinite definable, definably semisimple group in $\CK_0$. Then there exists a finite normal subgroup $N\trianglelefteq G$ and two normal subgroups $H_1,H_2\trianglelefteq G/N$, such that 
    \begin{enumerate}
        \item $H_1\cap H_2=\{e\}$, $H_1$ and $H_2$ centralize each other and $H_2$ is definably semisimple.
        \item $H_1\cdot H_2$ has finite index in $G/N$.
        \item $H_1$ is definably isomorphic to a subgroup of $\gl_n(K_0)$
        \item $H_2$ is definably isomorphic to a subgroup of $\gl_n(\bk_0)$.
    \end{enumerate}
If the almost $\bk$-rank of $G$ is $0$ (e.g., in the $p$-adically closed case) then $H_1=G/N$.
\end{theorem}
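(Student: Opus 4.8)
The plan is to reduce Theorem \ref{T: main} to the case analysis that has been set up over the previous sections, using Fact \ref{F: Reduction to sorts} to know that $G$ is locally almost strongly internal to one of the four distinguished sorts $K$, $\Gamma$, $\bk$ or $K/\CO$, and then dispatching each case. First I would observe that, by Fact \ref{F: existence of finite normla to get D-group}, there is a finite normal subgroup (defined over any model over which $G$ is defined, in particular over $\CK_0$) after passing to which $G$ becomes a $D$-group for the relevant unstable sort $D$ (or remains locally almost strongly internal to $\bk$ in the stable cases); by Corollary \ref{C: semisimple in quotients} definable semisimplicity is preserved, so there is no loss in replacing $G$ by this quotient. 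Working in a sufficiently saturated elementary extension is harmless for the existence statements since, as noted in the discussion preceding the theorem and via Corollary \ref{C: semisimple in quotients}, all the subgroups produced are definable over $\CK_0$ and definable semisimplicity transfers down; this is exactly the point that makes definable semisimplicity elementary.

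Next I would eliminate the two ``bad'' sorts. If $G$ is locally almost strongly internal to $\Gamma$, then Proposition \ref{P: Gamma} (applied to the $D$-group quotient) gives an infinite definable normal abelian subgroup, contradicting definable semisimplicity; hence this case does not occur for a definably semisimple $G$ with positive almost $\Gamma$-rank, and in fact it forces the almost $\Gamma$-rank to be $0$. Similarly, if $G$ is locally almost strongly internal to $K/\CO$ but \emph{not} to $\bk$, then Proposition \ref{P: K/O} produces an infinite definable normal abelian subgroup — again a contradiction. So either the almost $K/\CO$-rank is $0$, or $G$ is also locally almost strongly internal to $\bk$, which moves us to the $\bk$ case. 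In the $p$-adically closed setting $\bk$ is finite, so the almost $\bk$-rank is automatically $0$ and both of these observations leave only the sort $K$; this is what will yield the final sentence $H_1 = G/N$.

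Then I would handle the two surviving sorts. When $G$ is locally (almost) strongly internal to $\bk$ (so $\CK$ is $V$-minimal or power bounded $T$-convex), Proposition \ref{P: k} is exactly the statement we want for this piece: it gives a finite normal $N$ and normal subgroups $G_1, G_2$ of $G/N$ with $G_1 \cap G_2 = \{e\}$, $G_1$ and $G_2$ centralizing each other, $G_1 \cdot G_2$ of finite index, $G_2$ definably semisimple and $\bk$-linear, and $G_1$ of almost $\bk$-rank $0$ and $G/N$-semisimple. When $G$ is $K$-pure, I would run the adjoint-representation argument over $K$: by Lemma \ref{L:existence of local group} there is a differentiable (indeed strictly differentiable) normal local subgroup with respect to $K$, giving $\ad_K\colon G \to \gl_n(K)$ with $n$ the $K$-rank; set $H_1 = \ker(\ad_K)$ and $H_2 = C_G(H_1)$, both definable over $\CK_0$ by Remark \ref{R: H1H2 over the same parameters}. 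Definable semisimplicity forces $Z(H_i)$ finite hence $H_1 \cap H_2$ finite and the two centralize each other; using Proposition \ref{P:dim C_G(g)=dimG} and Corollary \ref{C:dugaldetal} one shows $\ker(\ad_K)$ has finite index behaviour controlling $G/(H_1 H_2)$, and a Lie-algebra dimension count (as in the o-minimal Proposition \ref{P:general H1,H2 into o-minimal}, now over $K$ using the $K$-differentiable structure) shows $H_1 H_2$ has finite index, while the semisimplicity of $H_2$ and of $G$ forces $H_1$ to be \emph{finite} — indeed if $H_1$ were infinite it would be locally almost strongly internal to $K$ of positive rank, but then it contributes to the $K$-rank strictly beyond $H_2$, contradicting $K$-criticality exactly as in the V-minimal argument. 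Absorbing the finite $H_1$ and the finite center into $N$, and possibly composing the $K$-case and $\bk$-case outputs (when $G$ sees both sorts), I obtain the two normal subgroups $H_1 \trianglelefteq G/N$ ($K$-linear) and $H_2$ ($\bk$-linear, definably semisimple) with $H_1 \cap H_2 = \{e\}$, mutual centralization, and $H_1 H_2$ of finite index.

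The main obstacle I anticipate is the bookkeeping needed to \emph{combine} the $K$-sort and $\bk$-sort decompositions into a single pair $(H_1, H_2)$ while keeping everything normal in one common finite quotient $G/N$ and defined over $\CK_0$: the $\bk$-analysis of Proposition \ref{P: k} already splits off a $\bk$-linear factor and a piece of almost $\bk$-rank $0$, and one must then recognize that this remaining piece is $K$-pure (its almost $\Gamma$- and almost $K/\CO$-ranks having been killed by the earlier elimination steps) and feed it into the $K$-argument, checking that the two finite normal subgroups and finite-index passages are compatible. The statement that ``$H_1$ is finite'' in the pure-$K$ case — i.e. that a definably semisimple $K$-pure group embeds (up to finite kernel and finite index) into $\gl_n(K)$ via $\ad_K$ — is the technical heart, and it is precisely where one needs the $1$-h-minimal rigidity (Fact \ref{F:AcHa}) together with the fine-grained control over $\dim$ versus $\dpr$ from Corollary \ref{C:every thing is strongly internal to K} and Lemma \ref{l: pure 0dim}. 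Once that is in place, the final sentence is immediate: in the $p$-adically closed case (or whenever the almost $\bk$-rank of $G$ is $0$) the $\bk$-linear factor $H_2$ is trivial modulo a finite group, so after enlarging $N$ we get $H_1 = G/N$.
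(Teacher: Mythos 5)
Your roadmap matches the paper's: pass to a finite quotient to get a $D$-group (Fact \ref{F: existence of finite normla to get D-group}, Corollary \ref{C: semisimple in quotients}), eliminate $\Gamma$ via Proposition \ref{P: Gamma}, eliminate $K/\CO$ (in the absence of $\bk$) via Proposition \ref{P: K/O}, use Proposition \ref{P: k} to split off a $\bk$-linear piece, and then mod out by $\ker(\ad_K)$ on the $K$-pure complement. But the crucial step -- that $\ker(\ad_K)$ is \emph{finite} -- is argued incorrectly. You claim that if $\ker(\ad_K)$ were infinite, it would be ``locally almost strongly internal to $K$ of positive rank'' and would ``contribute to the $K$-rank strictly beyond $H_2$, contradicting $K$-criticality exactly as in the V-minimal argument.'' That analogy fails: in the V-minimal $\bk$-case one uses the fact (cf.\ \cite[Proposition 6.2]{HaHaPeGps}) that a group of positive almost $\bk$-rank has a definable \emph{normal subgroup} strongly internal to $\bk$, so $B_1 \cdot G_2 \cong B_1 \times G_2$ is a group strongly internal to $\bk$, and dp-rank is additive for subsets of $\bk^n$. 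For $K$ no such structural fact is available -- the infinitesimal object is a \emph{local} group, not a global definable subgroup -- and $\ker(\ad_K)$ having positive $K$-rank only gives a $K$-critical subset, not a subgroup; there is no reason $\dpr(B_1 \cdot X)$ should be $\dpr(B_1)+\dpr(X)$, nor that $B_1\cdot X$ is strongly internal to $K$. The paper's argument (Claim \ref{C: last claim}) is entirely different: by $K$-purity and Corollary \ref{C:dugaldetal}, every $g\in\ker(\ad_K^{H_1})$ has $C_G(g)$ of finite index in $H_1$; Baldwin--Saxl (Corollary \ref{Baldwin Saxl}) gives a finite-index $\widetilde H_1\trianglelefteq G$ centralizing $\ker(\ad_K^{H_1})$, so $\widetilde H_1\cap\ker(\ad_K^{H_1})\subseteq Z(\widetilde H_1)$ is a definable normal abelian subgroup of $G$ -- finite by semisimplicity -- and hence $\ker(\ad_K^{H_1})$ is finite. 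You cite \ref{P:dim C_G(g)=dimG} and \ref{C:dugaldetal} but then deploy them to control the index of $H_1 H_2$ (for which the paper needs nothing new -- that already comes from Proposition \ref{P: k}), rather than to prove the kernel is finite.

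A second, smaller gap: after the $\bk$-split, you assert that the almost $K/\CO$-rank of the complementary factor $H_1$ has ``been killed by the earlier elimination steps.'' It has not. The earlier step only handles $G$ itself being internal to $K/\CO$ but not to $\bk$; once $G$ is split, one must \emph{re-apply} Proposition \ref{P: K/O} to $H_1/N_1$ (using the definable family of conjugations by all of $G$, so that the abelian subgroup produced is normal in $G$ and contradicts $G$-semisimplicity) and only now is the hypothesis ``not locally strongly internal to $\bk$'' satisfied, because $H_1$ has almost $\bk$-rank $0$. The paper does exactly this in the Claim inside the proof of Theorem \ref{T: main}.
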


\begin{proof}
Let $\CK\succ \CK_0$ be a sufficiently saturated elementary extension. Throughout the proof below, we use $G$ to denote $G(\CK)$. As a first approximation we prove the existence of $N, H_1,H_2\sub G$ as above, all defined over $K_0$, satisfying (1), (2) and (4), such that $H_1$ is $K$-pure. We shall later show that after modding out by another finite subgroup $H_1$ becomes $K$-linear.

We divide the proof into two cases:

\underline{(a) $\CK_0$ is $V$-minimal or power bounded $T$-convex.}
\vspace{.2cm} 

In this case, either by \cite[\S 3]{JohCminimalexist} in the V-minimal case, or by Proposition \ref{P:exists-infty V-min or T-conv} in the $T$-convex power bounded case, $\CK^{eq}$ eliminates $\exists^\infty$ and therefore $G$ is definably semisimple.

   By Fact \ref{F: existence of finite normla to get D-group}, there exists a $K_0$-definable finite normal subgroup $N'\trianglelefteq G$ such that in $G/N'$ the almost $K/\CO$-rank and the $K/\CO$-rank agree (they may be zero); by Lemma \ref{L:passage of D-group under finite-to-one}(4) this still holds if we further quotient by finite normal subgroups. Replace $G$ by $G/N'$ (using Corollary \ref{C: semisimple in quotients} which says it is still definably semisimple).

     Assume first that $G$ is locally almost strongly internal to $\bk$. 
    By Proposition \ref{P: k}, there is a finite normal subgroup $N_0\trianglelefteq G$ definable over $K_0$, and $K_0$-definable normal subgroups $H_1,H_2\trianglelefteq G/N_0$  such that $H_1\cap H_2=\{e\}$,  $H_1\cdot H_2$ has finite index in $G/N_0$ and $H_1, H_@$ centralize each other. Furthermore, $H_2$ is $K_0$-definably isomorphic to a $\bk$-linear definably semisimple group and the almost $\bk$-rank of $H_1$ is $0$. Since $G$ is definably semisimple, so is $G/N_0$ (Corollary \ref{C: semisimple in quotients}). Replace $G$ by $G/N_0$.

    If the almost $\bk$-rank of $G$ is $0$ then we just take $H_1=G$ and $H_2=\{e\}$. 

\begin{claim}
        The almost $K/\CO$-rank of $H_1$ is $0$.
    \end{claim}
    \begin{claimproof}
Assume towards contradiction that $H_1$ is almost locally strongly internal to $K/\CO$. By Fact \ref{F: existence of finite normla to get D-group}, there exists a finite $N_1\trianglelefteq H_1$,  invariant under conjugation in $G$ (namely normal in $G$), such that $H_1/N_1$ is locally strongly internal to $K/\CO$. Notice that $G$ acts on $H_1/N_1$ by $\sigma_g(hN_1):=h^gN_1$.

Since the almost $\bk$-rank of $H_1$ is $0$, so  is the almost $\bk$-rank of $H_1/N_1$.
  We now apply Proposition \ref{P: K/O} to $H_1/N_1$ and the definable  family of automorphisms $\CA=\{\sigma_g:g\in G\}$, and obtain a definable infinite normal abelian subgroup of $H_1/N_1$ which is $\CA$-invariant. By Corollary \ref{C: semisimple in quotients}, $H_1$ contains a definable infinite normal abelian subgroup which is invariant under conjugation in $G$, namely normal in $G$. This contradicts the semisimplicity of $G$.
\end{claimproof}

By Theorem \ref{T:johnson question}(2a), the almost $\Gamma$-rank of  $G$ is $0$ and therefore the same is true for $H_1$. So $H_1$ is $K$-pure, as claimed.

This completes the proof of our approximation to the theorem, when $\CK$ is either $V$-minimal or power bounded $T$-convex.

\underline{(b) Assume now that $\CK$ is $p$-adically closed.}

\vspace{.2cm}

In this case, we just need to show that $G$ is $K$-pure (and then we take $H_1=G$). However, since $\CK$ does not eliminate $\exists^\infty$ we cannot assume a-priori that it is definably semisimple. 

Again, by Theorem \ref{T:johnson question}(2a), the almost $\Gamma$-rank of $G$ is $0$, for otherwise $G$ would have a $K_0$-definable infinite normal abelian subgroup, whose $K_0$-points would contradict the definable semisimplicity of $G(\CK_0)$. 

Since the almost $\bk$-rank of $G$ is obviously $0$, it follows from Theorem \ref{T:johnson question} 2(b), that the almost $K/\CO$-rank of $G$ must be $0$. Indeed, if not, then once again $G$ would contain an infinite $K_0$-definable normal abelian subgroup whose $K_0$-points would contradict the semisimplicity of $G(\CK_0)$.

We therefore showed, in the $p$-adically closed case, that $G$ is $K$-pure. This ends the proof of the approximated statement in all cases.
\\

We now proceed with the proof of Theorem \ref{T: main}. As we showed above, we have a finite $N\trianglelefteq G$, and  $H_1, H_2\trianglelefteq G/N$. all defined over $K_0$, satisfying (1), (2), (4), with $H_1$ being $K$-pure (in particular, $H_1$ is locally strongly internal to $K$). In the $p$-adically closed case, we take $H_1=G/N$ and $H_2=\{e\}$.

By Corollary  \ref{C: semisimple in quotients}, $G/N$ is still definably semisimple. For clarity of notation, we replace $G$ by $G/N$.

Note that $\dim G=\dim H_1+\dim H_2$, and since $\dim H_2=0$, we have $\dim G=\dim H_1$. By Lemma \ref{L:nu of subgroup}, $\nu_K(G)=\nu_K(H_1)$. By Lemma \ref{L:existence of local group},  $G$ contains a definable, differentiable normal local subgroup, with respect to $K$, which -- as $\dim(G)=\dim(H_1)$ -- we may assume to be contained in $H_1$. Thus we have  an associated $K_0$-definable map $\ad_K: G\to \gl_n(K)$, with $n=\dim H_1$. Let $\ad_K^{H_1}=\ad_K\restriction H_1$.

    \begin{claim}\label{C: last claim}
        $\ker(\ad_K^{H_1})$ is a finite normal subgroup of $G$.
    \end{claim}
    \begin{claimproof}
        Since $H_1$ is $K$-pure, by Corollary \ref{C:dugaldetal}, for every $h\in \ker(\ad_K^{H_1})$, $C_G(h)$ has finite index in $H_1$. By Corollary \ref{Baldwin Saxl}, there exists a $\CK_0$-definable subgroup $\widetilde H_1\trianglelefteq H_1$ of finite index, that is also normal in $G$, such that $\widetilde H_1\leq C_{H_1}(\ker \ad_K^{H_1})$ and thus $\widetilde H_1 \cap \ker(\ad_K^{H_1})\subseteq Z(\widetilde H_1)$.
        Since   $\ker(\ad_K^{H_1})=\ker(\ad_K)\cap H_1$ it is obviously normal in $G$.

        Thus, $\widetilde H_1\cap \ker(\ad_K^{H_1})$ is a $\CK_0$-definable normal abelian subgroup of $G$, so it must be finite by semisimplicity of $G(\CK_0)$.
        
        Finally, since $\widetilde H_1$ has finite index in $H_1$ it follows that $\ker(\ad_K^{H_1})$ is finite, as claimed.\footnote{The argument given in the claim shows that for $K$-pure groups, the kernel of $\ad$ has a (relatively) open normal abelian subgroup of finite index. This is true in particular for  $p$-adic Lie groups definable in the $p$-adic field. Recently,  \cite{helge-example},  Gl\"ockner constructed an example of a $1$-dimensional $p$-adic Lie group $G$ for which this fails. In fact, in his example $\ker (\ad_K)=G$, but $G$ contains no open normal abelian subgroup.}
    \end{claimproof}

Clearly, $H_1/\ker(\ad_K^{H_1})$ is definably isomorphic, over $K_0$, to a subgroup of $\gl_n(K)$, with $n=\dim H_1$. Since $\ker(\ad_K^{H_1})\cap H_2=\{e\}$, we can replace $G$ by $G/\ker(\ad_K^{H_1})$ and obtain $H_1,H_2$ as needed.

Since all the groups and maps are defined over $K_0$ the theorem now descends to $G(\CK_0)$ as well.  This ends the proof of Theorem \ref{T: main}.
\end{proof}

\begin{remark}\label{R: result is ss}
In Theorem \ref{T: main} it is not claimed that $H_1$ is definably semisimple, though we believe it is true. We expect a standard proof using the tools developed in the unpublished paper \cite{GisHalMac} (and \cite[\S 6]{AcHa}). Note, however, that if $G$ in the theorem is definably connected  or has almost $\bk$-rank  $0$ then it follows easily that $H_1$ is definably semisimple.
\end{remark}

As a special case, we get: 
\begin{corollary}
    Let $\CK_0$ be as above. If a group $G$, definable in $\CK_0$,  is definably simple, then it is definably isomorphic to either a $K_0$-linear group or a  $\bk_0$-linear $H$. 
\end{corollary}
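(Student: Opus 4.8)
The plan is to read the corollary off directly from Theorem \ref{T: main}. First I would dispose of the degenerate case: if $G$ is finite then its regular representation embeds it definably into $\gl_n(K_0)$ (with $n=|G|$), so $G$ is $K_0$-linear and there is nothing more to prove. Hence assume from now on that $G$ is infinite. Since $G$ is definably simple it has no proper non-trivial definable normal subgroup, so its only candidate for an infinite definable normal abelian subgroup would be $G$ itself, which is excluded as $G$ is non-abelian. Thus $G$ is definably semisimple, and Theorem \ref{T: main} applies to it.

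Theorem \ref{T: main} then yields a finite normal $N\trianglelefteq G$ and definable normal subgroups $H_1,H_2\trianglelefteq G/N$ with $H_1\cap H_2=\{e\}$, $H_1$ and $H_2$ centralizing each other, $H_1\cdot H_2$ of finite index in $G/N$, with $H_1$ definably isomorphic to a subgroup of $\gl_n(K_0)$ and $H_2$ definably isomorphic to a subgroup of $\gl_n(\bk_0)$. Now $N$ is a definable normal subgroup of the infinite group $G$, hence a proper one, and definable simplicity forces $N=\{e\}$; so $G/N=G$ and $H_1,H_2$ are definable normal subgroups of $G$ itself.

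Applying definable simplicity once more, each $H_i$ is either $\{e\}$ or $G$. They cannot both be $\{e\}$, for then $H_1\cdot H_2=\{e\}$ would be of finite index in the infinite group $G$; and they cannot both be $G$, since $H_1\cap H_2=\{e\}\neq G$. Hence exactly one of them equals $G$: if $H_1=G$, then (3) of Theorem \ref{T: main} exhibits $G$ as definably isomorphic to a subgroup of $\gl_n(K_0)$, i.e.\ $G$ is $K_0$-linear; if $H_2=G$, then (4) gives that $G$ is $\bk_0$-linear. Either way the conclusion holds.

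This argument is essentially bookkeeping once Theorem \ref{T: main} is available, so there is no genuine obstacle; the only points requiring a moment's care are the observation that a definably simple group is a fortiori definably semisimple (so that the theorem applies), and the fact that the subgroups $N$, $H_1$, $H_2$ produced by the theorem are themselves \emph{definable} normal subgroups of $G$, which is precisely what lets us invoke the hypothesis of definable simplicity on them.
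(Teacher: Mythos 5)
Your proof is correct and is precisely the deduction the paper intends: the paper states this corollary immediately after Theorem \ref{T: main} with the phrase "As a special case, we get," giving no separate argument, and your proof spells out exactly that specialization — definable simplicity gives definable semisimplicity, forces $N=\{e\}$, and then forces exactly one of $H_1, H_2$ to equal $G$.
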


We also have the following.

\begin{corollary}\label{C: ss is fo} 
Let $\CK_0\prec  \CK$ be as above.
    Let  $G$ be a $K_0$-definable group. Then $G(K_0)$ is definably semisimple  if and only if $G(K)$ is. 
\end{corollary}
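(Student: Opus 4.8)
\textbf{Proof plan for Corollary \ref{C: ss is fo}.}

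The plan is to deduce both directions from Theorem \ref{T: main} together with Corollary \ref{C: semisimple in quotients}, observing that the statement "$G$ is definably semisimple" is essentially a finite disjunction of first-order conditions once we know the structural decomposition. First I would treat the easy direction: suppose $G(\CK)$ is definably semisimple. If $N \trianglelefteq G(K_0)$ is a $K_0$-definable infinite normal abelian subgroup, then $N(\CK)$ is a $K$-definable normal abelian subgroup of $G(\CK)$, and it is infinite because, by Fact \ref{F: Reduction to sorts}, an infinite definable set remains infinite in elementary extensions (more concretely, $N$ being infinite is witnessed by $N$ being locally almost strongly internal to one of the distinguished sorts, a first-order phenomenon, or simply: an infinite definable set over $K_0$ stays infinite over $\CK$ since $\CK_0 \prec \CK$). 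This contradicts definable semisimplicity of $G(\CK)$. Hence $G(K_0)$ is definably semisimple.

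For the converse, suppose $G(K_0)$ is definably semisimple. By Theorem \ref{T: main} (whose conclusion, as the theorem is stated, applies directly to $\CK_0$ without assuming saturation), there is a finite normal $N \trianglelefteq G$ and normal $H_1, H_2 \trianglelefteq G/N$, all $K_0$-definable, with $H_1 \cap H_2 = \{e\}$, $H_1 \cdot H_2$ of finite index in $G/N$, $H_2$ definably semisimple and $\bk_0$-linear, and $H_1$ a $K_0$-linear group (and $H_1 = G/N$ when the almost $\bk$-rank of $G$ is $0$). Now I would argue as follows in $\CK$: by Corollary \ref{C: semisimple in quotients}, to show $G(\CK)$ is definably semisimple it suffices to show $(G/N)(\CK)$ is (here $N$ is finite and $G/N$ is obtained from $G$ by modding out the finite normal subgroup $N$; the other direction of Corollary \ref{C: semisimple in quotients} gives that an infinite definable normal abelian subgroup of $G$ would produce one in $G/N$ only up to the conventions of that corollary, so instead I would invoke Corollary \ref{C: semisimple in quotients} in the direction: $G/N$ definably semisimple $\Rightarrow$ all finite quotients are, and conversely use that a normal abelian subgroup of $G$ of positive dp-rank lifts/descends appropriately — more carefully, Corollary \ref{C: semisimple in quotients} literally states that if $G/H$ has a $B$-definable normal abelian subgroup of dp-rank $k$ invariant under a family of automorphisms then so does $G$, so the contrapositive gives: $G$ definably semisimple $\Rightarrow G/H$ definably semisimple, which is the direction recorded in the last sentence of that corollary). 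So it is enough to prove $(G/N)(\CK)$ is definably semisimple, and for this, since $H_1 \cdot H_2$ has finite index, it suffices to show $H_1(\CK)$ and $H_2(\CK)$ have no infinite normal (in $G/N$) abelian definable subgroups; in fact any infinite normal abelian definable subgroup of $(G/N)(\CK)$ intersects $(H_1 \cdot H_2)(\CK)$ in an infinite one, and projects to infinite normal abelian subgroups of $H_1$ or $H_2$.

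The key point is then that $H_2(\CK)$, being definably isomorphic to a subgroup of $\gl_n(\bk_0)$ — and $\bk$ being either o-minimal (in the $T$-convex case) or a pure algebraically closed field (in the $V$-minimal case) or finite (in the $p$-adic case, where $H_2$ is trivial anyway) — has elimination of $\exists^\infty$ in the relevant sort, so "$H_2$ is definably semisimple" is a first-order property preserved from $\CK_0$ to $\CK$; and similarly $H_1(\CK)$, being $K_0$-linear, i.e. definably isomorphic to a subgroup of $\gl_n(K_0)$, sits in the valued field sort $K$ which is a geometric structure with uniform finiteness, so again definable semisimplicity of $H_1$ transfers. More efficiently, I would just say: for $H$ a $K_0$-definable subgroup of $\gl_n(K_0)$ or of $\gl_n(\bk_0)$, and for a $K_0$-definable family $\{M_s\}$ of subsets of $H$, the property "some $M_s$ is an infinite normal abelian subgroup" is first-order (finiteness being uniformly definable in these sorts), hence holds in $\CK_0$ iff it holds in $\CK$; applying this to the (finite, by our structural results) collection of candidate families produces the transfer. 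The main obstacle I anticipate is bookkeeping the normality conditions correctly through the quotient by $N$ and through the finite-index subgroup $H_1 \cdot H_2$: one must check that an infinite normal abelian subgroup of $(G/N)(\CK)$ really does yield one inside $H_1$ or $H_2$ that is normalized by enough of $G$, which is where I would lean on the centralizing property $H_1 \cap H_2 = \{e\}$, $[H_1, H_2] = 1$, together with the observation that the intersection of such a subgroup with $C_{G/N}(\text{the other factor})$ already lands inside the relevant factor up to finite index. Once that is arranged, the transfer is immediate from the geometric/o-minimal/stable nature of the two ambient sorts.
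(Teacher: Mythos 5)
Your plan is correct in spirit but follows a genuinely different route from the paper's proof, and there is one localized logical confusion worth flagging.

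\textbf{Comparison with the paper.} The paper first disposes of the $V$-minimal and power bounded $T$-convex cases in one stroke, by invoking Proposition \ref{P:exists-infty V-min or T-conv} and Johnson's \cite[\S3]{JohCminimalexist}: in those cases $\CK^{eq}$ eliminates $\exists^\infty$ outright, so definable semisimplicity is trivially elementary. Only the $p$-adic case remains. There, the almost $\bk$-rank of $G$ is automatically $0$, so the decomposition of Theorem \ref{T: main} degenerates: $H_2 = \{e\}$ and $G/H_0$ is $K_0$-linear in one piece. The transfer is then immediate from uniform finiteness in the valued field sort, and from the easy observation that since $H_0$ is finite, definable semisimplicity lifts from $G/H_0$ to $G$. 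Your plan instead treats all three cases uniformly via the full $H_1\cdot H_2$ decomposition and pushes the transfer through both factors, using uniform finiteness in the valued field sort (for $H_1$) and in $\bk$ (for $H_2$, where $\bk$ is o-minimal, algebraically closed, or finite). This is viable, but it both forgoes the shortcut of Proposition \ref{P:exists-infty V-min or T-conv} and creates the bookkeeping burden you correctly anticipate. That burden is resolvable along the lines you sketch: an infinite normal abelian $A\trianglelefteq (G/N)(\CK)$ meets the finite-index subgroup $H_1\cdot H_2 \cong H_1\times H_2$ in an infinite normal abelian subgroup $A'$; since conjugation by $G/N$ preserves $H_1$ and $H_2$ and hence commutes with the two coordinate projections, $\pi_i(A')$ is normal abelian in $G/N$ and at least one is infinite. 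The paper simply never has to carry this out because in the $p$-adic case there is no $H_2$.

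\textbf{A localized error.} You write ``by Corollary \ref{C: semisimple in quotients}, to show $G(\CK)$ is definably semisimple it suffices to show $(G/N)(\CK)$ is,'' and then spend a long parenthetical trying to reconcile this with the fact that the corollary's stated direction is $G$ semisimple $\Rightarrow G/N$ semisimple. The reconciliation you offer is circular and does not actually settle the point. The direction you need ($(G/N)$ semisimple $\Rightarrow G$ semisimple, for finite $N$) does not come from Corollary \ref{C: semisimple in quotients} at all; it is elementary: if $A\trianglelefteq G$ is infinite definable normal abelian, then its image $AN/N \cong A/(A\cap N)$ in $G/N$ is definable, normal, abelian, and infinite because $A\cap N$ is finite. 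Stating this cleanly would remove the confusion. (Your easy direction is fine; the appeal to Fact \ref{F: Reduction to sorts} there is unnecessary, since infiniteness is preserved under elementary extension for trivial reasons.)
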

\begin{proof} 
    By Proposition \ref{P:exists-infty V-min or T-conv} and  \cite[\S 3]{JohCminimalexist}, we may assume that $\CK_0$ is $p$-adically closed.

    If $G(K)$ is definably semisimple, then so is $G(K_0)$.  So we assume that $G(K_0)$ is  definably semisimple and show that so is $G(K)$.

     By Theorem \ref{T:johnson question}(2),  $G$ is $K$-pure; so by Theorem \ref{T: main}, there exists a finite normal subgroup $H_0\trianglelefteq G$ with $G/H_0 (\CK_0)$ definably isomorphic to a $K_0$-linear group. Note that $(G/H_0)(\CK_0)$ is  definably semisimple by Corollary \ref{C: semisimple in quotients}. As $\CK_0$ eliminates $\exists^\infty$ it follows that $(G/H_0)(\CK)$ is definably semisimple as well. However, since $H_0$ is finite,  $G(\CK)$ is definably semisimple.  
\end{proof}

\appendix
\section{Auxiliary results on power-bounded $T$-convex valued fields}
In this appendix, we prove two results on power bounded $T$-convex valued fields. The first, stating that definable subsets of $K$ are finite boolean combinations of ball cuts, is due to Holly \cite[Theorem 4.8]{holly} in the case of RCVF. In full generality it was proved by Tyne,  \cite[Page 94]{tynephd}, but never published. Tyne's proof builds on a deep result, dubbed the valuation property (also not published in the required generality). As a service to the community, we provide an alternative, more direct  proof. The second result shows, using a theorem of Johnson's \cite{JohCminimalexist}, uniform finiteness for all imaginary sorts.

From now on, $\CK$ denotes a power bounded $T$-convex valued field. We remind some standard notation. 

\subsection{Definable subsets of $K$}
If $C\subseteq K$ is any convex set, by $x<C$ we mean that $x<y$ for all $y\in C$ and  $x\leq C$ is defined similarly. For convex sets $C_1, C_2$ we write  $C_1<C_2$ if $x<y$ for any $x\in C_1$ and $y\in C_2$, similarly $C_1\leq C_2$.

By a \emph{definable cut} in $K$ we mean a pair of disjoint definable convex sets $\CC=(C_1,C_2)$, such that $C_1<C_2$ and $C_1\cup C_2=K$. A cut  $\CC$ is \emph{realized} if either $C_1$ has a maximum or $C_2$ has minimum.

For a definable function $f$ from $C_1$ (or some open interval containing it) to either $K$ or $\Gamma$ we say that  $\lim_{x\to \CC^-}f(x)=t_0$, if for every $t_1<t_0<t_2$  there exists $x\in C_1$ such that for all $x'>x$ in $C_1$, $t_1<f(x')<t_2$ (and likewise $\lim_{x\to \CC^+}$). 

Following \cite{holly}, we define: 
\begin{definition}
    A definable cut $\CC=(C_1,C_2)$ in $K$ is {\em a ball cut} if there is a  ball $B$ (possibly a point) such that either $C_1=\{x\in K: x<B\}$ (and then $C_2=\{x\in K: B\leq x\}$), or $C_2=\{x\in K: B<x\}$ (and then $C_1=\{x\in K: x\leq B\}$.
\end{definition}

By o-minimality of $\Gamma$, for every definable set $X$, bounded above or below, and $x\in X$,  there exists a maximal ball around $x$ which is contained in $X$. We leave the following easy observation to the reader.

\begin{lemma}\label{L:at most two closed max balls}
    Let $C\subseteq K$ be a convex definable subset and let $b_1,b_2,b_3$ be maximal balls in $C$ with $b_1<b_2<b_3$. Then $b_2$ is necessarily an open ball.
\end{lemma}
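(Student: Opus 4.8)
The plan is to argue by contradiction: assuming $b_2$ is closed, say $b_2 = B_{\geq\gamma}(a)$, I would exhibit a strictly larger ball around $a$ that is still contained in $C$, contradicting the maximality of $b_2$ among balls inside $C$ (which is what "maximal ball in $C$" means, by the remark preceding the lemma). The point of having $b_1$ on the left and $b_3$ on the right is precisely that convexity of $C$ will let me enlarge $b_2$ in both directions simultaneously without leaving $C$.

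First I would pick any $p\in b_1$ and any $r\in b_3$. Since $b_1<b_2<b_3$ and $a\in b_2$, we get $p<a<r$; and since distinct balls in an ultrametric are disjoint, $p,r\notin b_2=B_{\geq\gamma}(a)$, so $v(p-a)<\gamma$ and $v(r-a)<\gamma$. Put $\delta=\max\{v(p-a),v(r-a)\}<\gamma$. Next I would check that $B_{>\delta}(a)\subseteq(p,r)$: if $v(x-a)>\delta$, then $v(x-a)>v(p-a)$, so (higher valuation meaning strictly smaller absolute value in these real-closed-type valued fields) $|x-a|<|p-a|=a-p$, hence $p<x$; symmetrically $v(x-a)>v(r-a)$ gives $|x-a|<r-a$, hence $x<r$. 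Since $p,r\in C$ and $C$ is order-convex, $(p,r)\subseteq[p,r]\subseteq C$, so $B_{>\delta}(a)\subseteq C$. Finally, because $\delta<\gamma$ and $\Gamma$ is densely ordered (we are in the power bounded $T$-convex, hence o-minimal $\Gamma$, setting of this appendix), the open ball $B_{>\delta}(a)$ properly contains $B_{\geq\gamma}(a)=b_2$ — it contains elements at valuative distance strictly between $\delta$ and $\gamma$ from $a$, which lie outside $b_2$. This contradicts the maximality of $b_2$, so $b_2$ cannot be closed; being a (nontrivial) maximal ball, it is therefore open.

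I do not expect a serious obstacle here: the only points needing care are the valuation/order dictionary ($v(y-a)$ large $\iff y$ close to $a$ in the order) and the density of $\Gamma$, both routine in the present context; the degenerate case $b_2=\{a\}$ (a point, viewed as a closed ball of radius $\infty$) is covered by the same computation with $\gamma=\infty$, and is excluded for the same reason.
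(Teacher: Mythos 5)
Your proof is correct. The paper leaves this lemma to the reader with no proof provided, so there is nothing to compare against; the argument you give — reducing a closed (or singleton) middle ball to a contradiction by exhibiting, via the valuation/order dictionary in a $T$-convex field ($v(y)>v(z)\Rightarrow |y|<|z|$) and density of $\Gamma$, a strictly larger ball around $a$ trapped between $p\in b_1$ and $r\in b_3$ and hence inside $C$ — is the natural and complete one.
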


\begin{proposition}\label{ball interval} If $\CC=(C_1,C_2)$ is definable cut with $C_1,C_2 \neq \emptyset$, then $\CC$ is a ball cut. As a corollary, every definable subset of $K$ is a boolean combination of balls and intervals.
\end{proposition}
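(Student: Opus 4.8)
The plan is to reduce everything to the geometry of \emph{maximal balls}. We may assume $\CK$ is sufficiently saturated, since being a ball cut is expressible by a first-order formula in the parameters defining $\CC$. Note that $C_1$ is an initial segment and $C_2$ a final segment of $K$, and both are coinitial, resp.\ cofinal, in $K$; in particular no proper ball equals $C_1$ or $C_2$. If $\CC$ is realized, say by $\theta\in K$, then $C_1=\{x:x<\theta\}$ or $C_1=\{x:x\le\theta\}$, so $\CC$ is a ball cut with $B=\{\theta\}$; assume henceforth $\CC$ is not realized. For $x\in C_1$ (bounded above by any element of $C_2$), definable completeness of the o-minimal group $\Gamma$ yields a maximal ball $b(x)\subseteq C_1$ containing $x$; the $b(x)$ are exactly the maximal balls of $C_1$, and they partition $C_1$ into disjoint linearly ordered pieces, and symmetrically for $C_2$. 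Since $C_1$ is coinitial there is no $\le$-least maximal ball of $C_1$, so by Lemma \ref{L:at most two closed max balls} at most one maximal ball of $C_1$ is closed and, if so, it is the $\le$-greatest; similarly for $C_2$. The proposition reduces to the dichotomy: \emph{either $C_1$ has a $\le$-greatest maximal ball $b^*$, or $C_2$ has a $\le$-least one $b^*$} — for in the first case $C_1=\{x:x\le b^*\}$, $C_2=\{x:b^*<x\}$, and in the second $C_1=\{x:x<b^*\}$, $C_2=\{x:b^*\le x\}$, so $\CC$ is a ball cut.

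To prove the dichotomy I would work with the family $\CS$ of balls that \emph{straddle} $\CC$, i.e.\ meet both $C_1$ and $C_2$. It is nonempty ($B_{\ge v(a-c)}(a)\in\CS$ for $a\in C_1$, $c\in C_2$) and, as any two straddlers intersect hence are nested, it is a chain, ordered by its radii; set $B^*=\bigcap\CS$. First suppose $\CS$ has a $\subseteq$-least closed member $B^*=B_{\ge\gamma}(c)$. Its maximal proper sub-balls, the $B_{>\gamma}(z)$ for $z\in B^*$, do not straddle $\CC$ by minimality, hence each lies entirely in $C_1$ or in $C_2$; as $C_1$ is an initial segment, those in $C_1$ form an initial segment of the linearly ordered set of these sub-balls. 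Scaling by an element of value $\gamma$ identifies this ordered set, definably and (up to sign) order-preservingly, with a definable subset of the residue field $\bk$, which is o-minimal \cite{vdDries-Tconvex}; so the induced cut is realized, giving a $\le$-greatest sub-ball $b^*\subseteq C_1$ or a $\le$-least one $b^*\subseteq C_2$. Then a short argument — using that every ball properly containing $B^*$ also straddles $\CC$, that $B^*$ meets $C_2$ (resp.\ $C_1$), and that $\CC$ is not realized — shows $b^*$ is the $\le$-greatest maximal ball of $C_1$ (resp.\ $\le$-least of $C_2$), as wanted.

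The remaining case is when $\CS$ has no $\subseteq$-least element; then $B^*$ is not a ball and the radii in $\CS$ are cofinal up to some unattained $\gamma^*\le\infty$. If $B^*\cap K\ne\emptyset$, pick $\theta\in B^*\cap K$: for $z\ne\theta$ in $K$ choose $B\in\CS$ of radius exceeding $v(z-\theta)$, so $\theta\in B$ but $z\notin B$; since $B$ is convex and meets both $C_1$ and $C_2$, whether $z\in C_1$ or $C_2$ is forced by the sign of $z-\theta$, whence $\CC$ is realized at $\theta$ — a contradiction. Hence $B^*\cap K=\emptyset$, and the remaining task is to \emph{exclude} this: the cut is approached to arbitrarily fine valuative precision from at least one side, producing a definable function $\rho$ on a final part of $C_1$ (the valuative radius of $b(x)$) with $\rho(x)\to\gamma^*$; power-boundedness of $T$ — equivalently, in our setting, $1$-$h$-minimality of $\CK$ (cf.\ \cite{hensel-min}) — is exactly what bounds the admissible asymptotics of such functions, and should let one produce a single ball containing the cut which, after deleting the cut, lies in $C_1$ or in $C_2$, thereby realizing $\CC$ or reducing to the previous case. \textbf{The hard part will be this last step}: ruling out ``spiralling'' definable cuts. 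It is genuinely where the hypothesis is used, since for non–power-bounded $T$ (e.g.\ with exponentiation) such cuts exist and the proposition fails.

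For the corollary: by weak o-minimality of power-bounded $T$-convex valued fields \cite{vdDries-Tconvex}, every definable $X\subseteq K$ is a finite union of definable convex sets; each such set is an interval whose endpoints are elements of $K$, $\pm\infty$, or definable cuts, and by the proposition every definable cut is a ball cut, so each convex piece, and hence $X$ itself, is a boolean combination of balls and intervals.
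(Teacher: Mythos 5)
Your plan is structurally sound and correctly isolates where power-boundedness must enter, but there is a genuine gap: the case you flag as ``the hard part'' is, in fact, the entire content of the proposition, and you do not prove it. In your notation, case (a) (the chain $\CS$ of straddling balls has a $\subseteq$-least element, necessarily a closed ball by the argument you sketch) reduces to o-minimality of $\bk$, and this matches the paper's Claim \ref{C: ball cut}. Your case (b) ($\CS$ has no least element but $B^*=\bigcap\CS\ne\emptyset$) has a small slip: $B^*$ \emph{is} a (possibly degenerate) ball, being a nested intersection of balls with a common point, so the conclusion is not a contradiction with the unrealizedness of $\CC$ but rather that $B^*$ itself gives a ball cut (it cannot straddle, as that would make it the least element of $\CS$). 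This is fixable and does not affect the structure.

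The serious gap is case (c), $B^*=\emptyset$. Here you offer only a heuristic (``power-boundedness \dots should let one produce a single ball containing the cut'') without an argument. This is precisely where the paper does all the work: it introduces the definable function $r(x)$, the valuative radius of the maximal ball $B_x\subseteq C_1$ through $x$, lifts it to a definable $h:C_1\to K$ via Skolem functions with $v(h(x))=r(x)$, and then invokes \cite[Corollary~2.8]{vdDries-Tconvex} to replace $h$ on an end segment by an $\CL_{omin}$-definable function $\widehat h$. The two subcases of the paper -- $r(x)\to\infty$ (your $\gamma^*=\infty$) handled by o-minimality of the $\CL_{omin}$-reduct applied to $\widehat h^{-1}$, and $r(x)\to r_0\in\Gamma$ (your $\gamma^*<\infty$) handled via \cite[Proposition~4.2]{vdDries-Tconvex} plus a second appeal to the $\bk$-claim -- are exactly the two halves of your case (c). Corollary 2.8 and Proposition 4.2 are the results that encode power-boundedness (indeed, this is where the hypothesis is used, exactly as you predict), and without them your proof is incomplete. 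So: the framework (maximal balls, straddler chain, reduction to $\bk$) is a reasonable alternative organization, but the proposal defers the genuinely nontrivial step instead of resolving it.
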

\begin{proof}
Since every definable subset of $K$ is a finite union of convex sets \cite[Corollary 3.14]{TconvexI}, it will suffice to prove the first clause of the statement. So assume that  $\CC=(C_1,C_2)$ as given is an unrealized cut (if realized then $\CC$  is a ball cut with a trivial ball).  For every $x\in C_1$, let  $B_x$  denote the maximal ball in $C_1$ containing $x$ (since $C_2\neq \emptyset $ such a ball exists) and let $r(x)\in \Gamma$ be its radius. Note that $r(x)$ is (weakly) increasing with $x$. We start with the following.
\begin{claim}\label{C: ball cut}
 Keeping the above notation, if $r(x)$ stabilizes as $x\to \mathcal C^-$ then $\CC$ is a ball cut.
\end{claim}
\begin{claimproof}
    Notice that $r(x)$ is (possibly weakly) increasing.  Assume that $r(x)=r_0$ for sufficiently large $x$ in $C_1$. After re-scaling, assume that $r_0=0$.
    
    If $B_x$ is the same ball for all sufficiently large $x\in C_1$ then $\mathcal C$ is a ball cut, so assume that for every $x\in C_1$ there is some $x'>x$  in $C_1$ such that $B_x\neq B_{x'}$. By Lemma \ref{L:at most two closed max balls}, for all sufficiently large $x$, all the $B_x$ are open. Thus, for any $x\in C_1$, the closed ball $B_{\geq 0}(x)$ intersects $C_2$. As every ball is convex, we have $B_{\geq 0}(x_1)=B_{\geq 0}(x_2)$ for all sufficiently large elements of $C_1$; let $B$ be this closed ball. After translating, we may assume that $B=\CO$. 

    As a result, the map $x\mapsto x+\m$ maps $(B\cap C_1, B\cap C_2)$ into a cut in $\bk$. By o-minimality of $\bk$, this cut is realized, namely either the left side has a maximum or the ride side has a minimum. In the first case, $C_1$ has a right side ball and in the second case $C_2$ has a left side ball.
\end{claimproof}

By the claim, we may assume that $r(x)$ does not stabilize, as $x$ increases in $C_1$. 

Using definable Skolem functions, \cite[Remark 2.7]{vdDries-Tconvex}, we find a definable $h:C_1\to K$ such that for all $x\in C_1$, $r(x)=v(h(x))$. Let $\CL_{omin}$ be the language of the underlying o-minimal reduct (i.e., $\CL_{omin}=\CL(T)$). By \cite[Corollary 2.8]{vdDries-Tconvex}, there exists an $\CL_{omin}$-definable 
function $\widehat h:I\to K$ such that $h=\hat h$ on an end segment of $C_1^-$, which we may assume equals to $I\cap C_1$.  Since $\mathcal C$ is an unrealized cut and $I$ is an $\CL_{omin}$-definable interval containing an end segment of $C_1$ then necessarily $I\cap C_2\neq \0$. Shrinking $I$ (without losing the property that $I\cap C_i\neq 0$ for $i=1,2$) we may assume that $h$ is strictly monotone and continuous. 

By replacing, if needed,  $h$ by $-h$ (and $\widehat h$ by $-\widehat h$) we may assume that $\widehat h$ is strictly decreasing.\\

 \noindent\textbf{Case 1: } $\lim\limits_{x\to \CC^-} r(x)=\infty$. In this case $\lim\limits_{x\to \CC^-}\widehat h(x)=0$. Thus, the function $\widehat h$, which is strictly decreasing and continuous, takes a convex set of the form 
 $\{x\in C_1:x>c\}$, for some $c\in C_1\cap I$, onto an open interval $(0,d)$, with $d=\widehat h(c)$.

 Since $\widehat h$ is  $\CL_{omin}$-definable, so is its inverse function $\widehat h^{-1}\restriction(0,d)$. By o-minimality, and since $\hat h^{-1}$ is strictly decreasing and bounded, it takes the interval $(0,d)$ to an interval of the form $(c,a)$, for some $a\in K$,  and therefore $a$ realizes the cut $\mathcal C$, contradicting our assumption. \\

 \noindent\textbf{Case 2: } $\lim\limits_{x\to \CC^-} r(x)=r_0\in \Gamma$. 
Since $r(x)$ does not stabilize, then $r(x)=v(h(x))<r_0$ for all $x\in C_1$. After re-scaling, we may assume that $r_0=0$, so $v(\widehat h(x))<0$ for all $x\in C_1\cap I$ and   $\lim\limits_{x\in \CC^-}v(\widehat h(x))=0$. Thus,  for all $x\in C_2\cap I$, we  have $v(h_1(x))\geq 0$, and by continuity there must be an element $x\in C_2\cap I$ with $v(\widehat h(x))=0$. Hence, there is some $x_2\in C_2\cap I$ such that for all $x\in C_2$, if $x<x_2$ then $v(\widehat h(x))=0$. 

Consequently, $x\in C_2\cap I\iff \widehat h(x)\in \CO$. Let $(C_1',C_2')$ be the ball cut  $C_1'=\{y\in K:y\leq \CO\}$ and let $J=\widehat h(I)$. Then $J\cap C_i'\neq \emptyset$, for $i=1,2$, and  $\widehat h^{-1}$ is strictly decreasing (from $J$ to $I$). 
For simplicity, let $g=\widehat h^{-1}$.

For any $y\in \CO\cap J$, let $B_y\subseteq C_2$ be the maximal ball containing $g(y)\in C_2$, and denote its radius by $r'(y)$. We may assume that $y\mapsto B_y$ does not stabilize as $y\to (J\cap \CO)^+$ (otherwise $\CC$ is a ball cut, and we are done) and thus, by Lemma \ref{L:at most two closed max balls}  the $B_y\sub C_2$ are open. By \cite[Proposition 4.2]{vdDries-Tconvex}, $r'(y)$ stabilizes for sufficiently large $y\in  J$. Since $g$
sends $\CO\cap J$ to $C_2\cap I$, it follows that for some $c\in C_2$, all maximal balls $B\sub C_2$, with $B<c$, have the same radius. We can now conclude that $\mathcal C$ is a ball cut, using Claim \ref{C: ball cut} (with the roles of $C_1$ and $C_2$ interchanged), thus finishing the proof of Proposition \ref{ball interval}.
\end{proof}

The fact that $\CK$ is definably spherically complete is a consequence of $0$-h-minimality of $\CK$, \cite[Lemma 2.7.1]{hensel-min}. The proof there is not hard, though it implicitly uses Tyne's theorem. We give here a different proof using the previous proposition.
\begin{corollary} 
$\CK$ is definably spherically complete.
\end{corollary}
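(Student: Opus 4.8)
The plan is to deduce the statement from Proposition \ref{ball interval} by contradiction: a definable chain of non-empty balls with empty intersection will be shown to produce a definable cut which is not a ball cut. So fix a definable chain $\{B_t:t\in T\}$ of non-empty balls in $K$ (nested under inclusion) and suppose $\bigcap_t B_t=\emptyset$. The first step is to introduce the definable convex sets $L=\{x\in K: x<B_t\text{ for some }t\}$ and $R=\{x\in K: x>B_t\text{ for some }t\}$, where ``$x<B$'' means $x<y$ for all $y\in B$. Using only that the family is a chain of non-empty convex sets, one checks that $L$ is an initial segment, $R$ a final segment, that $L$ and $R$ are disjoint with $L<R$, and --- crucially, since $\bigcap_t B_t=\emptyset$ --- that $L\cup R=K$ (every $x$ lies outside some $B_t$, hence below it or above it). Discarding the trivial case that every $B_t=K$ (which forces $\bigcap_tB_t=K\neq\emptyset$), some $B_{t_0}$ is a proper ball, hence bounded both above and below in the real closed valued field, so $L\neq\emptyset\neq R$. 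Thus $(L,R)$ is a genuine definable cut, and by Proposition \ref{ball interval} it is a ball cut, associated with some ball $B_*$ (possibly a singleton).

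The second and main step is to prove that $B_*\subseteq B_t$ for every $t$; this finishes the proof, since $B_*$ is non-empty and $\bigcap_tB_t=\emptyset$. By the ultrametric dichotomy, for each $t$ the balls $B_*$ and $B_t$ are nested or disjoint. The inclusions $\{x<B_t\}\subseteq L$ and $\{x>B_t\}\subseteq R$, read through whichever of the two forms of a ball cut $(L,R)$ takes (here one uses $K\setminus\{x<B_*\}=B_*\cup\{x>B_*\}$ and its mirror), say that the ``interval'' of $B_t$ --- between its left and right edge-cuts $\ell(B_t)$ and $r(B_t)$ --- contains one of the edge-cuts of $B_*$. A $B_t$ disjoint from $B_*$ lies entirely to one side of $B_*$ and cannot do this; and a proper sub-ball $B_t\subsetneq B_*$ cannot either, since it sits strictly inside $B_*$ with points of $B_*$ both strictly below and strictly above it, so $\ell(B_*)<\ell(B_t)$ and $r(B_t)<r(B_*)$. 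The only remaining possibility is $B_*\subseteq B_t$, which is the claim.

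I expect the main obstacle to be setting up the ``edge'' bookkeeping cleanly: the left and right edges of a proper ball are cuts in $K$, not elements (a proper ball has no infimum or supremum in $K$), so the comparisons $\ell(B_t)\le\ell(B_*)\le r(B_t)$ must be phrased at the level of the sets $\{x<B\}$, $\{x>B\}$ and their complements rather than via endpoints. Once this is in place, the two cases (the two forms of a ball cut) are mirror images reducing to the same argument, and in particular no separate treatment of the singleton case $B_*=\{b\}$ is required (there $B_*\subseteq B_t$ for all $t$ simply says $b\in\bigcap_tB_t$). I would record once, as an elementary lemma about valued fields, that a proper sub-ball $B'\subsetneq B$ has points of $B$ strictly below and strictly above $B'$ (pick $c\in B'$ and a suitable $d$ with $v(d)$ strictly between the radii); this is precisely what excludes the $B_t\subsetneq B_*$ case.
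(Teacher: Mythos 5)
Your argument takes a genuinely different and more elementary route than the paper's. Both begin identically: form the cut $(L,R)$ (the paper's $(C_1,C_2)$) and invoke Proposition~\ref{ball interval} to get the ball $B_*$. The paper then lifts the radius function $r(x)=\sup\{r(B_t):x\in B_t\}$ via definable Skolem functions and applies van den Dries's monotonicity result \cite[Proposition 4.2]{vdDries-Tconvex} (twice, with a reciprocal trick for the open-ball case) to extract a minimal ball of the chain. You instead argue combinatorially that $B_*\subseteq B_t$ for every $t$, so that $B_*\subseteq\bigcap_tB_t$, a contradiction. This avoids Skolem functions and the monotonicity theorem entirely, and your treatment of the nested sub-case $B_t\subsetneq B_*$ (via the lemma that a proper sub-ball has points of the ambient ball strictly above and below it) is correct.

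There is, however, a gap in the disjoint sub-case. You assert that a $B_t$ lying entirely to one side of $B_*$ ``cannot'' have an edge-cut of $B_*$ in its interval $[\ell(B_t),r(B_t)]$, but this is not automatic. Take, say, $B_t<B_*$ with $(L,R)=\ell(B_*)$ (the ``first form''). The derived inequalities $\ell(B_t)\le\ell(B_*)\le r(B_t)$ are consistent with $\ell(B_*)=r(B_t)$, which is exactly the situation where no point of $K$ lies strictly between the two disjoint balls; your phrasing gives no reason this cannot happen. One way to close the gap is density of $\Gamma$: for disjoint balls $B<B'$ the midpoint of representatives can be checked to lie strictly between them, and then such a point is simultaneously in $L$ (being $<B_*$) and in $R$ (being $>B_t$), a contradiction. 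A cleaner fix uses only the lemma you already record: since $\bigcap_sB_s=\emptyset$, the chain contains some $B_s\subsetneq B_t$, and by your lemma there is $y\in B_t$ with $y>B_s$, so $y\in R$; but $y\in B_t$ and $B_t<B_*$ give $y<B_*$, hence $y\in L$ (this inclusion holds in both forms of the ball cut), contradiction. The mirror argument handles $B_*<B_t$. With this step supplied, your proof is complete and, I think, preferable to the paper's.
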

\begin{proof} 
Let $\{B_t:t\in T\}$ be a definable chain of balls in $K$. Assume towards contradiction that  $\bigcap_{t\in T} B_t= \emptyset$. Let $r(B_t)\in \Gamma$ be the valuative radius of $B_t$.

We define two definable convex sets $C_1,C_2$ by

\[C_1=\{x\in K: \exists t \,\, x<B_t\}\,\,;\,\, C_2=\{x\in K: \exists t \,\, B_t<x\}.\]

Since balls are convex,  our assumption implies that $\CC=(C_1,C_2)$ is a definable, unrealized, cut. By Proposition \ref{ball interval}, this is a ball cut.  For simplicity (the other cases are similar), we assume that $C_1=\{ x\in K: x\leq B\}$ for some ball $B$. Translating and re-scaling, we may assume that $B$ is either $\CO$ or $\bm$.

Let  $B_0=\bigcup\limits_{t\in T} B_t$. We define a function $r:B_0\to \Gamma$ 
by $r(x)=\sup\{r(B_t):x\in B_t\}$. Using definable Skolem functions, we find a definable function $h:B_0\to K$, such that $v(h(x))=r(x)$. 

Assume that $B=\CO$. By \cite[Proposition 4.2]{vdDries-Tconvex}, the function $v(h(x))$, restricted  to $\CO$, eventually stabilizes as $x\to \CC^-$. This implies that the  chain of balls $B_t$ has a minimal element (there is a bijection between the balls and their radii), contradicting our assumption that the intersection of the chain is empty.

Assume that  $B=\bm$ and  consider $h \restriction C_2$. Let $\CC'=(C_1',C_2')$, where $C_1'=\{x\in K : x\leq \CO\}$. As $x\to \CC^+$, we get that $x^{-1}\to \CC^-$, so applying \cite[Proposition 4.2]{vdDries-Tconvex} to $h(x^{-1})$, we conclude that $v(h(x))$ must stabilize as $x\to \CC^+$, again reaching a contradiction.
\end{proof}

\subsection{Elimination of $\exists^\infty$ in the $T$-convex power bounded case}

We now show that $\CK^{eq}$ eliminates $\exists^\infty$; the proof utilizes a criterion used by Johnson to prove a parallel result for $C$-minimal valued fields, see \cite{JohCminimalexist}.
\begin{proposition}\label{P:exists-infty V-min or T-conv}
    $\CK^{\eq}$ eliminates $\exists^\infty$.
\end{proposition}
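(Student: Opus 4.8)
The plan is to use Johnson's criterion from \cite{JohCminimalexist} for elimination of $\exists^\infty$ in imaginary sorts. That criterion reduces the statement ``$\CK^{eq}$ eliminates $\exists^\infty$'' to a boundedness statement about definable families of finite sets in the home sort together with control over definable equivalence relations: roughly, one must show that for every definable family of finite subsets of $K^n$ there is a uniform bound on their size (which holds since $K$ is a geometric structure, in particular eliminates $\exists^\infty$), and then bootstrap this up through each definable equivalence relation. Concretely, Johnson's method asks us to verify that every interpretable set $X$ in $\CK$ admits a definable finite-to-one map to a product of the basic sorts --- here $K$, $\bk$, $\Gamma$, and $K/\CO$ --- together with uniform finiteness for definable families of finite subsets of each of these. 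I would first recall that $\bk$ is o-minimal (hence eliminates $\exists^\infty$), $\Gamma$ is o-minimal (likewise), and $K$ is o-minimal in its own right as a real closed field, so all three of the sorts $K$, $\bk$, $\Gamma$ have uniform finiteness. The remaining sort $K/\CO$ must be handled separately.

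First I would set up the reduction: using Fact~\ref{F: Reduction to sorts} (and its quantitative refinements in \cite{HaHaPeGps}), every definable set in $\CK^{eq}$ is, up to finite-to-one correspondence, built from the four distinguished sorts. Since uniform finiteness is preserved under definable finite-to-one correspondences and under finite products, it suffices to establish uniform finiteness for each of the four sorts individually, and then to check that the ``gluing'' along definable equivalence relations (the content of Johnson's lemma in \cite{JohCminimalexist}) does not destroy it. The o-minimal sorts $K$, $\bk$, $\Gamma$ are immediate. For $K/\CO$ I would use the structure theory developed earlier: by Proposition~\ref{ball interval} every definable subset of $K$ is a boolean combination of balls and intervals, and the projection $\rho: K \to K/\CO$ is definable, so every definable subset of $K/\CO$ is the $\rho$-image of a boolean combination of balls and intervals; one then checks directly that such a set is finite only when it is a finite union of points, and that in a definable family the number of such points is uniformly bounded (this follows from o-minimality of $\Gamma$ controlling the radii, together with uniform finiteness in $\bk$ controlling the residues). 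Since $K/\CO$ here is weakly o-minimal (it is an SW-uniformity in the $T$-convex case), this last point can alternatively be extracted from weak o-minimality directly.

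The main obstacle I anticipate is not any single sort but the verification that Johnson's criterion applies verbatim in the power bounded $T$-convex setting: his argument in \cite{JohCminimalexist} is written for $C$-minimal (V-minimal) valued fields, and one must check that the only structural inputs he uses are (i) elimination of $\exists^\infty$ in the home sort, (ii) a classification of one-variable definable sets as boolean combinations of balls and points/intervals, and (iii) the reduction of imaginaries to the four distinguished sorts. Input (i) is standard (the valued field sort is geometric), input (ii) is exactly Proposition~\ref{ball interval} proved above, and input (iii) is Fact~\ref{F: Reduction to sorts}. Once these three are in place, I would follow Johnson's induction on the complexity of the definable equivalence relation essentially word for word, replacing each appeal to $C$-minimality by the corresponding appeal to Proposition~\ref{ball interval} and to o-minimality of $\Gamma$ and $\bk$. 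The bookkeeping in that induction --- tracking how the finite bound propagates when one quotients a family of finite sets by a definable equivalence relation whose classes are themselves controlled by lower-complexity data --- is where the real work lies, but it is routine given the machinery already assembled.
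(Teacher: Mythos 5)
Your proposal correctly identifies the two main ingredients the paper also uses --- Johnson's criterion from \cite{JohCminimalexist} and Proposition~\ref{ball interval} --- but it misstates the criterion and, more importantly, stops short of the argument that actually carries the proof.

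The criterion the paper invokes, \cite[Theorem~2.3]{JohCminimalexist}, is quite specific: it suffices to eliminate $\exists^\infty$ on those $X\subseteq\CK^{eq}$ for which there is a definable $S\subseteq X\times K$ with $a\mapsto S_a$ \emph{injective} (an embedding of $X$ into the powerset of $K$). You instead describe the criterion as requiring a finite-to-one map from every interpretable set into a product of the four distinguished sorts, to be furnished by Fact~\ref{F: Reduction to sorts}; but that fact only produces an infinite definable \emph{subset} of $S$ with a finite-to-one map into $D^k$ (that is what ``locally almost strongly internal'' means), not a decomposition of all of $S$, so the reduction you envisage does not go through, and it is not what Johnson's theorem says. (A smaller slip: you assert $K$ is o-minimal ``as a real closed field''; in $\CK$ the home sort $K$ is only weakly o-minimal since the valuation is definable.)

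Once one has Johnson's criterion in its correct form, the substance of the proof is the following chain of reductions, none of which appears in your proposal: by weak o-minimality each $S_a$ is a bounded union of convex sets, so one may assume $S_a$ is convex; by Proposition~\ref{ball interval} a convex definable set is determined by a pair of ball cuts, so one may assume $X$ is a definable set of balls; and then one must show that a definable family $\{Y_t\}_{t\in T}$ of finite sets of balls has uniformly bounded size. That last step is the crux: one observes that the minimal (w.r.t.\ inclusion) balls in a finite $Y_t$ are separated into distinct convex components of $\bigcup Y_t$, whose number is uniformly bounded, and that an unbounded chain of balls in $Y_t$ would give unboundedly large finite definable subsets of $\Gamma$ (their radii), contradicting o-minimality of $\Gamma$. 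Your proposal waves at ``weak o-minimality directly'' and the bookkeeping being ``routine,'' but this is precisely where the real work lies, and the chain/antichain dichotomy for balls (with $\Gamma$ controlling chains and convex-component counts controlling antichains) is a genuine idea that must be supplied, not a formality.
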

\begin{proof} 
We shall apply Johnson's criterion for eliminating $\exists^\infty$, \cite{JohCminimalexist}.   By \cite[Theorem 2.3]{JohCminimalexist}, it suffices to prove that if $X$ is a definable set in $\CK^{eq}$ such that  there exists a definable set $S\subseteq X\times K$ with the function $a\mapsto S_a:=\{b\in K: (a,b)\in S\}$  injective on $X$, then $\exists^{\infty}$ is eliminated on $X$. Namely, if $\{Y_t:t\in T\}$ is a definable family of subsets of $X$ then there is a bound on the size of those $Y_t$ that are finite.

Let $X$ be such a definable set (with $S\sub X\times K$ as in the assumption). As $\CK$ is weakly o-minimal (and saturated), there exists $k\in \Nn$ such that each $S_a$ is a finite union of at most $k$ convex sets. By partitioning $X$, we may assume that each $S_a$ consists of exactly $k$ convex sets.  Let $X'=X\times \{1,\dots,k\}$ and let  $S'\subseteq X'\times K$ the set satisfying that $S'_{a,i}$ is the $i$-th convex component of $S_a$. 

    It is sufficient to prove that $\exists^\infty$ is eliminated on $X'$:  Indeed, if $\exists^\infty$ is not eliminated on $X$  then there exists a definable family of subsets $\{Y_t: t\in T\}$ of $X$ and a sequence $\{t_n\}$, such that $|Y_{t_n}|$ is finite and tends to $\infty$. We then define a family of finite subsets of $X'$ as follows: For $i=1,\dots, k$, let 
    \[Y'_{t,i}=\{\text{ the $i$-th convex component of $S_a: a\in Y_t$}\}.\] 
    Since $|Y_{t_n}|\to \infty$  one of the $|Y_{ t_n,i}|$ must tend to $\infty$, thus $X'$ does not eliminate $\exists^\infty$.

    We now replace $X$ by $X'$ and $S$ by $S'$, so we may assume that each $S_a$ is a convex subset of $K$. By Proposition \ref{ball interval}, 
        every $S_a$ is a boolean combination of intervals and balls; so by convexity it must be of the form $B_1\square_1 x\square_2 B_2$, where each $B_i$ is either a point or a ball and $\square_i\in \{<,=,\leq\}$.
        Thus, every $S_a$ is coded by a pair of balls (for simplicity, we consider singletons as balls), so it is sufficient to treat the case where each $S_a$ is a ball , namely we may assume that $X$ is a set of balls.
        Let $\{Y_t:t\in T\}$ be a definable family of subsets of $X$. We claim that there is a bound on the size of the finite $Y_t$ in the family. We reduce the problem to the bound, in families, on the number of convex components of subsets of $K$, as well as the o-minimality of $\Gamma$.

We conclude the proof as in \cite[\S 3]{JohCminimalexist}.
If a ball $b$ belongs to a finite $Y_t$ then it contains a ball $b'\in Y_t$ which is minimal with respect to inclusion. Thus, we may assume that for every $t\in T$, every ball in $Y_t$ contains a minimal ball in $Y_t$ (the set of all such $t$ is definable).

We first note that whenever $Y_t$ is finite, each convex component of the definable set $\bigcup\{b\in Y_t: b \mbox{ minimal} \}$ consists of a single minimal ball in $Y_t$. Indeed, the union of finitely many (but more than one), necessarily pairwise disjoint, balls is not a convex set. 

Thus, we may assume now that for each $Y_t$ in the family, each convex component of the definable set $\bigcup\{b\in Y_t: b \mbox{ minimal} \}$ consists of a single minimal ball in $Y_t$ (this is a definable property of $t$). By the bound on the number of convex components, it follows that there is a bound on the number of minimal balls in each $Y_t$.



Assume towards contradiction that the number of balls in those finite $Y_t$ is not uniformly bounded.  Then, by the bound on the number of minimal balls in $Y_t$,  there are chains of balls in $Y_t$, as $t$ varies,  of unbounded size. This is impossible, as this would imply that the sets  $\{r(B): B\in Y_t\}$ (where $r(B)$ is the valuative radius of $B$) are finite of unbounded size (as $t$ ranges over $T$). Since $\Gamma$ is o-minimal and stably embedded,  definable families of finite subsets of unbounded size do not exist. 
\end{proof}

Let us conclude with an example demonstrating that general weakly o-minimal expansions of groups do not necessarily eliminate $\exists^\infty$ in the  imaginary sorts: 
\begin{example}
    Our goal is to construct an ordered $\Qq$-vector space with a discretely ordered definable family   of  convex subgroups. 
    
    Let $\CR_\Zz$ be a real closed valued field $R$ with value group $\Qq$ together with a predicate $Z\sub \Qq$ for the set of integers. Let $z: \Qq\to \Zz$ be the upper integer value.  Let  $\CM$ be the $2$-sorted structure reduct of $\CR_\Zz$ consisting of the ordered  $\mathbb Q$-vector space $R^{\mathbb Q}=(R,<,+,\{\lambda_{q}\}_{q\in \mathbb Q})$, the sort $(\mathbb Z,<)$ and the function $\zeta: R\to \Zz$ given by $z\circ v$. 

    It is not hard to check that, after adding the function symbols for the successor and predecessor on $\Zz$, the structure $\CM$ has quantifier elimination. It follows that the induced structure on $R$ is weakly o-minimal. It is also not hard to see that $\CM$ is inter-definable with the expansion of the $1$-sorted structure  $R^{\mathbb Q}$ by a binary relation $B$ on $R$, defined by  $B(x,y) \Leftrightarrow \zeta(x)\geq \zeta(y)$. Since $(\mathbb Z,<)$ is interpretable then $\exists^\infty$ cannot be eliminated in the imaginary sorts.

\end{example}
    We expect that also weakly o-minimal expansions of fields do not necessarily eliminate $\exists^\infty$ in their imaginary sorts (although T-convex structures, even if not power bounded, do eliminate $\exists^\infty$).

\subsection*{Conflicts of Interests:} None.

\subsection*{Financial Support:} The first author was partially supported by ISF grants No. 555/21 and 290/19. The second author was supported by ISF grant No. 555/21. The third author was supported by ISF grant No. 290/19.

\bibliographystyle{plain}
\bibliography{harvard}

\end{document}